\newtheorem{Lemma}{Lemma}[section]
\newtheorem{Theorem}{Theorem}
\newtheorem{Proposition}[Lemma]{Proposition}
\newtheorem{Corollary}[Lemma]{Corollary}
\newtheorem{Remark}[Lemma]{Remark}
\newenvironment{Acknowledgment}%
 {\begin{trivlist}\item[]\textbf{Acknowledgments.}}{\end{trivlist}}
\makeatletter\@addtoreset{figure}{section}\makeatother
\makeatletter \@addtoreset{equation}{section} \makeatother
\newcommand{\R}{\mathbb{R}}
\newcommand{\C}{\mathbb{C}}
\newcommand{\N}{\mathbb{N}}
\newcommand{\Z}{\mathbb{Z}}
\newcommand{\mD}{\mathbb{D}}
        \newcommand{\mc}[1]{\mathcal{#1}}
        \newcommand{\mb}[1]{\mathbb{#1}}
        \newcommand{\tl}[1]{\tilde{#1}}
        \newcommand{\ul}[1]{\underline{#1}}
        \newcommand{\lp}{\left}
        \newcommand{\rp}{\right}
                \newcommand{\la}{\lp\langle}
        \newcommand{\ra}{\rp\rangle}
        \newcommand{\beq}{\begin{equation}}
        \newcommand{\eeq}{\end{equation}}
        \newcommand{\ba}{\begin{align}}
        \newcommand{\ea}{\end{align}}
        \newcommand{\fr}[2]{\frac{#1}{#2}}
        \newcommand{\p}{\partial}
        \newcommand{\ri}{\mathrm{i}}
        \newcommand{\re}{\mathrm{e}}
        \renewcommand{\Re}{\mathrm{Re}}
        \newcommand{\rre}{\mathrm{Re}}
         \newcommand{\bth}{\mathbf{\theta}}
        \newcommand{\mfu}{\mathfrak{u}}
\title{Vortices in stably-stratified rapidly rotating Boussinesq convection}
\author{Ryan Goh and C. Eugene Wayne}
\begin{document}

\maketitle
\abstract{

 We study the Boussinesq approximation for rapidly rotating stably-stratified fluids in a three dimensional infinite layer with either stress-free or periodic boundary conditions in the vertical direction. For initial conditions satisfying a certain  quasi-geostrophic smallness condition, we use dispersive estimates and the large rotation limit to prove global-in-time existence of solutions.  We then use self-similar variable techniques to show that the barotropic vorticity converges to an Oseen vortex, while other components decay to zero.  We finally use algebraically weighted spaces to determine leading order asymptotics. In particular we show that the barotropic vorticity approaches the Oseen vortex with algebraic rate while the barotropic vertical velocity and thermal fluctuations go to zero as Gaussians whose amplitudes oscillate in opposite phase of each other while decaying with an algebraic rate.

\section{Introduction}

\subsection{Background}
The rotating Boussinesq equations have been widely used to study the effects of rotation and density stratification on flow dynamics in the Earth's atmosphere and oceans, as well as in many other geophysical settings. These equations are an approximation of the compressible, rotating Navier-Stokes equations where one assumes weak density variations in only the equation of state and the buoyancy term. Furthermore, one assumes that the density varies linearly with temperature, and, if one is studying ocean dynamics, salinity. 

After such approximations and nondimensionalization, these equations take the form of the incompressible Navier-Stokes equation posed in a non-inertial rotating frame coupled with an equation for temperature fluctuations in the stratified fluid,
\begin{align}\label{e:000}
\p_t u&= \nu \Delta u - u\cdot \nabla u - \Omega e_3\times u -\nabla p + g \theta e_3,\\
\p_t \theta &= \nu' \Delta \theta  - u\cdot\nabla \theta + (\fr{\p\bar\rho}{\partial x_3}) u_3,\\
 0&=\mathrm{div}\, u,\quad u = (u_1,u_2,u_3)\in \R^3, \, x\in \R^3, \, t>0.
\end{align}
Here $u$ describes the velocity field of the fluid, $g>0$ is a gravitational constant, $\nu>0$ represents the kinematic viscosity, and $\nu'>0$ the thermal viscosity. The term $\Omega e_3\times u$ arises from the effect of the Coriolis force due to the rotating frame. The angular frequency $\Omega$ is also proportional to the inverse of the Rossby number which measures the rate of rotation relative to the characteristic length scale of the fluid.  While $\Omega$ is in general $x$-dependent (say for spherical geometry in the case of the atmosphere of the Earth), it suffices in many situations to use the ``f-plane" approximation where $\Omega$ is constant. We will assume this for the rest of the work.
  Next $\theta$ and $p$ denote thermal and pressure fluctuations about a horizontally homogeneous mean state.  We assume that the mean velocity is zero, the mean density varies linearity with $x_3$, and that mean density and mean pressure are in \emph{hydrostatic} balance, where the pressure gradient is balanced by the force of gravity on the fluid,
$$
-\fr{\p}{\p x_3} \bar p(x_3) =  g\bar \rho(x_3).
$$
This approximation arises from the fact than in typical applications of this model, such as the Earth's oceans, the horizontal scale is much larger than than the vertical scale; see \cite{pedlosky2013geophysical} for more explanation. If $\fr{d\bar\rho}{dx_3}>0$, so that the fluid is convective with less dense fluid lying below more dense fluid (say the fluid is heated from below for example),  the system is said to be unstably-stratified.  See more Section \ref{ss:unst} for more discussion. 
 
In this work we focus on the case $\fr{d\bar\rho}{dx_3}<0$, where less dense fluid is above more dense fluid,  known as stable stratification.  The Brunt-V\"{a}is\"{a}l\"{a} buoyancy frequency $N:= \sqrt{-d\bar\rho/dx_3}$ denotes the frequency of the neutral oscillation of a small ``parcel" of vertically unstable fluid feeling the effects of buoyancy and gravity inside a uniform background of stably-stratified fluid; see \cite[\S 1.3]{majda2003introduction} for more discussion on this.  Substituting these quantities into \eqref{e:000} we obtain
   \begin{align}\label{e:00}
\p_t u&= \nu \Delta u - u\cdot \nabla u - \Omega e_3\times u -\nabla p + g \theta e_3,\\
\p_t \theta &= \nu' \Delta \theta  - u\cdot\nabla \theta - N^2 u_3,\\
0&=\mathrm{div}\, u.
\end{align}
For a full derivation of this system see \cite{gill1982atmosphere}. For general discussion on these equations, the many interesting physical phenomenon they model, and its various approximating limits, see \cite{pedlosky2013geophysical,riley2000fluid, gill1982atmosphere}.  For a more mathematically focused introduction and review of the subject see \cite{majda2003introduction, lions1992equations,chemin2006mathematical, li2007some,rademacher2017}.

\subsection{Overview of mathematical literature}
Due to the wealth of physical applications and the historical inaccessibility of various asymptotic parameter limits in numerical simulations, a sizeable body of mathematical research has been performed on this system and its many variants, with many interesting analytical tools brought to bear on the problem.  In addition, the fact that dynamical systems theory has often proven a valuable tool in analyzing the behavior of these equations means that they have featured in Nonlinearity's pages since its earliest days.   For instance, papers \cite{lions1992new} and \cite{lions1992equations} both treat models very similar to that treated here, and discuss their applicability to questions of ocean and atmospheric dynamics.

In order to discuss some of the key issues in the analysis of this system we set $\nu = \nu' = 1$; see Remark \ref{r:nunu} for discussion on the $\nu\neq\nu'$ and $\nu=\nu'\neq1$ cases.   Following \cite{herring_metais_1989,babin1999regularity,iwabuchi2016global}, to obtain a skew-symmetric operator  we scale $\theta \mapsto \fr{\sqrt{g}}{N}\theta,$ and denote $\Gamma = N\sqrt{g}$.   We also combine all the dependent variables into a single-four vector $v = (u_1,u_2.u_3,\theta)$.  Finally, applying the Helmholtz projection $\mb{P}$ defined in \eqref{e:helm} below,  we obtain from \eqref{e:00} the formulation
\beq\label{e:00d}
\p_t v + \mb{P}(u\cdot\tl\nabla)v - \mathrm{diag}(\Delta)v + \mb{P}J_{\Omega,\Gamma}\mb{P}v = 0,\quad \mathrm{div} u = 0
\eeq 
with $\tl\nabla = (\p_1,\p_2,\p_3,0)^T,$ and $\mb{P}J_{\Omega,\Gamma}\mb{P}$ is the operator formed by the Coriolis, buoyancy, and gravity effects with,
$$
J_{\Omega,\Gamma} = \left(\begin{array}{cc}\Omega J & 0 \\0 & \Gamma J\end{array}\right),\quad J=\left(\begin{array}{cc}0 & -1 \\1 & 0\end{array}\right).
$$

As discussed in \cite{iwabuchi2016global} (and reviewed below), there is one physically relevant eigendirection,
which we call the ``quasi-geostrophic mode'', which 
undergoes no dispersive smoothing.  However, the other eigendirections correspond to rapidly oscillating waves known
as ``inertial'' or ``Poincar\'e waves''.  While these do not decay in $L^2$ they do decay in $L^p$ spaces with $p>2$.
This can be quantified with the aid of Strichartz estimates.  One then attempts to control such oscillatory behavior and determine
the long-time, quasi-geostrophic dynamics of the system.  Such dynamics are briefly discussed below.  See the book by Chemin et. al  \cite{chemin2006mathematical} for more discussion on the role of inertial wave phenomenon in the context of a rotating incompressible Navier-Stokes system without density effects. There they also illustrate interesting connections of the $\Omega\rightarrow\infty$ limit with the incompressible limit (low mach number) of compressible Navier-Stokes.  Also see the text of Majda which discusses these topics in the context of invisid shallow-water wave equations \cite{majda2003introduction}.

Global well-posedness and regularity has been one of the main focuses of mathematical research on this system.  Here results date back to the works of Lions, Temam, and Wang \cite{lions1992new,lions1992equations} which proved global existence for weak solutions and estimates on the dimensions of global attractors, for a related model, known as the ``primitive" equations, which focused on ocean dynamics. This model includes an equation for water salinity and considered the system in a thin layer on a sphere, $S^2\times (-H,0)$ where the depth, $H$ was dependent on the spherical variables.  The primitive equations are obtained from \eqref{e:00} under the hydostatic approximation, where one uses the disparity between the vertical and horizontal length scales in the Earth's ocean, to replace the equation for $u_3$ with the leading order balance $\fr{\p p}{\p x_3} = - \theta g$.   Subsequent works (see \cite{ cao2007global, temam2005some,hu2003primitive,guillen2001anisotropic} and references therein) generalized these results, extending them to strong solutions (both small- and global-time existence) and to more general domains.

The works of Babin, Mahalov, and Nicolenko \cite{babin1998nonlinear, babin1999regularity,babin2002fast} considered systems of the form \eqref{e:00} with an additional forcing term, posed on fully periodic domains $x\in [0,2\pi a_1]\times[0,2\pi a_2]\times[0,2\pi a_3]$ or horizontally periodic domains with ``stress-free" boundary conditions in the vertical direction:
\beq
u_3 = \theta = 0, \quad \p_{x_3}u_1 =\p_{x_3} u_2 = 0,\quad x = 0,2\pi a_3. \label{e:00stf}
\eeq
There, inertial waves cannot ``escape to infinity" due to the bounded domain, possibly inducing resonant three dimensional wave interactions.  Small divisor techniques are required to prove global existence and regularity of strong solutions, along with the existence of attractors, for certain non-resonant domains in this setting. In these works, they also studied how, in the joint limit $\Omega = 1/\epsilon,N^2 = \tl N/\epsilon, \, \epsilon\rightarrow0$ solutions of \eqref{e:00} can be uniformly approximated in time by solutions of the quasi-geostrophic equations,  a classic and often-used model first derived by Charney \cite{charney1948scale} for the slow dynamics of the atmosphere. 

 There also has been an intense study of the full Boussinesq system \eqref{e:00} in the whole space $x\in\R^3.$  In a series of works \cite{charve2008global,charve2011global,charve2006asymptotics,charve2005global}, Charve has shown convergence to the quasi-geostrophic system in the whole space $x\in \R^3$ for both weak and strong solutions of various regularity and initial condition size, in the same limit $\epsilon\rightarrow0$ mentioned above.   The idea underlying all these
approaches is to decompose the solution as a part governed by the quasi-geostrophic system and a ``remainder'' and to show that the oscillatory effects arising from the fast rotation allow one to prove existence and uniqueness of solutions in three-dimensions, even for large initial data.

Subsequent results such as \cite{koba2012global} extend the use of dispersive estimates in $\R^3$ to other  $L^p$ spaces.  The most recent result \cite{iwabuchi2016global} investigates more deeply the connections between harmonic analysis and the principal curvatures of the linear dispersion relation $p_\eta( k)$; see \cite{greenleaf1981principal} for example. In particular, degeneracies of the Hessian, $(\p^2_{ k_i k_j}p_\eta)_{ij}$, of the dispersion relation are used to study more general initial conditions, only requiring smallness in the quasi-geostrophic component. We remark that because of the smallness of the quasi-geostrophic initial condition these last two results omit study of the quasi-geostrophic limit as in Charve's work. See \cite{iwabuchi2016global} for a more detailed discussion of these topics and review of the related literature.

\subsection{The role of dynamical systems}
Dynamical systems has also been integral to the study geophysical fluid dynamics.  In fact the study of such fluid models has been one of the main motivations for the development of many mathematical tools such as global attractors, inertial manifolds, and slow manifolds.   Again, Nonlinearity was a leader in this work and already, in the second volume, dynamical systems methods were exploited to study the stability of stratified fluids \cite{holm1989lyapunov}.

\paragraph{Slow manifolds}
During the 1970's and 1980's many researchers, including Lorenz \cite{lorenz1986existence}, posited the existence of a slow invariant manifold in the Boussinesq system which controlled and organized the dynamics. In an effort to understand the interplay between fast, oscillatory gravity-waves and the slow geostrophic dynamics for $\Omega,\Gamma\sim \mc{O}(1/\epsilon)$ with small $\epsilon$, researchers worked to discover if there was a manifold in the (infinite-dimensional) phase space, which was invariant under the time evolution, devoid of fast waves, and contained the slowly evolving geostrophic dynamics which governs much of the long term behavior of the system. Such manifolds would then be of use as they could give information on the global attractor of the system.  While the current consensus is that no such exact manifold exists \cite{lorenz1987nonexistence}, there has been rigorous work characterizing approximately invariant, or ``fuzzy", manifolds when the Boussinesq system undergoes certain types of forcing \cite{temam2011slow, temam2010stability, petcu2005renormalization}.  These works have shown that in finite-time the fast dynamics decay and stay $\mc{O}(\epsilon)$-small, so that the dynamics stay near geostrophic balance. For a nice review of this topic, the important work in the area, and some of the most pressing open questions, see \cite{temam2011slow}.

\paragraph{Scaling variables, invariant manifolds, and omega-limit sets}  
In work that is the closest to our own in both technique and aims, Gallay and Roussier-Michon have used dynamical systems techniques to study long-time asymptotics of the rotating incompressible Navier-Stokes system
\beq
\p_t u + (u\cdot \nabla )u + \Omega e_3\times u -\Delta u = 0,\quad \mathrm{div}\, u = 0.
\eeq
In \cite{gallay2009global}, they considered the above system posed on $\R^2\times[0,1]$ with periodic boundary conditions in the vertical direction.  They showed, for any size initial condition and a correspondingly large $\Omega$, the existence of global, infinite energy solutions which converge to the $x_3$-independent diffusively decaying Oseen vortex solution.  In other words, they showed for high rotation rates, solutions asymptotically behave like the non-rotating two-dimensional Navier-Stokes equations.  Using the barotropic-baroclinic decomposition described in \cite{chemin2006mathematical}, which splits the vector field into an $x_3$-independent component, $\bar u$, and an $x_3$-mean-zero component, $\tl u$, one obtains a system of three equations, one each for the horizontal vorticity $\bar \omega_3   = \p_1\bar u_2 - \p_2 \bar u_1 = \int_0^1 (\nabla \times u)_3 dx_3$, the vertical barotropic velocity $\bar u_3$, and the baroclinic velocity $\tl u.$ They obtain exponential decay of the baroclinic component using Poincare's inequality (due to vertical mean-zero and boundedness of vertical domain) and Strichartz estimates.  After using linear convection-diffusion estimates of Carlen and Loss \cite{carlen1996optimal} to show the algebraic decay of $\nabla \bar u_3$, they then study the barotropic vorticity which satisfies an equation very similar to the vorticity formulation of two-dimensional Navier Stokes.  Taking the approach of Gallay and Wayne \cite{gallay2005global}, they can employ scaling variables and compactness arguments to determine the omega-limit set of the system, showing that $\bar \omega$, and hence $u$ itself, asymptotically converge to the Oseen vortex solution as $t\rightarrow\infty$.    However, they do not consider the effects of the coupling between the velocity field and the temperature, nor do they derive detailed asymptotics for a solution near a vortex, like those in Theorem \ref{t:3}.

\subsection{Un-stably stratified fluids}\label{ss:unst}
The un-stably stratified system 
\begin{align}\label{e:00un}
\p_t u&= \Delta u - u\cdot \nabla u - \Omega e_3\times u + R \theta e_3,\\
\p_t\theta&= \Delta \theta - u\cdot \nabla \theta +  u_3,
\end{align}
has also been the focus of much recent study.  Here the Prandtl number $\nu/\nu'$ is assumed to be one for simplicity, and $R$ is the Rayleigh number, measuring the effects of gravity, thermal expansion, and the density difference in the background linear density profile.  Physical boundary conditions once again take the form \eqref{e:00stf}. Such models typically arise when studying more extreme systems, such as rapidly-rotating convective atmospheric layers and are contained in the more complicated magneto-hydrodynamic models used to study stellar plasmas.  See \cite{kraichnan1980two, boffetta2012two} for a review of these applications.

The convective nature of the unstable stratification leads to a rich family of structures and dynamics for various parameter ranges.  Experimental and numerical studies of \eqref{e:00un}, with various geometries and boundary conditions, have exhibited a wealth of interesting behaviors, such as convection cells, localized plumes, and large-scale turbulent vortices as the Rayleigh number is varied.  See the introduction of \cite{favier2014inverse} for a nice review of the numerical and experimental literature and more discussion on this topic. 

 In the case of rapid rotation, $\Omega\gg1$, the Coriolis term suppresses behaviors characteristic of three-dimensional incompressible fluids, such as direct turbulence cascades.  This causes the system to behave similarly to a forced two-dimensional fluid, where an inverse cascade dominates turbulent dynamics; see once again \cite{kraichnan1980two, boffetta2012two}.  Assuming large enough Rayleigh number, $R$, the thermal forcing creates small-scale eddies which become vertically ``aligned" and coalesce into large domain-scale turbulent vortices.

Mathematically, much less is known about these equations compared with the stably-stratified case.  While there are some results on existence and bifurcation of finite dimensional attractors in fully periodic domains $\mb{T}^3$ \cite{li2007some,hsia2007stratified}, little work has been done to rigorously characterize coherent structures which arise in these systems.  It would be interesting to determine if an invariant or approximately invariant slow-manifold existed in the system.  Given the previous literature and discussion above, a reasonable candidate for a slow variable is the the barotropic component of the full three-dimensional velocity,
$$
\bar v = \int_0^1 v(x_h,x_3) d x_3.
$$  
The boundary conditions on $\theta$ and $u_3$ immediately give that $\bar u_3 = \bar \theta = 0$. Then denoting the projection associated with this decomposition as  $Q$, the equation for the barotropic component $\bar v = (\bar u_h,0,0)^T$ can readily be found to be
\begin{align}
\p_t \bar u_h + (\bar u_h\cdot \nabla_h)\bar u_h  + Q\nabla_h p = \Delta \bar u_h - Q\lp[(\tl u\cdot \tl\nabla) \tl u_h\rp],
\end{align}
where $\tl v = (1 - Q) v$ is the baroclinic part of the system.  Thus $\bar u_h$ satisfies a forced 2-dimensional Navier-Stokes equation with forcing term $Q\lp[(\tl u_h\cdot\nabla_h )\tl u_h\rp]$, coming from the baroclinic system (which we have not written down here).  One would hope to show that in the high rotation limit, $\Omega = \epsilon^{-1},\,\,0<\epsilon\ll1$ (possibly also scaling the Rayleigh number $R = \tl R/\epsilon$ as well), the Coriolis force overpowers the unstable stratification term causing the baroclinic variables $\tl v$ to decay to $\mc{O}(\epsilon)$ sizes after a finite-time and remain small for all subsequent times, as in \cite{temam2011slow}.  Hence, the barotropic subspace $\{\tl v = 0\}$ would form an approximately invariant manifold for the full dynamics. One would then hope to characterize the dynamics of $\bar u_h$, either explicitly characterizing large-scale vortices or at the very least showing the existence of, and characterizing the attractor.  As forced two-dimensional turbulence is in general only understood statistically (see \cite{boffetta2012two}) this approach seems to be an interesting line of research, which we shall pursue in the future.

The system \eqref{e:00un} is in general difficult to simulate, with direct numerical simulations for large Rayleigh number only being performed in the last few years; see for example \cite{favier2014inverse}. Julien and Knobloch with various collaborators have derived a formal asymptotic quasi-geostrophic model which eliminates fast inertial waves and certain boundary effects, known as Ekman layers, and is thus much more tractable numerically while still retaining much of the three-dimensional dynamics (as opposed to a 2-D Navier-Stokes system with arbitrary forcing).  In a series of works \cite{sprague2006numerical,julien2012statistical,rubio2014upscale}, they have shown this model exhibits various coherent structures and similar statistical behaviors observed in experiments and recent simulations of the full system \eqref{e:00un}.   It would be of great interest to study such coherent structures of in this asymptotic model rigorously and consequently to rigorously study the asymptotic convergence of \eqref{e:00un} to this formal model in a way similar to that of the quasi-geostrophic approximation of the stably-stratified system discussed above.

\subsection{Our results}\label{s:results}
 This paper focuses on determining leading order asymptotics of \eqref{e:00d}.  We consider the stably-stratified Boussinesq system \eqref{e:00d} above, posed on $\mb{D}=\R^2\times[0,1]$ with either stress-free boundary conditions
 \begin{align}\label{e:sfbc}
\p_{x_3} u_1 = \p_{x_3} u_2 = u_3 = 0, \quad\theta= 0, \quad x_3 = 0,1,
\end{align}
or periodic boundary conditions
\beq\label{e:pbc}
u(x_h,x_3) = u(x_h,x_3+1), \quad \theta(x_h,x_3) = \theta(x_h,x_3+1).
\eeq
While the latter boundary conditions are for the most part non-physical, they have been often studied as an idealized version of the system. 

\paragraph{Local dynamics near a barotropic Oseen vortex}
 Our main result of this paper (proved in Section \ref{s:alg}), is to determine the leading order asymptotics in the case of periodic vertical boundary conditions.  As in many previous works, we begin by splitting the evolution into its barotropic (i.e. vertically averaged) and baroclinic parts, $\bar v = Qv:=\int_0^1 v(x_h,x_3) dx_3$, and $\tl v = (1-Q)\tl v$.  The purely barotropic evolution is obviously an invariant manifold within
the phase space of the full system.  We then identify a family of explicit vortex solutions in the barotropic system.  These vortices correspond to an Oseen vortex for the two-dimensional Navier-Stokes equation in the two horizontal components of the velocity, and a coupled pair of vortices in the vertical velocity and temperature fields which oscillate with the Brunt-V\"{a}is\"{a}l\"{a} frequency.  We then show that regardless of the vortex amplitude, these solutions are stable with respect to the full Boussinesq evolution.  From a dynamical systems perspective, this shows that these solutions are in the barotropic manifold and are at least locally attractive.
We also note, that for this local analysis, we need not assume the rotation rate is large, even to obtain the stability of large vortex solutions.

As we explain below, the detailed analysis of the convergence towards these vortices requires somewhat localized initial
data.
Technically,  we adapt and extend the aforementioned techniques of \cite{gallay2005global} to prove global existence in weighted spaces and derive leading order asymptotics by decomposing solutions using the leading order eigenspaces of the linear system.   
We enforce algebraic spatial decay with the weighted spaces
\begin{align}
L^2(m)^4 = \{ v(\xi,x_3)\in L^2(\mb{D})^4 \,:\, ||v||_{L^2(m)^4}:= ||b^m v||_{L^2(\mb{D})}<\infty\}\notag\\
L^2_{2D}(m) = \{ f\in L^2(\R^2)\,:\, || f||_{L^2_{2D}(m)}:=||b^m f||_{L^2(\R^2)}<\infty\},\notag
\end{align}
where $b^m = (1+|\xi|^2)^{m/2}.$   (The reason for this precise choice of weighted space is discussed in Section \ref{s:alg}.)
We also denote by 
\begin{equation}
\varphi_0(\xi) = \fr{1}{4\pi}\re^{-|\xi|^2/4}\ ,\qquad \bar{\mfu}^0_h (\xi)=\fr{1-\re^{-|\xi|^2/4}}{2\pi\xi^2}\left(\begin{array}{c}-\xi_2 \\\xi_1\\\end{array}\right),\label{e:oss}
\end{equation}
where $\bar{\mfu}^0_h$ is the incompressible, two-dimensional velocity field with Gaussian vorticity distribution $\varphi_0$. (This is the Oseen vortex, which governs the
long-time evolution of the two-dimensional Navier-Stokes equations \cite{gallay2005global}.) Then, setting $\overline{\omega}:= \nabla\times \bar u = (\p_2 \bar u_3, -\p_1 \bar u_3, \p_2 \bar u_1 - \p_1 \bar u_2)^T$, $\omega_h = (\omega_1,\omega_2)^T$, and $\bar\Theta:= \nabla^\perp_h \bar \theta = (\p_2 \bar\theta,-\p_1\bar\theta)$ our precise result is as follows
\begin{Theorem}\label{t:3}
Fix $\mu\in (0,1/2)$. Then for all $A,B_1,B_2\in \R$, there exists a constant $r>0$ such that for all initial data $(\bar \omega_{0}, \bar \Theta_0)\in L^2_{2D}(m)^5$ and $\tl v_0\in L^2(m)^4$, with $m>3$, and
\begin{align}
&||\bar \omega_{3,0} - A \varphi_0 ||_{L^2_{2D}(m)} + \| \bar \omega_{h,0} - B_1 \nabla_h^\perp \varphi_0 \|_{L^2_{2D}(m)^2} + ||\bar \Theta_0 - B_2 \nabla_h^\perp\varphi_0 ||_{L^2_{2D}(m)} + ||\tl v_0||_{L^2(m)^4} < r,\notag\\
&A = \int_{R^2} \bar \omega_{3,0}(x_h)dx_h,\, \, B = (B_1,B_2)^T:=( \int_{\R^2} \bar u_{3,0}(x_h)dx_h,\,  \int_{\R^2} \bar \theta_0(x_h)dx_h)^T,\notag
\end{align}
there exists a global solution of \eqref{e:00d}, for which the quantities $(\bar\omega, \bar \Theta,\tl v)$ satisfy $(\bar\omega,\bar\Theta,\tl v)\in C([0,\infty),L^2_{2D}(m)^3\times L^2_{2D}(m)^2\times L^2(m)^4)$ , and
\begin{align}
||\bar\omega_3(\cdot,t) - \fr{A}{1+t} \varphi_0 \lp(\fr{x}{\sqrt{1+t}}\rp)||_{L^p(\R^2)} &\leq \fr{C_p}{(1+t)^{1+\mu-1/p}}\\
\| \bar \omega_h(\cdot,t) - \fr{1}{(1+t)^{3/2}} \lp( B_1 \cos(\Gamma t) + B_2 \sin(\Gamma t)  \rp) \nabla_h^\perp\varphi_0(\fr{\cdot}{\sqrt{1+t}})\|_{L^p(\R^2)} &\leq \fr{C_p}{(1+t)^{3/2+\mu-1/p}} \\
\| \bar \Theta(\cdot,t) - \fr{1}{(1+t)^{3/2}} \lp( -B_1 \cos(\Gamma t) + B_2 \sin(\Gamma t)  \rp) \nabla_h^\perp\varphi_0(\fr{\cdot}{\sqrt{1+t}})\|_{L^p(\R^2)} &\leq \fr{C_p}{(1+t)^{3/2+\mu-1/p}}\\
\|\tilde{v}(t)\|_{L^p(\R^2)}&\leq \fr{C_p}{(1+t)^{5/4 - 1/p}},
\label{eq:red_asymptotics}
\end{align}
 for $t\geq 0$ and any $p\in [1,2]$. 
 \end{Theorem}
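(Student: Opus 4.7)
The plan is to adapt the Gallay-Wayne self-similar variable method to the stratified three-dimensional setting, treating the barotropic and baroclinic sectors separately and exploiting the exact vortex solutions of the barotropic linearization.

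First, I would write $v = \bar v + \tilde v$ with $\bar v = Qv$, $\tilde v = (1-Q)v$. Since $\tilde v$ has zero vertical mean on $[0,1]$, the Poincar\'e inequality gives a uniform spectral gap of the vertical Laplacian on this subspace; together with the Strichartz-type dispersive estimates for the Coriolis-buoyancy semigroup developed in earlier sections of the paper, this yields the asserted $\|\tilde v(t)\|_{L^p}\lesssim (1+t)^{-5/4+1/p}$ for small initial data in $L^2(m)^4$. The only way $\tilde v$ feeds the barotropic block is through quadratic terms of the schematic form $Q[(\tilde u\cdot\tilde\nabla)\tilde v]+Q[(\bar u\cdot\tilde\nabla)\tilde v+(\tilde u\cdot\tilde\nabla)\bar v]$; each of these produces an algebraically decaying forcing to be absorbed by the linear decay below.

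Second, I would reduce the barotropic system to a coupled PDE for $(\bar\omega_3,\bar u_3,\bar\theta)$: the scalar vorticity $\bar\omega_3$ obeys a 2D Navier-Stokes vorticity equation with $\bar u_h$ recovered by the horizontal Biot-Savart law, while $(\bar u_3,\bar\theta)$ obeys a coupled heat equation whose linearization is a skew-symmetric rotation at the Brunt-V\"ais\"al\"a frequency $\Gamma$. The explicit three-parameter family of barotropic solutions appearing in the theorem then follows immediately: the classical Oseen vortex $\fr{A}{1+t}\varphi_0(x_h/\sqrt{1+t})$ for $\bar\omega_3$ together with the rigidly-rotating Gaussian pair for $(\bar u_3,\bar\theta)$. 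These are compatible because the tangential Oseen velocity does not advect radially-symmetric scalars, and the amplitudes $A,B_1,B_2$ are fixed by the total masses, which are conserved by the full evolution.

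Third, I would pass to self-similar variables $\xi = x_h/\sqrt{1+t}$, $\tau=\log(1+t)$, rescale $\bar\omega_3,\bar u_3,\bar\theta$ by $(1+t)^{-1}$, and subtract the explicit solutions above. The rescaled remainders satisfy a linearized equation whose generator is the Fokker-Planck operator $\mc{L}=\Delta_\xi+\fr{1}{2}\xi\cdot\nabla_\xi+1$ plus the transport by the Oseen vortex in the vorticity channel, and $\mc{L}\otimes I_2+\Gamma J$ plus the same vortex transport in the $(\bar u_3,\bar\theta)$ channel. On $L^2_{2D}(m)$ with $m>3$, $\mc{L}$ has isolated eigenvalues $0,-\fr{1}{2},-1,\ldots$ with Hermite-Gaussian eigenfunctions and essential spectrum strictly to the left of $-\mu$ for any $\mu<1/2$; as in \cite{gallay2005global}, the vortex transport piece is non-self-adjoint but does not shift the real part of the leading spectrum. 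By construction the rescaled remainders are mass-zero and hence orthogonal to the kernel of $\mc{L}$, so they lie in an invariant subspace on which the semigroup decays like $\re^{-\mu\tau}$. Unscaling and applying the embedding $L^2_{2D}(m)\hookrightarrow L^p(\R^2)$ (valid for $m>1$ and $p\in[1,2]$) recovers the claimed $(1+t)^{-(1+\mu-1/p)}$ and $(1+t)^{-(3/2+\mu-1/p)}$ rates after accounting for the change-of-variables Jacobian.

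Finally, the argument closes by a continuation/fixed-point iteration in $L^2_{2D}(m)^3\times L^2(m)^4$ using standard bilinear estimates of Biot-Savart and convection type in the weighted space, with $r$ chosen small enough to absorb all nonlinear contributions into the linear decay. I anticipate two main difficulties. The first is the tight matching of decay rates: the $\tilde v$-driven cross-forcings decay only marginally faster than the barotropic remainder rate $\re^{-\mu\tau}$, so the weighted-space bilinear estimates must exploit the spectral gap sharply. The second, and in my view the chief obstacle, is the spectral analysis of the Oseen-vortex-linearized operator at \emph{arbitrary} amplitude $A$: showing that the leading real part of its spectrum remains at $0$ uniformly in $A$ (so that $r$ can depend only on $A,B_1,B_2$) requires the non-self-adjoint spectral machinery of \cite{gallay2005global}, now combined with the additional Brunt-V\"ais\"al\"a oscillation in the coupled $(\bar u_3,\bar\theta)$ channel, where the oscillatory factor $\re^{\pm\ri\Gamma\tau}$ provides no time-averaging gain since $\Gamma$ is fixed, not asymptotically large, in this local theorem.
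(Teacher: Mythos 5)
Your overall strategy coincides with the paper's: barotropic/baroclinic splitting, the explicit three-parameter vortex family, self-similar variables, spectral analysis of the linearization about the Oseen vortex in $L^2_{2D}(m)$, and a small-residual fixed point. Within that shared route, however, there are three concrete gaps. First, the baroclinic sector: Strichartz-type dispersive estimates are the wrong tool here. In this paper they only produce a gain through the factor $|\eta|^{-1/4}$, i.e. they require large rotation, whereas Theorem \ref{t:3} is proved for every fixed $\Omega,\Gamma$; moreover they are unweighted, while to close the fixed point you must control $\tilde v$ in the weighted spaces $L^2(m)$, $H^1(m)$. The actual mechanism is the one you mention in passing (the vertical Poincar\'e gap), but the substance is the weighted decay estimate for the baroclinic evolution family (Proposition \ref{p:btev}), which in turn rests on weighted $L^q$-boundedness of the Biot--Savart multiplier on $\R^2\times\mb{T}$ via Muckenhaupt-weight theory --- an ingredient your plan does not supply. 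Second, in scaling variables the buoyancy coupling is $\Gamma\re^{\tau}J$, not $\Gamma J$, and the approximate vortex has $\tau$-dependent, decaying and oscillating coefficients; the linearization is therefore genuinely non-autonomous, and the paper must pass to the rotating frame \eqref{eq:rotation} and construct evolution families $\overline{\mc{S}}(\tau,\sigma)$, $\tilde{\mc{S}}(\tau,\sigma)$ (Propositions \ref{p:SS}, \ref{p:btev}) rather than work with a fixed generator ``$\mc{L}\otimes I_2+\Gamma J$''. Relatedly, $\int\bar u_{3}$ and $\int\bar\theta$ are not conserved --- they rotate at frequency $\Gamma$ --- which is exactly why the leading term oscillates; your ``conserved masses'' justification should be replaced by this observation (the amplitudes are fixed by the initial masses).

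Third, and most substantively, the arbitrary-amplitude spectral bound that you yourself single out as the chief obstacle is left unresolved, and it is precisely the new ingredient of the paper's proof. Because the fields in the $(\bar\omega_h,\bar\Theta)$ channels are divergence-free, hence skew-gradients, the vector eigenvalue problem for $\mc{L}-A\,\overline{\mfu}_h^0\cdot\nabla+A(\cdot)\cdot\nabla\overline{\mfu}_h^0$ reduces via the Biot--Savart/stream-function relation (Lemma \ref{l:aLh}) to the scalar convection-diffusion operator $(\mc{L}-1/2)-A\,\overline{\mfu}_h^0\cdot\nabla$, whose spectrum is confined to $\Re\lambda\le-1$ for every $A$ by conjugation with the Gaussian and anti-symmetry of the advection term. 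Without this reduction (or a substitute) you cannot conclude the $\re^{-(1/2+\mu)\tau}$ decay of the residual in those channels, i.e. the rates $(1+t)^{-(3/2+\mu-1/p)}$ with $r$ depending only on $A,B_1,B_2$, and the quantitative rate $(1+t)^{-(1+\mu-1/p)}$ for $\bar\omega_3$ likewise relies on the evolution-family estimates (Proposition \ref{p:SS}, Theorem \ref{t:alg-as}) built on this spectral input, not merely on compactness or a citation of the two-dimensional result.
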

\begin{Corollary}
One can use the Biot-Savart relationships in Proposition \ref{p:bsest} to readily conclude the following decay estimates for $\bar v$, $1/q =1/p - 1/2$ with $p\in [1,2]$, and $t>0,$
\begin{align}
\|\bar{u}_h(\cdot,t) - \frac{A}{\sqrt{1+t}} \bar{\mfu}^0_h(\frac{\cdot}{\sqrt{1+t}}) \|_{L^q(\R^2)}&\leq \fr{C_p}{ (1+t)^{1/2+\mu-1/q}} , \notag \\
\| \bar{u}_3 (\cdot,t) - \frac{1}{1+t} \left( B_1 \cos(\Gamma t) + B_2 \sin(\Gamma t) \right) \phi_0 (\frac{\cdot}{\sqrt{1+t}})\|_{L^q(\R^2)}&\leq \frac{1}{(1+t)^{3/2+\mu-1/q}} , \notag \\
\|\bar{\theta}(\cdot,t) - \frac{1}{1+t }\left( - B_1 \sin(\Gamma t) + B_2 \cos(\Gamma t) \right) \phi_0(\frac{\cdot}{\sqrt{1+t}})\|_{L^q(\R^2)}&\leq \fr{C_p}{(1+t)^{3/2+\mu-1/q}}.\notag
\label{eq:red_asymptotics3}
\end{align}
\end{Corollary}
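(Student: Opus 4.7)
The plan is to deduce each of the three estimates in the corollary from the corresponding vorticity-type bound in Theorem~\ref{t:3} by applying the two-dimensional Biot--Savart estimates contained in Proposition~\ref{p:bsest}. The operators inverting the horizontal curl (for $\bar u_h$) or $\nabla_h^\perp$ (for $\bar u_3$ and $\bar\theta$) are convolutions against kernels of homogeneity $-1$ on $\R^2$, and hence by Hardy--Littlewood--Sobolev map $L^p(\R^2)$ boundedly to $L^q(\R^2)$ with $1/q = 1/p - 1/2$.

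First I would verify that the asymptotic profiles displayed in the corollary are precisely the Biot--Savart pre/post-images of those in Theorem~\ref{t:3}. Averaging the 3D incompressibility condition in $x_3$ yields $\mathrm{div}_h \bar u_h = 0$, so the standard 2D Biot--Savart law recovers $\bar u_h$ from $\bar\omega_3$, and by \eqref{e:oss} the field $\bar{\mfu}^0_h$ is exactly the Biot--Savart image of $\varphi_0$. With $\lambda=\sqrt{1+t}$ and $K$ the 2D Biot--Savart kernel, the homogeneity identity $K \ast [\phi(\cdot/\lambda)] = \lambda\,(K\ast\phi)(\cdot/\lambda)$ yields
\[
K \ast \Bigl[\tfrac{A}{1+t}\varphi_0(\cdot/\sqrt{1+t})\Bigr](x) \;=\; \tfrac{A}{\sqrt{1+t}}\,\bar{\mfu}^0_h(x/\sqrt{1+t}),
\]
which matches the asymptotic profile for $\bar u_h$ in the corollary. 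For $\bar u_3$ and $\bar\theta$, the identities $\bar\omega_h = \nabla_h^\perp \bar u_3$ and $\bar\Theta = \nabla_h^\perp \bar\theta$, together with decay of these scalar fields at spatial infinity, invert uniquely via $\bar u_3 = -(-\Delta_h)^{-1}\mathrm{curl}_h \bar\omega_h$ and its analog for $\bar\theta$. An identical scaling computation shows that the Gaussian $\varphi_0(\cdot/\sqrt{1+t})$, equipped with the stated time-oscillating prefactors, is the correct inverse image of the displayed vorticity/temperature-gradient profile in Theorem~\ref{t:3}.

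Second, I would apply Proposition~\ref{p:bsest} to each of the three differences ``vorticity (or $\nabla_h^\perp$ of temperature) minus its asymptotic profile,'' which by the step above equals the image under Biot--Savart of the corresponding velocity or temperature difference. This bounds the $L^q$ norm of the velocity/temperature difference by the $L^p$ norm of the vorticity-level difference, and the latter is controlled by Theorem~\ref{t:3}. Converting exponents via $1/p = 1/q + 1/2$ then produces the $L^q$ decay rates displayed in the corollary.

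The argument involves no substantial new analysis; the ``hard part'' is merely careful bookkeeping of (i) the similarity scaling of the Biot--Savart kernel under $x\mapsto x/\sqrt{1+t}$, (ii) the exponent shift $1/q = 1/p - 1/2$, and (iii) uniqueness of the inverse images, which follows from the spatial decay of $\bar u_h$, $\bar u_3$, $\bar\theta$ so that no additive harmonic terms enter. Once these are checked, the estimates follow by direct substitution of the Theorem~\ref{t:3} bounds into the $L^p\to L^q$ Biot--Savart inequality of Proposition~\ref{p:bsest}.
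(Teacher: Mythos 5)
Your route is exactly the one the paper intends -- the corollary is stated with no proof beyond the remark that it follows from Proposition \ref{p:bsest}, and your profile bookkeeping (the homogeneity identity for the degree $-1$ kernel, the identification of $\bar{\mfu}^0_h$ as the Biot--Savart image of $\varphi_0$, and the fact that $\nabla_h^\perp$ of the Gaussian profile with prefactor $(1+t)^{-1}$ reproduces the $(1+t)^{-3/2}\nabla_h^\perp\varphi_0$ profile of Theorem \ref{t:3}) is correct.

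However, your final claim that ``converting exponents via $1/p = 1/q+1/2$ then produces the $L^q$ decay rates displayed'' is only true for the first estimate. For $\bar u_h$ the conversion gives $1+\mu-1/p = 1/2+\mu-1/q$, matching the display. But for $\bar u_3$ and $\bar\theta$ your argument yields
$\|\bar u_3 - \text{profile}\|_{L^q} \le C\,\|\bar\omega_h - \text{profile}\|_{L^p} \le C(1+t)^{-(3/2+\mu-1/p)} = C(1+t)^{-(1+\mu-1/q)}$,
which is half a power of $(1+t)$ weaker than the printed rate $(1+t)^{-(3/2+\mu-1/q)}$. The $L^p\to L^q$ Biot--Savart bound cannot supply that extra $(1+t)^{-1/2}$: in scaled variables the velocity residual $\tilde{\mathfrak u}_{3,R}$ inherits only the $\re^{-(1/2+\mu)\tau}$ decay of $\bar w_{R,h}$ from Theorem \ref{t:alg-as}, and the velocity unscales with prefactor $(1+t)^{-1/2}$ rather than the vorticity's $(1+t)^{-1}$, so the honest output is $(1+t)^{-(1+\mu-1/q)}$. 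The most plausible reading is that the printed exponents in the second and third lines are a slip (the substitution $1/p\mapsto 1/q$ was made without subtracting the $1/2$, whereas the first line was converted correctly), but as written your proof asserts a bound its own steps do not deliver; you should either derive the weaker (correct) exponent or identify where an additional half power would come from. A smaller point: the estimate \eqref{e:bsbt} in Proposition \ref{p:bsest} is stated only for $p\in(1,2)$, so the endpoint cases $p=1$ ($q=2$) and $p=2$ ($q=\infty$) claimed in the corollary need a separate argument (e.g.\ exploiting the weighted-space control of the residuals rather than a bare $L^p\to L^q$ convolution bound), which your write-up does not address.
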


\paragraph{Global existence and asymptotics}
To study more general initial data, one can then use a high rotation number limit, $|\Omega|\gg1$, in dispersive estimates and energy-type estimates to obtain global existence and asymptotics, albeit with less information on the rate of convergence.  In particular, requiring smallness of only the quasi-geostrophic part of the initial condition, that is initial data $v_0$ satisfying $\int_{\R^3}|\langle \tl v( k,0), a_g( k)\rangle_{\C^4}|^2d k\ll 1$, with $a_g(k)$ defined in \eqref{e:evecs-p}, we prove global existence of mild solutions. We also once again show that at leading order, thermal fluctuations decay to zero and the velocity field converges to the two dimensional vortex solution
$$
u^G(x,t) =(\frac{A}{\sqrt{1+t}} \bar{\mfu}^0_h(\frac{x_h}{\sqrt{1+t}}),0)^T,
$$
with $\overline{\mfu}_h^0$ defined above. Depending on the boundary conditions, we consider solutions in one of the two the Banach spaces
\begin{align}
X_\mathrm{sf}&= \{ v\in H^1_{\mathrm{loc}}(\mb{D})^4\,|\, \mathrm{div} \, \tl u = \mathrm{div} \bar u_h= 0 , \tl v\in H^1(\mb{D})^4, \, \bar \omega\in L^1(\R^2)\cap L^2(\R^2),\text{$v$ satisfies \eqref{e:sfbc}}\},\notag\\
X_\mathrm{p} &=  \{ v\in H^1_{\mathrm{loc}}(\mb{D})^4\,|\, \mathrm{div} \, \tl u = \mathrm{div} \bar u_h= 0 , \tl v\in H^1(\mb{D})^4, \, \bar \omega\in L^1(\R^2)\cap L^2(\R^2),\text{$v$ satisfies \eqref{e:pbc}}\},\notag
\end{align}
with the norms 
\begin{align}
||v ||_{X_\mathrm{sf}} &= ||\tl v||_{H^1(\mb{D})} +||\bar \omega_3||_{L^2(\R^2)}+ ||\bar\omega_3||_{L^1(\R^2)},\notag\\
||v||_{X_\mathrm{p}} &= ||\tl v||_{H^1(\mb{D})} +||\bar \omega_3||_{L^2(\R^2)} + ||\bar\omega_3||_{L^1(\R^2)} +||\bar u_3||_{L^2(\R^2)} + ||\bar \theta||_{L^2(\R^2)}.
\end{align}
where once again $\bar v = Qv:=\int_0^1 v(x_h,x_3) dx_3$, and $\tl v = (1-Q)\tl v$.  Also, we define a projection operator $S$ on $L^2$ as
\begin{align}
S f = \mc{F}^{-1}\lp[  \langle \hat f(\cdot), a_g(\cdot)\rangle_{\C^4}a_g(\cdot) \rp],
\end{align}
where $\mc{F}$ is the Fourier transform, defined on $L^2(\mD)$.
Our results for these two boundary conditions are as follows:

\begin{Theorem}\label{t:0}
For all $\Gamma\in \R\diagdown\{0\}$, and initial conditions $v_0\in X_{i}$ with either $i = \mathrm{p}$ or $i =\mathrm{sf}$, and $\|Sv_0\|_{L^2}$ sufficiently small, there exists $\Omega_0>0$ such that, for all $|\Omega|\geq \Omega_0$, the system \eqref{e:00d} with either periodic boundary conditions \eqref{e:pbc} or stress-free boundary conditions \eqref{e:sfbc} respectively, has a mild solution $v\in C^0([0,\infty),X_\mathrm{i})$ satisfying $v(\cdot,0) = v_0$.  Furthermore, there exists a $C>0$ such that $||v(t)||_{X_\mathrm{i}}\leq C$ for all $t>0$ and given $A = \int_\mb{D}(\mathrm{curl } \, u_0)_3 dx,$  this solution satisfies
$$
||v(\cdot,t) - \frac{A}{\sqrt{1+t}}  V^G(\frac{\cdot}{\sqrt{1+t}},\log(1+t))||_{X_\mathrm{i}}\rightarrow 0,\quad \text{as } t\rightarrow+\infty,
$$
where $V^G(x,t) = (\bar{\mfu}^0_h,0,0)^T$ and $\bar{\mfu}^0_h$ is defined in \eqref{e:oss} above.
\end{Theorem}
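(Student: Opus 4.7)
The plan is to work within a Duhamel (mild) formulation built on the semigroup $\rme^{tL}$ generated by the linear operator $L = \mathrm{diag}(\Delta) - \mb{P}J_{\Omega,\Gamma}\mb{P}$. On the Fourier side this operator has four eigenvectors, and I would separate off the non-dispersive quasi-geostrophic eigendirection $a_g(k)$ via the projector $S$. The complementary three inertial/Poincar\'e wave modes acquire eigenvalues whose imaginary parts grow like $\Omega$, so stationary-phase-based arguments produce Strichartz-type estimates whose constants improve as $|\Omega|\to\infty$. I would then further decompose $v=\bar v+\tl v$ into its barotropic and baroclinic parts, noting that $\tl v$ enjoys a Poincar\'e inequality in $x_3$ on $\mb{D}$, hence exponential linear decay in $H^1(\mb{D})$; the barotropic component $\bar v$ splits in turn into its quasi-geostrophic part (essentially $\bar\omega_3$) and, in the periodic case, a pair $(\bar u_3,\bar\theta)$ that oscillates at frequency $\Gamma$ under the linear flow.

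For global existence I would run a contraction argument in $C([0,\infty),X_i)$ equipped with a norm that tracks the decay rate of each mode: algebraic $L^p$-decay for the inertial waves via Strichartz and the large-$\Omega$ limit, exponential decay for the baroclinic component, and the slow two-dimensional decay for the quasi-geostrophic component. The nonlinearity $\mb{P}(u\cdot\tl\nabla)v$ is split by mode type; the quasi-geostrophic$\times$quasi-geostrophic interaction, which reduces at leading order to the 2D Navier--Stokes vorticity nonlinearity, is closed by the smallness of $\|Sv_0\|_{L^2}$; the wave$\times$wave and wave$\times$quasi-geostrophic interactions are absorbed by the large-$\Omega$ dispersive gain; and the baroclinic contributions are controlled by their exponential decay together with Ladyzhenskaya-type Sobolev embeddings. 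This yields a global mild solution with $\|v(t)\|_{X_i}\le C$ uniformly in $t\ge 0$.

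Given global existence, the final step is the convergence to the Oseen vortex. I pass to self-similar variables $\xi=x_h/\sqrt{1+t}$, $\tau=\log(1+t)$ in the equation for the barotropic vertical vorticity $\bar\omega_3$. In these variables the linear part becomes the Fokker--Planck-type operator $\mc{L}_0=\Delta_\xi+\tfrac{1}{2}\xi\cdot\nabla_\xi+1$, whose spectrum and invariant subspaces are understood from the Gallay--Wayne theory. The forcing terms in the rescaled $\bar\omega_3$ equation come from the baroclinic nonlinearities $Q[(\tl u\cdot\tl\nabla)\tl u]$ and, in the periodic case, from quadratic expressions in $(\bar u_3,\bar\theta)$; both are integrable in $\tau$, the first because $\tl v$ decays exponentially, the second because the factor $\rme^{\pm \rmi\Gamma t}$ permits an integration by parts in $t$ that produces an extra $1/\Gamma$ and renders the nonlinear feedback summable. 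Conservation of the total circulation $A=\int_{\mb{D}}(\mathrm{curl}\,u)_3\,dx$, together with the uniform-in-time $X_i$ bound, provides precompactness of $\{\bar\omega_3(\cdot,\tau)\}_\tau$ in $L^1(\R^2)\cap L^2(\R^2)$; the $\omega$-limit set consists of equilibria of the autonomous limiting rescaled 2D Navier--Stokes equation, which are precisely the Oseen vortices $\{A'\varphi_0\}$, and the circulation constraint forces $A'=A$. Combined with the decay of $\tl v$ and, in the periodic case, of $(\bar u_3,\bar\theta)$, this gives the claimed $X_i$-convergence to $\tfrac{A}{\sqrt{1+t}}V^G(\cdot/\sqrt{1+t},\log(1+t))$.

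The main obstacle is reconciling the two time scales: the proof must simultaneously furnish uniform-in-time estimates strong enough to drive the dispersive global-existence argument and fine enough to feed into the Gallay--Wayne compactness machinery. The hardest sub-step is controlling the feedback of the oscillating wave modes into the quasi-geostrophic mode in a manner consistent with the scaling-variable framework; the large-$\Omega$ assumption and the smallness of $\|Sv_0\|_{L^2}$ must be expended together here, and the oscillatory $(\bar u_3,\bar\theta)$ contribution in the periodic setting must be neutralized by a normal-form-style time averaging before the self-similar analysis can proceed.
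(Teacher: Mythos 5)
There is a genuine gap in your global-existence step: you propose to close the quasi-geostrophic self-interaction -- which ``reduces at leading order to the 2D Navier--Stokes vorticity nonlinearity'' -- by the smallness of $\|Sv_0\|_{L^2}$. That smallness is not available for this purpose. The whole point of the theorem is that the barotropic vorticity $\bar\omega_3$, and hence the circulation $A$ and the limiting Oseen vortex, may be arbitrarily large; in the paper's proof (Theorem \ref{t:gle}) the smallness hypothesis is spent only on the \emph{baroclinic} geostrophic projection $S(1-Q)v_0$, i.e.\ the one piece of the baroclinic data that receives no $|\eta|^{-1/4}$ dispersive gain from taking $|\Omega|$ large (Proposition \ref{p:dis}). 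Even read literally, smallness of $\|Sv_0\|_{L^2}$ (an $L^2$ bound on a velocity-type quantity) does not make $\|\bar\omega_{3,0}\|_{L^1\cap L^2}$ small, and these are the norms entering $X_i$ and driving the quadratic term $\bar u_h\cdot\nabla\bar\omega_3$; so a global contraction in a decay-weighted norm cannot close for the barotropic component, and large-data 2D Navier--Stokes is not obtained by fixed-point arguments in any case. The paper instead exploits the absence of vortex stretching to derive a priori differential inequalities for $\|\bar\omega_3\|_{L^1}$, $\|\bar\omega_3\|_{L^2}^2$ and $\|\nabla r\|_{L^2}^2$ (after splitting $\tl v=\lambda+r$, with $\lambda$ the frequency-localized linear Rossby evolution), and closes by Gronwall plus a continuity/bootstrap argument, using large $\Omega$ and the small geostrophic projection only to make the time-integrated $\lambda$-dependent forcing terms small. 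Unless you replace your contraction step by such a priori estimates (or shrink the barotropic data, which proves a strictly weaker statement), the argument does not yield the theorem.

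Two secondary points. First, in the periodic case the pair $(\bar u_3,\bar\theta)$ does not feed back into the $\bar\omega_3$ equation at all: since $\p_3\bar u_h=0$, the barotropic self-interaction in the horizontal momentum equation is just $\bar u_h\cdot\nabla_h\bar u_h$, so the normal-form/time-averaging step you anticipate is unnecessary. Conversely, these barotropic wave modes oscillate at the fixed frequency $\Gamma$ (the eigenvalue $\pm\ri\Gamma p_\eta$ with $p_\eta=1$ at $n=0$), so no large-$\Omega$ dispersive gain is available for them either; they are controlled by two-dimensional diffusive decay together with the exponentially decaying baroclinic forcing. Your asymptotics step -- scaling variables, precompactness of the rescaled vorticity trajectory, and identification of the omega-limit set with $\{A\varphi_0\}$ via the conserved circulation -- does match the paper's argument and is sound once the uniform-in-time $X_i$ bounds are actually in hand.
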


\begin{Remark} Note that due to the presence of the $L^1$-norm of $\omega_3$ in the definition of the $X_i$ norm,
and the fact that the third component of the vorticity of $\frac{\alpha}{\sqrt{1+t}}  u^G(\frac{\cdot}{\sqrt{1+t}},\log(1+t))
\sim \frac{\alpha}{(1+t) } G(\frac{\cdot}{\sqrt{1+t}})$, this term has non-zero $X_i$ norm as $t \to \infty$.  However, the remaining
terms in the asymptotics in Theorem \ref{t:3} all vanish in the $X_i$-norm -- they are ``invisible''
in this theorem.  Thus, while Theorem \ref{t:0} has the 
advantage of treating global initial data, it gives far less detailed information about the long-time behavior
of solutions than Theorem \ref{t:3}.
\end{Remark}

The proof of Theorem \ref{t:0} for each type of boundary condition follows the approach of \cite{gallay2009global} with modifications to account for the inclusion of temperature effects.
Namely, we use a barotropic/baroclinic decomposition to split $v$ into a $x_3$ independent part $\bar v$ and a $x_3$-mean zero part $\tl v$.  We shall further decompose the barotropic vector $\bar v$ studying a system of equations for $\bar \omega_3 = (\nabla \times \bar u)_3, \bar u_3,$ and $\bar \theta$. Note stress-free boundary conditions force $\bar u_3\equiv \bar \theta \equiv 0$ identically.
 
We use dispersive estimates to show $\tl v$ decays exponentially fast and then use energy methods and Gronwall's inequality to prove global existence. Diffusive estimates and dynamical systems techniques can then be used to determine the leading order asymptotics of $\bar u_h$, the barotropic part, hence determining the leading order dynamics of the system. Since the proofs of the two cases in Theorem \ref{t:0} are quite similar, we only give the proof in the case of stress-free boundary conditions.  Note we do not obtain as detailed information on the decay rates of different solution components compared to that of Theorem \ref{t:3}.

\begin{Remark}\label{r:nunu}
Qualitatively similar results should hold for the more general situation of differing viscosities $\nu' \neq \nu$. In this situation, while the expressions for the eigenvalues and eigenvectors of the linear system in Fourier space are more complicated and the collection of eigenvectors are not orthogonal, three of these vectors are still orthogonal to the Fourier vector $( k_1, k_2, k_3,0)^T$. Thus, one should be able to push through the required dispersive estimates, as done in \cite[\S 4]{charve2005convergence} for example,  and obtain the same asymptotics for $\tl v$ up to an $\mc{O}(\Omega^{-1})$-sized correction.  The changes we expect if $\nu = \nu' \ne 1$ are also slightly more complicated.  On the whole space, one can change the length scale so that the viscosity is always equal to one.  However, if we make such a rescaling in our situation, it would change the thickness of the fluid layer.  A different value of $\nu$ would probably result in a different decay rate for the baroclinic components of the solution, but we still expect on heuristic grounds that they would decay more rapidly than any inverse power of $t$.  For the barotropic components of the solution, we expect the decay rates would be the same as those in Theorem \ref{t:3}, since these parts of the solution depend only on the unbounded variables $(x_1,x_2)$.
\end{Remark}

While such results seem to have been implicitly known in the literature, (namely that for sufficiently small quasi-geostrophic part the fast-oscillatory component decays as $t\rightarrow+\infty$, or in other words inertial waves escape to infinity, and the system asymptotes to solutions of 2-D Navier-Stokes equations), 
to our knowledge it has never been rigorously stated or proven, nor has the detailed asymptotic behavior obtained in Theorem \ref{t:3} been derived, especially not for perturbations of large, barotropic vortices. Furthermore, we believe the view point taken here could be useful in future studies, especially in the study of unstably-stratified convective rotating systems where a wealth of coherent structures arise; see Section \ref{ss:unst}.

A natural next line of study extending from this work would be to investigate how much one can say about asymptotics for fully arbitrary initial conditions $v_0\in X$, where there is no restriction on the quasi-geostrophic component.  As mentioned above, Charve has shown in $\R^3$ that general initial data evolves towards solutions of the quasi-geostrophic equation.  Thus, a useful question to frame such an inquiry could be: 

\emph{``Can dynamical systems techniques allow for a more refined characterization and classification of the asymptotic dynamics of the stably-stratified system with realistic boundary conditions?"}

Finally, we believe it would of great interest to investigate whether dynamical systems techniques could be used to characterize the physical phenomenon observed in unstably-stratified rotating convection, described in Section \ref{ss:unst} above.  This seems natural as such approaches have been very successful in describing coherent structures in other contexts (see for example \cite{knobloch2015spatial}), so one expects that they could be successful for the cellular and plume like dynamics of the rotating system.  Furthermore, it would be of interest whether the scaling variables/invariant manifold approach of \cite{gallay2002invariant,gallay2005global} could be used to describe the large-scale turbulent vortices which arise and grow out of small-scale turbulent eddies.

\paragraph{Outline of rest of work}
The rest of the work is organized as follows.  In Section \ref{s:prim}, we collect some facts about \eqref{e:00d} with both periodic and stress-free boundary conditions and set-up our framework. In Section \ref{s:alg}, we prove our main result Theorem \ref{t:3}, leaving proofs of several technical propositions to Appendix \ref{a:sg}.  In Section \ref{s:disp} we collect estimates on the linear dispersive equation associated with our system. We then use this information in Section \ref{s:glob} to prove global existence in the stress-free case, while in Section \ref{s:asym} we prove the leading order asymptotics under our assumptions, completing the proof of Theorem \ref{t:0}.

\begin{Acknowledgment} 
The authors were supported in part by the National Science Foundation through grants  NSF-DMS-1603416 (RG) and NSF-DMS-1311553 (CEW).  They would also like to thank Edgar Knobloch for useful discussions while working on this project.
 \end{Acknowledgment}

\section{Preliminaries}\label{s:prim}
To recall, we will study the system
\begin{align}
\p_t u + (u\cdot\nabla )u + \Omega e_3\times u + \nabla p &= \Delta u + \Gamma \theta e_3,\label{e:0a}\\
\p_t \theta + (u\cdot \nabla)\theta &= \Delta \theta  - \Gamma u_3,\label{e:0b}\\
\mathrm{div} u = 0,\label{e:0c}\\
u\in \R^3, (x_h,x_3)\in \mb{D} &:= \R^2 \times [0,1],\notag
\end{align}

We denote $v = (u_1,u_2,u_3,\theta)^T$ to be the combined vector of velocity and thermal fluctuations so that \eqref{e:0a}-\eqref{e:0c}  takes the form
\beq\label{e:0d}
\p_t v + (u\cdot\tl\nabla)v - \mathrm{diag}(\Delta)v + J_{\Omega,\Gamma}v + \tl\nabla p = 0,\quad \mathrm{div} u = 0
\eeq 
with $\tl\nabla = (\p_1,\p_2,\p_3,0)^T,$ and
$$
J_{\Omega,\Gamma} = \left(\begin{array}{cc}\Omega J & 0 \\0 & \Gamma J\end{array}\right),\quad J=\left(\begin{array}{cc}0 & -1 \\1 & 0\end{array}\right).
$$
As it will ease computations, we also set  $J_\eta = \Gamma^{-1}J_{\Omega,\Gamma}$ with $\eta = \Omega/\Gamma$. Applying the Helmholtz projection, denoted as $\mb{P}$, onto divergence free vector fields to the velocity component of \eqref{e:0d} we obtain
\beq\label{e:1}
\p_t v + \mathrm{diag}(-\Delta) v + \Gamma\mb{P}J_{\eta}\mb{P} v + \mb{P}(v\cdot\tl\nabla)v = 0,\quad \mathrm{div} u = 0.
\eeq
For periodic boundary conditions, $\mb{P}$ is defined in Fourier space as 
\beq\label{e:helm}
\widehat{\mb{P} v}(k) = P(k) \hat v(k), \quad P(k) = \left(\begin{array}{c|c}\lp(\delta_{ij} - \fr{k_i k_j}{|k|^2} \rp)_{i,j\in\{1,2,3\}} & 0 \\\hline 0 & 1\end{array}\right),\quad k\in\R^2\times \{2\ri \pi \Z\}.
\eeq
with $\delta_{ij}$ the Kroenecker delta function and $k_h = (k_1,k_2)^T \in \R^2,\,k_3 = 2 n\pi, n\in \mb{Z}$.  For stress-free boundary conditions $\mb{P}$ takes a similar form where one must consider sine and cosine series in the vertical direction.

\subsection{Barotropic/Baroclinic decomposition}
\paragraph{Periodic case}
In order to exhibit the leading order dynamics of the system, we decompose our vector field into ``barotropic" and ``baroclinic" parts.  For periodic boundary conditions \eqref{e:pbc} we set
$$
u = \bar u + \tl u,\quad  \theta = \bar \theta + \tl \theta,\quad \bar u  = Qu := \int_0^1 u(x_h,x_3)dx_3,\quad \bar \theta  = Q\theta := \int_0^1 \theta(x_h,x_3)dx_3,
$$
with $x_3$-independent barotropic variables $\bar u,\bar \theta$ and $x_3$-mean zero baroclinic variables $\tl u,\tl \theta$ which satisfy vertical periodic boundary conditions.  
With this decomposition, the $x_3$-dependent terms have the following Fourier decomposition,
\begin{align}
\tl v(x) &= \int_{\R^2} \sum_{n\in \mb{Z}_*} \widehat{\tilde v}(k) \re^{\ri( k_h\cdot x_h+2\pi n x_3)} dk_h,\,\,\\
\tl\omega_i(x) &= \int_{\R^2} \sum_{n \in \mb{Z}_*} \widehat{\tilde \omega}(k) \re^{\ri( k_h\cdot x_h+2\pi n x_3)} dk_h.
\end{align}
where $k_h = (k_1,k_2)^T$, $x_h = (x_1,x_2)^T$, $\mb{Z}_* = \mb{Z}\diagdown\{0\},$ and $\widehat{\tilde v}$ and $\widehat{\tilde \omega}$ denote the Fourier transform of $\tl v$ and $\tl \omega$ respectively. From the incompressibility condition, the Biot-Savart law relating $\tl u$ and $\tl \omega$ is found to be
\beq\label{e:pbbs}
\tl u_n(k_h) =\fr{1}{|k_h|^2 + 4\pi^2 n^2}  A_n(k_h) \tl \omega_n(k_h):=\fr{1}{|k_h|^2 + 4\pi^2 n^2} \left(\begin{array}{ccc}0 & -2\ri\pi n & \ri k_2 \\2\ri\pi n  & 0 & -\ri k_1 \\-\ri k_2 & \ri k_1 & 0\end{array}\right) \tl \omega_n(k_h), 
\eeq 
while the skew-Hermitian term $\mb{P}J_{\eta}\mb{P}$ takes the form
\beq\label{e:pjp-p}
\widehat{\mb{P}J_{\eta}\mb{P}} = \fr{1}{|k|^2}\left(\begin{array}{cccc}  0		& 4\pi^2 n^2 \eta  & -2\pi n k_2\eta  & -2\pi n k_1  \\ 
-4\pi^2 n^2\eta  & 0   & 2\pi n k_1 \eta  &  -2\pi n k_2  \\
 2\pi n k_2  \eta & -2\pi n k_1 \eta  & 0   & |k_h|^2  \\
 2\pi n k_1    & 2\pi n k_2   & -|k_h|^2   & 0  \end{array}\right),
\eeq
where $k= (k_1,k_2,2\pi n)^T,\,\, |k|^2:= |k_h|^2 + (2\pi n)^2.$ The spectral information of this matrix is computed in \cite[\S 2]{iwabuchi2016global} and one finds it has eigenvalues $0,0,\pm \ri p_\eta(k)$ with corresponding eigenvectors 
\begin{align}
a_g(k)&=\fr{1}{|k_\eta|}\left(\begin{array}{c}  \ri k_2 \\ -\ri k_1 \\ 0   \\ 2\ri\pi n\eta\end{array}\right),\,\,
a_0(k)=\fr{1}{|k|}\left(\begin{array}{c}  \ri k_1 \\  \ri k_2 \\ 2\ri \pi n\\ 0 \end{array}\right),\,\,\\
a_+(k)&=\fr{1}{\sqrt{2}|k_h||k||k_\eta|}\left(\begin{array}{c}2n\pi( k_2  \eta |k| + \ri k_1 |k_\eta| )\\ 2n\pi(-\ri k_1  \eta |k| + k_2 |k_\eta|) \\ -\ri |k_h|^2|k_\eta| \\ |k_h|^2|k| \end{array}\right),\,\,
a_-(k) = \overline{a_+(k)}\label{e:evecs-p}.
\end{align}
where $k_{\eta} = (k_1,k_2,2\pi n\eta)$. We can then decompose \eqref{e:0d} into the following barotropic/baroclinic system,
\begin{align}
\bar u_t &= \Delta \bar u + \Gamma \bar \theta e_3 - Q [\bar u \cdot\nabla \bar u + \tl u \cdot \nabla \tl u] -Q\nabla p ,\notag\\
\bar \theta_t &= \Delta \bar \theta - \Gamma \bar u_3 - Q[\bar u \cdot \nabla \bar \theta+ \tl u \cdot \nabla \tl \theta],\notag\\
\tl v_t &= \Delta \tl v - J_{\Omega,\Gamma} \tl v- (1 - Q) \tl\nabla p - (1 - Q) [( \tl u\cdot \nabla)\tl v + (\bar u \cdot \nabla) \tl v + (\tl u \cdot \nabla) \bar v].\label{e:pbdec}
\end{align}

\paragraph{Stress-free case}
In the stress-free case, the boundary conditions imply that $\bar u_3 = \bar \theta = 0$, so
$$
v = (\bar u_1, \bar u_2, 0, 0)^T + (\tl u_1,\tl u_2, \tl u_3,\tl \theta)^T,\quad u_h = (u_1,u_2)^T,
$$ 
and
$$
\bar u_h(x_h) = Q u_h = \int_0^1 u_h(x_h,x_3) dx_3,\quad \tl u_h = (1-Q) u_h.
$$
 Hence $\bar u_h$ is $x_3$-independent, $\tl u_h$ has zero vertical mean, and $\tl u_3$ and $\tl \theta$ satisfy Dirichlet boundary conditions.  Also note that the incompressibility condition implies that  $ \mathrm{div} \tl u = 0$ and $ \mathrm{div}_h \bar u_h = 0.$  Furthermore, the corresponding vorticity $\omega = \mathrm{curl}\, u$, has the decomposition $\omega = (0,0,\bar \omega_3)^T + (\tl \omega_1,\tl \omega_2,\tl \omega_3)^T$ with $\tl\omega_3$ mean-zero in $z$, $\bar \omega_3 = \mathrm{curl} \, \bar u_h$, and $\tl\omega_i$ satisfying Dirichlet boundary conditions in $x_3$ for $i = 1,2$.

 With this decomposition, the $x_3$-dependent terms have the following Fourier decomposition,
\begin{align}
\tl u_i(x) &= \int_{\R^2} \sum_{n = 1}^\infty \widehat{\tilde u}_{i}(k) \re^{\ri k_h\cdot x_h} \cos(n\pi x_3) dk_h,\,\, i = 1,2\\
\tl u_3(x) & = \int_{\R^2} \sum_{n = 1}^\infty \widehat{\tilde u}_{3}(k) \re^{\ri k_h\cdot x_h}\sin(n\pi x_3) dk_h,\\
\tl \theta(x) & = \int_{\R^2} \sum_{n = 1}^\infty  \widehat{\tilde\theta}(k) \re^{\ri k_h\cdot x_h}\sin(n\pi x_3) dk_h,\\
\tl\omega_i(x) &= \int_{\R^2} \sum_{n = 1}^\infty \widehat{\tilde\omega}_{i}(k) \re^{\ri k_h\cdot x_h} \sin(n\pi x_3) dk_h,\quad i = 1,2\\
\tl\omega_3(x) &= \int_{\R^2} \sum_{n = 1}^\infty \widehat{\tilde\omega}_{3}(k) \re^{\ri k_h\cdot x_h} \cos(n\pi x_3)dk_h,
\end{align}
where the $\widehat{\tilde u}_{i}$ and $\widehat{\tilde\omega}_{i}$ terms can be obtained from the inverse Fourier transform.  The Biot-Savart law for these boundary conditions can be found to be 
\beq\label{e:bs-sf}
\widehat{\tilde u}(k) = \fr{1}{|k_h|^2 + 4\pi^2 n^2} \left(\begin{array}{ccc}0 & n\pi & \ri k_2 \\-n\pi & 0 & -\ri k_1 \\-\ri k_2 & \ri k_1 & 0\end{array}\right) \widehat{\tilde\omega}(k),
\eeq 
while the skew-Hermitian term $\mb{P}J_{\eta}\mb{P}$ takes the form
\beq\label{e:pjp-sf}
\widehat{\mb{P}J_{\eta}\mb{P}} = \fr{1}{|k|^2}\left(\begin{array}{cccc}  0		& (n\pi)^2 \eta  & \ri k_2 n\pi\eta  & \ri k_1 n \pi  \\ 
-(n\pi)^2\eta  & 0   & -\ri k_1n\pi \eta  &  \ri k_2 n \pi \\
 \ri k_2 n\pi \eta & -\ri k_1 n \pi \eta  & 0   & |k_h|^2  \\
 \ri k_1 n\pi   & \ri k_2 n \pi   & -|k_h|^2   & 0  \end{array}\right),
\eeq
where $ |k|^2:= |k_h|^2 + (\pi n)^2$ and $|k_\eta|^2:= |k_h|^2 + (\eta\pi n)^2$.    The matrix defined above once again has a double eigenvalue at $0$ and a pair of eigenvalues $\pm \ri p_\eta(k) = \pm \ri |k_\eta|/|k|$ with corresponding eigenvectors
 \begin{align}
a_g(k)&=\fr{1}{|k_\eta|}\left(\begin{array}{c}  \ri k_2 \\ -\ri k_1 \\ 0   \\ n\pi\eta\end{array}\right),\,\,
a_0(k)=\fr{1}{|k|}\left(\begin{array}{c}  \ri k_1 \\  \ri k_2 \\ -n\pi \\ 0 \end{array}\right),\,\,\\
a_+(k)&=\fr{1}{\sqrt{2}|k_h||k||k_\eta|}\left(\begin{array}{c}-n\pi(\ri k_2  \eta |k| + k_1 |k_\eta| )\\ n\pi(\ri k_1  \eta |k| - k_2 |k_\eta|) \\ \ri |k_h|^2|k_\eta| \\ |k_h|^2|k| \end{array}\right),\,\,
a_-(k) = \overline{a_+(k)}\label{e:evecs-sf}.
\end{align}
We can then decompose \eqref{e:1} into the following system
\begin{align}\label{e:0e}
\p_t\bar u_h &= \Delta \bar u_h - \mb{P}\lp[   \bar u_h\cdot \nabla \bar u_h +  Q\tl u\cdot \nabla \tl u_h \rp]\notag\\
\p_t \tl v &= \Delta \tl v - \Gamma\mb{P}J_\eta\mb{P} \tl v - \mb{P}(1-Q)\lp[ (\tl v\cdot\tl\nabla)\tl v\rp].
\end{align}
 Taking the two-dimensional curl of the equation for $\bar u_h$ we obtain
\begin{align}\label{e:2a}
\p_t \bar \omega_3 &= \Delta \bar \omega_3 - \bar u_h \cdot\nabla \bar \omega_3 - \bar N(v),\\
\p_t \tl v &= \Delta \tl v - \Gamma\mb{P}J_{\eta}\mb{P} \tl v - \tl N(v),\label{e:2b}\\
&\mathrm{div}\, \tl u = 0,\notag
\end{align}
where 
$$
\bar N(v) = Q\lp[ (\tl u\cdot \nabla) \tl\omega_3 - (\tl\omega \cdot \nabla )\tl u_3 \rp],\quad
\tl N(v) = \mb{P}(1 - Q)\lp[ (\tl u \cdot \tl\nabla) \tl v   + (\bar u_h\cdot \tl\nabla )\tl v +  (\tl u\cdot\tl\nabla) \bar v   \rp]
$$
and $\tl u$ and $\tl \omega$ can be related via the Biot-Savart law \eqref{e:bs-sf}. We shall use this formulation to prove Theorem \ref{t:0} in Sections \ref{s:disp} -\ref{s:asym} below.

\section{Existence and asymptotics in algebraically weighted spaces}\label{s:alg}
Our main result in this section concerns the existence and asymptotics of solutions of the Boussinesq equations \eqref{e:0d} with periodic boundaries conditions \eqref{e:pbc} for initial data that lie in algebraically weighted spaces.  The existence result follows
in a fairly standard fashion by rewriting the equation as an integral equation, coupled with estimates on the linear evolution.  We note that the coupling of the vorticity formulation with temperature fluctuations in our system requires a more subtle analysis of the linear evolution; see Appendix \ref{a:sg}.
The analysis of the asymptotics makes use of scaling variables which have been very useful in previous studies of the asymptotics of the Navier-Stokes equations.
We work with the baroclinic-barotropic decomposition \eqref{e:pbdec} described in Section \ref{s:prim}. 

\subsection{Dynamics on barotropic invariant subspace}\label{ss:btdyn}
To motivate our results, we first consider the system on the subspace $\tl v \equiv 0$, which is invariant under the evolution of the system of equations \eqref{e:pbdec}. In this subspace, the equations reduce to:
\begin{align}
(\bar u_h)_t &= \Delta \bar u_h  - \bar u_h \cdot\nabla \bar u_h   -Q\nabla p_h ,\notag\\
(\bar u_3)_t &= \Delta \bar u_3 + \Gamma \bar \theta  - \bar u \cdot \nabla \bar u_3 , \notag \\
\bar\theta_t &= \Delta \bar \theta - \Gamma \bar u_3 -  \bar u \cdot \nabla \bar \theta  , \label{e:pbdec_red}
\end{align}
where $p_h(x_1,x_2) = (p_1,p_2)(x_1,x_2).$  Note further that since $\partial_{x_3} \bar u_3 = 0$, we have 
$\nabla_h \cdot \bar u_h\equiv \partial_{x_1} \bar u_1 + \partial_{x_2} \bar u_2  = 0$.  Finally note that the equation for $\bar u_h$ is independent of the evolution of $(\bar u_3, \bar \theta)$ (and also of $\Omega$ - the
horizontal components of the barotropic velocity are unaffected by the rotation of the system).  Furthermore, this is just the 
two-dimensional Navier-Stokes equation for which the long-time asymptotics are well understood \cite{gallay2002invariant},
\cite{gallay2005global}.   In particular, all solutions with integrable initial vorticity converge to an Oseen vortex, an explicit solution of the equation with Gaussian vorticity profile.

We will study dynamics using a vorticity formulation.  Since velocity and vorticity have different spatial decay and regularity properties, we consider the vorticity $\bar \omega_3 = \p_2 \bar u_1 - \p_1 \bar u_2$ of the horizontal velocity components as well as the skew-gradients $\bar \omega_h = \nabla_h^\perp \bar u_3$ and $\bar \Theta = \nabla_h^\perp \bar\theta$, with $\nabla_h^\perp = (\p_2,-\p_1)^T$, of the vertical velocity and temperature respectively.  With these quantities, one readily obtains from \eqref{e:pbdec_red}, the following system
\begin{align}
(\bar \omega_h)_t&= \Delta \bar \omega_h + \Gamma\bar\Theta- \bar u_h \cdot\nabla \bar \omega_h + \bar\omega_h\cdot\nabla \bar u_h\notag\\
(\bar \omega_3)_t&= \Delta \bar \omega_3 - \bar u_h\cdot\nabla\bar\omega_3\notag\\
\bar\Theta_t&= \Delta \bar\Theta -\Gamma\bar\omega_h- \bar u_h \cdot\nabla \bar\Theta + \bar\Theta\cdot\nabla \bar u_h\notag\\
&\nabla_h\cdot\bar\Theta = \nabla_h\cdot\bar w_h = 0. \label{eq:e:pbdec_vort}
\end{align}
Here, $\bar u_h, \bar u_3,$ and $\bar \theta$ are obtained from $\bar \omega_3$, $\bar\omega_h$, and $\bar \Theta$ respectively via the Biot Savart laws
 \begin{align}
 \overline{u}_h(x_h) &= \fr{1}{2\pi}\int_{\R^2}\fr{(x_h - y_h)^\perp}{|x_h-y_h|^2 }\overline{\omega}_3(y_h) dy_h\label{e:BSh}\\
 \overline{u}_3(x_h) &= -\fr{1}{2\pi}\int_{\R^2}\fr{(x_h - y_h)}{|x_h - y_h|^2}\wedge \bar \omega_h(y_h) dy_h,\label{e:BS3}\\
 \overline{\theta}(x_h)&= -\fr{1}{2\pi}\int_{\R^2}\fr{(x_h - y_h)}{|x_h - y_h|^2}\wedge \overline{\Theta}(y_h) dy_h,\label{e:BS4}
 \end{align}
with $x\wedge y = x_1 y_2 - x_2 y_1,$ and $x^\perp = (x_2,-x_1)^T.$ Note also that the coupling terms in the equations for $\bar w_h$ and $\bar \Theta$ create an oscillation between these two components of the solution.  The next natural step would be to remove this oscillation by going to a rotating coordinate system, but while this simplifies the linear dynamics, it complicates the baroclinic nonlinearity in the resulting system.  Hence, we will use a rotating frame to study the linear system, and then use the stationary frame in the nonlinear system.   With this in mind we introduce:

\paragraph{Rotating coordinates and vorticity formulation}
Note that if we ignore all terms with derivatives on the right hand side of the equations for $\bar w_h$ and $\bar \Theta$ in \eqref{eq:e:pbdec_vort}, we obtain a linear oscillator with frequency $\Gamma$.  With this in mind, we introduce a rotating system of coordinates:
\begin{equation}\label{eq:rotation}
\left(\begin{array}{c}\underline{\omega}_h \\ \underline{\Theta}\end{array}\right) := \re^{\Gamma tJ_2} \left(\begin{array}{c}\overline{\omega}_h \\ \bar\Theta\end{array}\right) = \left(\begin{array}{cc}\cos(\Gamma t) I_2 & \sin(\Gamma t)I_2\\-\sin(\Gamma t)I_2 & \cos(\Gamma t) I_2\end{array}\right)\left(\begin{array}{c}\overline{\omega}_h \\ \bar\Theta\end{array}\right)
,\qquad J_2 = \left(\begin{array}{cc}0 & I _2\\-I_2 & 0\end{array}\right),
\end{equation}
with $I_2$ the two-dimensional identity matrix, so that \eqref{eq:e:pbdec_vort} now takes the form
\begin{align}\label{eq:subsystem}
(\ul{\omega}_h)_t&= \Delta \ul{\omega}_h - \ul{u}_h \cdot\nabla \ul{\omega}_h + \ul{\omega}_h\cdot\nabla \ul{u}_h\notag\\
(\bar{ \omega}_3)_t&= \Delta \bar{ \omega}_3 - \bar{u}_h\cdot\nabla\bar{\omega}_3\notag\\
\ul{\Theta}_t&= \Delta \ul{\Theta} - \bar{u}_h\cdot\nabla\ul{\Theta} + \ul{\Theta}\cdot\nabla \bar{u}_h\notag\\
&\nabla_h\cdot\bar\Theta = \nabla_h\cdot\bar w_h = 0.
\end{align}

To analyze the asymptotic behavior of the solutions of these equations it is convenient to introduce ``scaling variables,'' i.e. to rescale both the dependent and independent variables in the equations as:
\begin{align}
& \bar\omega (x_h,t) = \fr{1}{1+t} \bar w (\fr{x_h}{\sqrt{1+t}},\log(1+t)),\notag\\
& \bar\Theta (x_h,t) = \fr{1}{1+t} \bar \Theta (\fr{x_h}{\sqrt{1+t}},\log(1+t)),\notag\\
&\bar{\theta}(x_h,x_3,t)  =\fr{1}{\sqrt{1+t}}\bar{\phi}(\fr{x_h}{\sqrt{1+t}},\log(1+t)),\notag\\
& \bar u_h(x_h,t)= \fr{1}{\sqrt{1+t}} \overline{\mfu}_h(\fr{x_h}{\sqrt{1+t}},\log(1+t)) ,\notag\\
& \bar u_3(x_h,t)= \fr{1}{\sqrt{1+t}} \overline{\mfu}_3(\fr{x_h}{\sqrt{1+t}},\log(1+t))\notag\\
&  \xi = \fr{x_h}{\sqrt{1+t}},\quad \tau = \log(1+\tau).
\label{eq:2dscaling}
\end{align}

In terms of these variables \eqref{eq:e:pbdec_vort} takes the form 
\begin{align}
(\ul{w}_h)_\tau&= \mc{L}\ul{w}_h - \overline{\mfu}_h\cdot\nabla \ul{w}_h + \ul{w}_h\cdot\nabla \overline{\mfu}_h\notag\\
(\bar w_3)_\tau& = \mc{L}\bar{w}_3 - \overline{\mfu}_h\cdot\nabla\ul{w}_h\notag\\
\ul{\Phi}_\tau& = \mc{L}\ul{\Phi}  - \overline{\mfu}_h\cdot\nabla \ul{\Phi} + \ul{\Phi}\cdot\nabla \overline{\mfu}_h,\notag\\
&  \nabla_h\cdot \underline{\Phi} = \nabla_h\cdot \underline{w}_h = 0,
\label{eq:subsystem_scaled}
\end{align}
where 
$$
\mc{L}:=\Delta_\xi + \fr{1}{2}\xi\cdot\nabla_h +1,$$
and 
${\overline{\mathfrak{u}}_h}$ is the velocity field associated to the vorticity $\bar w_3$ via the two-dimensional Biot-Savart law \eqref{e:BSh} (which somewhat remarkably is unchanged by the introduction of scaling variables).  As noted in Section \ref{s:results}, we study these equations in the weighted Hilbert spaces $L^2(m)$ and $L^2_{2D}(m)$.  We also need the associated weighted Sobolev spaces
$$
H^1(m) = \{ v\in L^2(m)^4\,:\, \nabla v \in L^2(m)^4\},\quad H^1_{2D}(m) = \{ v\in L^2_{2D}(m)\,:\, \nabla v \in L^2_{2D}(m)\}
$$
with norm 
$$
\| v \|_{H^1(m)}^2:= \int_{\mb{D}}(1+|x_h|^2)^{m}\lp( |f(x)|^2 + |\nabla f(x)|^2    \rp) dx.
$$
We also define the weighted $L^q$ spaces in a similar way
$$
L^q(m):=\{v\in L^q(\mb{D})^4 : \|v\|_{L^q(m)}:= \|b^m v\|_{L^q(\mb{D})} <\infty\}, \quad
W^{1,q}(m):= \{v\in L^q(m)\,:\,\nabla v\in L^q(m)^4\}.
$$
The operator $\mc{L}$ then has the following spectral properties when posed on $L^2_{2D}(m)$. For a more detailed account of these facts see \cite[\S A]{gallay2002invariant} or \cite[\S 4.2]{roussier2003stabilite}.  
\paragraph{Spectral information}
\begin{enumerate}
\item For any $m\geq0$, $\mc{L}$ has eigenvalues $\lambda_k = -k/2$ for all $k\in \mb{N}$ with eigenfunctions
$$
\varphi_\alpha(\xi) = (\p_\xi^\alpha\varphi_0)(\xi),\quad \varphi_0(\xi) = \fr{1}{4\pi}\re^{-|\xi|^2/4},\quad \alpha\in \mb{N}^2, \quad |\alpha| = k.
$$
\item Each eigenvalue has a spectral projection $P_n:L^2_{2D}(m)\rightarrow L^2_{2D}(m)$
$$
(P_n f)(\xi) = \sum_{|\alpha|\leq n} \lp(\int_{\R^2} H_\alpha(\xi')f(\xi')d\xi'\rp)^{1/2}\varphi_\alpha(\xi),
$$ 
where $H_\alpha(\xi) = \fr{2^{|\alpha|}}{\alpha!} \re^{|\xi|^2/4}\p_\xi^\alpha(\re^{-|\xi|^2/4})$ are the Hermite polynomials.
\item $\sigma(\mc{L}) =  \{ \lambda\in \mb{C} \,:\, \mathrm{Re}\,(\lambda)\leq \fr{1-m}{2}\}\cup \{ -n/2\,:\, n=\mb{N} \}$.
\end{enumerate}
Hence $P_0$ is the projection onto the 0-eigenspace and takes the form $P_0 f = (\int_{\R^2} f(\xi)d\xi )\,\varphi_0$.  We also denote the complementary projections 
\begin{align}
Q_n:&L^2(m)\rightarrow L^2_{2D}(m),\quad n\geq0\notag\\
Q_n :&f\mapsto (I - P_n)Qf,\notag
\end{align}
as well as for completeness $Q_{-1} = Q$. We also denote $$L^2_{2D,n}(m):= \mathrm{Rg}_{L^2_{2D}(m)}\{ Q_n\}.$$

Next, take the weight $m>3$ in the Hilbert spaces $L^2(m)$ (and $L^2_{2D}(m)$) for which the spectrum of $\mc L$ consists of a simple eigenvalue $0$, with eigenfunction the Gaussian $\varphi_0$ and all of the rest of the spectrum in the complex half-plane with real part less than or equal to $-1/2$.  Thus, at least for small initial data, solution components, $\bar \omega_3(\tau)$, of \eqref{eq:subsystem_scaled} should converge toward some multiple of the Gaussian $\varphi_0$ with a rate $\sim e^{-\tau/2}$.  We next turn to the equations of $\ul{w}_h $ and $\ul{\Phi}$ to see 
how $\overline{\mathfrak{u}}_h$ affects their evolution.  Looking at the first equation in \eqref{eq:subsystem_scaled},
and replacing $\overline{\mathfrak{u}}_h$
with its asymptotic limit $A \overline{\mathfrak{u}}_h^0$, where $\overline{\mathfrak{u}}_h^0= \fr{1-\re^{-|\xi|^2/4}}{2\pi|\xi|^2}\left(-\xi_2 ,\xi_1 \right)^T $ is the velocity profile obtained from the gaussian vorticity $\varphi_0$, we obtain the equation
\begin{equation}
(\ul{w}_h)_{\tau} = \mc{L} \ul{w}_h - A \overline{\mathfrak{u}}_h^0 \cdot \nabla \ul{w}_h + A\ul{w}_h\cdot \nabla\overline{\mfu}_h^0.
\end{equation}

While we are not able to explicitly compute the entire spectrum of the operator on the right hand side of this equation, a very similar operator arises in the study of the linearization of the two-dimensional Navier-Stokes equation about an Oseen vortex (see [\cite{gallay2005global}, Section 4]) and using the insights gained there, one can 
analyze the leading eigenvalue of this operator. One has
\begin{Proposition} For any $A \in \R$,  the spectrum of $ \ul{\Phi}\mapsto\mc{L} \ul{\Phi} - A \overline{\mathfrak{u}}_h^0 \cdot \nabla \ul{\Phi} +  A \ul{\Phi}\cdot\nabla_h \overline{\mfu}_h^0\ $ on the weighted Hilbert space $L^2_{2D}(m)\cap\{\nabla_h\cdot\ul{\Phi} = 0\}$ consists
of a simple eigenvalue $-1/2$, with eigenfunction $\nabla_h^\perp\varphi_0$, and the half-plane $\{ \lambda \in \C ~|~
~ \Re{\lambda} \le -1 \}$.
\end{Proposition}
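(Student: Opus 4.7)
My approach is to reduce the vector spectral problem to a scalar advection-diffusion problem via a stream function, and then analyze the resulting scalar operator using rotational symmetry together with the anti-self-adjointness of the advection term in a Gaussian-weighted inner product.

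Since any divergence-free $\ul{\Phi}\in L^2_{2D}(m)\cap\{\nabla_h\cdot\ul{\Phi}=0\}$ in two dimensions can be written as $\ul{\Phi} = \nabla_h^\perp\psi$ for some scalar $\psi$, the commutator identity $[\mc{L},\p_j] = -\tfrac{1}{2}\p_j$ yields $\mc{L}(\nabla^\perp\psi) = \nabla^\perp(\mc{L}\psi - \tfrac{1}{2}\psi)$, while a direct calculation using $\nabla\cdot\overline{\mfu}_h^0 = \nabla\cdot\ul{\Phi} = 0$ simplifies the nonlinear piece to
\begin{equation*}
-A\,\overline{\mfu}_h^0\cdot\nabla(\nabla^\perp\psi) + A\,(\nabla^\perp\psi)\cdot\nabla\overline{\mfu}_h^0 = -A\,\nabla^\perp\bigl(\overline{\mfu}_h^0\cdot\nabla\psi\bigr).
\end{equation*}
Calling the full operator $T_A$, this gives $T_A(\nabla^\perp\psi) = \nabla^\perp[\tl T_A \psi]$ with the scalar operator $\tl T_A := \mc{L} - \tfrac{1}{2} - A\,\overline{\mfu}_h^0\cdot\nabla$, so the problem reduces to the spectrum of $L_A := \mc{L} - A\overline{\mfu}_h^0\cdot\nabla = \tl T_A + \tfrac{1}{2}$ on $L^2_{2D}(m)$. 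Because $\varphi_0$ is radial while $\overline{\mfu}_h^0$ is purely azimuthal, $\overline{\mfu}_h^0\cdot\nabla\varphi_0 = 0$ identically, so $L_A\varphi_0 = 0$, giving $T_A(\nabla^\perp\varphi_0) = -\tfrac{1}{2}\nabla^\perp\varphi_0$ and verifying the named eigenfunction.

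For the remainder of the spectrum, $A\overline{\mfu}_h^0\cdot\nabla$ is a relatively compact perturbation of $\mc{L}$ on $L^2_{2D}(m)$ (using boundedness of $\overline{\mfu}_h^0$, exponential decay of its gradient, and compactness of weighted Sobolev embeddings), so by Weyl's theorem the essential spectrum of $L_A$ is still $\{\Re\lambda\leq (1-m)/2\}$, which for $m\geq 3$ lies in $\{\Re\lambda \leq -1/2\}$. For the discrete spectrum in the strip $\{\Re\lambda > -1/2\}$ I would exploit rotational invariance of $L_A$ to decompose $\psi = \sum_{n\in\Z} f_n(r)\re^{\rmi n\theta}$ into angular Fourier modes. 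On the $n=0$ mode the advection vanishes, so $L_A|_{n=0} = \mc{L}|_{n=0}$, whose radial eigenvalues are $\{0,-1,-2,\dots\}$; only $0$ (simple, eigenfunction $\varphi_0$) lies in $\{\Re\lambda > -1/2\}$. On each $n\neq 0$ mode, $L_A|_n = \mc{L}|_n - \rmi A n V(r)/r$ with $V(r) = (1-\re^{-r^2/4})/(2\pi r)$; in the Gaussian-weighted inner product $\la f,g\ra_{\mc{H}} := \int f\bar g\,\re^{|\xi|^2/4}\,\rmd\xi$ the operator $\mc{L}|_n$ is self-adjoint with top eigenvalue $-|n|/2$, whereas multiplication by the purely imaginary $\rmi A nV(r)/r$ is anti-self-adjoint. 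Taking the real part of $\lambda\|f\|_{\mc{H}}^2 = \la L_A|_n f,f\ra_{\mc{H}}$ for any eigenfunction then yields
\begin{equation*}
\Re\lambda = \fr{\la \mc{L}|_n f,f\ra_{\mc{H}}}{\|f\|_{\mc{H}}^2} \leq -\fr{|n|}{2} \leq -\fr{1}{2},
\end{equation*}
so no eigenvalues appear in $\{\Re\lambda > -1/2\}$ outside the $n=0$ mode. After the $-\tfrac12$ shift, this gives the claimed spectrum of $T_A$.

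The principal technical obstacle is that the Rayleigh-quotient argument naturally lives in $\mc{H}$, whereas eigenfunctions are a priori only in $L^2_{2D}(m)$. I would close this gap by a standard bootstrap: any isolated eigenfunction $\psi\in L^2_{2D}(m)$ with eigenvalue $\lambda$ outside the essential spectrum satisfies $(\mc{L}-\lambda)\psi = A\overline{\mfu}_h^0\cdot\nabla\psi$, and the Gaussian-type decay of the resolvent kernel of $\mc{L}$ together with the boundedness of $\overline{\mfu}_h^0$ force $\psi$ to decay faster than any polynomial, hence to lie in $\mc{H}$; this is the same regularization argument used in \cite{gallay2002invariant,gallay2005global} for the analogous linearization around the Oseen vortex in 2D Navier--Stokes.
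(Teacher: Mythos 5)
Your proposal is correct, and its backbone coincides with the paper's argument: you reduce the divergence-free vector problem to the scalar operator $\mc{L}-\tfrac12-A\,\overline{\mfu}_h^0\cdot\nabla$ acting on a stream function, bootstrap Gaussian decay of putative eigenfunctions, and then exploit that in the Gaussian-weighted inner product the diffusion part is self-adjoint while the advection by $\overline{\mfu}_h^0$ is skew-symmetric. The differences are in execution. The paper performs the reduction in the reverse direction, via the Biot--Savart map (Lemma \ref{l:aLh}), which forces a loss of weight (the stream function only lands in $L^2_{2D,0}(m')$ with $m'\le m-1$) and, crucially, lands in a \emph{mean-zero} space; the bound $\Re\lambda\le-1$ is then obtained because $\mc{L}-\tfrac12$ restricted to the mean-zero Gaussian-weighted space has top eigenvalue $-1$. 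You instead get the same bound from the angular Fourier decomposition: the advection vanishes on the $n=0$ mode, where the spectrum $\{0,-1,-2,\dots\}$ of $\mc{L}$ is explicit, and on $|n|\ge1$ modes the advection is multiplication by a purely imaginary radial function, so the Rayleigh quotient gives $\Re\lambda\le-|n|/2$. This buys you something: your mode-by-mode argument treats the non-mean-zero directions on the same footing, so simplicity of the $-1/2$ eigenvalue and the absence of spectrum in $(-1,-1/2)$ come out in one stroke, whereas the paper's lemma as stated only controls eigenvalues with mean-zero eigenfunctions; you also make the essential-spectrum step explicit via relative compactness and Weyl's theorem, which the paper leaves implicit. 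Two small corrections: $\nabla\overline{\mfu}_h^0$ decays only algebraically (like $|\xi|^{-2}$), not exponentially -- this does not matter, since decay to zero at infinity plus local Rellich compactness is all the compactness argument needs, and at the scalar level the gradient term is absent anyway; and because the stream function of an $L^2_{2D}(m)$ field is only in a space with a weaker weight, the Weyl/essential-spectrum step is cleaner if run directly on the vector operator $T_A$ on $L^2_{2D}(m)^2\cap\{\nabla_h\cdot\ul{\Phi}=0\}$ (treating both perturbation terms as relatively compact), rather than transferred through the scalar reduction; your eigenvalue bounds are unaffected since they pass through the Gaussian-decay bootstrap.
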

\begin{proof} 
 The fact that $\nabla_h^\perp\varphi_0$ is an eigenfunction with eigenvalue $-1/2$ follows by direct
computation, while the fact that the remainder of the spectrum lies in the half plane with real part less than or equal to $-1$ is proven in Lemma \ref{l:aLh} 
\end{proof}
Thus, the leading order asymptotics of the $\ul{w}_h$ and $\ul{\Phi}$ components in  \eqref{eq:subsystem_scaled} are also given by derivatives of a Gaussian 
$$
\ul{\Phi}(\xi,\tau) \approx \re^{-\tau/2}B_1\nabla_h^\perp\varphi_0(\xi ) + {\mc{O}}(e^{-\tau}),\quad
\ul{w}_h(\xi,\tau) \approx \re^{-\tau/2}B_2\nabla_h^\perp\varphi_0(\xi) + {\mc{O}}(e^{-\tau}),
$$ 
where $B_1 = \int_{\R^2} \bar u_3(x_h)dx_h$ and $B_2 = \int_{\R^2} \bar \theta(x_h)dx_h.$

If we now revert to our original, unscaled variables, and reexpress the barotropic motion in terms of the velocity, rather than
the vorticity, we find that solutions of \eqref{e:pbdec_red}, for small initial data, should behave as
\begin{align}
\bar{u}_h(x_h,t) & = \frac{A}{\sqrt{1+t}} {\bar{\mathfrak{u}}^0_h} (\frac{x_h}{\sqrt{1+t}}) + \mc{O}(\frac{1}{t}) , \notag \\
\bar{u}_3 (x_h) &= \frac{1}{1+t} \left( B_1 \cos(\Gamma t) + B_2 \sin(\Gamma t) \right) \varphi_0(\frac{x_h}{\sqrt{1+t}})
+  \mc{O}(\frac{1}{t^{3/2}}) , \notag \\
\bar{\theta}(x_h) &= \frac{1}{1+t} \left( - B_1 \sin(\Gamma t) + B_2 \cos(\Gamma t) \right) \varphi_0(\frac{x_h}{\sqrt{1+t}})
+  \mc{O}(\frac{1}{t^{3/2}}) . \label{eq:red_asymptotics}
\end{align}
We note a few somewhat surprising facts about these asymptotics:
\begin{Remark}The vertical component of the barotropic velocity, $\bar{u}_3$, and the barotropic termperature variations $\bar{\theta}$ 
are much more strongly localized than the horizontal components of the barotropic velocity ${\bar{u}}_h$.  From the Biot-Savart
law, one knows that $| \bar{\mathfrak{u}}^0_h(\xi)|  \sim |\xi|^{-1}$, as $|\xi| \to \infty$ whereas $\bar{u}_3$ and $\bar{\theta}$ decay as Gaussians.
\end{Remark}
\begin{Remark} Note that from a dynamical systems point of view, we will show that, at least locally, any of the vortex solutions defined in the barotropic subspace
$\{\tl v = 0\}$ are at least locally attractive for the full equations,  much as if we had a  center-manifold in a finite dimensional dynamical systems.
\end{Remark}

\subsection{Full dynamics}\label{ss:fulldyn}
We now show that these same asymptotics hold for small solutions of the full equations \eqref{e:pbdec}. We begin by taking the curl of $\bar u$ and $\tl u$ to eliminate the pressure terms.  As mentioned previously, because $\bar \theta$ has different regularity and spatial decay properties than $\bar \omega$, we consider the evolution of its horizontal skew-gradient, $\bar \Theta = \nabla^\perp_h \bar \theta = (\p_2 \bar\theta,-\p_1\bar\theta)^T$. Thus we study a system in terms of the variables $\bar \omega,\bar\Theta, \tl \omega,$ and $\tl \theta$, obtaining
\begin{align}
(\bar \omega)_t  &= \Delta \bar \omega + \left(\begin{array}{c} \Gamma \bar \Theta \\0\end{array}\right)-\bar N_1(v),\label{e:vdec1}\\
\bar \Theta_t &= \Delta \bar \Theta - \Gamma \bar w_h - \bar N_2(v)\label{e:vdec2}\\
\tl\omega_t &= \Delta\tl\omega - \Omega \p_3 \tl u + \Gamma \nabla_h^\perp \tl \theta  - \tl N_1(v)\label{e:vdec3}\\
\tl\theta_t &= \Delta \tl\theta - \Gamma \tl u_3 -\tl N_2(v)\label{e:vdec4},\\
&\mathrm{div}\, \tl \omega = 0,\quad \mathrm{div}_h\, \bar u_h = 0 \label{e:vdec5a}
\end{align}
where $\tl \omega = \nabla \times \tl u$, $\bar \omega_3 = \nabla \times \bar u_h$,$ \nabla_h^\perp = (\p_2,-\p_1)^T$, $\mathrm{div}_h u_h = \p_1 u_1 + \p_2 u_2$, and the nonlinearities are defined as  
\begin{align}
\bar N_1(v) &= (\bar u_h\cdot \nabla )\bar \omega - (\bar \omega\cdot \nabla) \bar u +Q\lp[( \tl u \cdot \nabla)\tl\omega - (\tl\omega \cdot \nabla) \tl u \rp],\notag\\
\bar N_2(v) &= (\bar u_h\cdot \nabla_h )\bar\Theta  - (\bar \Theta\cdot\nabla_h )\bar u_h + Q\lp[(\tl u\cdot \nabla)\nabla_h^\perp\tilde\theta -(\nabla_h^\perp\tilde\theta\cdot\nabla_h)\tl u_h + \p_3\tilde\theta \cdot (\nabla_h^\perp\tl u_3) - \p_3 \tl u_3\cdot (\nabla_h^\perp \tl\theta) \rp]\notag\\
\tl N_1(v)& = (1 - Q) \lp[ (\bar u \cdot \nabla)\tl \omega -( \tl \omega \cdot \nabla )\bar u + (\tl u \cdot \nabla) \bar \omega - (\bar \omega \cdot\nabla )\tl u + (\tl u \cdot \nabla) \tl \omega - (\tl\omega \cdot \nabla) \tl u\rp],\notag\\
\tl N_2(v)&=  (1 - Q) \lp[  (\tl u \cdot \nabla) \tl \theta + (\tl u \cdot \nabla )\bar\theta + (\bar u \cdot \nabla) \tl \theta  \rp].\notag
\end{align}
Note here that $\bar \omega = (\p_2 \bar u_3, -\p_1 \bar u_3, \p_2\bar u_1 - \p_1 \bar u_2)^T$ and the third component of $(\bar \omega\cdot \nabla) \bar u$ is zero due to the imcompressibility condition \eqref{e:vdec5a}. 

Our approach for this nonlinear system is the following.  We first introduce self-similar variables, and then use the properties of the approximate solution to derive a solution for the residual.  We then derive an equivalent mild/integral formulation for this residual and use estimates on the linear evolution to show nonlinear existence and asymptotics via a fixed-point argument.

\paragraph{Scaling variables}
We now convert the system \eqref{e:vdec1}-\eqref{e:vdec4} into scaling variables. In addition to
the definition of the variables $\bar{w},\overline{\mfu},\bar\Phi$ and $\bar{\phi}$ introduced in \eqref{eq:2dscaling},
we also make the change of coordinates 
\begin{align}
&\tl\omega(x_h,x_3,t) = \fr{1}{1+t}\tl w(\fr{x_h}{\sqrt{1+t}},x_3,\log(1+t)),\notag\\
&\tl u(x_h,x_3,t) = \fr{1}{\sqrt{1+t}}\tilde{\mathfrak{u}}(\fr{x_h}{\sqrt{1+t}},x_3,\log(1+t)),\notag\\
&\tl \bth(x_h,x_3,t) = \fr{1}{\sqrt{1+t}}\tl\phi(\fr{x_h}{\sqrt{1+t}},x_3,\log(1+t)),\notag
\end{align}
where as before $\xi = \fr{x_h}{\sqrt{1+t}}, \tau = \log(1+\tau)$.  Note also that $\nabla_h^\perp \bar\phi  = \overline{\Phi}.$ We then obtain from \eqref{e:vdec1}-\eqref{e:vdec4} the following system of equations
\begin{align}
\bar w_{\tau} &= \mc{L}\bar w + \re^{\tau} \left(\begin{array}{c}  \Gamma\overline{\Phi} \\0\end{array}\right) - \bar N_1(\bar w,\overline{\Phi}, \tl W,\tau),\label{e:vsc1}\\
\overline{\Phi}_\tau &=\mc{L}\overline{\Phi}- \re^{\tau}\Gamma \bar{w_h}-  \bar N_2(\bar w,\overline{\Phi}, \tl W,\tau),\label{e:vsc2}\\
\tl w_\tau &= (\mc{L} + \re^\tau \p_3^2)\tl w - \Omega \re^{3\tau/2}\p_3\tl u + \Gamma \re^{\tau}\left(\begin{array}{c} \nabla_h^\perp\tl \phi\\0\end{array}\right) -\tl N_1(\bar w,\overline{\phi}, \tl W,\tau),\label{e:vsc3}\\
\tl \phi_\tau &= (\mc{L} + \re^{\tau}\p_3^2 - 1/2)\tl \phi - \Gamma\re^{\tau} \tilde{\mathfrak{u}}_3 - \tl N_2(\bar w,\overline{\phi}, \tl W,\tau)\label{e:vsc4}
\end{align}
with the nonlinear terms
\begin{align}
\bar N_1(\bar w,\overline{\phi}, \tl W,\tau)&=(\overline{\mathfrak{u}}_h\cdot \nabla) \, \bar w -  \bar w\cdot \nabla \overline{\mfu} +Q\lp[ (\tilde{\mathfrak{u}} \cdot \nabla_\tau)\tl w - (\tl w \cdot \nabla_\tau )\tilde{\mathfrak{u}} \rp],\notag\\
\bar N_2(\bar w,\overline{\phi}, \tl W,\tau)&=(\overline{\mathfrak{u}}_h \cdot \nabla_h) \overline{\Phi} - (\overline{\Phi} \cdot \nabla_h)\overline{\mathfrak{u}}_h +Q\lp[ (\tilde{\mathfrak{u}}\cdot \nabla_\tau)\nabla_h^\perp\tilde\phi -(\nabla_h^\perp\tilde\phi\cdot\nabla_h)\tilde{\mathfrak{u}}_h + \re^{\tau/2}\p_3\tilde\phi\cdot (\nabla_h^\perp\tilde{ \mathfrak{u}}_3) - \re^{\tau/2}\p_3 \tl\phi\cdot (\nabla_h^\perp \tl\phi)  \rp],\notag\\
\tl N_1(\bar w,\overline{\phi}, \tl W,\tau)&=  (1-Q) \lp[  (\overline{\mfu} \cdot \nabla_\tau )\tl w - (\tl w\cdot \nabla_h )\overline{ \mfu} + (\tilde{\mathfrak{u}} \cdot \nabla_h )\bar w - (\bar w \cdot\nabla_\tau )\tilde{\mathfrak{u}} + (\tilde{\mathfrak{u}}\cdot \nabla_\tau )\tl w  - (\tl w \cdot \nabla_\tau )\tilde{\mathfrak{u}}     \rp]\notag\\
\tl N_2(\bar w,\overline{\phi}, \tl W,\tau)&=(1-Q)
\lp[ (\tilde{\mathfrak{u}} \cdot \nabla_\tau )\tilde{\phi} + (\tilde{\mathfrak{u}} \cdot \nabla_h) \overline{\phi} + (\overline{\mfu} \cdot \nabla_\tau )\tilde{\phi}     \rp]\notag
\end{align}
with $\tl W = (\tl w, \tl \phi)^T,$ $\nabla_h^\perp = (\p_2,-\p_1)^T$, $\nabla_\tau = (\p_1,\p_2,\re^{\tau/2}\p_3)^T.$
Note also that the incompressiblity conditions now take the form
$$
\nabla_\tau\cdot \tl w =\nabla_h\cdot \bar{\mathfrak{u}}_h= \nabla_h\cdot \bar w_h = \nabla_h\cdot\overline{\Phi}=0,
$$
with $\bar w$ and $\overline{\mfu}$, and $\overline{\Phi}$ and $\overline{\phi}$ are still related via a Biot-Savart laws, \eqref{e:BSh} - \eqref{e:BS4}.  Also, $\tl w$ and $\tilde{\mfu}$ are related by a scaled version of the Biot Savart law \eqref{e:pbbs} taking into account the $\re^{\tau/2}$ factor accompanying $x_3$-derivatives.
Furthermore, we shall need the following estimates which are derived in \cite[\S 4.2]{roussier2003stabilite} (see also \cite{gallay2002invariant}):
\begin{Proposition}(Biot-Savart)\label{p:bsest}
If $p\in (1,2)$ and $\bar w\in L^p(\R^2)$ then $\overline{\mfu}\in L^{2p/(2-p)}(\R^2)$ and 
\beq
||\overline{\mfu}||_{L^{2p/(2-p)}(\R^2)}\leq C||\bar w||_{L^p(\R^2)}.\label{e:bsbt}
\eeq
If $\tl w\in L^2(\mb{D})$ then $\tilde{\mathfrak{u}}\in L^q(\mb{D})$ for all $q\in [2,6]$ and there exists a $C>0$ such that
\beq
||\tilde{\mathfrak{u}}||_{L^q(\mb{D})}\leq C  \re^{-\tau/2}||\tl w||_{L^2(\mb{D})}.\label{e:bsbc}
\eeq
Furthermore, if $\tl w\in L^2(m)^3$ then $\tilde{\mfu}\in H^1(m)$ and there exists a $C>0$ such that 
\beq
||\tilde{\mathfrak{u}}||_{H^1(m)}\leq C ||\tl w||_{m}.\label{e:bsbc2}
\eeq
\end{Proposition}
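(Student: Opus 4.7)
The plan is to establish the three bounds in turn, using Fourier-analytic tools in each case.

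First, inequality \eqref{e:bsbt} is a direct consequence of the Hardy--Littlewood--Sobolev inequality: the explicit two-dimensional Biot--Savart kernel in \eqref{e:BSh} has size $|x_h|^{-1}$ in $\R^2$, which lies in weak-$L^2$, so convolution against it maps $L^p(\R^2) \to L^q(\R^2)$ whenever $1/q = 1/p - 1/2$, giving precisely $q = 2p/(2-p)$ for $p \in (1,2)$.

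Second, for \eqref{e:bsbc} I would work in Fourier. Because $\tl w$ is baroclinic, its Fourier expansion in $x_3$ contains only nonzero modes $n \in \Z_*$. In scaling variables, the divergence-free condition reads $\nabla_\tau \cdot \tilde{\mfu} = 0$ with $\nabla_\tau = (\p_{\xi_1}, \p_{\xi_2}, \re^{\tau/2}\p_{x_3})$, and $\nabla_\tau \times \tilde{\mfu} = \tl w$. Inverting in Fourier yields a multiplier of the form
\[
\hat{\tilde{\mfu}}(\kappa_h,n) \;=\; \frac{\tilde{A}_n(\kappa_h)}{|\kappa_h|^2 + 4\pi^2 \re^{\tau} n^2}\,\hat{\tl w}(\kappa_h,n),
\]
the analogue of \eqref{e:pbbs} after the $\re^{\tau/2}$-rescaling of vertical derivatives, with $|\tilde{A}_n(\kappa_h)| \lesssim (|\kappa_h|^2 + 4\pi^2 \re^{\tau} n^2)^{1/2}$. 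Hence $|\hat{\tilde{\mfu}}| \lesssim (|\kappa_h|^2 + 4\pi^2 \re^{\tau} n^2)^{-1/2}|\hat{\tl w}|$, and since $n \neq 0$ the denominator is bounded below by $2\pi \re^{\tau/2}$, so Plancherel gives the $L^2$ bound with prefactor $\re^{-\tau/2}$. The same multiplier bound also yields $\|\nabla_\tau \tilde{\mfu}\|_{L^2(\mb{D})} \lesssim \|\tl w\|_{L^2(\mb{D})}$, placing $\tilde{\mfu}$ in $H^1(\mb{D})$ uniformly in $\tau$; the three-dimensional Sobolev embedding $H^1(\mb{D}) \hookrightarrow L^6(\mb{D})$, interpolated against the $L^2$ bound, then delivers the estimate on the full range $q \in [2,6]$.

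Third, for the weighted estimate \eqref{e:bsbc2} the algebraic weight $b^m$ does not commute with the Biot--Savart operator, so direct Fourier methods do not suffice. My plan is to work in physical space: represent $\tilde{\mfu}$ mode by mode in $n \in \Z_*$ as a convolution in $\xi_h$ against the Green's function of the Helmholtz-type operator $-\Delta_{\xi_h} + 4\pi^2 \re^\tau n^2$ composed with one horizontal derivative coming from $\tilde{A}_n$, and split the integral into a near-field and a far-field piece relative to $\xi$. The near-field is controlled by the $L^2(m)$-boundedness of Calder\'on--Zygmund operators against Muckenhoupt $A_2$ weights (a class that includes $b^m$ for $m \ge 0$), while the far-field is estimated pointwise via $b^m(\xi) \le C b^m(\xi-\eta) b^m(\eta)$ combined with the exponential decay of the Green's function at infinity. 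The main obstacle here is precisely this weighted control: tracking $b^m$ through the Biot--Savart operator while keeping all constants uniform in $\tau$ requires a careful commutator analysis. This is essentially the argument carried out in \cite[\S 4.2]{roussier2003stabilite} (see also \cite{gallay2002invariant}), and I would follow it directly.
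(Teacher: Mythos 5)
Your handling of \eqref{e:bsbt} (Hardy--Littlewood--Sobolev applied to the $|x_h|^{-1}$ kernel) is exactly the standard argument behind the estimate the paper cites from \cite{gallay2005global}, and for \eqref{e:bsbc2} your plan is, in substance, the paper's own proof, which consists entirely of a citation to \cite[\S 4.1]{roussier2003stabilite}. The genuine gap is in your proof of \eqref{e:bsbc} for $q>2$. The Plancherel step is fine and gives $\|\tilde{\mfu}\|_{L^2(\mb{D})}\leq C\re^{-\tau/2}\|\tl w\|_{L^2(\mb{D})}$, and the multiplier bound does give $\|\tilde{\mfu}\|_{H^1(\mb{D})}\leq C\|\tl w\|_{L^2(\mb{D})}$ uniformly in $\tau$; but precisely because the $H^1\hookrightarrow L^6$ endpoint carries no decay factor, interpolating between the decaying $L^2$ bound and the non-decaying $L^6$ bound only yields $\|\tilde{\mfu}\|_{L^q}\leq C\re^{-\theta\tau/2}\|\tl w\|_{L^2}$ with $\theta=3/q-1/2$, which degenerates to no decay at $q=6$. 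So your argument does not produce the uniform prefactor $\re^{-\tau/2}$ on the whole range $q\in[2,6]$. Moreover, no interpolation trick can: undoing the scaling (using $\|\tilde{\mfu}\|_{L^q}=(1+t)^{1/2-1/q}\|\tl u\|_{L^q}$, $\|\tl w\|_{L^2}=(1+t)^{1/2}\|\tl\omega\|_{L^2}$), the inequality \eqref{e:bsbc} for a fixed $q$ is equivalent to $\|\tl u\|_{L^q(\mb{D})}\leq C(1+t)^{1/q-1/2}\|\tl\omega\|_{L^2(\mb{D})}$ for the $t$-independent unscaled Biot--Savart map, which fails for $q>2$ once $t$ is large (take any fixed nonzero baroclinic $\tl\omega$). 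What one can prove by rescaling the unscaled bound $\|\tl u\|_{L^q}\leq C\|\tl\omega\|_{L^2}$ (itself a consequence of $|\hat{\tl u}|\leq C|k|^{-1}|\hat{\tl\omega}|$ with $|k|\geq 2\pi$ and Sobolev) is the $q$-dependent rate $\|\tilde{\mfu}\|_{L^q}\leq C\re^{-\tau/q}\|\tl w\|_{L^2}$; this, or simply the uniform bound $\|\tilde{\mfu}\|_{L^6}\leq C\|\tl w\|_{L^2}$, is all that is actually invoked later in the fixed-point estimates, so you should either prove that version or, as the paper does, cite \cite[\S 4.1]{roussier2003stabilite}.

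A smaller slip in your part (iii): $b^m$ is \emph{not} an $A_2(\R^2)$ weight for all $m\geq 0$; as in Lemma \ref{l:mhr2} one needs $m\leq 2(q-1)$, so for the relevant range $m>3$ the weight is not even in $A_q$ for $q$ near $2$. The paper's own workaround (Lemma \ref{l:bclem}) is to peel off the integer part of the weight as monomials $x_h^\beta$, convert them into $\kappa_h$-derivatives of the multiplier, and apply weighted multiplier theory only to the small leftover weight $b^{m-m_1}$ with $m-m_1<2(q-1)$; alternatively the pointwise kernel splitting you sketch, with the submultiplicativity of $b^m$ and the decay of the Green's function, is the route of \cite{gallay2002invariant,roussier2003stabilite}. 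With that correction your plan for \eqref{e:bsbc2} coincides with the paper's (citation-based) proof.
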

\begin{proof}
The first estimate follows from \cite[Eqn. (16)]{gallay2005global} and the proof of the second
and third can be found in \cite[\S4.1.1, 4.1.4]{roussier2003stabilite}.
\end{proof}
Note, that if one desires estimates on $\overline{\mfu}_h$ or $\overline{\mfu}_3$ then one simply sets $\overline{\mfu} = (\overline{\mfu}_h,0)^T$ or $\overline{\mfu} = (0,0,\overline{\mfu}_3)^T$ respectively. We consider the existence, and asymptotic behavior of solutions of this equation in the space
\begin{equation}
Z = \{ (\bar{w},\bar \Phi ,\tl w, \tl \phi) \in L^2_{2D}(m)^3\times L^2_{2D}(m)^2\times L^2(m)^3\times H^1(m)\}\cap \{ \nabla_h\cdot\overline{w}_h = 0,\, \nabla_h\cdot \overline{\Phi} = 0,\,  \nabla_\tau\cdot\tilde w = 0\}\ , 
\end{equation}
with the natural norm on this space. We expect, from the form of the system \eqref{e:pbdec}, that the temperature should have the same regularity as the velocity.  If the vorticity is in $L^2$, then we expect (as in \eqref{e:bsbc2}) that the velocity would be in $H^1$.  For this reason we require increased regularity in the temperature component $\tl\phi$.

\paragraph{Approximate solutions and residual equation}
We take initial data, $( \bar{w}_0,\overline \Phi_0, \tilde{w}_0 , \tilde{\theta}_0)\in Z$.
(Note that the due to the way in which the scaling variables are defined, the initial values
for $\bar{w}$ are the same as that for $\bar{\omega}$ and similarly for the remaining variables.)
Motivated by our heuristic discussion in Section \ref{ss:btdyn} above, we then define
\begin{align}
&(\overline{\mfu}_h)_{app}(\xi) =A \overline{\mfu}_h^0(\xi),\notag\\ 
&(\overline{\mathfrak{u}}_3)_{app}(\xi,\tau)  = e^{-\tau/2}\left( B_1 c(\tau)+ B_2 s(\tau)\right)  \varphi_0(\xi)\ , \notag \\
& \bar{\phi}_{app}(\xi,\tau) =e^{-\tau/2} \left( -B_1 s(\tau) + B_2 c(\tau) \right)  \varphi_0(\xi)\ ,\notag\\
& \bar{w}_{app}(\xi,\tau) = \lp(0,0, A \varphi_0(\xi)\rp)^T+ \re^{-\tau/2}\lp(B_1c(\tau) + B_2s(\tau)\rp) \, \lp(\p_2\varphi_0(\xi),-\p_1\varphi_0(\xi),0   \rp)^T\notag\\
& \overline{\Phi}_{app}(\xi,\tau) =e^{-\tau/2} \left( -B_1 s(\tau) + B_2 c(\tau) \right)  \nabla_h^\perp\varphi_0(\xi) \label{e:approxsol}
\end{align}
where $c(\tau) := \cos(\Gamma(\re^{\tau}-1)), \, s(\tau) := \sin(\Gamma(\re^{\tau}-1))$, $(\overline{\mfu}_{h})_{app}$ is the velocity profile associated with $\varphi_0$ from the Biot-Savart law \eqref{e:BSh}, and the constants are determined by the initial data
$$
A = \int_{\R^2} (\bar{\omega}_3)_0 (\xi) d\xi,\quad B_1 = \int_{\R^2} (\bar{u}_3)_0 (\xi) d\xi,\quad B_2 = \int_{\R^2} \bar{\theta}_0 (\xi) d\xi.
$$
Direct computation then readily gives that the solution $\bar W_{app}  = (\bar w_{app},\overline{\Phi}_{app})^T$ solves the barotropic subsystem \eqref{e:vsc1} - \eqref{e:vsc2} with $(\tl w,\tl \phi) \equiv 0$. One also readily finds that the subspace $L^2_{2D,1}(m)^2\times L^2_{2D,0}(m)^0\times L^2_{2D,1}(m)^2$ is invariant under the evolution of this subsystem.
We can thus make the following decomposition, which splits off the leading order approximate dynamics
\begin{align}
\bar w_h &= {\bar w}_{app,h}  + \bar w_{R,h},\quad \bar w_3  =  {\bar w}_{app,3}  + \bar w_{R,3},\quad \overline{\Phi}= \overline{\Phi}_{app} + \overline{\Phi}_R\notag\\
 \overline{\phi} &= \overline{\phi}_{app} + \overline{\phi}_R,\quad \overline{\mfu}_h = (\overline{\mfu}_h)_{app} +\overline{\mfu}_{h}^R,
\end{align}
where the residuals satisfy
  $$
 \bar w_{R,h},\overline{\Phi}_R\in \mathrm{Rg}\{ Q_1\} = L^2_{2D,1}(m)^2,\quad \bar w_{R,3}\in \mathrm{Rg}\{Q_0\} =  L^2_{2D,0}(m).
 $$  
Note $\overline{\mfu}_R$ and $\overline{\phi}_R$ can also be obtained from $\bar w_{R}$ and $\bar \Phi_R$ respectively using the two-dimensional Biot-Savart laws \eqref{e:BSh} and \eqref{e:BS3} above. We thus consider the residual on the space $Y \times L^2(m)^3\times H^1(m)$ with
 \begin{align}
 Y &= Y_{h,1}\times Y_{3,0}\times Y_{p,1},\notag\\
 Y_{h,n} &= \{\bar w_h\in L^2_{2D,n}(m)^2\, |\, \mathrm{div}_h \bar w_h = 0\},\,\, Y_{3,n} = L^2_{2D,n}(m),\, Y_{p,n} = \{\bar \Phi\in L^2_{2D,n}(m)^2\, |\, \mathrm{div}_h \bar\Phi = 0\},\notag
 \end{align}
 and the norms
 $$
 \| (\bar w,\bar \Phi)\|_{m} := \| \bar w \|_m + \|\overline\Phi\|_{m},\quad  \| (\tl w,\tl\phi)\|_{*,m} := \|\tl w\|_m + \|\tl \phi\|_{H^1(m)}.
 $$
 In order to simplify the notation, we also define $\bar W_R = (\bar w_R, \overline{\Phi}_R)^T$ and $\tilde{W} = (\tilde{w},\tilde{\theta})^T$.  Inserting this decomposition into the full system \eqref{e:vsc1} - \eqref{e:vsc4}, and using the aformentioned facts of the approximate solution, we find that the residuals $\bar W_R$ and $\bar W$ satisfy the following system
\begin{align}
(\bar W_R)_\tau& = (\bar L(\tau)+  \bar \Lambda) \bar W_R + \overline{\mathbf{N}}(\bar W_R,\tl W),\label{e:vsc5}\\
\tilde{W}_\tau& = (\tilde L(\tau) + \tilde \Lambda ) \tl W + \tilde{\mathbf{ N}}(\bar W_R,\tl W),\label{e:vsc6}
\end{align}
with the linear operators
\begin{align}
\bar L(\tau)\bar W_R&:=\mc{L}\left(\begin{array}{c} \bar w_R \\  \overline{\Phi}_R\end{array}\right) + \re^{\tau}\Gamma\left(\begin{array}{c}\overline{\Phi}_R \\0\\- \overline{w}_{R,h}\end{array}\right),\label{e:linv1}\\
\tl L(\tau) \tl W&:= (\mc{L} + \re^\tau \p_{3}^2) \left(\begin{array}{c}\tl w \\\tl\phi\end{array}\right)  - \left(\begin{array}{c}0 \\\tl\phi/2\end{array}\right)
+ \Gamma \re^{\tau}\left(\begin{array}{c}\nabla^\perp_h \tl\phi\\0 \\ -\tilde{\mathfrak{u}}_3  \end{array}\right) - \left(\begin{array}{c}\Omega \re^{3\tau/2}\p_3\tilde{\mathfrak{u}}  \\ 0\end{array}\right),\label{e:linv2}\\
\bar\Lambda \bar W_R&:= \bar N(\bar W_{app} + \bar W_R,\tl W) - \lp(\bar N(0,\tl W) + \bar N(\bar W_{app},0) + \bar N(\bar W_R,0)\rp),\notag\\
&= \overline{\mfu}_{app}\cdot \nabla \bar W_R + \overline{\mfu}_R\cdot \nabla \bar W_{app}
 - \left(\begin{array}{c} (\bar w_{app}\cdot\nabla)\overline{\mfu}_{R,h} \\ 0\\ (\overline{\Phi}_{app}\cdot\nabla) \overline{\mfu}_{R,h} \end{array}\right)  
 - \left(\begin{array}{c} (\bar w_{R}\cdot\nabla)\overline{\mfu}_{app,h} \\0\\ (\overline{\Phi}_{R}\cdot\nabla) \overline{\mfu}_{R,h} \end{array}\right)\label{e:pertlin1}\\
\tl \Lambda \tl W&:=  \tl N(\bar W_{app},\tl W) - \tl N(0,\tl W),\notag\\
&= \overline{\mfu}_{app}\cdot \nabla_\tau \tl W - \tl w \cdot\nabla \left(\begin{array}{c} \overline{\mfu}_{app}  \\0 \end{array}\right) + \tilde{\mfu}\cdot \nabla  \left(\begin{array}{c} \overline{w}_{app}  \\ \bar\phi_{app} \end{array}\right) - \bar w_{app}\cdot \nabla_\tau \left(\begin{array}{c} \tilde{\mfu}  \\0 \end{array}\right),\label{e:pertlin2}
\end{align}
and with the nonlinear components $\overline{\mathbf{N}} = (\overline{\mathbf{N}}_h,\overline{\mathbf{N}}_3,\overline{\mathbf{N}}_p)^T, \,\tilde{\mathbf{N}}  = (\tl{\mathbf{N}}_1,\tl{\mathbf{N}}_2)^T$ defined as 
\begin{align}
\overline{\mathbf{N}}(\bar W_R,\tl W)&:= \bar Q\lp[ \bar N(\bar W_{app} + \bar W_R,\tl W) - \bar N(\bar W_{app},0) -  \bar\Lambda \bar W_R  \rp]\notag\\
\overline{\mathbf{N}}_h(\bar W_R, \tl W)&:= Q_1\lp[ \bar N_h(\bar W_{app} + \bar W_R,\tl W) - \bar N_h(\bar W_{app},0) -  (\bar\Lambda \bar w_R)_{h} \rp]\notag\\
&= (\overline{\mathfrak{u}}_{R,h}\cdot \nabla) \, \bar w_{h,R} -  (\bar w_R\cdot \nabla) \overline{\mfu}_{R,h} 
 +Q\lp[ (\tilde{\mathfrak{u}} \cdot \nabla_\tau)\tl w - (\tl w \cdot \nabla_\tau )\tilde{\mathfrak{u}} \rp],\notag\\
\overline{\mathbf{N}}_3(\bar W_R, \tl W)&:= Q_0\lp[ \bar N_3(\bar W_{app} + \bar W_R,\tl W) - \bar N_3(\bar W_{app},0) -  (\bar\Lambda \bar w_R)_{3} \rp]\notag\\
&= (\overline{\mathfrak{u}}_{R,h}\cdot \nabla) \, \bar w_{3,R}
 +Q\lp[ (\tilde{\mathfrak{u}} \cdot \nabla_\tau)\tl w_3 - (\tl w \cdot \nabla_\tau )\tilde{\mathfrak{u}}_3 \rp],\notag\\
 \overline{\mathbf{N}}_p(\bar W_R, \tl W)&:=  Q_1\lp[ \bar N_p(\bar W_{app} + \bar W_R,\tl W) - \bar N_p(\bar W_{app},0) -  (\bar\Lambda \bar \Phi_R)_{p}  \rp]\notag\\
 &=(\overline{\mathfrak{u}}_{R,h} \cdot \nabla_h) \overline{\Phi}_R - (\overline{\Phi}_R \cdot \nabla_h)\overline{\mathfrak{u}}_{h,R} \notag\\
 &\qquad\qquad+Q\lp[ (\tilde{\mathfrak{u}}\cdot \nabla_\tau)\nabla_h^\perp\tilde\phi -(\nabla_h^\perp\tilde\phi\cdot\nabla_h)\tilde{\mathfrak{u}}_h + \re^{\tau/2}\p_3\tilde\phi\cdot (\nabla_h^\perp\tilde{ \mathfrak{u}}_3) - \re^{\tau/2}\p_3 \tilde{\mfu}_3\cdot (\nabla_h^\perp \tl\phi)  \rp],\notag\\
\tilde{\mathbf{N}}(\bar W_R,\tl W)&:= \tl N(\bar W_{app} + \bar W_R,\tl W)  - \tl\Lambda \tl W,\notag\\
&= (1-\bar Q)\Big[ \overline{\mfu}_R\cdot \nabla_\tau \tl W  - \tl w\cdot \nabla \left(\begin{array}{c}\overline{\mfu}_R \\0\end{array}\right) + \tilde{\mfu}\cdot \nabla \left(\begin{array}{c}\overline{w}_R \\ \bar \phi_R\end{array}\right) \notag\\
&\qquad\qquad\quad- \bar w_R\cdot \nabla_\tau\left(\begin{array}{c}\tilde{\mfu} \\0\end{array}\right) + \tilde{\mfu}\cdot \nabla_\tau \tl W - \tl w\cdot\nabla_\tau \left(\begin{array}{c}\tilde{\mfu} \\0\end{array}\right)\Big].
\end{align}
Here subscripts in the barotropic nonlinearities denote the first two, third, and last two components of vectors in $Y$, and the projection $\bar Q:= Q_1\oplus Q_1\oplus Q_0\oplus Q_1\oplus Q_1$ projects $L^2_{2D}(m)^5$ onto $Y$.

Before considering the full nonlinear equation \eqref{e:vsc5}-\eqref{e:vsc6}, we list some results on the barotropic and baroclinic components of the linear evolution. Since the linear operators are time-dependent, we must characterize the linear evolution in terms of evolutionary families of operators in the sense of Pazy \cite[Ch. 5]{pazy2012semigroups}.   We leave the details of their proof to the Appendix.  We define $a(\tau) = 1 - \re^{-\tau}$, the standard basis vectors $e_i\in \R^5$, and the projections $e_i^\perp = 1 - e_i e_i^T$

\begin{Proposition}(Barotropic evolution)\label{p:SS}
Let $ m>3$, $0<\mu<1/2$, $\alpha\in \N^2$ with $|\alpha|\leq1$, and $\bar W_0\in Y.$ Then the operator $\bar L(\tau) + \bar \Lambda$ generates an evolutionary family of operators, $\overline{\mc{S}}(\tau,\sigma),$ with $\tau\geq\sigma\geq0$, on $Y$. If also $b^m \bar W_0 \in L^q(\R^2)^5$, then it satisfies the following decay estimates for any $q\in [1,2],$
\begin{align}
\|e_3^\perp\lp(\p^\alpha\overline{\mc{S}}(\tau,\sigma)\bar W_0\rp)\|_{m} &\leq \fr{C\re^{-(1/2+\mu)(\tau - \sigma)}}{a(\tau - \sigma)^{1/q - 1/2+|\alpha|/2}}\|b^m  \bar W_0\|_{L^q} ,\notag\\
 \|e_3^T\lp(\p^\alpha\overline{\mc{S}}(\tau,\sigma)\bar W_0\rp)\|_m 
   &\leq \fr{C\re^{-(\tau - \sigma)/2}}{a(\tau - \sigma)^{1/q - 1/2+|\alpha|/2}} 
  \| b^m e_3^T  \bar W_0 \|_{L^q}   \notag
\end{align}
for some constant $C>0$.
\end{Proposition}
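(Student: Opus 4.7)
The plan rests on three structural features of the generator $\bar L(\tau) + \bar\Lambda$ that I would exploit in sequence. First, the scalar third component $\bar w_{R,3}$ decouples at leading order from the pair $(\bar w_{R,h},\overline{\Phi}_R)$: the explicit $e^\tau\Gamma$ coupling in $\bar L(\tau)$ only links the latter two, and every component of $\bar W_{app}$ other than $\bar w_{app,3} = A\varphi_0$ carries an explicit $e^{-\tau/2}$ prefactor, so all remaining cross-couplings in $\bar\Lambda$ are $O(e^{-\tau/2})$. Second, the $e^\tau\Gamma$ term is eliminated by the rotation $R(\tau) := \exp(\Gamma(e^\tau-1)J_2)$ (the scaling-variable version of \eqref{eq:rotation}), which commutes with $\mc{L}$ because $\mc{L}$ acts as a scalar on each component. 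Third, in the rotating frame the coefficient operator is a time-independent limit (controlled entirely by $A\varphi_0$) plus an exponentially decaying, $\tau$-dependent remainder.

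For the third component, substituting the 2D Biot--Savart law \eqref{e:BSh} for $\overline{\mfu}_{R,h}$ into the third row of $\bar\Lambda$ gives
$$\partial_\tau \bar w_{R,3} = \mc{L}_3 \bar w_{R,3} + r_3(\tau,\bar W_R),$$
where $\mc{L}_3 := \mc{L} - A\overline{\mfu}_h^0 \cdot \nabla_h + A\,\mathrm{BS}[\,\cdot\,]\cdot\nabla_h \varphi_0$ is precisely the linearization of the two-dimensional Navier--Stokes equation about the Oseen vortex analyzed in \cite{gallay2005global}, and $r_3$ is an $O(e^{-\tau/2})$ remainder linear in $\bar W_R$. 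On $Y_{3,0} = \mathrm{Rg}(Q_0)$ the spectrum of $\mc{L}_3$ lies in $\{\Re\lambda \le -1/2\}$, so the associated semigroup satisfies the required $L^q \to L^2(m)$ smoothing with decay rate $e^{-(\tau-\sigma)/2}$ and singular factor $a(\tau-\sigma)^{-(1/q-1/2)-|\alpha|/2}$ inherited from the self-similar heat-kernel smoothing. Duhamel with a Gronwall bootstrap on the $r_3$ term then yields the claimed $e_3^T$-bound.

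For the $e_3^\perp$ block, set $\underline{W}(\tau) := R(\tau)(\bar w_{R,h},\overline{\Phi}_R)^T$. A direct computation shows the $e^\tau\Gamma$ contribution from $\bar L(\tau)$ cancels, and $\underline{W}$ satisfies
$$\partial_\tau \underline{W} = (\mc{L} + \mc{B}_\infty)\underline{W} + V(\tau)[\underline{W},\bar w_{R,3}],$$
with $\mc{B}_\infty \underline{W} = A(\overline{\mfu}_h^0\cdot\nabla_h)\underline{W} - A\,\underline{W}\cdot\nabla_h\overline{\mfu}_h^0$ the time-independent piece coming from $\bar w_{app,3} = A\varphi_0$, and $V(\tau)$ the collection of rotated remainders involving $\bar w_{app,h}$, $\overline{\Phi}_{app}$ and the coupling to $\bar w_{R,3}$, all of size $O(e^{-\tau/2})$. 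By Lemma \ref{l:aLh}, the spectrum of $\mc{L} + \mc{B}_\infty$ on the divergence-free subspace of $L^2_{2D}(m)^2$ consists of the simple eigenvalue $-1/2$ (eigenfunction $\nabla_h^\perp\varphi_0$) together with $\{\Re\lambda\le -1\}$. Because $Q_1 \nabla_h^\perp\varphi_0 = 0$, restricting to $Y_{h,1}\times Y_{p,1}$ removes this eigenfunction, so the associated semigroup decays at rate $e^{-(\tau-\sigma)}$ with the usual smoothing. A Duhamel iteration against this asymptotic semigroup, controlling $V(\tau)$ in operator norm by $Ce^{-\tau/2}$, loses at most $e^{-\mu(\tau-\sigma)}$ for any fixed $\mu\in(0,1/2)$, giving the claimed rate $e^{-(1/2+\mu)(\tau-\sigma)}$.

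The main technical obstacle is that $\bar L(\tau)+\bar\Lambda$ is genuinely $\tau$-dependent, so the evolutionary family $\overline{\mc{S}}(\tau,\sigma)$ cannot be produced by Hille--Yosida on its own but must be constructed by a Duhamel iteration started from the asymptotic semigroup $e^{(\tau-\sigma)(\mc{L}+\mc{B}_\infty)}$, then shown to satisfy both the spectral-gap decay and the short-time smoothing simultaneously. The smoothing step is delicate because $V(\tau)$ contains first-order spatial derivatives whose coefficients are only controlled via the Biot--Savart estimates of Proposition \ref{p:bsest}; keeping the integrand $s\mapsto a(\tau-s)^{-(1/q-1/2)-|\alpha|/2}\,e^{-(1/2+\mu)(\tau-s)}\,e^{-s/2}$ integrable in $s\in[\sigma,\tau]$ is precisely what forces $|\alpha|\le 1$ and $\mu<1/2$. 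Once the iteration is set up in these norms, the remaining verifications (closure of the cross-couplings between the third component and the $e_3^\perp$-block in the weighted spaces, and uniformity of the $C$ in $\tau,\sigma$) reduce to routine manipulations that are deferred to Appendix~A.
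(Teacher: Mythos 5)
Your overall strategy is the same as the paper's: pass to the rotating frame via $\re^{\Gamma(\re^{\tau}-1)J_2}$ to cancel the $\re^{\tau}\Gamma$ coupling, identify the resulting time-independent part of the $(h,p)$-block with the operator of Lemma \ref{l:aLh}, identify the third component with the Oseen-vortex linearization of \cite{gallay2005global}, and then run a Duhamel/Gronwall bootstrap against the $\mc{O}(\re^{-\tau/2})$ remainder, which is exactly where the loss from $\re^{-(\tau-\sigma)}$ to $\re^{-(1/2+\mu)(\tau-\sigma)}$ comes from (this is the content of Propositions \ref{p:aT1}, \ref{p:aSu} and \ref{p:asgest} in the appendix). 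However, one structural claim in your outline is wrong in a way that matters for the precise statement being proved.

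You assert that the third component satisfies $\p_\tau \bar w_{R,3} = \mc{L}_3 \bar w_{R,3} + r_3$ with $r_3$ an $\mc{O}(\re^{-\tau/2})$ remainder linear in the full $\bar W_R$, and you propose to absorb $r_3$ by Duhamel and Gronwall. In fact $r_3 \equiv 0$: in \eqref{e:linv1} the $\re^{\tau}\Gamma$ term has zero third entry, the two stretching-type column vectors in \eqref{e:pertlin1} have zero third entries, a barotropic ($x_3$-independent) quantity is advected only by $(\overline{\mfu}_h)_{app} = A\overline{\mfu}_h^0$, and $\overline{\mfu}_{R,h}$ is the two-dimensional Biot--Savart velocity of $\bar w_{R,3}$ alone; hence the $\bar w_{R,3}$-equation is closed and autonomous and is exactly the Oseen linearization, untouched by any time-dependent coupling (in the paper's language, the third component is unaffected by $\ul{\Lambda_2}$). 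This is not cosmetic: the second estimate of the Proposition bounds $e_3^T\lp(\p^\alpha\overline{\mc{S}}(\tau,\sigma)\bar W_0\rp)$ by $\|b^m e_3^T \bar W_0\|_{L^q}$ only, whereas a bootstrap against a nonzero $r_3$ coupling to $(\bar w_{R,h},\overline{\Phi}_R)$ would at best yield a bound involving the full $\|b^m \bar W_0\|_{L^q}$, i.e.\ a strictly weaker statement. The fix is simply to use the decoupling: the $e_3^T$ bound is inherited directly from Proposition 4.13 of \cite{gallay2005global}, and it is precisely this clean $\re^{-(\tau-\sigma)/2}\|b^m e_3^T \bar W_0\|_{L^q}$ bound that one then feeds, through the $\re^{-s/2}$-decaying coupling, into the Gronwall closure for the $e_3^\perp$-block. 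A secondary gloss worth noting: passing from ``eigenvalues in $\{\Re\lambda\le -1\}$'' to the semigroup decay $\re^{-(\tau-\sigma)}$ for your $\mc{L}+\mc{B}_\infty$ also requires control of the essential spectrum; the paper supplies this via the compact-difference argument of Proposition \ref{p:compact} before invoking the eigenvalue localization of Proposition \ref{p:eigen}, again following \cite{gallay2005global}.
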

\begin{proof}
See Appendix \ref{ss:Abt}.
\end{proof}

\begin{Proposition}(Baroclinic Evolution)\label{p:btev}
The linear operator $\tl L(\tau) + \tl \Lambda$ generates an evolutionary family of operators $\tilde{\mc{S}}(\tau,\sigma)$, for $\tau\geq \sigma$, on $L^2(m)^3\times H^1(m)$ which, for $m>1$,$\alpha\in \mb{N}^3$ with $|\alpha|\leq 1, q\in [1,2], 0<\delta<4\pi$, $f\in L^2(m)^3\times H^1(m)$ and with $b^m f\in L^q(\mb{D}),$ satisfies the following estimate
\begin{align}
\| \p^\alpha\tilde{\mc{S}}(\tau,\sigma)f\|_{*,m} &\leq \fr{\re^{-(4\pi^2-\delta)(\re^{\tau}-\re^\sigma)}}{a(\tau-\sigma)^{1/q - 1/2 + \fr{\alpha_1+\alpha_2}{2}}a(\re^{\tau}-\re^\sigma)^{(1/q-1/2)/2 + \alpha_3/2}} (||b^m e_4^\perp f||_{L^q(\mb{D})} + \|b^m e_4^T f\|_{W^{1,q}(\mb{D})}).\label{e:fullest2}
\end{align}
for $\tau>\sigma\geq0,$ and some constant $C>0.$
\end{Proposition}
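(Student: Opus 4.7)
The plan is to construct $\tilde{\mc S}(\tau,\sigma)$ by first analyzing the principal linear part $\tl L(\tau)$ in Fourier variables, then treating $\tl\Lambda$ as a lower-order perturbation with Schwartz-class coefficients via Duhamel. The exponential factor $\exp(-(4\pi^2-\delta)(\re^\tau-\re^\sigma))$ comes entirely from the fact that baroclinic vector fields have zero vertical mean: in the Fourier decomposition only modes with $|n|\geq 1$ appear, so $\re^\tau\p_3^2$ acts as multiplication by $-4\pi^2 n^2\re^\tau\leq-4\pi^2\re^\tau$, and integrating this coefficient in time gives the rate $4\pi^2(\re^\tau-\re^\sigma)$. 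The slack $\delta$ absorbs contributions from weight commutators, the skew-Hermitian coupling, and $\tl\Lambda$. Generation of the evolutionary family follows from Pazy's theory once the requisite energy estimates are in place, since the time-dependence of $\tl L(\tau)+\tl\Lambda$ is smooth and the leading-order part is dissipative.

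The main calculation is for the scalar operator $\mc L + \re^\tau\p_3^2$ together with the skew-Hermitian coupling $\Gamma\re^\tau\mb P J_\eta\mb P$ and the rotation term $\Omega\re^{3\tau/2}\p_3$. Horizontally, $\mc L$ is the standard scaling-variable Laplacian whose weighted $L^q\to L^2(m)$ estimates from \cite{gallay2002invariant} provide the factor $a(\tau-\sigma)^{-(1/q-1/2+(\alpha_1+\alpha_2)/2)}$. Vertically, the change of variable $t=\re^\tau$ turns $\re^\tau\p_3^2$ into the standard heat equation on the unit-period torus restricted to mean-zero data, yielding both the Poincar\'e decay and the one-dimensional smoothing factor $a(\re^\tau-\re^\sigma)^{-(1/q-1/2)/2-\alpha_3/2}$. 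Because the horizontal and vertical operators commute, the product of the two propagators gives the desired scalar estimate. The coupling terms assemble into the skew-Hermitian symbol displayed in \eqref{e:pjp-p} with eigenstructure \eqref{e:evecs-p}; on the divergence-free subspace they preserve $L^2$ norms exactly, so diagonalising in the basis $a_g,a_0,a_\pm$ leaves the dissipative estimate intact. Passing from $L^2$ to $L^2(m)$ requires conjugation by $b^m$ and absorption of the commutators $[b^m,\Delta_\xi]b^{-m}$ and $[b^m,\xi\cdot\nabla_h]b^{-m}$ into the $\delta$ slack, exactly as in the standard weighted heat-semigroup argument. The extra $W^{1,q}$ control on the temperature input is forced by the appearance of $\nabla\tl\phi$ in $\tl\Lambda$; propagating $\nabla\tl\phi$ in $L^2(m)$ in parallel with $\tl\phi$ itself yields the required $H^1(m)$ bound on the output.

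Finally, $\tl\Lambda$ has Schwartz-class coefficients built from $\overline{\mfu}_{app}$, $\overline{w}_{app}$, $\overline{\Phi}_{app}$, $\bar\phi_{app}$ that are either stationary or of size $\re^{-\tau/2}$, so it defines a bounded multiplier between the weighted spaces and a Gronwall-style Duhamel iteration transfers the estimate for the principal part to the full $\tilde{\mc S}(\tau,\sigma)$ at the cost of arbitrarily small $\delta$. The main obstacle I anticipate is the interplay of the two time scales $\tau-\sigma$ and $\re^\tau-\re^\sigma$ in the presence of the rotation term $\Omega\re^{3\tau/2}\p_3\tilde{\mfu}$: its symbol is unbounded in $n$, so it cannot be estimated in operator norm and must instead be kept inside the skew-Hermitian structure of the coupling matrix, requiring careful bookkeeping of the Biot--Savart scaling $\tilde{\mfu}\sim |k|^{-1}\tl w$ so that the horizontal and vertical smoothing factors balance when the two estimates are multiplied together.
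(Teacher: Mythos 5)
Your skeleton matches the paper's at a high level: the factor $\re^{-(4\pi^2-\delta)(\re^\tau-\re^\sigma)}$ does come from the vertical mean-zero (Poincar\'e) structure of the baroclinic modes, the two smoothing factors in $a(\tau-\sigma)$ and $a(\re^\tau-\re^\sigma)$ do come from horizontal and vertical parabolic smoothing respectively (the paper gets them by undoing the scaling variables and invoking the unscaled semigroup estimate of Roussier-Michon, rather than by your commuting-propagator product, but that part is interchangeable), and the perturbation $\tl\Lambda$ is indeed handled by a Duhamel fixed point plus a Gronwall argument on a weighted functional. However, there is a genuine gap in your treatment of the non-local terms in the \emph{weighted} spaces, and this is precisely where the real technical content of the proof lies. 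Your plan is to diagonalise the skew-Hermitian coupling in the eigenbasis $a_g,a_0,a_\pm$ and then pass from $L^2$ to $L^2(m)$ by ``absorbing commutators of $b^m$ into the $\delta$ slack, as in the standard weighted heat-semigroup argument.'' That argument only works for local differential operators. The spectral projections onto $a_\pm(k)$ contain factors like $k_1/|k_h|$, $k_2/|k_h|$ which are discontinuous along the whole line $k_h=0$, $n\neq 0$; such multipliers do not act boundedly on $L^2(m)$ for $m>1$, so the diagonalisation step is not available in the weighted setting (this is why the paper uses that diagonalisation only for the unweighted dispersive estimates of Section 4, not in Appendix A). Likewise, the term $\Omega\re^{3\tau/2}\p_3\tilde{\mfu}$ and the velocity fields appearing in the norm $\|\cdot\|_{*,m}$ are Fourier multipliers composed with the Biot--Savart map, and their boundedness on the weighted spaces cannot be dispatched by ``bookkeeping of the scaling $\tilde{\mfu}\sim|k|^{-1}\tl w$''; it has to be proved.

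The paper's mechanism, which your proposal lacks, is twofold: (i) the dissipative estimate with the skew term is proved in the \emph{unscaled velocity formulation}, where $\mb{P}J_{\Omega,\Gamma}\mb{P}$ is handled following Roussier-Michon (Proposition \ref{p:bcvel}), and the scaled-variable estimate \eqref{e:fullest2} for $\tl L(\tau)$ is then obtained by changing variables back and forth; and (ii) the transfer between the velocity and vorticity/temperature formulations in $b^m$-weighted $L^q(\mb{D})$ is justified by proving that the relevant Biot--Savart multipliers are bounded on these weighted spaces. This is done via Muckenhaupt $A_q$ theory on $\R^2\times\mb{T}$ (Lemmas \ref{l:mhr2}--\ref{l:bclem}), where the weight $b^m$ with $m>1$ exceeds the admissible $A_q$ range and must be split into an integer polynomial part, converted to Fourier derivatives $\p_{k_h}^\beta$ of the (smooth away from $k_h=0$, bounded on the baroclinic lattice $n\neq0$) symbols, plus a small fractional weight handled by the weighted multiplier theorem. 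Without some substitute for this step, your estimate of the principal part in $L^2(m)^3\times H^1(m)$, and hence the whole Gronwall iteration for $\tilde{\mc S}(\tau,\sigma)$, does not close. (A smaller inaccuracy: $\tl\Lambda$ is not a bounded multiplier between the weighted spaces --- it contains $\nabla_\tau$ with the growing coefficient $\re^{\tau/2}$ on $\p_3$ and Biot--Savart velocities, and the paper's Gronwall functional is built precisely to beat these growing factors against the superexponential decay.)
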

\begin{proof}
See Appendix \ref{ss:Abc}.
\end{proof}

 \paragraph{Fixed point operator}

Having collected information about the linear system in $W_R :=(\bar W_R,\tl W)^T,$ we now study the nonlinear system \eqref{e:vsc5}-\eqref{e:vsc6}.  In particular we study mild solutions of this system via the equivalent integral formulation
\begin{align}
\bar W_R(\tau)&= \overline{\mathbf{F}}(W_R):=\overline{\mc{S}}(\tau,\sigma) \bar W_{R,0} + \int_0^\tau \overline{\mc{S}}(\tau,\sigma)\overline{\mathbf{N}}(W_R(\sigma))d\sigma,\label{e:bmd1}\\
\tl W(\tau)&= \tilde{\mathbf{F}}(W_R):= \tilde{\mc{S}}(\tau,\sigma) \tl W_{0} + \int_0^\tau \tilde{\mc{S}}(\tau,\sigma) \tilde{\mathbf{N}}(W_R(\sigma)) d\sigma.\label{e:bmd2}
\end{align}
We note here that neither $\overline{\mc{S}}$ nor $\tilde{ \mc{S}}$ act diagonally on vectors in $\R^5$ and $\R^4$ respectively, so the mild formulation cannot be broken down into components as in \cite{roussier2003stabilite}. Using this formulation we can then prove the following result about the asymptotic behavior of small solutions of \eqref{e:vsc4}.

\begin{Theorem} \label{t:alg-as}
 There exists $K_0 > 0$ such that for $0<\mu<1/2$, $m>3$, initial data $W_{R,0} = (\bar W_{R,0},\tl W_0)\in Y\times L^2(m)^3\times H^1(m)$, with $\nabla \cdot \tilde{w}_0 = 0$, and $\| \bar W_{R,0} \|_{m} + \|\tl W_0\|_{*,m} < K_0$, there exists a unique solution of \eqref{e:bmd1}-\eqref{e:bmd2} in $C^0([0,\infty), Y\times L^2(m)^3\times H^1(m)) $ which satisfies the asymptotic estimates:
\begin{align}
&\lim_{\tau \to \infty}  \Bigg[ \re^{\tau/2}\|\bar w_{R.3}(\cdot,\tau) \|_{L^2_{2D}(m)}\,+  \re^{(1/2+\mu)\tau}\lp(\| \overline{\Phi}_{R}(\cdot, \tau)  \|_{L^2_{2D}(m)^2} + \|\bar w_{R,h}(\cdot, \tau) \|_{L^2_{2D}(m)^2} \rp)\Bigg] = 0 , \notag \\
& \lim_{\tau \to \infty} \re^{\gamma \tau} 
\left[ \| \tilde{w}(\cdot,\tau) \|_{L^2(m)^3} + \| \tilde{\phi}(\cdot,\tau) \|_{L^2(m)} \right] = 0 .\label{e:asymp1}
\end{align}
for any $\gamma>3/4$.
\end{Theorem}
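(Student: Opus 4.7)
The plan is to carry out a Banach fixed point argument for the integral system \eqref{e:bmd1}-\eqref{e:bmd2} on a time-weighted Banach space whose norm is designed to capture exactly the decay rates claimed in \eqref{e:asymp1}. The contraction will combine the linear propagator estimates of Propositions \ref{p:SS} and \ref{p:btev} with bilinear estimates on $\overline{\mathbf N}$ and $\tilde{\mathbf N}$ obtained through the Biot-Savart bounds of Proposition \ref{p:bsest}.

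First I would introduce, for any fixed $\gamma\in(3/4,1)$, the Banach space
$$
\mc{Z}_\gamma := \Big\{ W_R = (\bar{W}_R, \tl W)\in C^0([0,\infty); Y\times L^2(m)^3\times H^1(m))\,:\,\|W_R\|_{\mc{Z}_\gamma}<\infty\Big\},
$$
equipped with the norm
$$
\|W_R\|_{\mc{Z}_\gamma} = \sup_{\tau\geq 0}\Bigl[ \re^{\tau/2}\|\bar w_{R,3}(\tau)\|_m + \re^{(1/2+\mu)\tau}\bigl(\|\bar w_{R,h}(\tau)\|_m + \|\overline{\Phi}_R(\tau)\|_m\bigr) + \re^{\gamma\tau}\|\tl W(\tau)\|_{*,m}\Bigr].
$$
The goal is to show that for sufficiently small data, the map $\mathbf F = (\overline{\mathbf F}, \tilde{\mathbf F})$ defined by \eqref{e:bmd1}-\eqref{e:bmd2} is a contraction on a closed ball $B_\rho\subset \mc Z_\gamma$ of radius $\rho$ proportional to $\|\bar W_{R,0}\|_m + \|\tl W_0\|_{*,m}$.

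Next I would handle the linear data contribution. Because $\bar W_{R,0}\in Y$ lies in $\mathrm{Rg}(\bar Q)$, the vanishing-moment conditions built into the definition of $Y$ allow Proposition \ref{p:SS} (applied with $q=2$) to deliver
$$
\|e_3^\perp\overline{\mc S}(\tau,0)\bar W_{R,0}\|_m \leq C\re^{-(1/2+\mu)\tau}\|\bar W_{R,0}\|_m,\qquad \|e_3^T\overline{\mc S}(\tau,0)\bar W_{R,0}\|_m\leq C\re^{-\tau/2}\|\bar W_{R,0}\|_m,
$$
which matches the barotropic weights in $\|\cdot\|_{\mc Z_\gamma}$ exactly. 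For the baroclinic data term $\tilde{\mc S}(\tau,0)\tl W_0$, Proposition \ref{p:btev} furnishes a super-exponential factor $\re^{-(4\pi^2-\delta)(\re^\tau-1)}$ that dominates any $\re^{-\gamma\tau}$, and the singular factors $a(\cdot)^{-\beta}$ are harmless at $\sigma=0$; hence the data contribution is $O(\|W_{R,0}\|)$ in $\mc Z_\gamma$.

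The core of the argument will be the nonlinear Duhamel bound. Using the product structure of $\overline{\mathbf N}$ and $\tilde{\mathbf N}$, the Biot-Savart estimates \eqref{e:bsbt}-\eqref{e:bsbc2} (which, crucially, yield an extra $\re^{-\tau/2}$ gain when passing from baroclinic vorticity $\tl w$ to baroclinic velocity $\tilde{\mfu}$), and weighted H\"older inequalities in some $L^q$ with $q\in(1,2)$, one checks that for $W_R\in B_\rho$ every purely barotropic term in $\overline{\mathbf N}$ is quadratic in $\bar W_R$ and decays at rate at least $\re^{-(1+\mu)\tau}$; the purely baroclinic quadratic contributions to $\tilde{\mathbf N}$ decay like $\re^{-2\gamma\tau}$ (the $\re^{\tau/2}$ arising from the vertical part of $\nabla_\tau$ being compensated by the Biot-Savart gain); and the mixed terms coupling the slowly-decaying component $\bar w_{R,3}$ (rate $\re^{-\tau/2}$) with $\tl W$ decay like $\re^{-(\gamma+1/2)\tau}$. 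I would then invoke Proposition \ref{p:SS} with $q$ slightly greater than $1$ to convert the integrable $L^q$ gain into additional temporal decay for the barotropic Duhamel integral, and Proposition \ref{p:btev} for the baroclinic Duhamel integral, whose super-exponential kernel $\re^{-(4\pi^2-\delta)(\re^\tau-\re^\sigma)}$ localizes $\sigma$ near $\tau$ and effectively converts the convolution into a pointwise bound of the right order. A parallel computation on differences shows $\mathbf F$ is Lipschitz on $B_\rho$ with constant $\lesssim\rho$, hence a strict contraction for $\rho$ small.

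The main obstacle will be the delicate balancing of the scaling-induced factors $\re^{\tau/2}$ and $\re^{3\tau/2}$ appearing in $\nabla_\tau$ and in the term $\Omega\re^{3\tau/2}\p_3\tilde{\mfu}$ of the baroclinic linearization against the super-exponential decay of the baroclinic propagator and the $\re^{-\tau/2}$ gain in the Biot-Savart law; the threshold $\gamma>3/4$ emerges precisely from optimizing these competing factors in the mixed nonlinear terms built from the slow vertical vorticity $\bar w_{R,3}$ and $\tilde{\mfu}$. Once the contraction is established, Banach's theorem produces a unique fixed point $W_R\in B_\rho\subset\mc Z_\gamma$; a standard Gronwall argument on the difference of two putative solutions promotes uniqueness to the full ambient space $C^0([0,\infty);Y\times L^2(m)^3\times H^1(m))$. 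Since $\gamma\in(3/4,1)$ was arbitrary and the fixed point for different $\gamma$ coincide by uniqueness, the asymptotic estimates \eqref{e:asymp1} follow for every $\gamma>3/4$, completing the proof.
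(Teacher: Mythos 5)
Your overall strategy is the same as the paper's: a contraction argument for the Duhamel system \eqref{e:bmd1}--\eqref{e:bmd2} in a time-weighted space, using Propositions \ref{p:SS} and \ref{p:btev} with $q$ between $1$ and $2$ together with the Biot--Savart gains of Proposition \ref{p:bsest}. However, there is a genuine gap in the choice of the space $\mc Z_\gamma$: its norm controls only $\|\bar W_R(\tau)\|_m$ and $\|\tl W(\tau)\|_{*,m}$, while the nonlinearities $\overline{\mathbf N}$ and $\tilde{\mathbf N}$ contain first derivatives of the unknowns, e.g.\ $\overline{\mfu}_{R,h}\cdot\nabla \bar W_R$, $\tilde{\mfu}\cdot\nabla_\tau \tl w$, $\tl w\cdot\nabla_\tau\tilde{\mfu}$, and terms that are bounded only through $\|\nabla\tilde{\mfu}\|_{L^6}\lesssim\|\tl w\|_{H^1(m)}$ and Gagliardo--Nirenberg bounds involving $\|\nabla\bar w_{R,3}\|_m$. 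None of these quantities is controlled by $\|W_R\|_{\mc Z_\gamma}$, so the claimed quadratic bound $\|\mathbf F(W_R)\|_{\mc Z_\gamma}\lesssim \|W_R\|_{\mc Z_\gamma}^2$ cannot be established as stated. The only way to avoid gradient norms on the unknown would be to write the transport terms in divergence form and throw the derivative onto the propagators, but that requires estimates of the type $\overline{\mc S}(\tau,\sigma)\nabla\cdot$ and $\tilde{\mc S}(\tau,\sigma)\nabla_\tau\cdot$ which are not among the stated linear estimates, and it does not handle the stretching terms or the $\re^{\tau/2}\p_3$ components cleanly.

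The paper closes this by enlarging the norm: the space $X_{\mu,\gamma}$ includes, in addition to your terms, the parabolically weighted gradient norms $a(\tau)^{1/2}\|\nabla\bar w_{R,3}\|_m$, $a(\tau)^{1/2}\re^{\mu\tau}\|e_3^\perp\nabla\bar W_R\|_m$, $a(\tau)^{1/2}\|\nabla_h\tl W\|_{*,m}$ and $a(\re^\tau)^{1/2}\|\p_3\tl W\|_{*,m}$, and the derivative estimates ($|\alpha|=1$) in Propositions \ref{p:SS} and \ref{p:btev} are precisely what recovers these components for $\mathbf F(W_R)$, with the integrable singularities $a(\tau-\sigma)^{-1/2}$, $a(\re^\tau-\re^\sigma)^{-1/2}$ absorbed in the Duhamel integrals (the paper works with $q=3/2$). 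You would need to make the same augmentation (and check the gradient components of $\mathbf F$) for your contraction to close. A secondary, smaller point: the threshold $\gamma>3/4$ does not come from the interaction of $\bar w_{R,3}$ with $\tilde{\mfu}$, but from the purely baroclinic quadratic terms (which carry an $\re^{\sigma/2}$ loss from the scaled vertical derivative/velocity) feeding back into the barotropic components, which must decay at rate $1/2+\mu$; this forces roughly $2\gamma-1/2>1/2+\mu$ with $\mu<1/2$, i.e.\ $\gamma>3/4$.
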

The solution of the residual equation for sufficiently small initial residual data then readily implies existence and asymptotics in the full solution $W = W_{app} + W_R$.
\begin{Corollary}\label{c:alg-as}
There exists $K_0>0$ such that for $0< \mu < 1/2$, initial data $W_0\in Z$ with $\| \bar W_0(\cdot) - \bar W_{app}(\cdot, 0) \|_{m} + \|\tl W_0\|_{*,m} < K_0$, there exists a unique solution of \eqref{e:vsc1} - \eqref{e:vsc4} in $C^0([0,\infty], Z)$ which satisfies the following asymptotics
\begin{align}
&\lim_{\tau \to \infty}  \Bigg[ \re^{\tau/2}\|\bar w_3(\cdot,\tau) - \bar w_{app,3}(\cdot,\tau)\|_{L^2_{2D}(m)}\,+  \notag\\
&\,\qquad\re^{(1/2+\mu)\tau}\lp(\| \overline{\Phi}(\cdot, \tau) - \bar{\Phi}_{app} (\cdot, \tau) \|_{L^2_{2D}(m)^2} + \|\bar w_h(\cdot, \tau) - \bar w_{app,h}(\cdot, \tau))\|_{L^2_{2D}(m)^2} \rp)\Bigg] = 0 , \notag \\
& \lim_{\tau \to \infty} \re^{\gamma \tau} 
\left[ \| \tilde{w}(\cdot,\tau) \|_{L^2(m)^3} + \| \tilde{\phi}(\cdot,\tau) \|_{H^1(m)} \right] = 0 .\label{e:asymp1}
\end{align}
for any $\gamma>3/4$.
\end{Corollary}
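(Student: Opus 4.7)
\textbf{Proof proposal for Corollary \ref{c:alg-as}.} The corollary is essentially a direct translation of Theorem \ref{t:alg-as} from the residual variables back to the full unknowns, so the plan is to reduce it to that theorem by means of the decomposition $\bar W = \bar W_{app} + \bar W_R$.

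First, I would verify that the explicit profile $\bar W_{app}$ in \eqref{e:approxsol} is an exact solution of the barotropic subsystem \eqref{e:vsc1}--\eqref{e:vsc2} with $(\tilde w, \tilde\phi) \equiv 0$. This is a direct computation: the factors $c(\tau)$ and $s(\tau)$ are chosen precisely so that $\partial_\tau\bigl((\overline{\mfu}_3)_{app}, \bar\phi_{app}\bigr)$ cancels the oscillatory coupling $e^\tau \Gamma(\overline{\Phi}_{app}, -\overline{w}_{app,h})$, the Gaussian $A\varphi_0$ lies in the kernel of $\mc L$, and the associated Biot--Savart velocity $A\bar{\mfu}_h^0$ is perpendicular to $\nabla \varphi_0$ so that the quadratic self-advection of the horizontal vorticity vanishes identically. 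Taking this for granted, writing $\bar W = \bar W_{app} + \bar W_R$ and subtracting the equation satisfied by $\bar W_{app}$ from the full system \eqref{e:vsc1}--\eqref{e:vsc4} produces precisely the residual system \eqref{e:vsc5}--\eqref{e:vsc6} that was isolated in the set-up of the section.

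Second, I would check that the initial residual $\bar W_{R,0} := \bar W_0 - \bar W_{app}(\cdot, 0)$ lies in the target space $Y = Y_{h,1}\times Y_{3,0}\times Y_{p,1}$. The constants $A, B_1, B_2$ are chosen exactly so that $\int \bar w_{R,3,0}\,d\xi = 0$, which is $P_0 \bar w_{R,3,0} = 0$, i.e. $\bar w_{R,3,0}\in\mathrm{Rg}\,Q_0$. For the horizontal components, writing $\bar w_{h,0} = \nabla_h^\perp \bar u_{3,0}$ and $\overline{\Phi}_0 = \nabla_h^\perp\bar\phi_0$, integration by parts against the Hermite polynomials $H_\alpha$ for $|\alpha|\le 1$ reduces the conditions $P_1\bar w_{R,h,0} = 0$ and $P_1\overline{\Phi}_{R,0}=0$ to $\int(\bar u_{3,0}-B_1\varphi_0)\,d\xi = 0$ and $\int(\bar\phi_0-B_2\varphi_0)\,d\xi = 0$, both immediate from the definitions of $B_1, B_2$. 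The divergence-free conditions $\nabla_h\cdot\bar w_{R,h,0}=\nabla_h\cdot\overline{\Phi}_{R,0}=0$ are preserved because $\bar w_{app,h}$ and $\overline{\Phi}_{app}$ are themselves horizontal skew-gradients.

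Third, the smallness hypothesis $\|\bar W_0 - \bar W_{app}(\cdot, 0)\|_{m} + \|\tilde W_0\|_{*,m} < K_0$ is exactly the hypothesis of Theorem \ref{t:alg-as}, so the theorem supplies a unique global mild solution $(\bar W_R, \tilde W)\in C^0([0,\infty), Y\times L^2(m)^3\times H^1(m))$ together with the decay rates \eqref{e:asymp1}. Reassembling $\bar W = \bar W_{app} + \bar W_R$ then gives a solution of the full system \eqref{e:vsc1}--\eqref{e:vsc4} in $C^0([0,\infty),Z)$, and the asymptotic estimates in the corollary follow term-by-term from those of Theorem \ref{t:alg-as}, with $\bar W_{app}$ carrying the leading order (oscillatory Gaussian) behavior and $\bar W_R$ providing a strictly faster-decaying correction. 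The only potentially delicate step is the spectral-subspace verification in the second paragraph, but it is purely algebraic once $P_n$ is unpacked via integration against Hermite polynomials; no new analytic obstacle arises beyond what is handled in Theorem \ref{t:alg-as}.
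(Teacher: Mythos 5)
Your proposal is correct and follows essentially the same route as the paper: the corollary is obtained by writing $\bar W = \bar W_{app} + \bar W_R$, checking (by direct computation) that $\bar W_{app}$ solves the barotropic subsystem and that the choice of $A,B_1,B_2$ places the initial residual in $Y$, and then invoking Theorem \ref{t:alg-as}. The only cosmetic point is that the vanishing of the quadratic terms for $\bar W_{app}$ is not just orthogonality of $\bar{\mfu}^0_h$ to $\nabla\varphi_0$ but a cancellation between the transport and stretching terms (both reduce to a Poisson bracket of radial functions), which is exactly the ``direct computation'' the paper alludes to.
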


\begin{Remark} Note that if we rewrite these estimates in terms of our original variables, it
says that the asymptotics of \eqref{eq:red_asymptotics} are correct, (at least if we replace
the ${\mc{O}}(\frac{1}{t})$ error terms with ${\mc{O}}(\frac{1}{t^{(1-\epsilon)}})$ and similarly
for the ${\mc{O}}(\frac{1}{t^{3/2}})$ terms) and that the baroclinic components of the velocity
and temperature decay at least like ${\mc{O}}(\frac{1}{t^{(\gamma + 1/2 -\epsilon)}})$.  In fact we could probably use the methods below to establish that the baroclinic components decay more rapidly than any inverse power of $t$.  For the rotating fluid layer without thermal effects, Roussier-Michon shows (\cite[Thm. 3.3.1]{roussier2003stabilite}) that for the linearized evolution the baroclinic velocity actually decays exponentially fast.
\end{Remark}

 \begin{proof}
 To begin, we define a Banach space
\begin{equation}
X_{\mu,\gamma} = \{ W_R, \nabla W_R \in C^0([0,\infty);Y\times L^2(m)^3\times H^1(m)) ~|~  \nabla_\tau \cdot\tl w(\tau) = \nabla_h\cdot\overline{\mfu}_h(\tau)= \nabla_h\cdot \overline{w}_h= 0\} ,
\end{equation}
with norm 
\begin{align}
\| W \|_{X_{\mu,\gamma}} &= \sup_{\tau \ge 0} e^{\tau/2} \Bigg( 
\| \bar w_{R,3}(\tau)\|_{m} + \re^{\mu\tau}\|e_3^\perp \bar W_R(\tau)\|_{m} + a(\tau)^{1/2}(\|\nabla \bar w_{R,3}(\tau)\|_{m} + \re^{\mu \tau}\|e_3^\perp\nabla \bar W_R(\tau)\|_{m})\Bigg) \notag\\
&+e^{ \gamma\tau}\left(  \|\tilde{W}(\tau) \|_{*,m} +  a(\tau)^{1/2}\| \nabla_h \tilde{W}(\tau) \|_{*,m}+ a(\re^\tau)^{1/2}\| \p_3\tilde{W}(\tau) \|_{*,m}\right) ,
\end{align}
where we recall that $e_3^\perp = 1 - e_3e_3^T$ with $e_3 = (0,0,1,0,0)^T.$ We let $\mc{S}(\tau,\sigma)$ be the direct sum of the evolution operators $\overline{\mc{S}}(\tau,\sigma)$ and $\tilde{\mc{S}}(\tau,\sigma)$, and $F = (\bar F, \tl F)^T$ where $\bar F$ and $\tl F$ are the integral terms in \eqref{e:bmd1} and \eqref{e:bmd2} respectively.  The existence of a global solution then follows by finding a fixed point of the mapping, 
$$
\mathbf{F}(W_R):= (\overline{\mathbf{F}}(W_R),\tilde{\mathbf{F}}(W_R))^T= S(\tau,\sigma) W_{R,0} + F(W_R)
$$
defined on $X_{\mu,\gamma}$. Paired with the linear estimates on $\overline{\mc{S}}$ and $\tilde{\mc{S}}$, this follows in a standard fashion from the estimates
\begin{align}
\| F(W_R) \|_{X_{\mu,\gamma}}  & \le
C_1\| W_R \|_{X_{\mu,\gamma}}^2  , \label{e:inv} \\
\| F(W_R) - F(W'_R) \|_{X_{\mu,\gamma}} &  \le C_2 \left( \sup_{\tau \ge 0} \| W_R( \tau) \|_{Z} +
\sup_{\tau \ge 0} \| W' _R( \tau) \|_{Z} \right)  \| W_R - W'_R \|_{X_{\mu,\gamma}} \ . \label{eq:lipshitz}
\end{align}
The estimate for \eqref{eq:lipshitz} follows in a similar way to \eqref{e:inv}.  For \eqref{e:inv}, since the evolutionary operators $\bar S$ and $\tl S$ do not act diagonally we cannot consider the nonlinearities in $\mathbf{F}$ component-wise as in \cite{roussier2003stabilite}. We thus use Proposition \ref{p:SS} to estimate $\overline{\mathbf{F}}$ and Proposition \ref{p:btev} to estimate $\tilde{\mathbf{F}}$, both with $q = 3/2.$  

\paragraph{Estimates on $\overline{\mathbf{F}}$:}

We first find
\begin{align}
\| e_3^\perp &\bar F(W_R)(\tau)\|_{m} \leq C \int_0^\tau \|e_3^\perp \overline{\mc{S}}(\tau,\sigma) \overline{\mathbf{N}}(\bar W_R,\tl W) \|_m d\sigma\notag\\
&\leq C \int_0^\tau \fr{\re^{-(1/2+\mu)(\tau-\sigma)}}{a(\tau - \sigma)^{1/6}} \Big[   
\| b^m \overline{\mfu}_{R,h} \cdot \nabla \bar W_R\|_{L^{3/2}} +  \| b^m \bar w_{R,h}\cdot\nabla_h \overline{\mfu}_{R,h}\|_{L^{{3/2}}} + \| b^m \overline{\Phi}_{R}\cdot\nabla_h \overline{\mfu}_{R,h}\|_{L^{{3/2}}} \notag\\
&\quad+ \|b^m \tilde{\mfu}\cdot\nabla_\sigma \tl w\|_{L^{3/2}} + \|b^m \tilde{\mfu}\cdot\nabla_\sigma (\nabla_h^\perp\tl \phi)\|_{L^{3/2}}  +\|b^m\tl w\cdot \nabla_\sigma \tilde{\mfu}\|_{L^{{3/2}}} + \|b^m\nabla_h^\perp\tl\phi\cdot \nabla_h\tilde{\mfu}\|_{L^{{3/2}}} \notag\\
&\quad+\re^{\sigma/2} \|b^m \p_3\tilde\phi\cdot (\nabla_h^\perp\tilde{ \mathfrak{u}}_3)\|_{L^{{3/2}}} + \re^{\sigma/2}\|b^m \p_3 \tilde{\mfu}_3\cdot (\nabla_h^\perp \tl\phi) \|_{L^{{3/2}}}
\Big]d\sigma\notag\\
&\leq  C \int_0^\tau \fr{\re^{-(1/2+\mu)(\tau-\sigma)}}{a(\tau - \sigma)^{1/6}} \Big[
\| \bar w_{R,3}\|_m \|\nabla \bar W_R\|_m + ( \|\bar w_{R,h}\|_m + \|\bar \Phi_R\|_m)\|\bar w_{R,3}\|_{H^1(m)} \notag\\
&\quad + \re^{\sigma/2}\| \tl w\|_m \|\nabla \tl W\|_{*,m} + \re^{\sigma/2} \|\tl W\|_{*,m} \|\tl w\|_{H^1(m)} \Big]d\sigma \notag
\end{align}
where $C>0$ is a constant which may change from line to line. To obtain the last inequality, we go term-by-term. The two-dimensional Biot-Savart law in Proposition \ref{p:bsest}, the embedding $L^2_{2D}(m)\hookrightarrow L^q(\R^2)$ for all $q\in [2/(m+1),2],$ and H\"{o}lder's inequality give
\begin{align}
\|b^m \overline{\mfu}_{R,h}\cdot \nabla \bar W_R\|_{L^{3/2}} &\leq\| \overline{\mfu}_{R,h}\|_{L^6(\R^2)}\|\nabla \bar W_R\|_m\notag\\
&\leq  C\| \overline{w}_{R,3}\|_{L^{3/2}(\R^2)}\| \nabla \bar W_R\|_m\notag\\
&\leq C\| \overline{w}_{R,3}\|_m\| \nabla \bar W_R\|_m.\notag
\end{align}
The estimate $\|\nabla \overline{\mfu}_h\|_{L^p} \leq  \|\overline{w}_3\|_{L^p}$, which can be derived from \cite[Lem. 2.1]{gallay2002invariant}, along with the Gagliardo-Nirenberg inequality gives
\begin{align}
\| b^m\bar w_{R,h}\cdot \nabla \overline{\mfu}_{R,h}\|_{L^{3/2}} &\leq \|\bar w_{R,h}\|_m \|\nabla \overline{\mfu}_{R,h}\|_{L^6}\notag\\
&\leq C\|\bar w_{R,h}\|_m\|\bar w_{R,3}\|_{L^{6}}\notag\\
&\leq C \|\bar w_{R,h} \|_m \|\nabla \bar w_{R,3}\|_{L^2}^{2/3} \| \bar w_{R,3}\|_{L^2}^{1/3}\notag\\ 
&\leq  C\|\bar w_{R,h}\|_m(\|\bar w_{R,3}\|_{m} + \|\nabla \bar w_{R,3}\|_{m}).\notag
\end{align}
A similar approach gives 
\begin{align}
\|b^m \overline{\Phi}_R\cdot \nabla_h \overline{\mfu}_{R,h}\|_{L^{3/2}}\leq C \|\overline{\Phi}_R\|_m (\|\bar w_{R,3}\|_{m} + \|\nabla \bar w_{R,3}\|_{m}).
\end{align}
For the baroclinic contributions, the three-dimensional Biot-Savart law in Proposition \ref{p:bsest}, the embedding $L^2(m)\hookrightarrow L^q(\mD)$ for all $q\in [1,2]$, and finally the estimate $\|\nabla \tilde{\mfu}\|_{L^6} \leq  \|\tilde{w}\|_{H^1(m)}$ from \cite[Lem. 2.4.2]{roussier2003stabilite} give
\begin{align}
\|b^m \tl w\cdot \nabla_\sigma \tilde{\mfu}\|_{L^{3/2}}&\leq C\re^{\sigma/2} \|\tl w\|_m \|\tl w\|_{H^1(m)},\notag\\
\|b^m\tilde{\mfu}\cdot \nabla_\sigma \tl w\|_{L^{3/2}} + \|b^m\tilde{\mfu}\cdot \nabla_\sigma \nabla^\perp\tl\phi\|_{L^{3/2}}&\leq C\re^{\sigma/2}\|\tilde{\mfu}\|_{L^6} \|\nabla \tl W\|_{*,m}\notag\\
&\leq C\re^{\sigma/2}\|\tilde{w}\|_{L^2} \|\nabla \tl W\|_{*,m}\notag\\
&\leq C\re^{\sigma/2} \|\tl w\|_m \|\nabla \tl W\|_{*,m}.
\end{align}
We next have
\begin{align}
\| b^m \nabla_h^\perp \tl \phi \cdot \nabla_h \tilde{\mfu}\|_{L^{3/2}} + \re^{\sigma/2}\| b^m \p_3\tl \phi \nabla_h^\perp \tilde{\mfu}_3\|_{L^{3/2}} + \re^{\sigma/2}\|b^m \p_3\tilde{\mfu}_3 \nabla_h^\perp \tilde\phi\|_{L^{3/2}}&\leq
C\re^{\sigma/2}\|\nabla \tl\phi\|_m \|\nabla \tilde{\mfu}\|_{L^6}\notag\\
&\leq C\re^{\sigma/2}\|\tl W\|_{*,m} \|\tl w\|_{H^1(m)}.
\end{align}
From this we can then conclude
\begin{align}
\re^{(1/2+\mu)\tau}\| e_3^\perp\bar F(W_R)(\tau)\|_{m} &\leq C\int_0^\tau \fr{\re^{(1/2+\mu)\sigma}}{a(\tau - \sigma)^{1/6}} \Big[ \fr{\re^{-\sigma}}{a(\sigma)^{1/2}}  \| \bar W_R\|_{X_{\mu,\gamma}}^2 + \fr{\re^{-(1+\mu)\sigma}}{a(\sigma)^{1/2}}\| \bar W_R\|_{X_{\mu,\gamma}}^2\notag\\
&\qquad\qquad\qquad\qquad\quad+ \re^{(1/2 - 2\gamma )\sigma}(C + a(\sigma)^{-1/2} + a(\re^\sigma)^{-1/2}) \|\tl W\|_{X_{\mu,\gamma}}^2 \Big] d\sigma\notag\\
&\leq C \| W_R\|_{X_{\mu,\gamma}}^2,\label{e:nlfp1}
\end{align}
and in a similar manner
\begin{align}
\re^{\tau/2}\|e_3^T \bar F(W_R)\|_{m}&\leq C \int_0^\tau \re^{\sigma/2} \Big[  \| \bar w_{R,3}\|_m \|\nabla \bar w_{R,3}\|_{m}+ \re^{\sigma/2}( \|\tl w\|_m \|\nabla \tl W\|_m + \|\tl w\|_m \|\tl W\|_{H^1(m)})\Big] d\sigma\notag\\
&\leq C\|W_R\|_{X_{\gamma,\mu}}^2.\label{e:nlfp2}
\end{align}
Here we make a slight abuse of notation, letting $\| \bar W_R\|_{X_{\mu,\gamma}}$ and $\|\tl W\|_{X_{\mu,\gamma}}$ denote the $X_{\mu,\gamma}$ norm of $(\bar W_R,0)$ and $(0,\tl W)$ respectively.
Continuing in this way one readily obtains estimates on the gradient terms
\begin{align}
a(\tau)^{1/2}\re^{\tau/2} \lp( \|e_3^T\nabla \bar F(W_R)\|_m + \re^{\mu\tau} \|e_3^\perp \nabla \bar F(W_R)\|_m\rp)\leq C\| W_R\|_{X_{\mu,\gamma}}^2.\label{e:nlfp3}
\end{align}
Combining \eqref{e:nlfp1}, \eqref{e:nlfp2} and \eqref{e:nlfp3} we then obtain 
$$
\| \bar F(W_R)\|_{X_{\mu,\gamma}} \leq C \|W_R\|_{X_{\mu,\gamma}}^2.
$$

\paragraph{Estimates on $\tilde{\mathbf{F}}$:}

For $\tl F$ we use similar estimates and \eqref{e:fullest2} of Proposition \ref{p:btev} with $q =3/2$ to find
\begin{align}
\re^{\gamma \tau}\|\tl F(W_R)\|_{*,m} &\leq 
\re^{\gamma\tau}\int_0^\tau\Big\|\tilde{\mc{S}}(\tau,\sigma)(1-Q)\Bigg[ 
\overline{\mfu}_R\cdot \nabla_\sigma \tl W  - \tl w\cdot \nabla \left(\begin{array}{c}\overline{\mfu}_R \\0\end{array}\right) + \tilde{\mfu}\cdot \nabla \left(\begin{array}{c}\overline{w}_R \\ \bar \phi_R\end{array}\right) \notag\\
&\qquad- \bar w_R\cdot \nabla_\sigma\left(\begin{array}{c}\tilde{\mfu} \\0\end{array}\right) + \tilde{\mfu}\cdot \nabla_\sigma \tl W - \tl w\cdot\nabla_\sigma \left(\begin{array}{c}\tilde{\mfu} \\0\end{array}\right)\Bigg]\Big\|_{*,m} d\sigma\notag\\
&\leq C\int_0^\tau \fr{\re^{-(4\pi^2-\delta)(\re^\tau - \re^\sigma)}\re^{\gamma\tau} }{a(\tau - \sigma)^{1/6}a(\re^{\tau} - \re^{\sigma})^{1/12}}\Bigg[
\|b^m\overline{\mfu}_R\cdot \nabla_\sigma \tl w\|_{L^{3/2}} + \|b^m\overline{\mfu}_R\cdot \nabla_\sigma \tl\phi\|_{W^{1,3/2}}
+ \|b^m\tl w\cdot \nabla \overline{\mfu}_R\|_{L^{3/2}}\notag\\
&\qquad\qquad+ \|b^m\tilde{\mfu}\cdot \nabla \overline{w}_R\|_{L^{3/2}} + \|b^m \tilde{\mfu}\cdot \nabla \bar\phi_R\|_{W^{1,{3/2}}} 
+ \|b^m\bar w_R\cdot \nabla_\sigma \tilde{\mfu}\|_{L^{3/2}}\notag\\
&\qquad\qquad+ \|b^m \tilde{\mfu}\cdot \nabla_\sigma \tl w\|_{L^{3/2}} + \| b^m \tilde{\mfu}\cdot \nabla_\sigma \tl \phi\|_{W^{1,{3/2}}}
+ \| b^m\tl w\cdot\nabla_\sigma\tilde{\mfu} \|_{L^{3/2}}
\Bigg]d\sigma\notag\\
&\leq C\int_0^\tau \fr{\re^{-(4\pi^2-\delta)(\re^\tau - \re^\sigma)}\re^{\gamma\tau} }{a(\tau - \sigma)^{1/6}a(\re^{\tau} - \re^{\sigma})^{1/12}}\Bigg[
\|\bar w_{R,3}\|_m\lp( \|\tl W\|_{*,m} + \re^{\sigma/2} \|\nabla \tl W\|_{*,m}\rp) \notag\\
&\qquad+ \|e_3^\perp \bar W_R\|_{m}\lp( \re^{\sigma/2}\|\tl W\|_{*,m} + \|\nabla \tl W\|_{*,m}\rp) 
+ \|\tl W\|_{*,m} \|\nabla \bar W\|_{m} + \re^{\sigma/2} \|\tl W\|_{*,m} \|\nabla \tl W\|_{*,m}
\Bigg] d\sigma\notag\\
&\leq C\int_0^\tau \fr{\re^{-(4\pi^2-\delta)(\re^\tau - \re^\sigma)}\re^{\gamma(\tau - \sigma)} }{a(\tau - \sigma)^{1/6}a(\re^{\tau} - \re^{\sigma})^{1/12}}\Bigg[ 1 + a(\sigma)^{-1/2} a(\re^\sigma)^{-1/2}
\Bigg]d\sigma\,\, \|W_R\|_{X_{\mu,\gamma}}^2.
\end{align}
Here we have used similar estimates as for the bound on $\bar F$ described above as well as the following
\begin{align}
\|b^m \tilde{\mfu}\cdot\nabla_\sigma \tl \phi\|_{W^{1,{3/2}}}&\leq C\lp(\|b^m \tilde{\mfu}\cdot\nabla_\sigma \tl \phi\|_{L^{3/2}} + \|b^m \nabla(\tilde{\mfu}\cdot\nabla_\sigma \tl \phi)\|_{L^{3/2}}\rp)\notag\\
&\leq C(\|\tilde{\mfu}\|_{L^6}\|b^m \nabla_\sigma \tilde\phi\|_{L^2} + \|\nabla\tilde{\mfu}\|_{L^6}\|b^m \nabla_\sigma\tl\phi\|_{L^2} + \|\tilde{\mfu}\|_{L^6}\|\nabla_\sigma\tl\phi\|_{H^1(m)})\notag\\
&\leq C\re^{\sigma/2}(\|\tilde{w}\|_m\| \nabla \tilde\phi\|_{m} + \| \tilde{w}\|_{H^1(m)}\|\nabla\tl\phi\|_{m} + \|\tilde{w}\|_m\|\nabla\tl\phi\|_{H^1(m)}),\\
\|b^m \tilde{\mfu}\cdot\nabla \bar\phi_R\|_{W^{1,3/2}}\|&\leq C(\|b^m \tilde{\mfu}\cdot\nabla\bar \phi_R\|_{L^{3/2}} + \|b^m\nabla(\tilde{\mfu}\cdot\nabla\bar \phi_R)\|_{L^{3/2}})\notag\\
&\leq C(\|\tilde{\mfu}\|_{L^6} \|b^m \nabla\phi_R\|_{L^2} + \|\nabla \tilde{\mfu}\|_{L^6}\|b^m\nabla\bar\phi_R\|_{L^2} + \|\tilde{\mfu}\|_{L^6} \|\nabla\bar\phi_R\|_{H^1(m)})\notag\\
&\leq C( \|\tl w\|_m\|\bar\Phi_R\|_m + \|\tl w\|_{H^1(m)}\|\bar\Phi_R\|_m + \|\tl w\|_m \|\bar\Phi_R \|_{H^(m)}  )
\end{align}
and other estimates which follow in a similar way.  Then, along with similar estimates for $a(\tau)^{1/2}\|\nabla \tl F\|_m$, we obtain
$$
\|\tl F(W_R)\|_{X_{\mu,\gamma}} \leq C \|W_R\|_{X_{\mu,\gamma}}^2.
$$

We thus obtain the quadratic estimate on $F$ in \eqref{e:inv} and conclude the existence of a fixed point for sufficiently small $W_0\in Z$.  This implies global existence of solutions and the structure of the $X_{\mu,\gamma}$ norm implies the temporal decay prescribed in \eqref{e:asymp1}.

 \end{proof}

\section{Global existence and asymptotics}
\subsection{Dispersive Estimates}\label{s:disp}

The proofs of the different choices of boundary conditions in Theorems \ref{t:0} are very similar, so we only provide the details
of the proof for 
 the stress-free case.  Hence for the remaining sections we will consider the boundary conditions \eqref{e:sfbc}.  Indeed, the proofs of these results follow very closely the strategy used by Roussier-Michon and Gallay in \cite{gallay2009global} so we will mainly just highlight the differences in the proof necessitated
 by the additional temperature dependence, {\it vis-a-vis} the purely rotational problem considered in
 that reference.  In contrast to the preceding section, the dispersive nature of the linearized problem is paired with large-rotation rate, $|\Omega|\gg1$, to obtain global existence for initial data in which the 
 smallness assumptions are imposed only on the quasi-geostrophic part of the initial data.

To characterize such dispersive effects, it will be important  to consider the linear Rossby-type equation
\beq\label{e:lrs}
\p_t\tl v+ \Gamma\mb{P}J_{\eta} \mb{P} \tl v = \Delta \tl v,\quad \mathrm{div}\tl u = 0,
\eeq
where $\mb{P}J_\eta\mb{P}$ was defined in \eqref{e:pjp-sf} above.  We note that since the spatial domain $\mb{D}$ is bounded in the vertical direction, and since this equation only acts on the baroclinic part of the solution (i.e. the part with non-trivial $x_3$-dependence), all eigenvalues of the linear operator will have a negative real part $\sim -4 \pi n^2$, where $n$ is the Fourier index in the $x_3$ direction. This
immediately leads to the fact that for all $s\geq0$ and $\tl v_0\in (1 - Q)H^s(\mb{D})^4$ with $\mathrm{div} \tl v_0 = 0$, a solution $\tl v(t)$ to \eqref{e:lrs} satisfies
\beq\label{e:poin}
||\tl v(t)||_{H^s(\mb{D})} \leq ||\tl v_0||_{H^s(\mb{D})} \re^{-4\pi^2 t},\quad t\geq 0.
\eeq

In addition,  recalling that $\mb{P}J_\eta\mb{P}$  is anti-symmetric, we see that
\begin{equation}
\frac{1}{2} \partial_t \| \nabla \tilde{v} \|_{L^2(\mD)}^2 = - \| \Delta \tilde{v} \|_{L^2(\mD) }^2\ .
\end{equation}
From this, we immediately conclude
\begin{Lemma}  If $\tilde{v}_0 \in H^1(\mD)^4$, then for any $T >0$, solutions of \eqref{e:lrs} satisfy
\begin{equation}
\int_0^T \| \Delta \tilde{v}(\cdot, t) \|_{L^2}^2 dt \le \frac{1}{2} \| \nabla \tilde{v}_0 \|_{L^2}^2 \ .
\end{equation}
\end{Lemma}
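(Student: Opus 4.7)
The statement is essentially a direct consequence of the energy identity displayed immediately before the lemma, so the plan is short.

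First, I would justify the energy identity itself, since the lemma relies on it. Taking the $L^2(\mD)$ inner product of $-\Delta \tl v$ with both sides of \eqref{e:lrs} and integrating by parts (the stress-free boundary conditions \eqref{e:sfbc} are preserved by the linear flow, so all boundary terms vanish), one gets
\begin{equation}
\tfrac{1}{2}\partial_t \|\nabla \tl v\|_{L^2(\mD)}^2 \;=\; -\|\Delta \tl v\|_{L^2(\mD)}^2 \;+\; \Gamma\,\langle \nabla \tl v, \nabla\, \mb{P}J_\eta\mb{P}\tl v\rangle_{L^2(\mD)}.
\end{equation}
The cross term vanishes: since $\mb{P}J_\eta\mb{P}$ is a Fourier multiplier (given by \eqref{e:pjp-sf}) it commutes with $\nabla$, and the resulting inner product $\langle \nabla\tl v, \mb{P}J_\eta\mb{P}\nabla\tl v\rangle$ is zero because the symbol in \eqref{e:pjp-sf} is skew-Hermitian. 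This recovers exactly the identity quoted in the excerpt.

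Second, having the identity, I would simply integrate in time from $0$ to $T$:
\begin{equation}
\int_0^T \|\Delta \tl v(\cdot,t)\|_{L^2}^2\, dt \;=\; \tfrac{1}{2}\|\nabla \tl v_0\|_{L^2}^2 \;-\; \tfrac{1}{2}\|\nabla \tl v(\cdot,T)\|_{L^2}^2 \;\le\; \tfrac{1}{2}\|\nabla \tl v_0\|_{L^2}^2,
\end{equation}
which is the claimed bound.

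The only mild subtlety is regularity: to write these manipulations at the pointwise-in-$t$ level one needs $\tl v \in L^\infty_t H^1_x \cap L^2_t H^2_x$. For $\tl v_0 \in H^1(\mD)^4$ this follows from standard parabolic theory applied to \eqref{e:lrs} (the skew-adjoint term $\Gamma \mb{P}J_\eta\mb{P}$ is a bounded Fourier multiplier and does not change the parabolic regularity picture). If one prefers to avoid invoking this directly, one can run the argument for initial data in $H^2$, obtain the inequality on that dense subclass, and then pass to the $H^1$ limit by approximation. I do not expect any real obstacle — the lemma is essentially the standard "$H^1$ energy estimate" for the heat semigroup, and the rotational/stratification term is inert under it because of the skew-symmetry used in the preceding display of the excerpt.
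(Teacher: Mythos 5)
Your proof is correct and follows essentially the same route as the paper: establish the energy identity $\tfrac12\partial_t\|\nabla\tl v\|_{L^2}^2=-\|\Delta\tl v\|_{L^2}^2$ using the anti-symmetry of $\mb{P}J_\eta\mb{P}$ (which makes the rotational/stratification term drop out), then integrate in time and discard the nonnegative term $\tfrac12\|\nabla\tl v(\cdot,T)\|_{L^2}^2$. The added remark on regularity/density is a harmless refinement of what the paper leaves implicit.
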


\begin{Remark}\label{r:decomp}
Due to the form of the eigenvalues of the linearized equation, solutions  $
\hat{\tilde{v}}(k,t) = \hat{\tilde{v}}(k_h,n,t)$
with $|k|  = \sqrt{k_1^2+k_2^2+4\pi^2 n^2}\sim |k_1|+|k_2|+|n| \geq R$, decay like $\sim e^{-R^2 t}$, so 
in order to understand the dispersive properties of the solutions, it suffices to study the part of the solution
localized in a neighborhood of zero in Fourier space.
\end{Remark}

With this in mind, let $B_R = \{k=(k_h,n)\in \R^2\times \mb{Z} \,|\, | k |\leq R\}$ and $S $ be the projection onto the geostrophic eigenspace, defined as above
$$
\widehat{Sv}(k) = \la a_g(k),\hat v(k)\ra_{\C^4} a_g(k).
$$ 
Because the quasi-geostrophic eigenvalue $-|k|^2$ corresponding to $a_g$ is independent
of $\Omega$ and $\Gamma$, we cannot expect any dispersive smoothing in this mode.
However for the other modes, we can 
prove the following estimates on solutions of the linear Rossby equation \eqref{e:lrs} for initial data compactly supported in Fourier space perpendicular to the geostrophic mode.
\begin{Proposition}\label{p:dis}
For any $R>0$, there exists $C_R>0$ such that, for all $\tl v_0\in (I - Q) L^2(\mb{D})^4$ with $\mathrm{div} \,\tl u_0 = 0$ and $\mathrm{supp}\,\widehat{\tl v}_0 \subset B_R$, the solution $\tl v$ of \eqref{e:lrs} satisfies
\beq
|| \tl v||_{L^1(\R_+,L^\infty(\mb{D}))} \leq C_R\lp( |\eta|^{-1/4} ||(1-S)\tl v_0||_{L^2(\mb{D}) }+||S\tl v_0||_{L^2(\mb{D})}\rp) .
\eeq
\end{Proposition}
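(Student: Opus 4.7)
The plan is to diagonalize the skew-symmetric generator $\mathbb P J_\eta\mathbb P$ via the Fourier-space eigenbasis $\{a_g,a_0,a_+,a_-\}$ displayed in \eqref{e:evecs-sf} and handle the quasi-geostrophic and oscillatory components separately. Since $\tilde v_0$ is divergence-free and baroclinic, its expansion has no $a_0$ piece (the compressible direction) and can be written as
\begin{equation*}
\hat{\tilde v}_0(k) = c_g(k)\,a_g(k) + c_+(k)\,a_+(k) + c_-(k)\,a_-(k),\qquad k=(k_h,n\pi),\ n\geq 1,
\end{equation*}
with the $c_g$ term equal to $S\tilde v_0$ and the $c_\pm$ terms supplying $(I-S)\tilde v_0$. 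The solution of \eqref{e:lrs} is then
\begin{equation*}
\hat{\tilde v}(k,t) = \mathrm e^{-|k|^2 t}\bigl(c_g\,a_g + c_+\,\mathrm e^{\mathrm i\Gamma t\,p_\eta(k)}\,a_+ + c_-\,\mathrm e^{-\mathrm i\Gamma t\,p_\eta(k)}\,a_-\bigr).
\end{equation*}

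The quasi-geostrophic part $S\tilde v(t) = \mathrm e^{t\Delta}S\tilde v_0$ carries no oscillation. Because $\tilde v_0$ is baroclinic, the vertical Fourier frequencies satisfy $n\pi \geq \pi$, so $|k|^2 \geq \pi^2$ on the support of $\widehat{S\tilde v}_0$. A Bernstein-type argument using Plancherel and the compactness of $B_R$ gives $\|S\tilde v(t)\|_{L^\infty_x} \leq C_R\,\mathrm e^{-\pi^2 t}\|S\tilde v_0\|_{L^2(\mathbb D)}$, which is integrable in $t$ and produces the $\|S\tilde v_0\|_{L^2}$ contribution in the proposition; no hypothesis on $\eta$ is needed for this piece.

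The oscillatory part carries all of the dispersive content, and its treatment follows the strategy used for rotating fluids in \cite{chemin2006mathematical, iwabuchi2016global}. I would establish a Strichartz-type estimate of the form $\|\mathrm e^{\pm\mathrm i\Gamma t p_\eta}\widehat g\|_{L^2_t L^\infty_x} \leq C_R|\eta|^{-1/2}\|g\|_{L^2(\mathbb D)}$ for $g$ with Fourier support in $B_R$ and transverse to $a_g$. The key ingredient is a pointwise bound on the convolution kernel
\begin{equation*}
K_\pm(x,t) = \int_{B_R}\mathrm e^{\mathrm i(k\cdot x\pm \Gamma t\,p_\eta(k))}\,\chi(k)\,dk,
\end{equation*}
via stationary phase in the $k_h$-variables (the sum over the finitely many relevant vertical modes $n$ contributes only a multiplicative constant). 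In the stress-free case, $p_\eta(k_h,n\pi) = \sqrt{|k_h|^2+\eta^2\pi^2 n^2}/\sqrt{|k_h|^2+\pi^2 n^2}$ is radial in $k_h$, and a direct computation shows that for large $|\eta|$ both eigenvalues of $\nabla_{k_h}^2 p_\eta$ are of order $|\eta|$ away from a lower-dimensional degenerate subset. Standard stationary phase then yields $|K_\pm(x,t)| \leq C(|\Gamma\eta|t)^{-1}$ in the regime of interest, which powers a $TT^*$ argument for the $L^2_t L^\infty_x$ estimate. Pairing this with Cauchy--Schwarz in $t$ against the integrable heat factor $\mathrm e^{-\pi^2 t}$ furnishes the required $L^1_t L^\infty_x$ bound with the square-root loss $|\eta|^{-1/4}$.

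The principal obstacle is the careful analysis near the set where $\det\nabla_{k_h}^2 p_\eta$ degenerates; there the naive stationary-phase bound deteriorates and one must excise the bad set via a dyadic stratification based on the size of the Hessian determinant, coupled with integration-by-parts estimates away from critical points of the phase in the complementary regime. This principal-curvature analysis, in the spirit of \cite{greenleaf1981principal} and carried out in related contexts in \cite{iwabuchi2016global}, is what ultimately pins down the exponent $|\eta|^{-1/4}$ rather than a stronger rate.
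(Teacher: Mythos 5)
Your proposal follows essentially the same route as the paper: the same Fourier eigenvector decomposition (with the $a_0$ mode eliminated by incompressibility), the same direct exponential-decay estimate for the geostrophic piece using $n\neq 0$ and the compact Fourier support in $B_R$, and a dispersive/Strichartz estimate for the oscillatory $a_\pm$ modes. The only real difference is that the paper obtains the oscillatory bound by following Appendix B of \cite{gallay2009global} essentially line for line rather than redoing the stationary-phase/$TT^*$ analysis you sketch (where, incidentally, your intermediate exponents --- an $L^2_t L^\infty_x$ estimate with constant $|\eta|^{-1/2}$ fed into Cauchy--Schwarz in $t$ --- would not by themselves yield $|\eta|^{-1/4}$; as you acknowledge, the actual rate is pinned down by the degenerate-Hessian analysis you defer).
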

\begin{proof}

In Fourier space, equation \eqref{e:lrs} takes the form
\beq\label{e:lrsf}
\lp(\p_t  +|k|^2I_4 + \Gamma\widehat{\mb{P}J_{\eta}\mb{P}} \rp) \widehat{\tl v}(k_h,n,t) = 0,\quad n\in \mb{Z}\diagdown \{0\}, k_h\in \R^2,
\eeq

We decompose $\tilde{v} = \tilde{v}^{+} + \tilde{v}^{-} + \tilde{v}^g$, where in Fourier space
\begin{align}
\widehat{\tilde{v}}^{\pm}(k,t)  & = e^{-t |k|^2 \pm i t \Gamma p_{\eta}(k) } \langle (\widehat{\tilde{v}}_0)(k) , a_{\pm}(k) \rangle_{\mb{C}^4} a_{\pm}(k) \notag\\
\widehat{\tilde{v}}^g(k,t) & =  \re^{-|k|^2 t} \la (\widehat{\tl v}_0)(k),a_g(k)\ra_{\mb{C}^4} a_g(k)\ ,
\end{align}
where we recall that the explicit expressions for the eigenvectors $a_g$ and $a_{\pm}$ are 
given in \eqref{e:evecs-p}.
Note here that the incompressibility condition gives $\la (\widehat{\tl v}_0)(k),a_0(k)\ra_{\mb{C}^4} = 0.$
Also note that the forms of the modes $\widehat{\tilde{v}}^{\pm}$ are almost identical to those
of the dispersive modes in \cite[Eqn. (87)]{gallay2009global}. (see also \cite[\S 4.2]{charve2005convergence}  for a discussion of similar dispersive estimates, albeit in an unbounded domain.)  Following the arguments of (\cite{gallay2009global}; App.B) more-or-less line for line, one obtains
\begin{equation}
|| \tl v^{\pm} ||_{L^1(\R_+,L^\infty(\mb{D}))} \leq C_R\lp( |\eta|^{-1/4} ||(1-S)\tl v_0||_{L^2(\mb{D}) } \rp)\ .
\end{equation}

Thus, we need only estimate $\tilde{v}^g$.  However this term can be estimated directly without recourse
to the duality methods used in \cite{gallay2009global}.  Note that 
\begin{equation}
\tilde{v}^g(x,t)  = \sum_{n\neq0} \int_{\R^2}  e^{i k_h\cdot x_h} e^{2\pi i n x_3} e^{-| k_h |^2t} e^{-4 \pi^2 n^2 t} 
 \la (\widehat{\tl v}_0)(k),a_g(k)\ra_{\mb{C}^4} a_g(k) d k_h\ .
\end{equation}
Thus, using the fact that all terms have $n\neq 0$, and the support condition on $\tl v_0$ we find
\begin{align}
| \tilde{v}^g(x,t) | & \le  \sum_{n\neq0} \int_{\R^2}  e^{-| k_h |^2t} e^{-4 \pi^2 n t} 
| \la (\widehat{\tl v}_0)(k),a_g(k)\ra_{\mb{C}^4} a_g(k) |  d k_h  \\ \nonumber
& \le    \sum_{n \ne 0}  \int_{|k_h|\le R} e^{-| k_h |^2t} e^{-4 \pi^2 n t} 
| \la (\widehat{\tl v}_0)(k),a_g(k)\ra_{\mb{C}^4} a_g(k) |  d k_h \le C_R^g e^{-4 \pi^2 t}\ ,
\end{align}
Integrating this
estimate with respect to $t$ and combining it with the estimates on $\tilde{v}^{\pm}$ immediately
yields Proposition \ref{p:dis}.

\end{proof}

Note that the restriction on the support of $\hat{\tilde{v}}$ in Proposition \ref{p:dis} means that the
solution of \eqref{e:lrs} lies in any Sobolev space $H^s$ with $s \ge 0$.  From this, we immediately
obtain the following:
\begin{Corollary} With the assumptions of Proposition \ref{p:dis}, for any $1 \le p \le \infty$ and
$2 \le q \le \infty$, with $\frac{1}{p}+\frac{2}{q} \le 1$, the solution of \eqref{e:lrs} obeys:
$$
|| \tl v||_{L^p(\R_+,L^q(\mb{D}))} \leq C_R\lp( |\eta|^{-1/(4p)} ||(1-S)\tl v_0||_{L^2(\mb{D}) }+||S\tl v_0||_{L^2(\mb{D})}\rp) .
$$
\end{Corollary}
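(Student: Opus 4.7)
The plan is to decompose $\tl v=\tl v_g+\tl v_d$, where $\tl v_g$ has initial data $S\tl v_0$ and $\tl v_d$ has initial data $(1-S)\tl v_0$, and to handle the two pieces separately. Since $S$ is the orthogonal spectral projection onto the kernel direction $a_g(k)$ of $\mb{P}J_\eta\mb{P}$, it commutes with the evolution \eqref{e:lrs}, so the splitting is preserved in time and each piece retains its Fourier support in $B_R$.

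For the geostrophic piece I would exploit that $\mb{P}J_\eta\mb{P}$ annihilates $a_g$, so that $\tl v_g$ solves the pure heat equation $\p_t\tl v_g=\Delta \tl v_g$. Since $\tl v_g$ is baroclinic, the Poincar\'e-type bound \eqref{e:poin} gives $\|\tl v_g(t)\|_{L^2(\mb{D})}\le \re^{-ct}\|S\tl v_0\|_{L^2}$ for some fixed $c>0$, and Bernstein's inequality (which applies because $\widehat{\tl v_g}(\cdot,t)$ stays inside $B_R$) upgrades this to $\|\tl v_g(t)\|_{L^q(\mb{D})}\le C_R\re^{-ct}\|S\tl v_0\|_{L^2}$ for every $q\in[2,\infty]$. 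Integrating in $t$ yields $\|\tl v_g\|_{L^p(\R_+,L^q(\mb{D}))}\le C_R\|S\tl v_0\|_{L^2}$ for every $p\in[1,\infty]$, with no $\eta$-weight.

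For the dispersive piece, Proposition \ref{p:dis} applied to the initial data $(1-S)\tl v_0$ (whose $S$-component vanishes) furnishes the endpoint $\|\tl v_d\|_{L^1(\R_+,L^\infty)}\le C_R|\eta|^{-1/4}\|(1-S)\tl v_0\|_{L^2}$, while the skew-symmetry of $\mb{P}J_\eta\mb{P}$ together with the Laplacian dissipation yields the other endpoint $\|\tl v_d\|_{L^\infty(\R_+,L^2)}\le \|(1-S)\tl v_0\|_{L^2}$. On the critical line $1/p+2/q=1$ I would interpolate pointwise in $t$ via $\|\tl v_d(t)\|_{L^q}\le\|\tl v_d(t)\|_{L^\infty}^{1-2/q}\|\tl v_d(t)\|_{L^2}^{2/q}$ (valid for $q\ge 2$); the identity $p(1-2/q)=1$ along this line means that integrating in time and pulling out the uniform $L^2$ bound gives
$$\|\tl v_d\|_{L^p_t L^q_x}^p \le \|(1-S)\tl v_0\|_{L^2}^{2p/q}\,\|\tl v_d\|_{L^1_t L^\infty_x},$$
which rearranges to $\|\tl v_d\|_{L^p_t L^q_x}\le C_R|\eta|^{-1/(4p)}\|(1-S)\tl v_0\|_{L^2}$. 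For the off-line case $1/p+2/q<1$ I would fix $q$, choose $p_0<p$ with $1/p_0+2/q=1$, and interpolate the just-obtained $L^{p_0}_tL^q_x$ bound with the uniform estimate $\|\tl v_d\|_{L^\infty_t L^q_x}\le C_R\|(1-S)\tl v_0\|_{L^2}$ (from Bernstein plus energy) using H\"older in time; the $\eta$-exponent combines correctly since $(|\eta|^{-1/(4p_0)})^{p_0/p}=|\eta|^{-1/(4p)}$.

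The main point requiring care is that the weight $|\eta|^{-1/(4p)}$ must attach only to the $(1-S)$ component and not to the full $L^2$ norm of the initial data. This is precisely why I split at the outset; without the splitting, interpolating directly from Proposition \ref{p:dis}'s composite bound would produce powers of a sum and destroy the clean separation stated in the corollary. Everything else reduces to routine interpolation and dissipative estimates already in hand from Proposition \ref{p:dis} and the Poincar\'e-type bound \eqref{e:poin}.
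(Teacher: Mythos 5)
Your proof is correct, and it is in substance the argument the paper intends: the paper's own justification is a one-line remark that the compact Fourier support puts the solution in every $H^s$ (hence every $L^q$ with $R$-dependent constants), after which the mixed-norm bound follows by interpolating the $L^1_t L^\infty_x$ estimate of Proposition \ref{p:dis} against the energy/embedding bounds. Where you go beyond the paper's terse "we immediately obtain" is in making explicit that one must split $\tl v$ into the $S$- and $(1-S)$-parts \emph{before} interpolating: since $S$ commutes with the evolution (the eigenvectors $a_g,a_0,a_\pm$ are orthogonal pointwise in $k$), the geostrophic piece evolves by pure heat flow and contributes $\|S\tl v_0\|_{L^2}$ with no $\eta$-weight via the baroclinic spectral gap and the support condition, while the dispersive piece interpolates cleanly between $L^1_tL^\infty_x$ and $L^\infty_tL^2_x$ on the line $1/p+2/q=1$ and then by H\"older in time off that line, with the exponent arithmetic $(|\eta|^{-1/(4p_0)})^{p_0/p}=|\eta|^{-1/(4p)}$ working out as you say. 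Your observation that interpolating the composite bound of Proposition \ref{p:dis} directly would only yield a power of a sum (and hence not the stated separation of the $\eta$-weight onto the $(1-S)$-component) is accurate; the paper avoids this issue only implicitly, because the decomposition $\tl v=\tl v^{+}+\tl v^{-}+\tl v^{g}$ already appears inside its proof of Proposition \ref{p:dis}. In short: same strategy, but your write-up supplies the splitting and interpolation details that the paper leaves to the reader, and it does so correctly.
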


\subsection{Global Existence}\label{s:glob}

To make use of these dispersive properties in the nonlinear problem we make the following decomposition
$$
\tl v(x,t) = \lambda(x,t) + r(x,t),
$$
where $\lambda$ satisfies
\beq\label{e:lrl}
\p_t\lambda+ \Gamma \mb{P}J_{\eta}\mb{P}\lambda = \Delta \lambda, \quad \mathrm{div}\lambda = 0,
\eeq
with initial data $\lambda_0 = P_R \tl v_0$ where $P_R$ is the multiplier defined by
\begin{equation}\label{e:PRdef}
\widehat{(P_R f)}_n(k) = \chi\lp( \fr{\sqrt{|k|^2 + (2\pi n)^2}}{R} \rp)\hat f_n(k),
\end{equation}
and $\chi$ is a smooth function with $\chi(k) = 1$ for all $|k|<1$ and $\chi(k) = 0$ for $|k|>2.$ The remainder $r$ must then solve
\beq\label{e:rt}
\p_t r + \Gamma\mb{P}J_{\eta}\mb{P} r + N_3 = \Delta r, \quad \mathrm{div}r = 0,
\eeq
with $N_3 = \mb{P}\lp[  (\bar u\cdot \nabla)\tl v + (\tl u\cdot \nabla) \bar v + (1 - Q) (\tl u\cdot \nabla)\tl v   \rp]$ and initial condition $r_0 = (1 - P_R) \tl u_0$.

The estimates of the previous section control the evolution of $\lambda$, while we expect $r$ will decay exponentially fast at the linear level (see Remark \ref{r:decomp}).  Such linear estimates are then sufficient to show that solution $r(t)$ with $r_0$ small will remain so. Also note, we obtain the following local existence result using a standard fixed point argument.
\begin{Proposition}(Local Existence)\label{p:lex}
For any $R>0$, there exists $T_R>0$ such that for all $\Omega, \Gamma\in \R$ and initial data $v_0\in X_\mathrm{sf}$ with $||v_0||_{X_\mathrm{sf}}<R$ the equation \eqref{e:1} has a unique local solution $v\in C^0([0,T_R],X_\mathrm{sf})$ satisfying $v(0) = v_0$.
\end{Proposition}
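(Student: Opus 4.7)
The plan is to establish local well-posedness via a standard Banach fixed-point argument applied to the mild formulation of \eqref{e:1}. Working in the barotropic/baroclinic decomposition from Section \ref{s:prim}, we rewrite \eqref{e:2a}--\eqref{e:2b} in integral form:
\begin{align}
\bar\omega_3(t) &= e^{t\Delta_h}\bar\omega_{3,0} - \int_0^t e^{(t-s)\Delta_h}\bigl[\bar u_h\cdot\nabla\bar\omega_3 + \bar N(v)\bigr](s)\,ds,\notag\\
\tl v(t) &= e^{t(\Delta - \Gamma\mb{P}J_\eta\mb{P})}\tl v_0 - \int_0^t e^{(t-s)(\Delta - \Gamma\mb{P}J_\eta\mb{P})}\tl N(v)(s)\,ds,\notag
\end{align}
and seek a fixed point of the induced map $\Phi$ on a closed ball in $C^0([0,T_R];X_{\mathrm{sf}})$ of radius $\sim R$. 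The key observation that makes the existence time $T_R$ independent of $\Omega,\Gamma$ is that $\mb{P}J_\eta\mb{P}$ is skew-adjoint on $L^2(\mathbb D)^4$ and commutes with $\Delta$ (and with derivatives), so the full baroclinic semigroup is a contraction on $H^s(\mathbb D)$ for every $s\geq 0$, with estimates uniform in $\eta$.

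For the linear part, the 2D heat semigroup acts boundedly on $L^1(\R^2)\cap L^2(\R^2)$ and smooths one derivative with the usual $t^{-1/2}$-factor, i.e.\ $\|e^{t\Delta_h}\nabla f\|_{L^p(\R^2)}\lesssim t^{-1/2}\|f\|_{L^p(\R^2)}$ for $p\in\{1,2\}$; the full baroclinic semigroup $e^{t(\Delta-\Gamma\mb{P}J_\eta\mb{P})}$ maps $L^2(\mathbb D)^4$ continuously to $H^1(\mathbb D)^4$ with an analogous $t^{-1/2}$ gain, and preserves $H^1(\mathbb D)^4$. These estimates, together with the incompressibility conditions $\mathrm{div}\,\bar u_h=0$ and $\mathrm{div}\,\tl u=0$, let us write transport terms in divergence form and absorb one derivative into the semigroup.

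For the nonlinear estimates we use Hölder and the Sobolev embedding $H^1(\mathbb D)\hookrightarrow L^p(\mathbb D)$ for $p\in[2,6]$, plus the Biot--Savart bound \eqref{e:bsbc2} to control $\tl u$ by $\tl\omega$ (equivalently by $\tl v$). The barotropic forcing $\bar N(v)$ contains only baroclinic--baroclinic couplings, so it is quadratic in $\tl v$ and may be bounded in $L^1(\R^2)\cap L^2(\R^2)$ (after vertical averaging $Q$) by $\|\tl v\|_{H^1(\mathbb D)}^2$ via $L^2\cdot L^2\!\to\!L^1$ and $L^3\cdot L^2\!\to\!L^{6/5}\!\subset\! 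L^2$ after the heat-semigroup smoothing. The transport term $\bar u_h\cdot\nabla\bar\omega_3=\nabla_h\!\cdot\!(\bar u_h\bar\omega_3)$ is handled by placing $\bar u_h\bar\omega_3$ in $L^p(\R^2)$ using the 2D Biot--Savart estimate $\|\bar u_h\|_{L^q(\R^2)}\lesssim\|\bar\omega_3\|_{L^1\cap L^2}$ valid for every $q\in[2,\infty)$ by interpolation, and then using the one-derivative heat gain. The baroclinic nonlinearity $\tl N(v)$ is estimated similarly: each quadratic term of the form $(\tl u\cdot\nabla)\tl v$, $(\bar u\cdot\nabla)\tl v$, $(\tl u\cdot\nabla)\bar v$ lies in $L^{3/2}(\mathbb D)\hookrightarrow H^{-1}(\mathbb D)$ via $L^6\cdot L^2\!\to\!L^{3/2}$, and then the $t^{-1/2}$ smoothing of the baroclinic semigroup closes the $H^1$-bound.

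Assembling these bounds yields an estimate of the form $\|\Phi(v)\|_{C([0,T];X_{\mathrm{sf}})}\leq C_0R + C_1 T^{1/2}(R+\|v\|^2)$ together with a Lipschitz bound on $\Phi$ with constant $O(T^{1/2}(R+\|v\|+\|v'\|))$. Choosing $T_R$ sufficiently small (depending only on $R$) makes $\Phi$ a contraction on the closed ball $\{\|v\|\leq 2C_0 R\}$, yielding the desired unique mild solution in $C^0([0,T_R];X_{\mathrm{sf}})$. The main obstacle in carrying this out is bookkeeping: the norm on $X_{\mathrm{sf}}$ is a mixed $L^1\cap L^2\times H^1$ structure, so each nonlinear term must be paired with the appropriate Hölder exponents and the Biot--Savart/Sobolev embedding that matches the available smoothing of the corresponding semigroup, rather than any single hard analytic estimate.
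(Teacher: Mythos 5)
Your overall architecture (Duhamel formulation in the barotropic/baroclinic splitting, contraction mapping in $C^0([0,T_R];X_{\mathrm{sf}})$, uniformity in $\Omega,\Gamma$ from the skew-adjointness of $\mb{P}J_\eta\mb{P}$ and the fact that it is a bounded Fourier multiplier commuting with $\Delta$) is exactly the ``standard fixed point argument'' the paper invokes without detail, and most of your H\"older/Sobolev/Biot--Savart bookkeeping is fine. However, there is a genuine gap in how you propagate the $L^1(\R^2)$ component of the $X_{\mathrm{sf}}$ norm of $\bar\omega_3$ through the map. You claim $\|\bar u_h\|_{L^q(\R^2)}\lesssim\|\bar\omega_3\|_{L^1\cap L^2}$ ``for every $q\in[2,\infty)$,'' but the endpoint $q=2$ is false: the two-dimensional Biot--Savart estimate \eqref{e:bsbt} in Proposition \ref{p:bsest} holds only for $p\in(1,2)$, i.e.\ $q=2p/(2-p)\in(2,\infty)$, and indeed a vorticity with nonzero mean (e.g.\ the Oseen vortex) produces $\bar u_h\sim|x_h|^{-1}$ at infinity, which is not square integrable. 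Consequently $\bar u_h\bar\omega_3$ is only in $L^p(\R^2)$ for some $p>1$, and the term
\begin{equation}
\Bigl\| \int_0^t e^{(t-s)\Delta_h}\nabla_h\cdot\bigl(\bar u_h\bar\omega_3\bigr)(s)\,ds \Bigr\|_{L^1(\R^2)}
\end{equation}
cannot be closed: by Young's inequality $e^{t\Delta_h}\nabla_h$ maps $L^p\to L^1$ only when $p=1$, so the heat semigroup cannot bring an $L^p$, $p>1$, nonlinearity back into $L^1$. Thus the contraction estimate fails for precisely the self-advection term in the barotropic vorticity equation, i.e.\ for the part of the $X_{\mathrm{sf}}$ norm that distinguishes this space from a purely $L^2$-based one. (The baroclinic contribution $\bar N$ is not the problem: written in divergence form it is a product of two $L^2$ functions, hence in $L^1$, and your estimate goes through.)

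The gap is fixable by standard means, and this is where the transport structure must be used rather than the bare heat semigroup: since $\bar u_h$ is divergence free, the convection--diffusion propagator associated with $\partial_t-\Delta_h+\bar u_h\cdot\nabla_h$ is an $L^1$-contraction (this is the Carlen--Loss mechanism \cite{carlen1996optimal} used in \cite{gallay2009global}, and it is what underlies the paper's a priori bound \eqref{e:est4}). One can therefore either set up the iteration for $\bar\omega_3$ using this propagator with the drift frozen from the previous iterate, or run the contraction only in the $L^2\times H^1$ part of the norm (possibly with Kato-type time-weighted auxiliary norms) and recover boundedness and continuity of $\|\bar\omega_3(t)\|_{L^1}$ a posteriori from the equation. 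Two further small inaccuracies: $L^{6/5}(\R^2)\not\subset L^2(\R^2)$ (what you need, and presumably mean, is the heat-semigroup map $L^{6/5}\to L^2$ with an integrable-in-time singularity), and the quadratic terms in $\bar N$ involve $\tl\omega\sim\nabla\tl v$, so they are quadratic in $(\tl v,\nabla\tl v)$ rather than in $\tl v$ alone; both are harmless once the terms are put in divergence form as you indicate.
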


Note that this local existence result imposes no restriction on the size of the initial data.  We now
show that if the initial size of the geostrophic component of the solution is sufficiently small, the
solution can be extended for all time, by deriving a bound on the solution in  the $X_\mathrm{sf}$-norm 
which is uniform in time.  
We will prove
\begin{Theorem}\label{t:gle}
For all $v_0\in X_\mathrm{sf}$ with $S(1-Q)v_0$ sufficiently small, that is the projection onto the baroclinic-geostrophic portion of $u_0$ is small, there exists $\Omega_0$ such that for all $\Omega\in \R$ with $|\Omega|\geq \Omega_0$ the stably-stratified system \eqref{e:1} with stress-free boundary conditions has a unique  global solution $v\in C^0([0,\infty),X_\mathrm{sf})$ with $v(0) = v_0$.  Furthermore, there exists a constant $C>0$ such that $||v(t)||_{X_\mathrm{sf}} \leq C$ for all $t>0$. 
\end{Theorem}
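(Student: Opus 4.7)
The proof will follow the template established by Gallay and Roussier-Michon in \cite{gallay2009global}, with modifications to accommodate the thermal coupling. Proposition \ref{p:lex} already supplies a unique local solution on some $[0,T_R]$, so the theorem reduces to a uniform a priori bound $\|v(t)\|_{X_{\mathrm{sf}}}\le C$, which both prevents blow-up and closes the bootstrap. I would work with the barotropic/baroclinic decomposition \eqref{e:2a}-\eqref{e:2b} and further split $\tilde v=\lambda+r$, where $\lambda$ solves the linear Rossby equation \eqref{e:lrl} from data $\lambda_0=P_R\tilde v_0$ (with the smooth Fourier cutoff $P_R$ defined in \eqref{e:PRdef}), and $r$ solves \eqref{e:rt} from the high-frequency data $r_0=(1-P_R)\tilde v_0$. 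Three quantities then have to be controlled simultaneously: $\bar\omega_3$, $\lambda$, and $r$.

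For $\lambda$ I would invoke Proposition \ref{p:dis} together with its mixed-norm corollary to obtain Strichartz-type bounds
\begin{equation*}
\|\lambda\|_{L^p(\R_+,L^q(\mD))}\le C_R\bigl(|\eta|^{-1/(4p)}\|\tilde v_0\|_{L^2}+\|S\tilde v_0\|_{L^2}\bigr),
\end{equation*}
which, by the smallness hypothesis on $\|S(1-Q)v_0\|_{L^2}=\|S\tilde v_0\|_{L^2}$ and the choice $|\Omega|\ge\Omega_0$ large (hence $|\eta|=|\Omega|/|\Gamma|\gg 1$), make $\lambda$ small in the relevant space-time norms. For $r$ I would run an $H^1(\mD)$ energy estimate; skew-adjointness of $\mb{P}J_\eta\mb{P}$ kills the Coriolis-buoyancy term, and \eqref{e:poin} together with the baroclinic Poincar\'e inequality supply a spectral gap that gives exponential decay at the linear level. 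The nonlinear forcing $N_3$ is bilinear in $(\bar u,\lambda+r)$; by H\"older and Sobolev embeddings I would estimate each term in the $H^1$ inner product either by a small prefactor coming from $\lambda$ in a Strichartz norm, or by a quadratic-in-$r$ contribution absorbable into $\|\nabla r\|_{H^1}^2$, or by a term controllable through $\|\bar\omega_3\|_{L^1\cap L^2}$. For the barotropic vorticity, equation \eqref{e:2a} is a 2D Navier-Stokes vorticity equation with forcing $\bar N(v)$ quadratic in the baroclinic fields; the classical $L^1\cap L^2$ estimates (see \cite{gallay2005global}) paired with the bounds above yield control of $\|\bar\omega_3(t)\|_{L^1\cap L^2}$. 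Gronwall closes all three bounds simultaneously, provided $\Omega_0$ is taken large enough and $\|S(1-Q)v_0\|_{L^2}$ small enough.

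The main obstacle is the careful bookkeeping of the trilinear interactions: since $\lambda$ is small only in dispersive $L^p_tL^q_x$ norms rather than in $L^\infty_tL^2_x$, one must select the Strichartz pairs so that after H\"older each interaction produces either a small prefactor $|\eta|^{-1/(4p)}$ (from the dispersive bound) or a time weight compatible with the exponential decay of the linear $r$-evolution. In particular, the $L^1(\R_+,L^\infty(\mD))$ endpoint of Proposition \ref{p:dis} is essential because it renders the running-time integral of the forcing finite. Once this bookkeeping succeeds, the uniform bound $\|v(t)\|_{X_{\mathrm{sf}}}\le C$ follows, yielding global existence; rapid rotation is indispensable precisely because it is what renders the non-geostrophic part of the baroclinic data dispersively small without any direct smallness assumption on $\|(1-S)\tilde v_0\|_{L^2}$.
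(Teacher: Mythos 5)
Your proposal follows essentially the same route as the paper: local existence plus a uniform a priori bound on $\|\bar\omega_3\|_{L^1\cap L^2}$ and $\|\nabla r\|_{L^2}$, obtained from the splitting $\tilde v=\lambda+r$ with $\lambda_0=P_R\tilde v_0$, the $L^1_t L^\infty_x$ dispersive bound of Proposition \ref{p:dis} (small via large $|\Omega|$ and small geostrophic projection), the $H^1$ energy estimate for $r$ using skew-adjointness and the Poincar\'e gap, the forced 2D vorticity estimates for $\bar\omega_3$, and a Gronwall/bootstrap closure. This is exactly the structure of the paper's argument (Propositions \ref{p:en}, Lemma \ref{l:diffeq}, and the continuity argument at $T^*$), so no further comparison is needed.
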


As noted above, the proof of this theorem follows by showing that there exists a uniform
bound of the functional
\begin{equation}
\tilde{\Psi}(t) = \| \overline{\omega}_3(t) \|_{L^1} +  \| \overline{\omega}_3(t) \|^2_{L^2}
+ \| \tilde{v}(t) \|^2_{H^1(\mD)}\ .
\end{equation}
Such a bound implies that the $X_\mathrm{sf}$-norm of the solution is uniformly bounded and hence 
we can extend the local existence theorem indefinitely.
Using the decomposition of $\tilde{v}$ defined above, a bound on $\tilde{\Psi}$ is equivalent
to a bound on 
\begin{equation}
\overline{\Psi}(t) = \| \overline{\omega}_3(t) \|_{L^1} +  \| \overline{\omega}_3(t) \|^2_{L^2}
+ \| r(t) \|^2_{H^1(\mD)}+ \| \lambda(t) \|^2_{H^1(\mD)} \ .
\end{equation}
From the estimates in Section \ref{s:disp}, we conclude that $\sup_{t \ge 0} \| \lambda(t) \|^2_{H^1(\mD)} 
\le  \| \lambda_0  \|^2_{H^1(\mD)} $.
In addition, Poincar\'e's inequality implies that $ \| r(t) \|^2_{H^1(\mD)} \le C \| \nabla r(t) \|_{L^2(\mD)}^2$.
Thus, our bound on the $X_\mathrm{sf}$-norm of the solution will follow from a bound on 
\begin{equation}\label{e:psidef}
\Psi(t) = \| \overline{\omega}_3(t) \|_{L^1} +  \| \overline{\omega}_3(t) \|^2_{L^2} + \| \nabla r(t) \|_{L^2}^2\ .
\end{equation}

We control the evolution of the various terms in $\Psi(t)$ using the following energy estimates
modeled on \cite{gallay2009global}:

\begin{Proposition}\label{p:en}
There exists a constant $C_1$ such that if $v\in C^0([0,T],X_\mathrm{sf})$ is a solution of \eqref{e:1} for some $\Gamma,\Omega\in \R$ and if $v_0$ as above for some $R>0$, then the solutions of \eqref{e:lrl}, \eqref{e:rt}, and  \eqref{e:2a} satisfy for any $t\in (0,T]:$ 
\begin{align}
\fr{d}{dt}||\bar\omega_3(t)||_{L^2(\R^2)}^2&\leq - ||\nabla \bar\omega_3(t)||^2_{L^2(\R^2)} + 8 || \, |\tl u(t)|\, |\nabla\tl u(t)| ||^2_{L^2(\mD)},\label{e:est3}\\
||\bar \omega_3(t)||_{L^1(\R^2)}&\leq ||\bar\omega_3(0)||_{L^1(\R^2)} + 2 \int_0^t ||\tl u(s)||_{L^2(\mD)}\, ||\Delta \tl u(s)||_{L^2(\mD)} ds,\label{e:est4} \\
\fr{d}{dt}|| \nabla r(t)||^2_{L^2(\mD)}&\leq - ||\Delta r(t)||^2_{L^2(\mD)} + C_1 ||\nabla r(t)||^2_{L^2(\mD)} \,||\nabla\bar u(t)||_{L^2(\R^2)}^2 \, ||\Delta \bar u(t)||^2_{L^2(\R^2)}\notag\\
&\quad + C_1\lp( ||\bar u(t)||^2_{L^4(\R^2)}||\nabla\lambda(t)||_{L^4(\mD)}^2 + ||\nabla \bar u(t)||_{L^2(\R^2)}^2||\lambda(t)||_{L^\infty(\mD)}^2 + ||\,|\tl u(t)|\,|\nabla \tl v(t)|\,||_{L^2(\mD)}^2 \rp).\label{e:est5}
\end{align}

\end{Proposition}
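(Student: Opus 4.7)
The plan is to derive each of the three inequalities by a standard energy argument applied to the appropriate evolution equation, exploiting the algebraic structure of the baroclinic forcing $\bar N(v) = Q[(\tilde u\cdot\nabla)\tilde\omega_3 - (\tilde\omega\cdot\nabla)\tilde u_3]$ and the antisymmetry of the rotation/stratification operator. A preliminary observation is that, after using $\nabla\cdot\tilde u = \nabla\cdot\tilde\omega = 0$ together with the stress-free boundary condition $\tilde u_3|_{x_3\in\{0,1\}} = 0$, the vertical contributions drop out of the vertical average $Q$ and we may write $\bar N(v) = \nabla_h\cdot Q[\tilde u_h\tilde\omega_3 - \tilde\omega_h\tilde u_3]$. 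For \eqref{e:est3} I would multiply \eqref{e:2a} by $\bar\omega_3$ and integrate over $\R^2$: the transport term vanishes by $\nabla_h\cdot\bar u_h = 0$, the Laplacian gives $-\|\nabla\bar\omega_3\|_{L^2}^2$, and integrating the forcing by parts produces $\int\nabla\bar\omega_3\cdot Q[\tilde u_h\tilde\omega_3 - \tilde\omega_h\tilde u_3]\,dx_h$, which is bounded by the $\||\tilde u||\nabla\tilde u|\|_{L^2(\mD)}$ quantity via Cauchy--Schwarz (using $\|Qf\|_{L^2(\R^2)}\leq\|f\|_{L^2(\mD)}$) and Young's inequality.

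For \eqref{e:est4}, the linear operator $\Delta-\bar u_h\cdot\nabla$ (with divergence-free $\bar u_h$) generates an $L^1(\R^2)$-contractive semigroup, so Duhamel's formula yields $\|\bar\omega_3(t)\|_{L^1}\leq\|\bar\omega_3(0)\|_{L^1}+\int_0^t\|\bar N(v)(s)\|_{L^1(\R^2)}\,ds$. Using $\|Qf\|_{L^1(\R^2)}\leq\|f\|_{L^1(\mD)}$ and Cauchy--Schwarz, we get $\|\bar N(v)\|_{L^1}\leq\|\tilde u\|_{L^2(\mD)}\|\nabla\tilde\omega_3\|_{L^2}+\|\tilde\omega\|_{L^2}\|\nabla\tilde u_3\|_{L^2}$; the first factor is controlled by $\|\tilde u\|_{L^2}\|\Delta\tilde u\|_{L^2}$ via the Calder\'on--Zygmund-type bound $\|\nabla\tilde\omega\|_{L^2}\leq\|\Delta\tilde u\|_{L^2}$, and the second by the integration-by-parts identity $\|\nabla\tilde u\|_{L^2}^2 = -\langle\tilde u,\Delta\tilde u\rangle\leq\|\tilde u\|_{L^2}\|\Delta\tilde u\|_{L^2}$, which together produce the prefactor $2$.

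For \eqref{e:est5}, I would take the $L^2(\mD)$ inner product of \eqref{e:rt} with $-\Delta r$. Since $\Gamma\mathbb{P}J_\eta\mathbb{P}$ is an antihermitian Fourier multiplier commuting with $\Delta$, the rotation term drops; the diffusion gives $-\|\Delta r\|_{L^2}^2$; and there remains $\frac{1}{2}\frac{d}{dt}\|\nabla r\|^2 + \|\Delta r\|^2 = \langle N_3,\Delta r\rangle$. Expanding $N_3$ via $\tilde v = \lambda + r$ and treating each summand by H\"older's inequality, the 2D Ladyzhenskaya bound $\|\bar u\|_{L^4(\R^2)}^2\leq C\|\bar u\|_{L^2}\|\nabla\bar u\|_{L^2}$ (iterated, to estimate $\|\nabla\bar u\|_{L^4}$), the 3D Gagliardo--Nirenberg inequalities and the embedding $H^1(\mD)\hookrightarrow L^6(\mD)$ applied to $r$ and $\lambda$ (with Poincar\'e in $x_3$ for $r$), and Young's inequality to absorb excess powers of $\|\Delta r\|$ into the leading diffusion, one separates the result into: (a) contributions from products involving only $r$ and $\bar u$, which yield the coefficient $\|\nabla\bar u\|^2\|\Delta\bar u\|^2$ on $\|\nabla r\|^2$; (b) contributions linear in $\lambda$, which produce $\|\bar u\|_{L^4}^2\|\nabla\lambda\|_{L^4}^2 + \|\nabla\bar u\|^2\|\lambda\|_\infty^2$ after invoking the $H^1$ control of $\lambda$ from Section \ref{s:disp}; and (c) the remaining baroclinic self-interaction piece $\||\tilde u||\nabla\tilde v|\|_{L^2}^2$.

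The main obstacle is the bookkeeping in \eqref{e:est5}: each of the many distinct product terms in $N_3$ requires the correct choice of H\"older exponents and Young-inequality splittings so that (i) the total absorbed portion of $\|\Delta r\|^2$ stays strictly less than $1$, leaving a leading $-\|\Delta r\|^2$ on the right-hand side, and (ii) the $\|\nabla r\|^2$ coefficients combine into a quantity integrable in $t$ via the \emph{a priori} bound from \eqref{e:est3}, enabling a Gronwall closure in the subsequent global existence argument. Estimates \eqref{e:est3} and \eqref{e:est4}, by contrast, follow essentially in one step once the horizontal-divergence structure of $\bar N$ is recognized and the $L^1$-contractivity of the barotropic transport-diffusion semigroup is invoked.
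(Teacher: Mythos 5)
Your proposal is correct and takes essentially the same route as the paper: the paper obtains \eqref{e:est3}--\eqref{e:est4} by invoking Proposition 2.5 of \cite{gallay2009global} (whose content is exactly your divergence-form energy estimate for $\bar\omega_3$ and the $L^1$-contraction/Duhamel argument for the advection--diffusion equation, the only nitpick being that the time-dependent drift gives an evolution family rather than a semigroup), and it proves \eqref{e:est5} precisely as you do, by pairing \eqref{e:rt} with $\Delta r$, discarding the skew-symmetric term $\Gamma\mb{P}J_\eta\mb{P}r$, dropping the Helmholtz projection since $\nabla\cdot r=0$, and estimating the nonlinearity term by term with H\"older/Gagliardo--Nirenberg/Young into the three groups of coefficients appearing in \eqref{e:est5}. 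The only difference is cosmetic: you write out the standard details that the paper delegates to the reference.
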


\begin{proof}
Note that for stress-free boundary conditions, the equations for $\bar{u}_h$ and $\bar{\omega}_3$ are
exactly the same as they are in the case of rotating fluids, so the proofs of \eqref{e:est3} and \eqref{e:est4}
follow exactly as in Proposition 2.5 of \cite{gallay2009global}. 
For \eqref{e:est5} we must make a few small changes to account for the presence of the temperature
term in our equation.    If we compute $\frac{1}{2}\fr{d}{dt} \| \nabla r \|_{L^2}^2$, the dissipative term in \eqref{e:rt} gives rise to $- \| \Delta r\|_{L^2}^2$, while the term $\Gamma\mb{P}J_{\eta}\mb{P} r$ makes no
contribution due to anti-symmetry.  Thus, we need only estimate the contributions of the nonlinear
term:
\begin{equation}
| \int_\mD (\Delta r)\cdot \tl N dx | \le | \int_\mD (\Delta r) \cdot\left[ (\bar{u} \cdot \nabla ) \tilde{v} + (\tilde{u} \cdot \nabla)
\bar{v} - (1-Q) (\tilde{u} \cdot \nabla) \tilde{v} \right] dx |\ ,
\end{equation}
and the Helmholtz projector has vanished due to the fact that $\nabla \cdot r = 0$. These terms are broken up and estimated in turn. For example:
\begin{equation}
| \int (\Delta r) \cdot (\bar{u} \cdot \nabla) \tilde{v} dx | = | \int (\Delta r) \cdot (\bar{u} \cdot \nabla) \lambda  dx + \int (\Delta r) \cdot (\bar{u} \cdot \nabla) r dx
|\ ,
\end{equation}
and each of these terms is estimated in a fashion analogous to that used in  \cite{gallay2009global},
leading to the bound
\begin{equation}
| \int (\Delta r) \cdot (\bar{u} \cdot \nabla) \tilde{v} dx | \leq \frac{1}{16} \| \Delta r \|_{L^2}^2 + C \| \bar{u} \|_{L^4}^2 \| \nabla \lambda\|_{L^4}^2 
+ \frac{1}{16} \| \Delta r \|_{L^2}^2 + C \| \nabla r \|_{L^2}^2 \| \nabla \bar{u} \|_{L^2}^2 \| \Delta \bar{u} \|_{L^2}^2 \ .
\end{equation}
The remaining terms are estimated in a similar way leading to \eqref{e:est5}.
\end{proof}

Note that \eqref{e:est4} gives us control of the $\| \bar{\omega}_3 \|_{L^1}$ term in $\Psi(t)$ 
provided we can control the evolution of $\tilde{u}$ and $\Delta \tilde{u}$ and these are in turn
controlled by the evolution of $r$ and $\lambda$.  The evolution of $\lambda$ is controlled by
the estimates of the previous section, and thus, we turn our attention to the ``reduced'' functional
\begin{equation}
\Phi(t) = \| \bar{\omega}_3 \|_{L^2}^2 + \| \nabla r \|_{L^2}^2 \ .
\end{equation}
\begin{Remark}\label{r:rvsw} Note that we expect the two terms in $\Phi$ to have different properties - 
$\bar{\omega}_3$ may be large, but is not expected to grow much, while we can make $\nabla r$
arbitrarily small (at least initially) by choosing $R$ (in \eqref{e:PRdef}) sufficiently large.
\end{Remark}
Differentiating with respect to $t$ and using the estimates of Proposition \ref{p:en}, we obtain
\begin{align}\label{e:p1}
\fr{d}{dt}\Phi(t) &\leq  - \lp(  ||\nabla\bar\omega_3(t)||^2_{L^2} + ||\Delta r(t)||^2_{L^2}  \rp)
+ C ||\,| \tl u(t)| \,|\nabla \tl v(t)|||_{L^2}^2\notag\\
&\quad\quad +C_1\lp( ||\nabla r(t)||^2_{L^2} \,||\nabla\bar u(t)||_{L^2}^2 \, ||\Delta \bar u(t)||^2_{L^2}+||\bar u(t)||^2_{L^4}||\nabla \lambda(t)||_{L^4}^2 + ||\nabla \bar u(t)||_{L^2}^2||\lambda(t)||_{L^\infty}^2 \rp)
\end{align}
We bound the term
\begin{eqnarray}
\| | \tilde{u} | |\nabla \tilde{v} | \|^2_{L^2} \le C \left(\| \nabla r \|_{L^2}^3 \| \Delta r\|_{L^2}
+ \| \nabla r \|_{L^2}^2 ( \| \nabla \lambda \|_{L^{\infty}}^2 + \| \lambda \|_{L^{\infty}}^2 ) \right)
+ C  \| \nabla \lambda \|_{L^{2}}^2 \| \lambda \|_{L^{\infty}}^2 \ ,\label{e:bdunv}
\end{eqnarray}
while we bound $\bar{u}$ with the aid of the Biot-Savart law (see \cite[App. B]{gallay2002invariant})
\begin{equation}
\| \bar{u} \|_{L^4}^2 \le C \| \bar{\omega}_3 \|_{L^{4/3}}^2 \le C \| \bar{\omega}_3 \|_{L^1} \| \bar{\omega}_3 \|_{L^2}
\end{equation}
Recalling Remark \ref{r:rvsw} about the expected relative sizes of $\bar{\omega}_3$ and $r$, we see that \eqref{e:bdunv} implies
\begin{Lemma}\label{l:diffeq}
There exist constants $C_2,C_3,C_4$ such that the following holds.  Let $v\in C^0([0,T],X_\mathrm{sf})$ be a solution of \eqref{e:1} which is decomposed $v = \bar v + \lambda + r$ as above for some $R>0$.  Assume as well that there exist $K\geq1$ and $\epsilon \in (0,1]$ such that the corresponding components satisfy
\beq\label{e:asbd}
||\bar \omega_3(t)||_{L^2(\R^2)} \leq K, \quad ||\nabla r(t)||_{L^2(\mD)}<\epsilon,
\eeq
for all $t\in [0,T]$.  Then
\begin{align}\label{e:diffeq1}
\fr{d}{dt} \Phi(t) \leq& -\lp(  ||\nabla \bar\omega_3(t)||^2_{L^2(\R^2)} + ||\Delta r(t)||^2_{L^2(\mD^2) } \rp)\notag\\
&\quad\quad + C_2\epsilon^2 K^2 ||\Delta \bar u(t)||^2_{L^2(\R^2)} + C_3 \epsilon^2||\Delta r(t)||^2_{L^2(\R^2)},\notag\\
&\quad\quad + \Phi(t) G(t) + F(t) \notag \\
& \le -  \frac{1}{2} \lp(  ||\nabla \bar\omega_3(t)||^2_{L^2(\R^2)} + ||\Delta r(t)||^2_{L^2(\mD^2) } \rp)
+  \Phi(t) G(t) + F(t)\ ,
\end{align}
for all $t\in(0,T]$.  Here
\begin{align}
F(t)&= C_4 ( ||\lambda(t)||_{L^\infty(\mD)}^2||\Delta\lambda(t)||^2_{L^2(\mD)}
+ \| \bar{\omega}_3 \|_{L^1}^2 \| \nabla \lambda \|_{L^4}^2),\notag\\
G(t)&= C_4(||\nabla \lambda(t)||_{L^\infty(\mD)}^2 + ||\lambda(t)||_{L^\infty(\mD)}^2 + ||\nabla \lambda(t)||^2_{L^4(\mD)}).
\end{align}
\end{Lemma}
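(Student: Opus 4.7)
The plan is to differentiate $\Phi(t)=\|\bar\omega_3(t)\|_{L^2}^2+\|\nabla r(t)\|_{L^2}^2$, apply the two energy estimates \eqref{e:est3} and \eqref{e:est5} of Proposition \ref{p:en}, and then bound each nonlinear contribution so that it either can be absorbed by the two dissipation terms $-\|\nabla\bar\omega_3\|_{L^2}^2$ and $-\|\Delta r\|_{L^2}^2$ or can be written as $\Phi(t)G(t)+F(t)$ with $G,F$ as advertised. Throughout I will use the a priori hypotheses $\|\bar\omega_3\|_{L^2}\le K$ and $\|\nabla r\|_{L^2}<\epsilon$, together with three standard two-dimensional identities/inequalities for the barotropic velocity: $\|\nabla\bar u\|_{L^2(\R^2)}=\|\bar\omega_3\|_{L^2(\R^2)}$, $\|\Delta\bar u\|_{L^2(\R^2)}=\|\nabla\bar\omega_3\|_{L^2(\R^2)}$, and the interpolation-type estimate $\|\bar u\|_{L^4(\R^2)}^2\le C\|\bar\omega_3\|_{L^1(\R^2)}\|\bar\omega_3\|_{L^2(\R^2)}$ borrowed from \cite{gallay2002invariant}.

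For the $\||\tilde u||\nabla\tilde u|\|_{L^2}^2$ contribution from \eqref{e:est3}, I substitute $\tilde v=\lambda+r$ and invoke the estimate \eqref{e:bdunv} (which actually controls the larger quantity $\||\tilde u||\nabla\tilde v|\|_{L^2}^2$ appearing on the right of \eqref{e:est5}). This produces three types of pieces: (a) a cubic-in-$r$ piece $\|\nabla r\|_{L^2}^3\|\Delta r\|_{L^2}$ which, using $\|\nabla r\|_{L^2}\le\epsilon$ and Young's inequality, is $\le C\epsilon^2\|\Delta r\|_{L^2}^2+C\epsilon^2\|\nabla r\|_{L^2}^2$, the first to be absorbed in the dissipation and the second fitting into $\Phi(t)G(t)$; (b) quadratic-in-$\nabla r$ pieces weighted by $\|\nabla\lambda\|_{L^\infty}^2+\|\lambda\|_{L^\infty}^2$, which are directly of the form $\Phi(t)G(t)$; and (c) a pure $\lambda$ piece $\|\nabla\lambda\|_{L^2}^2\|\lambda\|_{L^\infty}^2$, which enters $F(t)$.

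For the remaining contributions in \eqref{e:est5}, the term $\|\nabla r\|_{L^2}^2\|\nabla\bar u\|_{L^2}^2\|\Delta\bar u\|_{L^2}^2$ is bounded by $\epsilon^2 K^2\|\Delta\bar u\|_{L^2}^2$, which is exactly the $C_2\epsilon^2K^2\|\Delta\bar u\|_{L^2}^2$ term in the intermediate line of \eqref{e:diffeq1}. The Kato-type contribution $\|\bar u\|_{L^4}^2\|\nabla\lambda\|_{L^4}^2$ is bounded by $C\|\bar\omega_3\|_{L^1}\|\bar\omega_3\|_{L^2}\|\nabla\lambda\|_{L^4}^2$ and split via $2ab\le a^2+b^2$ into a $\Phi(t)G(t)$ contribution with weight $C\|\nabla\lambda\|_{L^4}^2$ and a forcing contribution $C\|\bar\omega_3\|_{L^1}^2\|\nabla\lambda\|_{L^4}^2$ that enters $F(t)$; this explains why $\|\bar\omega_3\|_{L^1}^2\|\nabla\lambda\|_{L^4}^2$ appears in the stated definition of $F$. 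Finally $\|\nabla\bar u\|_{L^2}^2\|\lambda\|_{L^\infty}^2=\|\bar\omega_3\|_{L^2}^2\|\lambda\|_{L^\infty}^2$ is manifestly of the form $\Phi(t)G(t)$.

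Collecting all pieces yields the intermediate inequality in \eqref{e:diffeq1}. The passage to the final line uses $\|\Delta\bar u\|_{L^2}=\|\nabla\bar\omega_3\|_{L^2}$ and absorbs $C_2\epsilon^2K^2\|\nabla\bar\omega_3\|_{L^2}^2+C_3\epsilon^2\|\Delta r\|_{L^2}^2$ into the two dissipation terms at the cost of a factor $1/2$, which is what effectively fixes the smallness requirements on $\epsilon$ (relative to $K$) to be imposed in the subsequent bootstrap in Theorem \ref{t:gle}. The main obstacle is not analytical but bookkeeping: one must verify that every coupling involving the new temperature variable (absent from \cite{gallay2009global}) is disposed of as an absorbable term, as part of $\Phi(t)G(t)$, or as a purely $\lambda$-driven forcing in $F(t)$; the only term that genuinely splits between $\Phi G$ and $F$ is the Kato piece $\|\bar u\|_{L^4}^2\|\nabla\lambda\|_{L^4}^2$, and handling it correctly is what makes $F$ depend on $\|\bar\omega_3\|_{L^1}$.
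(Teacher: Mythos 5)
Your argument is essentially the paper's own proof: differentiate $\Phi$, insert the energy estimates \eqref{e:est3} and \eqref{e:est5} of Proposition \ref{p:en}, control the mixed term via \eqref{e:bdunv} together with the Biot--Savart bound $\|\bar u\|_{L^4}^2\le C\|\bar\omega_3\|_{L^1}\|\bar\omega_3\|_{L^2}$, and sort the resulting pieces into dissipation, $\Phi(t)G(t)$, and $F(t)$ exactly as the paper does (including the split of the Kato piece that puts $\|\bar\omega_3\|_{L^1}^2\|\nabla\lambda\|_{L^4}^2$ into $F$, and the observation that the final line of \eqref{e:diffeq1} is the absorption step whose smallness requirement on $\epsilon$ relative to $K$ is enforced in the subsequent bootstrap). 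One bookkeeping slip: the leftover $C\epsilon^2\|\nabla r\|_{L^2}^2$ from your Young inequality cannot be placed in $\Phi(t)G(t)$, since $G$ consists only of $\lambda$-norms and may vanish; instead use Poincar\'e's inequality for the baroclinic remainder, $\|\nabla r\|_{L^2(\mD)}\le C\|\Delta r\|_{L^2(\mD)}$, and fold it into the $C_3\epsilon^2\|\Delta r\|_{L^2}^2$ term (the same inequality is what lets the pure-$\lambda$ piece $\|\nabla\lambda\|_{L^2}^2\|\lambda\|_{L^\infty}^2$ enter $F$ in the stated form $\|\lambda\|_{L^\infty}^2\|\Delta\lambda\|_{L^2}^2$).
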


We now prove that with the aid of Gronwall's inequality, the $X_\mathrm{sf}$-norm of the solution of \eqref{e:1} remains uniformly bounded for all time, and hence, the local existence theorem can be extended without limit.  More precisely, we show
\begin{Proposition} For any initial conditions, $v_0 \in X_\mathrm{sf}$ of \eqref{e:1}, there exists $K, \Omega_0 >0$ and 
$\epsilon \in (0,1)$, such that if $|\Omega| > \Omega_0$, and if the projection onto the geostrophic
mode, $S \tilde{v}_0$, is sufficiently small, then for any $T>0$, 
\begin{eqnarray} \label{e:omegaest}
& \sup_{0 \le t \le T}\{ \| \bar{\omega}_3(t) \|_{L^1} + \| \bar{\omega}_3(t) \|_{L^2}\} \le K \\ \label{e:rest}
& \sup_{0\le t \le T} \| \nabla r(t) \|_{L^2}  \le  \epsilon\ .
\end{eqnarray}
\end{Proposition}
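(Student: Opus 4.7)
The plan is a standard bootstrap/continuation argument built on the differential inequality of Lemma \ref{l:diffeq} and the integral estimate \eqref{e:est4}. First, I choose $R > 0$ large enough that $\|(1-P_R)\tilde v_0\|_{H^1(\mD)} < \epsilon/2$; this is possible since $\tilde v_0 \in H^1(\mD)^4$. This choice already gives $\|\nabla r_0\|_{L^2} < \epsilon/2$. With $K$ to be fixed below, define
\[
T^\ast = \sup\bigl\{ T \in [0, T_R] : \|\bar\omega_3(t)\|_{L^1} + \|\bar\omega_3(t)\|_{L^2} \le K \text{ and } \|\nabla r(t)\|_{L^2} \le \epsilon \text{ for all } t \in [0,T]\bigr\}.
\]
Local existence (Proposition \ref{p:lex}) and continuity of the relevant norms in $t$ guarantee $T^\ast > 0$. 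A standard continuation argument reduces the proposition to the claim that the strict inequalities $\|\bar\omega_3(t)\|_{L^1}+\|\bar\omega_3(t)\|_{L^2} < K$ and $\|\nabla r(t)\|_{L^2} < \epsilon$ hold throughout $[0, T^\ast]$.

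Next, I control $\lambda$ through linear dispersive estimates alone. Because $\mathrm{supp}\,\widehat{\lambda_0} \subset B_R$, this support is preserved by the flow \eqref{e:lrl} and Bernstein's inequality allows us to trade spatial derivatives for powers of $R$. Combining Proposition \ref{p:dis}, its corollary, and the dissipative decay \eqref{e:poin}, one bounds $\|\lambda\|_{L^2_t L^\infty_x}$, $\|\nabla \lambda\|_{L^2_t L^\infty_x}$, $\|\nabla\lambda\|_{L^2_t L^4_x}$ and $\|\Delta\lambda\|_{L^\infty_t L^2_x}$ uniformly by
\[
C_R \bigl( |\eta|^{-1/4}\|(1-S)\tilde v_0\|_{L^2} + \|S\tilde v_0\|_{L^2}\bigr).
\]
Squaring, integrating, and using the bootstrap bound $\|\bar\omega_3\|_{L^1}^2 \le K^2$ in $F$ then yields
\[
\int_0^\infty G(s)\,ds + \int_0^\infty F(s)\,ds \le C_R (1 + K^2)\bigl( |\eta|^{-1/2}\|(1-S)\tilde v_0\|_{L^2}^2 + \|S\tilde v_0\|_{L^2}^2\bigr).
\]

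Third, I apply Gronwall's inequality to $\Phi'(t) \le \Phi(t) G(t) + F(t)$ from Lemma \ref{l:diffeq}. Once $\int_0^\infty G \le \log 2$, this yields $\Phi(t) \le 2(\Phi(0) + \int_0^\infty F)$ on $[0, T^\ast]$, and integrating the full inequality also controls the dissipation $\int_0^{T^\ast} \|\Delta r(s)\|_{L^2}^2\,ds$ by the same right-hand side. To bound $\|\bar\omega_3\|_{L^1}$, I feed the splitting $\tilde u = \lambda_u + r_u$ and Poincar\'e's inequality $\|\tilde u\|_{L^2} \le C\|\nabla\tilde u\|_{L^2}$ (valid since $\tilde u$ is baroclinic) into \eqref{e:est4} and apply Cauchy-Schwarz:
\[
\int_0^t \|\tilde u\|_{L^2}\|\Delta\tilde u\|_{L^2}\,ds \le \Bigl(\int_0^t \|\nabla\tilde u\|_{L^2}^2\,ds\Bigr)^{1/2}\Bigl(\int_0^t\|\Delta\tilde u\|_{L^2}^2\,ds\Bigr)^{1/2},
\]
and both factors are controlled by the dispersive bounds on $\lambda$ and the integrated dissipation of $r$.

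The main obstacle is the ordering of the parameters: the factor $K^2$ sitting inside $\int F$ forces one to fix $K$ \emph{first} in terms of the initial data (for example $K^2 \ge 4(\Phi(0) + 1)$ and $K \ge 2\|\bar\omega_3(0)\|_{L^1} + 1$) and to fix $R$ in terms of $\epsilon$, and only \emph{afterwards} to take $\|S\tilde v_0\|_{L^2}$ small and $|\Omega|$ large so that the dispersive smallness dominates this $K^2$. With this ordering, the bounds from the previous paragraphs fall strictly below $K^2/4$ and $\epsilon^2/4$ respectively, the strict improvements close the bootstrap, and one concludes $T^\ast = \infty$ with the advertised uniform bounds.
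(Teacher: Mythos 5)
Your overall strategy is the same as the paper's: a continuation/bootstrap argument in which $K$ and $R$ are fixed first from the initial data, the dispersive estimates on $\lambda$ (made small by taking $|\Omega|$ large and $\|S\tilde v_0\|_{L^2}$ small \emph{after} $K$, $R$ are fixed) control $\int_0^\infty F$ and $\int_0^\infty G$, Gronwall applied to the inequality of Lemma \ref{l:diffeq} controls $\Phi$ together with the integrated dissipation, and \eqref{e:est4} with the splitting $\tilde u = \lambda + r$ controls $\|\bar\omega_3\|_{L^1}$. Your explicit discussion of the ordering of the parameters (the $K^2$ sitting inside $\int F$ through the $\|\bar\omega_3\|_{L^1}^2\|\nabla\lambda\|_{L^4}^2$ term) is exactly the point the paper handles implicitly, and that part is fine.

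There is, however, one genuine gap: the closing of the bootstrap for $\|\nabla r\|_{L^2}$. You claim that "the bounds from the previous paragraphs fall strictly below $K^2/4$ and $\epsilon^2/4$ respectively," but the only estimate you have produced for $\|\nabla r\|_{L^2}^2$ is the Gronwall bound $\Phi(t)\le 2\bigl(\Phi(0)+\int_0^\infty F\bigr)$, and $\Phi(0)$ contains $\|\bar\omega_3(0)\|_{L^2}^2$, which for the initial data allowed here (arbitrarily large barotropic part, smallness only of the geostrophic projection) is of size comparable to $K^2$, not $\epsilon^2$. So the $\Phi$-bound can never recover $\|\nabla r(t)\|_{L^2}\le \epsilon/2$; this is precisely the asymmetry flagged in Remark \ref{r:rvsw}. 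To close this half of the bootstrap you need a \emph{separate} Gronwall argument applied directly to the differential inequality \eqref{e:est5} for $\|\nabla r\|_{L^2}^2$: the initial datum $\|\nabla r_0\|_{L^2}$ is small by the choice of $R$, the source terms involving $\lambda$ are small for $|\Omega|\ge\Omega_0$ and small geostrophic projection, and the Gronwall factor $\exp\bigl(C_1\int_0^{T^*}\|\nabla\bar u\|_{L^2}^2\|\Delta\bar u\|_{L^2}^2\,dt\bigr)$ is finite (though $K$-dependent) because $\|\Delta\bar u\|_{L^2}\le C\|\nabla\bar\omega_3\|_{L^2}$ and $\int\|\nabla\bar\omega_3\|_{L^2}^2$ is controlled by integrating \eqref{e:diffeq1}; the thresholds for $\Omega_0$ and $\|S\tilde v_0\|_{L^2}$ must then be taken small relative to this $K$-dependent factor, consistent with the ordering you already impose. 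This is exactly how the paper completes the proof; without this extra step your argument does not yield \eqref{e:rest}.
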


\begin{Remark} Note that Theorem \ref{t:gle} follows immediately from this proposition and the estimates
on $\lambda$ in Section \ref{s:disp}.
\end{Remark}

\begin{proof} Choose $K$ and $R$ large enough that
\begin{equation}
\| \bar{\omega}_3 |_{t=0} \|_{L^1} + \| \bar{\omega}_3 |_{t=0} \|_{L^2} \le \frac{1}{16} K\ ,
\end{equation}
and
\begin{equation}
\| \nabla r|_{t=0} \|_{L^2} \le \frac{1}{16} \epsilon\ .
\end{equation}
By the local existence theorem, there exists $T^{**} > 0$ such that \eqref{e:omegaest} and \eqref{e:rest}
hold for $0 \le t \le T^{**}$.  Let $T^*$ be the supremum over the set of values of $T$ for which these
estimates hold.  We claim that $T^* = \infty$.

Suppose instead that $T^* < \infty$.  Note that from the estimates of Section \ref{s:disp}, if we choose
$\Omega_0 $ sufficiently large, and geostrophic projection sufficiently small, we can insure that
$$
\re^{\int_0^T G(s) ds} \le 2\ ,\qquad \int_0^T F(s) ds<\fr{1}{16}K,
$$
for any $T > 0$.  Furthermore, using the estimates on $\lambda$ from the previous section, plus \eqref{e:rest} and \eqref{e:est4}, we can insure that both 
$$
\| \bar{\omega}_3(t) \|_{L^1}  \le 2 \| \bar{\omega}_3 |_{t=0} \|_{L^1} \le \frac{1}{8} K\ .
$$
for all $0 \le t \le T^*$.  If one then applies Gronwall's inequality to \eqref{e:diffeq1}, one finds
that 
\begin{equation}
\Phi(t) \le \frac{1}{2} K\ ,
\end{equation}
for all $0 \le t \le T^*$.  If we then apply a similar argument with Gronwall's inequality to 
$\| \nabla r(t) \|_{L^2}$, we also find that for  $\Omega_0$ sufficiently large, and the geostrophic 
projection sufficiently small, we have
\begin{equation}
\| \nabla r(t) \|_{L^2} \le \frac{1}{2} \epsilon\ ,
\end{equation}
for all $0 \le t \le T^*$.  However, these two estimates imply that we could extend the time for which
\eqref{e:omegaest}  and \eqref{e:rest} hold beyond $T^*$, contradicting its definition.  Hence, $T^* = \infty$
as desired.

\end{proof}

\subsection{Asymptotics}\label{s:asym}
\subsubsection{Exponential decay of $\tl v$}
\begin{Proposition}
For all $0<\mu<2\pi^2$, we have
\begin{align}
\sup_{t\geq 0} \re^{\mu t}||\nabla \tl v(t)||_{L^2} &<\infty, \\
\sup_{t\geq 1}\re^{\mu t}||\Delta \tl u(t)||_{L^2} &< \infty.
\end{align}
\end{Proposition}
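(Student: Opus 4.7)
The plan is to obtain the two exponential decay estimates by deriving differential inequalities for $\|\nabla\tilde v\|_{L^2}^2$ and for $\|\Delta\tilde u\|_{L^2}^2$ and closing them via the Poincar\'e inequality on baroclinic functions, using the a priori bounds from Theorem \ref{t:gle} and the exponential estimates of Section \ref{s:disp} to absorb the nonlinear terms.

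For the first bound, I would take the $L^2$ inner product of the baroclinic evolution \eqref{e:2b} (together with the corresponding $\tilde\theta$ equation) with $-\Delta\tilde v$ and integrate by parts. The viscous contribution yields $-\|\Delta\tilde v\|_{L^2}^2$; the skew-Hermitian Coriolis--buoyancy term $\Gamma\mathbb{P}J_\eta\mathbb{P}\tilde v$ vanishes by anti-symmetry; and the nonlinear piece $\tilde N(v)$ is estimated by H\"older and Gagliardo--Nirenberg inequalities. Decomposing $\tilde v=\lambda+r$ as in Section \ref{s:glob}, the contributions involving $\lambda$ inherit its explicit exponential-in-time smallness from Proposition \ref{p:dis}, while the remaining contributions, which each carry a factor of $r$ or $\bar v$, are controlled by the uniform-in-time $X_{\mathrm{sf}}$ bound of Theorem \ref{t:gle}. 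The output is a differential inequality of the form
\begin{equation*}
\frac{d}{dt}\|\nabla\tilde v\|_{L^2}^2+2\|\Delta\tilde v\|_{L^2}^2 \leq h(t)\|\nabla\tilde v\|_{L^2}^2+g(t),
\end{equation*}
where $h\in L^1(0,\infty)$ and $g(t)$ decays faster than $e^{-2\mu t}$ for any $\mu<2\pi^2$.

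To close the argument, I would invoke the Poincar\'e inequality for functions of zero vertical mean, $\|\Delta\tilde v\|_{L^2}^2\geq c\,\|\nabla\tilde v\|_{L^2}^2$, with $c$ at least as large as the first baroclinic eigenvalue of $-\partial_{x_3}^2$ (the same type of bound underlying \eqref{e:poin}). Choosing $c>\mu$ and absorbing the $h(t)$-term into the dissipation, Gronwall's inequality then yields $\|\nabla\tilde v(t)\|_{L^2}\leq C_\mu\, e^{-\mu t}$, establishing the first claim. For $\|\Delta\tilde u(t)\|_{L^2}$ I would repeat the scheme one order higher, taking the $L^2$ inner product of the $\tilde u$ equation with $\Delta^2\tilde u$ to derive an analogous inequality
\begin{equation*}
\frac{d}{dt}\|\Delta\tilde u\|_{L^2}^2+2\|\nabla\Delta\tilde u\|_{L^2}^2 \leq \tilde h(t)\|\Delta\tilde u\|_{L^2}^2+\tilde g(t),
\end{equation*}
once again closed via the baroclinic Poincar\'e inequality. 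The supremum must start at $t\geq 1$ because the initial data lie only in $H^1$, so $\|\Delta\tilde u(0)\|_{L^2}$ need not be finite; however, the Lemma preceding this proposition gives $\int_0^1\|\Delta\tilde v(s)\|_{L^2}^2\,ds\leq\tfrac{1}{2}\|\nabla\tilde v_0\|_{L^2}^2$, and therefore $\|\Delta\tilde u(t_0)\|_{L^2}<\infty$ for some $t_0\in[\tfrac12,1]$. From $t_0$ onwards the higher-order energy inequality propagates the exponential decay.

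The main technical obstacle is the coupling term $(\bar u\cdot\nabla)\tilde v$ (and its counterpart in the temperature equation), since $\bar u$ carries no temporal decay --- only a uniform-in-time bound. To close the estimate one must genuinely exploit the $\tilde v=\lambda+r$ splitting from Section \ref{s:glob}: $\lambda$ contributes exponential-in-time smallness via the dispersive estimates, and $\|\nabla r\|_{L^2}$, though not decaying, can be made arbitrarily small by choosing the cut-off $R$ and the geostrophic projection $S(1-Q)v_0$ sufficiently small. These two ingredients together make the nonlinear remainder on the right-hand side of the energy inequality absorbable into the dissipative Poincar\'e term, leaving a net exponential rate that can be taken arbitrarily close to (but strictly less than) $2\pi^2$.
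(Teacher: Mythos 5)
Your proposal is correct and follows essentially the same route as the paper: an energy inequality for the gradient norm of the baroclinic part, closed by the baroclinic Poincar\'e inequality, the exponential decay of $\lambda$, the smallness of $\|\nabla r\|_{L^2}$, the time-integrability of $\|\Delta \bar u\|_{L^2}^2$ coming from the Gronwall bound on $\Psi$, and Gronwall's inequality. The paper simply runs the differential inequality on $r$ (using the already-established estimate \eqref{e:est5}) rather than on $\tilde v$ itself, and obtains the $\|\Delta \tilde u\|_{L^2}$ bound by differentiating in time rather than by a second-order energy estimate; the only small correction to your argument is that the finiteness of $\|\Delta\tilde u(t_0)\|_{L^2}$ at some $t_0\in[1/2,1]$ should be drawn from the integrated dissipation in \eqref{e:diffeq1} together with the linear estimates for $\lambda$, since the Lemma of Section \ref{s:disp} you cite applies only to the linear equation \eqref{e:lrs}.
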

\begin{proof}
This once again follows in a similar way as in \cite{gallay2009global}.  We already have by Poincare's inequality that $$||\lambda(t) ||_{H^s}\leq C \re^{-4\pi^2 t}.$$  Then using the global bound derived in Theorem \ref{t:gle}, along with dispersive estimates for $\lambda$, we find that \eqref{e:poin} and \eqref{e:est5} implies
\beq
\fr{d}{dt} ||\nabla r(t)||^2_{L^2(\mD)} + \fr{1}{2} ||\Delta r(t)||^2_{L^2(\mD)} \leq C ||\nabla r(t)||_{L^2(\mD)}^2||\Delta \bar u(t)||^2_{L^2(\R^2)} + C_2 \re^{-8\pi^2 t}.
\eeq
Setting $f(t) = \re^{\mu t} ||\nabla r(t)||^2_{L^2(\mD)}$ one obtains 
\beq
f'(t) \leq C f(t) ||\Delta \bar u(t)||^2_{L^2(\R^2)} + C_2 \re^{-(8\pi^2 - \mu)t}.
\eeq
Then pairing this with the fact that $\int_0^\infty ||\Delta \bar u(t)||^2_{L^2}dt <\infty$ (from the Gronwall estimate on $\Psi(t)$), we obtain that $f(t) \leq C_3$ and thus that 
\beq
||\nabla r(t)||_{L^2(\mD)}\leq C_3\re^{-\mu t/2}.
\eeq
From which we conclude the exponential decay of $||\tl v(t)||_{H^1}$ as $t\rightarrow \infty$ with any rate $0<\mu<2\pi^2$ as 
$$
||\tl v||_{H^1(\mD)} \sim ||\nabla \tl v||_{L^2(\R^2)} \leq ||\nabla r||_{L^2(\mD)}+||\nabla \lambda||_{L^2(\mD)}.
$$
We also remark that optimal decay rates can be found in a similar way by multiplying \eqref{e:2a} with $\Delta \tl v$ to find
\beq
\sup_{t\geq 0} \re^{\mu t}||\nabla \tl v(t)||_{L^2} <\infty, \quad \text{for all } \mu < 4\pi^2,
\eeq
and furthermore, by differentiating in time, that 
$$
\sup_{t\geq 1}\re^{\mu t}||\Delta \tl u(t)||_{L^2} < \infty,\quad \text{for all } \mu < 4\pi^2.
$$
\end{proof}

\subsubsection{Diffusive decay of $\bar \omega_3$.}

In this section we show that the solution $\bar \omega_3(t)$ of \eqref{e:2a} converges to Oseen's vortex as $t\rightarrow+\infty.$  This can be obtained using the approach in \cite[\S 3.4]{gallay2009global} and thus we only outline the argument. One first introduces scaling variables by defining
\begin{align}
\bar\omega_3(x_h,t) &= \fr{1}{1+t}\bar w_3(\fr{x_h}{\sqrt{1+t}},\log(1+t)),\notag\\
\bar u_h(x_h,t) &= \fr{1}{\sqrt{1+t}} \overline{\mathfrak{u}}_h(\fr{x_h}{\sqrt{1+t}},\log(1+t)),\notag\\
\xi &= \fr{x_h}{\sqrt{1+t}},\quad \tau = \log(1+t)).\notag
\end{align}
It then follows that $\bar w_3$ satisfies the equation
\beq\label{e:scvar}
\bar w_{3,\tau} = \mc{L} \bar w_3 - (\overline{\mathfrak{u}}_h\cdot \nabla_\zeta) \bar w_3 - \bar{\mathbf{N}},
\eeq
where $\mc{L}:=\Delta_\zeta +\fr{1}{2}(\xi\cdot\nabla_\xi) + 1$, $\bar{\mathbf{N}}(\xi,\tau) = \re^{2\tau} \bar N(v(\xi\re^{\tau/2},x_3,\re^\tau-1)),$ and $\bar N$ is defined in \eqref{e:2a}.   The exponential decay of $\tl v$ gives that that 
$$
\int_0^\infty ||\bar{\mathbf{N}}(\cdot,\tau)||_{L^1(\R^2)}^2d\tau <\infty.
$$ 
Solutions $\bar w_3(\tau)$ of \eqref{e:scvar} with initial data $\bar w_{3,0}$ in $L^1(\R^2)$ are thus globally defined for $\tau\geq0$ satisfying
$$
\bar w_3\in C^0([0,\infty),L^1(\R^2)), \quad ||\bar w_3(\tau)||_{L^1(\R^2)}\leq C,
$$
for all $\tau\geq0.$ Asymptotics for the solution $\bar w_3(\tau)$ are then obtained by explicitly characterizing its omega-limit set 
$
\Omega_\infty:= \{ w_\infty\in L^1(\R^2)\,|\, \exists \tau_n\rightarrow\infty,\,\, \bar w_3(\tau_n)\rightarrow w_\infty\}.
$
 As in \cite[Lem. 3.1]{gallay2009global}, one uses Duhamel's formula to express the solution $\bar w_3(\tau)$ of \eqref{e:scvar} in terms of the explicit semi-flow, $\mathbf{\Phi}(\tau)$, associated with the purely barotropic equation
$$
\bar w_{3,\tau} = \mc{L} \bar w_3 - \overline{\mathfrak{u}}_h\cdot\nabla \bar w_3
$$
 which one can use to prove that the trajectory $\{\bar w_3(\tau)\}_{\tau\geq0}$ is relatively compact in $L^1(\R^2)$.  Then, as in \cite[Lem 3.2]{gallay2009global}, one compares the solution $\bar w_3(\tau)$ to that of the limiting equation, $\Phi(\tau) \bar w_{3,0}$, to obtain that $\Phi(\tau) \Omega_\infty = \Omega_\infty$ for all $\tau\geq0$.  Continuing as in the proof of \cite[Prop. 3.4]{gallay2005global} then implies the following result.
\begin{Proposition}
Let $A = \int_{\R^2} \bar w_{3,0}(\xi)d\xi = \int_{\R^2} \bar\omega(x_h,0)dx_h$, then the omega-limit set of a solution, $\bar w_3(\zeta,\tau)$, of \eqref{e:scvar} with initial condition $\bar w_3(\xi,0)  =\bar w_{3,0}(\xi)$ in $L^1(\R^2)$ satisfies 
$
\Omega_\infty = \{ A G \}.
$
\end{Proposition}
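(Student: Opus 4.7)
The plan is to follow the scheme of \cite[Prop. 3.4]{gallay2005global} and \cite[\S 3.4]{gallay2009global}, adapting the argument to account for the coupling with the exponentially decaying baroclinic terms through the nonlinearity $\bar{\mathbf{N}}$. There are three ingredients: (i) a global $L^1$ bound and relative compactness of the trajectory $\{\bar w_3(\tau)\}_{\tau\geq 0}$ in $L^1(\R^2)$; (ii) invariance of the omega-limit set $\Omega_\infty$ under the explicit semiflow $\mathbf{\Phi}(\tau)$ of the limiting (decoupled) equation; and (iii) identification of the only mass-$A$ invariant set as the singleton $\{AG\}$ via a Lyapunov/monotonicity argument.

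First, I would rewrite \eqref{e:scvar} in Duhamel form
\begin{equation}
\bar w_3(\tau) = \mathbf{\Phi}(\tau)\bar w_{3,0} - \int_0^\tau \mathbf{\Phi}(\tau - s)\bar{\mathbf{N}}(\cdot,s)\, ds,
\end{equation}
where $\mathbf{\Phi}(\tau)$ is the semiflow of the purely barotropic scaled 2D vorticity equation $\bar w_{3,\tau}=\mc{L}\bar w_3 - \overline{\mfu}_h\cdot\nabla \bar w_3$. Smoothing and decay estimates for $\mathbf{\Phi}(\tau)$ on $L^1(\R^2)$ (as established in \cite{gallay2002invariant,gallay2005global}) combined with $\int_0^\infty\|\bar{\mathbf{N}}(\cdot,s)\|_{L^1}^2 ds <\infty$ (which follows from the exponential decay of $\tilde v$ in the previous subsection and the quadratic form of the nonlinearities) yield a uniform $L^1$-bound $\|\bar w_3(\tau)\|_{L^1(\R^2)}\le C$ for all $\tau\geq 0$, along with a uniform bound in a suitable weighted space $L^2(m)$ on bounded time intervals. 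The Duhamel representation, together with regularising properties of $\mathbf{\Phi}$ and tightness of Gaussian-like tails produced by $\mc{L}$, then gives relative compactness of $\{\bar w_3(\tau)\}_{\tau\geq 0}$ in $L^1(\R^2)$; this is the main technical step, since one must control both spatial tails and regularity despite only $L^1$ initial data. The mass conservation $\int \bar w_3(\tau)\, d\xi = A$ for all $\tau\geq 0$ (since $\bar{\mathbf{N}}$ is in divergence form, as a sum of divergences of products of $\tilde v$) passes to the limit along any sequence $\tau_n\to\infty$, so every $w_\infty \in \Omega_\infty$ has total mass $A$.

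Next, I would establish that $\Omega_\infty$ is invariant under $\mathbf{\Phi}$. Taking $\tau_n\to\infty$ with $\bar w_3(\tau_n)\to w_\infty$ in $L^1$, the Duhamel formula for $\bar w_3(\tau_n+\tau)$ split on $[0,\tau_n]$ and $[\tau_n,\tau_n+\tau]$ gives
\begin{equation}
\bar w_3(\tau_n + \tau) = \mathbf{\Phi}(\tau)\bar w_3(\tau_n) - \int_{0}^{\tau}\mathbf{\Phi}(\tau - s)\bar{\mathbf{N}}(\cdot,\tau_n + s)\, ds.
\end{equation}
The remainder integral tends to $0$ as $\tau_n\to\infty$ because $\|\bar{\mathbf{N}}(\cdot,\tau)\|_{L^1}\to 0$ exponentially fast, while continuity of $\mathbf{\Phi}(\tau)$ in $L^1$ gives $\mathbf{\Phi}(\tau)\bar w_3(\tau_n)\to \mathbf{\Phi}(\tau)w_\infty$. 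After passing to a subsequence so that $\bar w_3(\tau_n+\tau)$ converges in $L^1$, we identify the limit with $\mathbf{\Phi}(\tau)w_\infty$, so $\mathbf{\Phi}(\tau)\Omega_\infty \subset \Omega_\infty$; a symmetric time-reversal argument (or direct compactness) gives equality $\mathbf{\Phi}(\tau)\Omega_\infty = \Omega_\infty$.

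Finally, I would invoke the characterization of the complete trajectories of the limiting 2D vorticity equation in scaling variables with prescribed total mass. By \cite[Thm. 1.1]{gallay2005global}, the only complete orbit of $\mathbf{\Phi}$ in $L^1(\R^2)$ with $\int w\,d\xi = A$ which is bounded for all $\tau\in\R$ is the stationary Oseen vortex $AG$; this is proved via a relative entropy (Lyapunov) functional strictly decreasing along $\mathbf{\Phi}$ except at the fixed point $AG$. Since every element of $\Omega_\infty$ lies on such a complete bounded trajectory (by the invariance in the previous step) and has mass $A$, we conclude $\Omega_\infty = \{AG\}$. The main obstacle is the first step: producing relative compactness in $L^1$ from only $L^1$ initial data while the nonlinear coupling is driven by $\tilde v$; the key leverage is the divergence structure of $\bar N$ plus the $H^1$-exponential decay of $\tilde v$ established previously, which together with the smoothing properties of $\mathbf{\Phi}$ propagate enough regularity and localization to close the compactness argument.
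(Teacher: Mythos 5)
Your proposal is correct and follows essentially the same route as the paper: Duhamel representation with the barotropic semiflow $\mathbf{\Phi}(\tau)$ to get relative compactness of the trajectory in $L^1(\R^2)$, invariance of $\Omega_\infty$ under $\mathbf{\Phi}$ by letting the exponentially decaying forcing $\bar{\mathbf{N}}$ drop out, and identification of the mass-$A$ invariant set as $\{A G\}$ via the Lyapunov/relative-entropy argument of \cite{gallay2005global}. The paper's proof is exactly this outline, citing \cite[Lem. 3.1, Lem. 3.2]{gallay2009global} and \cite[Prop. 3.4]{gallay2005global} for the three steps you describe.
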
 
It follows from this proposition that $\| \bar w_3 (\tau) -  A G \|_{L^1(\R^2)} \rightarrow 0$ as $\tau \rightarrow \infty$.  Translating this result into unscaled variables one can obtain leading order asymptotics for the solution $\bar\omega(x_h,t)$ (and thus $\bar u_h(x_h,t)$) to complete the proof of Theorem \ref{t:0}.

\appendix

\section{Estimates on the linear evolution}\label{a:sg}
\subsection{Barotropic evolution}\label{ss:Abt}
In this section we build, in a step-by-step fashion, the solution operators for the linear equations $\bar W_\tau = (\bar L(\tau) + \bar \Lambda) \bar W,\, \bar W(\sigma) = \bar W_0,\,$ defined for $\tau\geq \sigma$ in \eqref{e:vsc5}, \eqref{e:linv1}, and \eqref{e:pertlin1} above.   The barotropic linear evolution can be characterized and estimated more easily in the rotating frame as discussed in Section \ref{ss:btdyn}. That is, converting the linear part of \eqref{e:pbdec_red} into the rotating frame with coordinates
\begin{align}
\left(\begin{array}{c}\underline{\omega}_h \\ \underline{\Theta}\end{array}\right) := \re^{\Gamma tJ_2} \left(\begin{array}{c}\overline{\omega}_h \\ \overline{\Theta}\end{array}\right)
,\qquad J_2 = \left(\begin{array}{cc}0 & I _2\\-I_2 & 0\end{array}\right),\label{e:tJ}
\end{align}
one obtains
\begin{align}
(\underline{\omega}_h)_t = \Delta \underline{\omega}_h\notag\\
(\underline{\omega}_3)_t = \Delta \underline{\omega}_3\notag\\
\underline{\Theta}_t = \Delta\underline{\Theta}.
\end{align}
Then moving into scaling variables
$$
\underline{\omega}(x_h,t) = (1+t)^{-1} \underline{w}(x(1+t)^{-1/2},\log(1+t)),\quad
\ul{\Theta}(x_h,t) = (1+t)^{-1} \underline{\Phi}(x(1+t)^{-1/2},\log(1+t)),
$$
we obtain a linear system for $\ul{W}:= (\underline{w}, \underline{\Phi})^T,$
\begin{align}\label{e:rtsc}
\underline{W}_\tau = \mc{L} \, \underline{W}
\end{align}
which generates the strongly continuous semigroup, $\underline{S}_0(\tau):=\re^{\mc{L} t}$ on $L^2_{2D}(m)^5$.  Since the linear operator is diagonal, we can study spectral properties component-wise.  First we have that any $\lambda$ in the point spectrum satisfies $\rre\,\lambda\leq 0$.  Due to the incompressibility conditions, $\nabla_h \cdot \ul{\Phi} = \nabla_h\cdot\ul{w}_h$, we have that the $0$-eigenspace is spanned by the vector $(0,0,\varphi_0,0,0)^T$ and the $(-1/2)$-eigenspace is spanned by the vectors $(\p_2\varphi_0,-\p_1\varphi_0,0,0,0)^T$, $(0,0,0,\p_2\varphi_0,-\p_1\varphi_0)^T$, and $(0,0,\p_i\varphi_0,0,0)^T$ for $i = 1,2$.  

Since we must study different components of the evolution we define the following projection operators
$$
e_i^\perp = 1 - e_ie_i^T, i = 1,...,5,\qquad e_h^T = e_1e_1^T + e_2e_2^T,\quad e_h^\perp = 1 - e_h^T,\quad
e_p^T = e_4e_4^T + e_5e_5^T,\quad e_p^\perp = 1 - e_p^T,
$$
where $e_i$ are the standard basis vectors in $\R^5$.  Also recall that $b = (1+|\xi|^2)^{1/2}$. Next, recall that
$$
Y = Y_{h,1} \times Y_{3,0}\times Y_{p,1},\quad Y_{h,n} = L^2_{2D,n}(m)^2\cap\{ \nabla_h\cdot \underline{w}_h = 0\},\, Y_{3 ,n}=  L^2_{2D,n}(m),\,\, Y_{p,n} =  L^2_{2D,n}(m)^2\cap\{\nabla_h\cdot \ul{\Phi} = 0\}
$$
with the standard $L^2_{2D}(m)$ norm.  We first state asymptotics for the strongly continuous semi-group $\re^{\tau\mc{L}}$ generated by the operator $\mc{L}$ on $L^2_{2D}(m)$.

 \begin{Proposition}\label{p:sgest2}
Let $f\in L^2_{2D}(m)$, $m>1,q\in [1,2], \alpha\in \mb{N}^2$, then there exists $C>0$ constant such that 
\beq
|| b^m \p^\alpha \re^{\tau \mc{L}} Q_n f||_{L^2(\R^2)},  \leq C\fr{\re^{-\gamma\tau}}{a(\tau)^{1/q-1/2+|\alpha|/2}}||b^m f||_{L^q(\R^2)}\label{e:sgest2}
\eeq
where $a(\tau) = 1-\re^{-\tau}$ and 
$$
\gamma = \begin{cases}
\fr{m-1-\epsilon}{2}, &\text{if}\quad n+1<m\leq n+2\\
\fr{n+1}{2},\quad &\text{if}\quad m>n+2.
\end{cases}$$
\end{Proposition}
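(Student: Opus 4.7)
The strategy is to exploit the explicit integral representation of $e^{\tau\mathcal{L}}$, which is available because $\mathcal{L}$ arises from rewriting the heat equation on $\mathbb{R}^2$ in the self-similar variables $\xi = x/\sqrt{1+t}$, $\tau = \log(1+t)$. Direct substitution into the heat kernel yields, with $a(\tau) = 1 - e^{-\tau}$,
\[
(e^{\tau\mathcal{L}} f)(\xi) \;=\; \frac{1}{4\pi a(\tau)} \int_{\mathbb{R}^2} \exp\!\Bigl(-\tfrac{|\xi - e^{-\tau/2}\eta|^2}{4 a(\tau)}\Bigr)\, f(\eta)\, d\eta,
\]
and differentiating under the integral sign gives analogous formulas for $\partial^\alpha e^{\tau\mathcal{L}} f$, each spatial derivative contributing a Gaussian factor of size $a(\tau)^{-1/2}$. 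This is the workhorse for every step below.

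First I would prove the case $n=-1$ (i.e.\ $Q_n = I$) by a weighted Young-type argument. Split the weight via $b^m(\xi) \le C\, b^m(\xi - e^{-\tau/2}\eta)\, b^m(e^{-\tau/2}\eta)$, absorb the second factor into $b^m(\eta)$ using $e^{-\tau/2} \le 1$, and estimate the remaining $\eta$-convolution kernel $b^m(\zeta)\exp(-|\zeta|^2/(4a(\tau)))$ in $L^r(\mathbb{R}^2)$ with $1/r = 3/2 - 1/q$. This produces exactly the factor $a(\tau)^{-(1/q - 1/2 + |\alpha|/2)}$ and gives the estimate with no exponential decay $(\gamma = 0)$; once the derivative piece is included, the derivative factor $a(\tau)^{-|\alpha|/2}$ appears automatically from differentiating the kernel.

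Next I would upgrade to $n \ge 0$ by exploiting the defining property of $Q_n$: since $P_n$ projects onto $\operatorname{span}\{\varphi_\alpha : |\alpha| \le n\} = \operatorname{span}\{\partial^\alpha \varphi_0 : |\alpha|\le n\}$, the range of $Q_n$ is precisely characterized by the vanishing-moment conditions $\int \eta^\beta (Q_n f)(\eta) d\eta = 0$ for all $|\beta|\le n$. Taylor-expanding the kernel $K(\xi,\eta,\tau)$ in $\eta$ around $\eta = 0$ up to order $n$ and subtracting the annihilated polynomial part leaves a remainder of size $C\, |e^{-\tau/2}\eta|^{n+1}$, producing the target factor $e^{-(n+1)\tau/2}$. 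The remaining $|\eta|^{n+1}$ in the integrand is absorbed by the weight $b^{-m}$: when $m > n+2$ this integrates cleanly and gives $\gamma = (n+1)/2$, whereas for $n+1 < m \le n+2$ the borderline integrability forces an $\epsilon$-loss, yielding $\gamma = (m-1-\epsilon)/2$. The combined bound is then assembled with the base estimate of the previous step applied to the remainder kernel.

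The main technical obstacle is bookkeeping at the boundary between the two regimes for $\gamma$: one must interpolate between the ``spectral'' rate $(n+1)/2$ dictated by the next gap in $\sigma(\mathcal{L})$ and the ``weight-limited'' rate $(m-1-\epsilon)/2$ coming from the essential spectrum, and verify that the Taylor remainder manipulations remain consistent with the placement of the weight $b^m$ under the convolution. Once this case split is handled, combining the kernel differentiation with the weighted Young estimate yields the full bound uniformly in $\alpha$, $q$, and $\tau$, as required.
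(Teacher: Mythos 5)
Your outline is essentially correct, but note that the paper does not prove this proposition at all: it simply cites Proposition 4.2.2 of Roussier-Michon and Propositions A.2 and A.5 of Gallay--Wayne, where the result is established. Your plan reconstructs that standard argument on the physical-space side: the explicit kernel $(\re^{\tau\mc{L}}f)(\xi)=\frac{1}{4\pi a(\tau)}\int\exp\lp(-\tfrac{|\xi-\re^{-\tau/2}\eta|^2}{4a(\tau)}\rp)f(\eta)\,d\eta$ together with Peetre's inequality for the weight and Young's inequality gives the $a(\tau)^{-(1/q-1/2+|\alpha|/2)}$ factor, and the extra decay on $\mathrm{Rg}(Q_n)$ comes from the vanishing moments killing the Taylor polynomial of the kernel in $\re^{-\tau/2}\eta$ up to order $n$. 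The cited references do the same thing, except that Gallay--Wayne work largely on the Fourier side, where $L^2_{2D}(m)$ becomes $H^m(\R^2)$, membership in $\mathrm{Rg}(Q_n)$ becomes vanishing of $\hat f$ and its derivatives at $k=0$ up to order $n$, and the two regimes for $\gamma$ are read off from the H\"older continuity of $\hat f$ at the origin afforded by $H^m$-regularity. Both routes are equivalent; yours is perhaps more elementary, theirs makes the origin of the $\epsilon$-loss transparent.

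The one step you should make precise is the regime $n+1<m\le n+2$. As written, you invoke the full order-$(n+1)$ Taylor remainder, which forces you to integrate $|\eta|^{n+1}$ against $b^{-m}f\cdot b^m$, and $\||\eta|^{n+1}b^{-m}\|_{L^2(\R^2)}$ diverges precisely when $m\le n+2$; this is not a ``borderline $\epsilon$-loss at the same rate'' but an actual obstruction to obtaining $\gamma=(n+1)/2$. The fix is to interpolate the remainder bound: the difference between the kernel and its order-$n$ Taylor polynomial is bounded both by $C\re^{-(n+1)\tau/2}|\eta|^{n+1}a(\tau)^{-(n+1)/2}$ and by the corresponding order-$n$ expression, so for any $\sigma\in[n,n+1]$ it is bounded by $C\re^{-\sigma\tau/2}|\eta|^{\sigma}$ times the appropriate power of $a(\tau)^{-1}$. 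Choosing $\sigma=m-1-\epsilon$ makes $|\eta|^{\sigma}b^{-m}$ square-integrable and yields exactly $\gamma=(m-1-\epsilon)/2$, consistent with the essential spectrum $\{\Re\lambda\le\frac{1-m}{2}\}$ of $\mc{L}$ on $L^2_{2D}(m)$. With that interpolation spelled out (and the routine check that the $a(\tau)$-singularities from the derivative $\p^\alpha$ and from the $L^q\to L^2$ smoothing combine as claimed for small $\tau$, while for $\tau\ge 1$ one uses the $q=2$ bound composed with the smoothing on $[0,1]$), your argument gives the stated estimate.
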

\begin{proof}
See Proposition 4.2.2 of \cite{roussier2003stabilite} or Propositions A.2  and A.5 of \cite{gallay2002invariant}.
\end{proof}
One can then readily conclude the following temporal estimates on $\ul{\mc{S}}_0$.
\begin{Proposition}
Let $f\in Y$, $m>3, q\in [1,2], \alpha\in \mb{N}^2.$ If in addition $b^m f\in L^q$ then there exists a constant $C>0$ such that 
\begin{align}
\| e_h^T\lp(\p^\alpha \ul{\mc{S}}_0(\tau) f\rp)\|_m &\leq C \fr{\re^{-\tau}}{a(\tau)^{1/q - 1/2+|\alpha|/2}}\|b^m e_h^T f\|_{L^q},\label{e:S0est1}\\
\| e_3^T\lp(\p^\alpha \ul{\mc{S}}_0(\tau) f\rp)\|_m &\leq C \fr{\re^{-\tau/2}}{a(\tau)^{1/q - 1/2+|\alpha|/2}} \| b^m e_3^T f\|_{L^q}.\label{e:S0est2}\\
\| e_p^T\lp(\p^\alpha \ul{\mc{S}}_0(\tau) f\rp)\|_m &\leq C \fr{\re^{-\tau}}{a(\tau)^{1/q - 1/2+|\alpha|/2}} \| b^m e_p^T f\|_{L^q}.\label{e:S0est4}
\end{align}
\end{Proposition}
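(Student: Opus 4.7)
The plan is to exploit the fact that $\ul{\mc{S}}_0(\tau) = \re^{\tau\mc{L}}$ acts diagonally across the five vector components of $f$, so the inequality for each block will reduce to the scalar estimate already given by Proposition \ref{p:sgest2}, applied with the appropriate value of $n$. Before doing so, I must check that the decomposition $Y = Y_{h,1}\oplus Y_{3,0}\oplus Y_{p,1}$ is invariant under the flow. The projections $e_h^T,\,e_3^T,\,e_p^T$ trivially commute with the scalar differential operator $\mc{L}$, and the spectral projections $Q_n$ commute with $\mc{L}$ because the $\varphi_\alpha$ are its eigenfunctions. The only non-obvious invariance is the incompressibility constraint: a short computation using the identity $\nabla_h\cdot(\xi\cdot\nabla w_h)=\nabla_h\cdot w_h+\xi\cdot\nabla(\nabla_h\cdot w_h)$ yields $\nabla_h\cdot(\mc{L} w_h)=\mc{L}(\nabla_h\cdot w_h)+\tfrac{1}{2}(\nabla_h\cdot w_h)$, so divergence-free vector fields remain divergence-free under $\re^{\tau\mc{L}}$.

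Once invariance is in hand, I would simply read off the three claimed estimates from Proposition \ref{p:sgest2} block by block. For the horizontal component, $e_h^T f\in L^2_{2D,1}(m)^2$, so I invoke the scalar estimate with $n=1$; since $m>3=n+2$, the formula gives $\gamma=(n+1)/2=1$, which is \eqref{e:S0est1}. The identical argument applied to $e_p^T f\in L^2_{2D,1}(m)^2$ yields \eqref{e:S0est4}. For the vertical component, $e_3^T f\in L^2_{2D,0}(m)$, I invoke the scalar estimate with $n=0$; since $m>3>2=n+2$, the formula gives $\gamma=(n+1)/2=1/2$, which is \eqref{e:S0est2}. The factor $a(\tau)^{-(1/q-1/2+|\alpha|/2)}$ transfers over unchanged from Proposition \ref{p:sgest2}, since derivatives commute with $\mc{L}$ and hence with the semigroup.

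I do not foresee any real obstacle: the only content beyond a direct quotation of Proposition \ref{p:sgest2} is the bookkeeping of which subspace each block of $f$ lives in, and the matching of the decay rate $\gamma$ with the assumption $m>3$. If anything subtle could arise, it would be in verifying that the incompressibility condition is genuinely compatible with the moment conditions built into the ranges of $Q_0$ and $Q_1$; but this is automatic because $\nabla_h\cdot\varphi_\alpha$ is again a derivative of a Gaussian, so projecting off the first Hermite modes does not affect the divergence constraint.
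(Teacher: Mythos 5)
Your argument is correct and is exactly the route the paper intends: the proposition is stated as an immediate consequence of Proposition \ref{p:sgest2}, applied blockwise with $n=1$ for the $e_h^T$ and $e_p^T$ components (which lie in $\mathrm{Rg}\,Q_1$, so $m>3=n+2$ gives $\gamma=1$) and $n=0$ for the $e_3^T$ component (giving $\gamma=1/2$), and your check that the divergence-free constraint is preserved (via $\nabla_h\cdot(\mc{L}w_h)=\mc{L}(\nabla_h\cdot w_h)+\tfrac12\nabla_h\cdot w_h$) is a correct, if optional, verification that $\ul{\mc{S}}_0(\tau)$ maps $Y$ into $Y$. One small slip: derivatives do not commute with $\mc{L}$ (indeed $[\p_i,\mc{L}]=\tfrac12\p_i$), but this is harmless since Proposition \ref{p:sgest2} already states the bound for $\p^\alpha\re^{\tau\mc{L}}Q_nf$ directly, so no commutation is needed.
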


Transforming the nonlinear equation \eqref{eq:subsystem} on the barotropic subspace $\{\tl v = 0\}$ into scaling variables, we obtain an equation in the rotating frame for the $(\underline{w},\underline{\Phi})$.
\begin{align}
\underline{w}_\tau&= \mc{L} \underline{w} - \underline{\mfu}_h\cdot \nabla \underline{w} + \underline{w}_h\cdot \nabla \left(\begin{array}{c}\underline{\mfu}_h \\ 0\end{array}\right)    ,\notag\\
\underline{\Phi}_\tau &= \mc{L} \underline{\phi} - \underline{\mfu}_h\cdot \nabla \underline{\Phi} + \underline{\Phi}\cdot\nabla\underline{\mfu}_h. \label{e:brf}
\end{align}
We use the ansatz
$$
\underline{w} = \underline{w}_{app} + \underline{w}_R,\quad \underline{\Phi} = \underline{\Phi}_{app} + \underline{\Phi}_R,\quad \underline{\mfu} = \underline{\mfu}_{app} + \underline{\mfu}_R,
$$
with the approximate solution 
$$
\underline{w}_{app} = (0,0,A\varphi_0)^T + B_1\re^{-\tau/2}(\p_2 \varphi_0,-\p_1\varphi_0,0)^T,\quad
\underline{\Phi}_{app} = B_2\re^{-\tau/2} (\p_2\varphi_0, -\p_1\varphi_0)^T 
\quad \underline{\mfu}_{app} = (A\underline{\mfu}_{h}^0,B_1 \re^{-\tau/2}\varphi_0)^T,
$$
where $\varphi_0$ is the Gaussian, $\underline{\mfu}_h^0$ is its corresponding velocity profile, and $\underline{w}_{R,h}, \underline{\Phi}_R \in L^2_{2D,1}(m)^2, \underline{w}_{R,3}\in L^2_{2D,0}(m).$ Note that $\underline{w}_{app}$ and $\ul{\Phi}_{app}$ just correspond to the explicit vortex solution discussed in Section \ref{ss:fulldyn}.  Alternatively, it can be obtained from the stationary frame approximate solution by re-writing \eqref{e:approxsol} in the rotating frame. Using the fact that the approximate solution solves \eqref{e:brf} we find that the perturbation of the linear evolution satisfies
\begin{align}
(\underline{W}_R)_\tau = \mc{L}\, \underline{W}_R + \underline{\Lambda}\, \underline{W}_R\label{e:aunsc}
\end{align}
where 
$$
\underline{\Lambda}\, \underline{W}_R:= -\underline{\mfu}_{app,h}\cdot\nabla \underline{W}_R - \underline{\mfu}_{R,h}\cdot \nabla \underline{W}_{app} + \left(\begin{array}{c} \underline{w}_{app,h}\cdot\nabla \underline{\mfu}_{R,h} \\0\\ \underline{\Phi}_{app}\cdot\nabla  \underline{\mfu}_{R,h}\end{array}\right)   +  \left(\begin{array}{c}\underline{w}_{R,h}\cdot\nabla\underline{\mfu}_{app,h} \\0\\\underline{\Phi}_{R}\cdot\nabla\underline{\mfu}_{app,h}\end{array}\right).
$$
We can then split $\underline{\Lambda}$ into three parts,
\begin{align}
\ul{\Lambda} &= \ul{\Lambda_0} + \ul{\Lambda_1} + \ul{\Lambda_2},\notag\\
\ul{\Lambda_0} \underline{W}_R &= 
-\lp[\underline{\mfu}_{app,h}\cdot\nabla \left(\begin{array}{c}0 \\ \ul{w}_{R,3} \\0\end{array}\right) + \underline{\mfu}_{R,h} \cdot\nabla\left(\begin{array}{c}0 \\\ul{ w}_{app,3} \\0\end{array}\right)
\rp]\notag\\
\ul{\Lambda_1} \underline{W}_R &= 
\left(\begin{array}{c}\ul{w}_{R,h}\cdot\nabla \underline{\mfu}_{app,h}\\0 \\ \ul{\Phi}_{R}\cdot\nabla \underline{\mfu}_{app,h}\end{array}\right)  - \underline{\mfu}_{app,h} \cdot\nabla\left(\begin{array}{c}\ul{w}_{R,h} \\0 \\ \Phi_{R}\end{array}\right)\notag\\
\ul{\Lambda_2} \underline{W}_R &= 
\left(\begin{array}{c}\ul{w}_{app,h}\cdot\nabla \underline{\mfu}_{R,h}\\0 \\ \ul{\Phi}_{app}\cdot\nabla \underline{\mfu}_{R,h}\end{array}\right)  - \underline{\mfu}_{R,h} \cdot\nabla\left(\begin{array}{c}\ul{w}_{app,h} \\0 \\ \ul{\Phi}_{app}\end{array}\right),\notag
\end{align}
where $\ul{\Lambda_0}$ is exactly the same as in the 2-D Navier Stokes case discussed in \cite{gallay2005global}, and $\ul{\Lambda_2}$ consists entirely of terms decaying like $\re^{-\tau/2}$. For the rest of the section we shall omit the $R$ subscript.

\subsubsection{Evolution generated by $\ul{L} + \ul{\Lambda_0} + \ul{\Lambda_1}$}

We then use this decomposition to obtain the following proposition which gives temporal decay estimates on the linear evolution generated by $\underline{L} + \underline{\Lambda}_0 + \underline{\Lambda}_1$ defined on $Y$. Recall that $a(\tau) = 1-\re^{-\tau}$.
\begin{Proposition}\label{p:aT1}
The linear operator $\underline{L}+ \underline{\Lambda_0} + \underline{\Lambda_1}$ defines a strongly continuous semi-group $\ul{\mc{T}}_1(\tau)$ on $Y$ which, for $f\in Y$, $|\alpha|\leq1$, $q\in [1,2],$ $m>3$, satisfies the following estimates,
\begin{align}
\|e_h^T\lp(\p^\alpha \ul{\mc{T}}_1(\tau) f\rp)\|_m &\leq \fr{\re^{-\tau}}{a(\tau)^{1/q - 1/2+|\alpha|/2}}\|b^m e_h^T f\|_{L^q},\qquad
\label{T1est1c} \\
\| e_3^T\lp(\p^\alpha \ul{\mc{T}}_1(\tau) f\rp)\|_m &\leq \fr{\re^{-\tau/2}}{a(\tau)^{1/q - 1/2+|\alpha|/2}}\|b^m e_3^Tf\|_{L^q},\qquad
 \label{T1est2c}\\
\|e_p^T\lp(\p^\alpha \ul{\mc{T}}_1(\tau) f\rp)\|_{m} &\leq \fr{\re^{-\tau}}{a(\tau)^{1/q - 1/2+|\alpha|/2}}\|b^{m} e_p^T f\|_{L^q},\qquad
\label{T1est3c} 
\end{align}
\end{Proposition}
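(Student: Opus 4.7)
The key structural observation is that the operator $\underline{L} + \underline{\Lambda}_0 + \underline{\Lambda}_1$ is \emph{block diagonal} with respect to the decomposition $Y = Y_{h,1}\oplus Y_{3,0}\oplus Y_{p,1}$. Indeed, $\ul{\Lambda}_0$ has range contained in the $e_3$ direction and depends on its argument only through $\ul{w}_3$ (the Biot-Savart velocity $\ul{\mfu}_{R,h}$ that appears in it is determined solely by $\ul{w}_3$). Conversely, $\ul{\Lambda}_1$ has range in $e_h\oplus e_p$ and depends on its argument only through $\ul{w}_h$ and $\ul{\Phi}$, which are themselves decoupled from one another. Thus my plan is to treat the three blocks separately: the $Y_{3,0}$ block, governed by $\mc{L}+\ul{\Lambda}_0$, and the two blocks $Y_{h,1}$ and $Y_{p,1}$, each governed by a restriction of $\mc{L}+\ul{\Lambda}_1$.

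For the $Y_{3,0}$ block, the equation is precisely the linearization of the two-dimensional Navier--Stokes vorticity equation about the Oseen vortex with circulation $A$, in scaling variables. The corresponding semigroup estimate with $\re^{-\tau/2}$ decay in $L^2_{2D}(m)$ (after projecting out the zero-mode $\varphi_0$, which is exactly the content of $Q_0$ defining $Y_{3,0}$) is established in \cite{gallay2005global}; I will simply invoke that result. For the two blocks $Y_{h,1}$ and $Y_{p,1}$, the equations are of exactly the form analyzed in Lemma \ref{l:aLh}, which identifies $-1/2$ as a simple eigenvalue with eigenfunction $\nabla_h^\perp\varphi_0$ and places the remainder of the spectrum in $\{\Re\lambda\le -1\}$. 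Since $Y_{h,1}=Q_1 L^2_{2D}(m)^2\cap\{\nabla_h\cdot=0\}$ projects out both the zero and the $-1/2$ eigenspaces (and analogously for $Y_{p,1}$), a semigroup generation argument combined with this spectral information yields an exponential decay rate $\re^{-\tau}$ on these subspaces.

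To promote the spectral information into the quantitative short-time smoothing estimates \eqref{T1est1c}--\eqref{T1est3c} with the correct $a(\tau)^{-(1/q-1/2+|\alpha|/2)}$ singularity, I will use Duhamel's formula on each block,
\begin{equation*}
\ul{\mc{T}}_1(\tau) = \ul{\mc{S}}_0(\tau) + \int_0^\tau \ul{\mc{S}}_0(\tau-s)\,(\ul{\Lambda}_0+\ul{\Lambda}_1)\,\ul{\mc{T}}_1(s)\,ds,
\end{equation*}
together with the heat-semigroup estimates \eqref{e:S0est1}--\eqref{e:S0est4}. The perturbations $\ul{\Lambda}_0$ and $\ul{\Lambda}_1$ are first-order differential operators whose coefficients $\ul{\mfu}_{app,h},\nabla\ul{\mfu}_{app,h}$ are bounded (with the $e^{-\tau/2}$-factors in $\ul{w}_{app,h}$ and $\ul{\Phi}_{app}$ giving additional decay where relevant), so Biot--Savart and H\"older estimates as in Proposition \ref{p:bsest} reduce the nonlocal terms to standard bounds in $L^2_{2D}(m)$. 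A short-time fixed-point argument for $\tau\in(0,1]$, propagated for $\tau>1$ by the exponential spectral gap, then yields the stated estimates. The main obstacle I anticipate is book-keeping the interaction of the $L^q$--$L^2_{2D}(m)$ smoothing with the growth of $\ul{\mfu}_{app,h}\sim |\xi|^{-1}$ inside $\ul{\Lambda}_1$; this is handled by combining the weighted $L^p$ estimate for $\ul{\mfu}_h^0$ from Proposition \ref{p:bsest} with the Gagliardo--Nirenberg inequalities used elsewhere in Section \ref{ss:fulldyn}.
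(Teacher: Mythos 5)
Your overall architecture is the same as the paper's: you exploit the block-diagonal structure of $\ul{\Lambda_0}+\ul{\Lambda_1}$ to split $\ul{\mc{T}}_1$ into independent pieces on $Y_{h,1}$, $Y_{3,0}$, $Y_{p,1}$, quote \cite{gallay2005global} for the third (Oseen-vortex) block, use the spectral localization of Lemma \ref{l:aLh} for the $h$- and $p$-blocks, and recover the short-time smoothing factor $a(\tau)^{-(1/q-1/2+|\alpha|/2)}$ by Duhamel against $\ul{\mc{S}}_0$. All of that matches the paper and is fine.

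The genuine gap is the step where you pass from the spectral information on the generator to the $\re^{-\tau}$ decay of the semigroup on $Y_{h,1}$ and $Y_{p,1}$ ("propagated for $\tau>1$ by the exponential spectral gap"). The semigroup $\re^{\tau\mc{L}}$ on $L^2_{2D}(m)$ is not analytic (its spectrum fills a half-plane), so the spectral mapping theorem is not available, and knowing that all eigenvalues of $\lp(\mc{L}+\ul{\Lambda_0}+\ul{\Lambda_1}\rp)_h$ on $Y_{h,1}$ satisfy $\Re\,\lambda\le -1$ does not by itself bound the growth rate of $\ul{\mc{T}}_{1,h}(\tau)$: the essential spectrum of the perturbed semigroup could in principle obstruct the decay. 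The paper closes exactly this hole with Proposition \ref{p:compact}, showing via Duhamel, the gain of one power of the weight ($\|\cdot\|_{m}\to\|\cdot\|_{m+1}$ coming from the decay of $\underline{\mfu}_h^0$), and Rellich's criterion that $\ul{\mc{T}}_{1,h}(\tau)-\ul{\mc{S}}_{0,h}(\tau)$ is compact for every $\tau>0$; hence the essential spectral radius of the operator $\ul{\mc{T}}_{1,h}(\tau)$ coincides with that of $\ul{\mc{S}}_{0,h}(\tau)$, and only isolated eigenvalues of the generator — located by Proposition \ref{p:eigen} via Lemma \ref{l:aLh} — could slow the decay. With that compactness statement (or an equivalent control of the essential spectral radius of the perturbed semigroup), your argument goes through exactly as in Proposition 4.13 of \cite{gallay2005global}; without it, the large-$\tau$ part of \eqref{T1est1c} and \eqref{T1est3c} is not justified.
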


We shall split the proof of this result into a series of results. We begin by using Duhamel's formula to write solutions of the evolution equation
\begin{equation}
(W_R)_{\tau} =  \mc{L}\, \underline{W}_R +( \ul{\Lambda}_0 + \ul{\Lambda}_1)\underline{W}_R\ ,
\end{equation}
as
\begin{equation}
\ul{\mc{T}}_1(\tau) \underline{W}_0 = \ul{\mc{S}}_0 (\tau) \underline{W}_0
+ \int_0^{\tau} \ul{\mc{S}}_0(\tau - \sigma) \Lambda_0 \underline{W}_R(\sigma) d\sigma
+ \int_0^{\tau} \ul{\mc{S}}_0(\tau - \sigma) \Lambda_1 \underline{W}_R(\sigma) d\sigma\ .
\end{equation}
Estimating the integral terms as we did the corresponding expressions in Section \ref{s:alg} one 
readily shows that $\ul{\mc{T}}_1(\tau)$ defines a strongly continuous semigroup such that
for any $T>0$, there exists $C_T>0$ such that for any $0 \le \tau \le T$,  and multi-index
$\alpha$ with $|\alpha | =1$, 
\begin{equation}\label{e:existence}
\| \ul{\mc{T}}_1(\tau) \underline{W}_0 \|_Y \le C_T \| \underline{W}_0 \|_Y\ ,\ \ 
\| \partial^{\alpha} \left(  \ul{\mc{T}}_1(\tau) \underline{W}_0 \right)  \|_Y \le \frac{C_T}{a(\tau)^{1/2} } \| \underline{W}_0 \|_Y.
\end{equation}
The more refined estimates of the evolution given in Proposition \ref{p:aT1} will follow from a more detailed analysis of the semigroup.

We begin the estimates of $\ul{\mc{T}}_1(\tau)$ by noting that because of the form of $\ul{\Lambda}_0$ and $\ul{\Lambda}_1$,  it splits into three independent parts which govern the evolution of 
$ \underline{W}_{R,h} = (\underline{W}_{R,1},\underline{W}_{R,2})^T$, $\underline{W}_{R,3}$ and $\underline{W}_{R,p}  = (\underline{W}_{R,4},\underline{W}_{R,5})^T$.  We refer to these three parts of $\ul{\mc{T}}_1$ as
$\ul{\mc{T}}_{1,h}$, $\ul{\mc{T}}_{1,3}$ and $\ul{\mc{T}}_{1,p}$ and will estimate each of them separately.  For later use, we will refer to the pieces of $\mc{L} +( \ul{\Lambda}_0 + \ul{\Lambda}_1)$ which generate each of these semigroups as $\left( \mc{L} +( \ul{\Lambda}_0 + \ul{\Lambda}_1)\right)_h$, and so forth. 

As a first step, note that $\ul{\mc{T}}_{1,3}(\tau)$ is exactly the semigroup studied in Section 4
of  \cite{gallay2005global}, as it is the linearization of the two-dimensional Navier-Stokes equation.
Thus, estimates \eqref{T1est2c} follow from the results of Proposition 4.13 of that work.

To prove the estimates in \eqref{T1est1c} and \eqref{T1est3c} we use methods similar to those in Section 4 of  \cite{gallay2005global}.  We will give the  details for \eqref{T1est1c}, as the case of \eqref{T1est3c} follows in the exact same way. The proof of \eqref{T1est1c} begins by noting that these estimates are exactly those we would obtain
for the horizontal components of the diagonal semigroup $\ul{\mc{S}}_0(\tau)$ (which we will denote
$\ul{\mc{S}}_{0,h}$.)  Thus, we proceed in two steps:
\begin{enumerate}
\item We first show that the difference between $\ul{\mc{S}}_{0,h}(\tau) $ and $\ul{\mc{T}}_{1,h}(\tau)$ is
a compact operator for any $\tau > 0$.  As a consequence, the essential spectral radius of
  $\ul{\mc{T}}_{1,h}(\tau)$ is the same as that of $\ul{\mc{S}}_{0,h}(\tau) $.
 \item The first point implies that the only way that the decay rates of the two semigroups can differ
 is if $\ul{\mc{T}}_{1,h}(\tau)$ has an isolated eigenvalue lying outside the disc which
 gives the essential spectral radius.  Thus, the second step in our analysis is to determine the
 location of isolated eigenvalues of $ \left( \underline{L}\, + \Lambda_0 + \Lambda_1\right)_h$.
 \end{enumerate}

 Thus we shall prove the following two results
\begin{Proposition}\label{p:compact}
 Assume the hypotheses of Proposition \ref{p:aT1}.  Then for any $\tau > 0$, the linear operator ${\mathcal{K}}_h(\tau)  = \ul{\mc{T}}_{1,h}(\tau)
 - \ul{\mc{S}}_{0,h}(\tau)$ is compact.
 \end{Proposition}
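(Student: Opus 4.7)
The plan is to express $\mc{K}_h(\tau)$ via Duhamel's formula and then show that the resulting integral operator is compact on $Y_{h,1}$.  Observing that $\ul{\Lambda_0}$ contributes only to the third component of $\ul{W}_R$, on the horizontal subspace only $\ul{\Lambda_1}$ acts nontrivially, so
\[
\mc{K}_h(\tau) \;=\; \int_0^\tau \ul{\mc{S}}_{0,h}(\tau-\sigma)\,(\ul{\Lambda_1})_h\,\ul{\mc{T}}_{1,h}(\sigma)\,d\sigma,
\]
where $(\ul{\Lambda_1})_h f = A\bigl(f\cdot\nabla\ul{\mfu}_h^0 - \ul{\mfu}_h^0\cdot\nabla f\bigr)$ depends only on the smooth Oseen velocity profile $\ul{\mfu}_h^0$ and its first derivative.

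The core step is to establish that for every $\sigma\in(0,\tau)$ the operator $(\ul{\Lambda_1})_h\,\ul{\mc{T}}_{1,h}(\sigma) : Y_{h,1}\to Y_{h,1}$ is compact.  First, I would invoke the analyticity of $\ul{\mc{T}}_{1,h}$, which follows from the sectoriality of $\mc{L}+\ul{\Lambda_1}$ (with $\ul{\Lambda_1}$ relatively bounded with zero bound with respect to $\mc{L}$); analyticity upgrades the $H^1$-smoothing already encoded in \eqref{e:existence} and Proposition~\ref{p:sgest2} to $\ul{\mc{T}}_{1,h}(\sigma): Y_{h,1}\to H^2(m)$ with operator norm controlled by $C/a(\sigma)$.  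Then, given a bounded sequence $\{W_0^n\}\subset Y_{h,1}$, the images $W^n = \ul{\mc{T}}_{1,h}(\sigma)W_0^n$ are uniformly bounded in $H^2(m)$.  Using a smooth spatial cutoff $\chi_R$ supported in $\{|\xi|\le R\}$, I would split $(\ul{\Lambda_1})_h W^n$ into an inner and an outer piece.  Rellich--Kondrachov applied on $\{|\xi|\le R\}$ extracts a subsequence along which the inner piece converges in $L^2(B_R)$, hence in $L^2(m)$ since the weight is bounded there.  The outer piece is controlled uniformly in $n$ using the explicit decay rates $|\ul{\mfu}_h^0(\xi)|\le C/|\xi|$ and $|\nabla\ul{\mfu}_h^0(\xi)|\le C/|\xi|^2$, yielding
\[
\bigl\|(1-\chi_R)(\ul{\Lambda_1})_h W^n\bigr\|_{L^2(m)} \;\le\; \frac{C}{R\,a(\sigma)^{1/2}}\|W_0^n\|_{L^2(m)}.
\]
A standard diagonal argument along $R\to\infty$ then produces a Cauchy subsequence in $L^2(m)^2$.

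Composition with the bounded operator $\ul{\mc{S}}_{0,h}(\tau-\sigma): L^2(m)^2\to Y_{h,1}$ from Proposition~\ref{p:sgest2} preserves compactness, and the resulting integrand has norm bound $C/a(\sigma)^{1/2}$, which is integrable on $[0,\tau]$.  Since the ideal of compact operators on a Banach space is closed under operator-norm limits, the Bochner integral defines a compact operator, completing the proof.  The main obstacle is precisely the compactness step for $(\ul{\Lambda_1})_h\,\ul{\mc{T}}_{1,h}(\sigma)$: because $H^1(m)$ does not compactly embed into $L^2(m)$ in general, no single Sobolev embedding suffices, and the decisive ingredient is the combination of parabolic regularization (reaching $H^2(m)$) with the genuine spatial decay of the Oseen coefficients at infinity, which together simultaneously handle the interior (via Rellich) and the tails (via the quantitative decay of $\ul{\mfu}_h^0$).
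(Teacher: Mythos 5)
Your overall architecture (Duhamel, decay of the Oseen profile to control the tails, Rellich-type compactness in the interior, then integrate in $\sigma$) is close in spirit to the paper's, but as written it has two genuine gaps. First, your key step requires $\ul{\mc{T}}_{1,h}(\sigma)$ to map $Y_{h,1}$ into $H^2(m)$ with norm $\sim a(\sigma)^{-1}$, and you justify this by asserting sectoriality of $\mc{L}+\ul{\Lambda_1}$ and analyticity of the semigroup. That assertion is unsupported: $\mc{L}=\Delta+\tfrac12\xi\cdot\nabla+1$ is an Ornstein--Uhlenbeck-type operator on a polynomially weighted $L^2$ space (Lebesgue-type measure), and such semigroups are in general \emph{not} analytic; neither the paper nor \cite{gallay2002invariant} ever claims analyticity, and the only smoothing available for the perturbed semigroup is the $H^1$ bound \eqref{e:existence}. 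The $H^2$ smoothing could perhaps be recovered by hand (second-derivative estimates for $\ul{\mc{S}}_0$ from Proposition \ref{p:sgest2} plus a Duhamel bootstrap using the commutation of $\partial_j$ with $e^{\tau\mc{L}}$), but that is additional work your argument silently relies on, and it is needed only because you place the entire derivative burden on the right-hand factor $\ul{\mc{T}}_{1,h}(\sigma)$. Second, your final step -- ``the Bochner integral of compact-operator-valued integrands is compact'' -- requires operator-norm measurability (essentially norm continuity on $(0,\tau)$) of $\sigma\mapsto \ul{\mc{S}}_{0,h}(\tau-\sigma)(\ul{\Lambda_1})_h\ul{\mc{T}}_{1,h}(\sigma)$; strong continuity of the two semigroups does not give this, and since the semigroups here are not known to be analytic, norm continuity is not automatic.

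The paper's proof avoids both issues by never asking for compactness of the integrand: it estimates the \emph{integrated} operator ${\mathcal{K}}_h(\tau)$ directly, gaining one power of the spatial weight from the decay of $\underline{\mfu}_h^0$ (via $\|\underline{\mfu}_h^0 f\|_{m+1}\le C\|f\|_m$) and one derivative from the smoothing of the \emph{left} factor $\ul{\mc{S}}_{0,h}(\tau-\sigma)$, with only the $H^1$ bound \eqref{e:existence} used for $\ul{\mc{T}}_{1,h}(\sigma)$ and the integrable singularity $a(\sigma)^{-1/2}$. The resulting bounds $\|{\mathcal{K}}_h(\tau)f\|_{m+1}\le C_T\|f\|_m$ and $\|\partial_j{\mathcal{K}}_h(\tau)f\|_m\le C_T a(\tau)^{-1/2}\|f\|_m$ place the image of a bounded set in a set with uniform extra decay at infinity and uniform $H^1$ regularity, so a single application of Rellich's criterion to ${\mathcal{K}}_h(\tau)$ gives compactness. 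If you reorganize your estimate so that the derivative is absorbed by $\ul{\mc{S}}_{0,h}(\tau-\sigma)$ and the weight gain by $\underline{\mfu}_h^0$, your tail-plus-interior reasoning collapses to exactly this argument and the two problematic ingredients become unnecessary.
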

\begin{Proposition}\label{p:eigen} Assume that $m > 3$.  Then any eigenvalue, $\lambda$,  of  $ \left( \mc{L}\,  + \ul{\Lambda_0} + \ul{\Lambda_1}\right)_h$ on $Y_{h,1}$ satisfies
\begin{equation}
\Re(\lambda) \le -1.
\end{equation}
\end{Proposition}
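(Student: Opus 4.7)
On the horizontal subspace $Y_{h,1}$, the operator $(\mc{L}+\ul{\Lambda_0}+\ul{\Lambda_1})_h$ simplifies considerably: $\ul{\Lambda_0}$ affects only the scalar third component, and the horizontal block of $\ul{\Lambda_1}$ involves only the time-independent Oseen velocity $\ul{\mfu}_{app,h}=A\ul{\mfu}_h^0$, so
$$
L_h \ul{w}_h = \mc{L}\ul{w}_h + A\,[\ul{w}_h,\ul{\mfu}_h^0],\qquad [\ul{w}_h,\ul{\mfu}_h^0]:=\ul{w}_h\!\cdot\!\nabla \ul{\mfu}_h^0-\ul{\mfu}_h^0\!\cdot\!\nabla \ul{w}_h.
$$
Since $\ul{w}_h\in Y_{h,1}$ is divergence-free in $\R^2$, I write $\ul{w}_h=\nabla_h^\perp s$ for a scalar $s$, and use $\ul{\mfu}_h^0=\nabla_h^\perp\Psi$ with $-\Delta\Psi=\varphi_0$. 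A direct calculation shows $[\mc{L},\nabla_h^\perp]=-\tfrac12\nabla_h^\perp$, and the 2D identity $[\nabla_h^\perp f,\nabla_h^\perp g]=-\nabla_h^\perp\{f,g\}$ combined with $\{s,\Psi\}=-\ul{\mfu}_h^0\cdot\nabla s$ yields
$$
L_h(\nabla_h^\perp s)=\nabla_h^\perp\!\bigl[(\mc{L}-\tfrac12)s-A\,\ul{\mfu}_h^0\!\cdot\!\nabla s\bigr]=:\nabla_h^\perp\,\tilde L s.
$$
Hence the eigenvalue problem $L_h\ul{w}_h=\lambda\ul{w}_h$ on $Y_{h,1}$ is equivalent, modulo additive constants (excluded by the decay of $s$), to $\tilde L s=\lambda s$ on a suitable scalar subspace. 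Using the Hermite characterization of $P_1$, the constraint $\nabla_h^\perp s\in Y_{h,1}$ reduces, after integration by parts, to the single moment condition $\int_{\R^2}s\,d\xi=0$.

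The operator $\tilde L$ commutes with rotations because $\ul{\mfu}_h^0=\ul{\mfu}_h^{0,\theta}(r)\hat\theta$ is axisymmetric. Decomposing $s=\sum_{n\in\Z}s_n(r)e^{in\theta}$, the transport term becomes the purely imaginary multiplication $A\ul{\mfu}_h^0\!\cdot\!\nabla\big|_n=inAF(r)$ with $F(r)=\ul{\mfu}_h^{0,\theta}(r)/r=(1-e^{-r^2/4})/(2\pi r^2)$. I then pass to the Gaussian-weighted inner product $\langle f,g\rangle_G:=\int f\bar g\,\varphi_0^{-1}\,d\xi$, in which $\mc{L}$ is self-adjoint with the standard Hermite spectrum, while $\ul{\mfu}_h^0\!\cdot\!\nabla$ is anti-self-adjoint because $\nabla\!\cdot(\ul{\mfu}_h^0\varphi_0^{-1})=\ul{\mfu}_h^0\!\cdot\!\nabla\varphi_0^{-1}=0$ (azimuthal field against radial function). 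Consequently, for any eigenfunction $s$ with eigenvalue $\lambda$,
$$
\Re\lambda\;=\;\frac{\langle\mc{L}s,s\rangle_G}{\|s\|_G^2}-\tfrac12\;\le\;\sup\sigma\bigl(\mc{L}\big|_{\text{admissible subspace}}\bigr)-\tfrac12.
$$

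On the $n$-th angular mode the point spectrum of $\mc{L}$ is $\{-|n|/2-k:k\ge 0\}$. Combining this with the single constraint $\int s_0\,d\xi=0$, which removes precisely the top eigenfunction $\varphi_0$ on $n=0$, one finds $\sup\sigma\bigl(\mc{L}|_{n=0,\ \int=0}\bigr)=-1$, $\sup\sigma(\mc{L}|_{n=\pm1})=-1/2$, and $\sup\sigma(\mc{L}|_{|n|\ge 2})\le -1$. The overall supremum $-1/2$ yields $\Re\lambda\le-1$ for every eigenvalue. The essential spectrum of $\mc{L}$ on $L^2_{2D}(m)$ with $m>3$ is confined to $\{\Re\lambda\le(1-m)/2\}\subset\{\Re\lambda\le-1\}$, and $A\ul{\mfu}_h^0\!\cdot\!\nabla$ is a relatively compact perturbation that does not move it; combined with the bound above this establishes the proposition.

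\textbf{Main obstacle.} The delicate step is justifying the Gaussian-weighted energy argument for eigenfunctions originally defined in $L^2_{2D}(m)$; this follows because any $L^2_{2D}(m)$-eigenfunction of the elliptic operator $\tilde L$ (an Ornstein--Uhlenbeck-type operator with a smooth bounded first-order perturbation) inherits Hermite-like decay and therefore belongs to the Gaussian-weighted space, so that the symmetry/antisymmetry decomposition applies and $\Re\langle\tilde L s,s\rangle_G$ is genuinely controlled by the spectrum of $\mc{L}$ alone.
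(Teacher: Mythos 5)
Your proposal follows essentially the same route as the paper: the stream-function reduction $\ul{w}_h=\nabla_h^\perp s$, with the commutator identity $\mc{L}\nabla_h^\perp s=\nabla_h^\perp(\mc{L}-\tfrac12)s$ and the single moment constraint $\int s\,d\xi=0$, is exactly the content of Lemma \ref{l:aLh} (there implemented via the Biot--Savart map, landing in $L^2_{2D,0}(m')$ with $m'\le m-1$), and the Gaussian-weighted splitting into the self-adjoint part $\mc{L}-\tfrac12$ (top eigenvalue $-1$ on the zero-mean subspace) plus the anti-symmetric transport term $\ul{\mfu}_h^0\cdot\nabla$ is exactly the paper's concluding lemma. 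The one step you assert rather than prove --- that an eigenfunction originally in the polynomially weighted space actually has Gaussian decay, so the weighted inner-product argument applies --- is precisely what the paper establishes by the polar-coordinate ODE estimate borrowed from \cite{gallay2005global}; you correctly flag it as the main obstacle, and your extra angular-mode bookkeeping and essential-spectrum remark are harmless but not needed for the eigenvalue bound.
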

Combining these two propositions, the estimates \eqref{T1est1c} follow exactly as the proof of
Proposition 4.13 in \cite{gallay2005global}.    The proof of Proposition \ref{p:compact} follows immediately from Rellich's criterion and the following lemma.
\begin{Lemma}
Assume the hypotheses of Proposition \ref{p:aT1}.  Then for any $T>0$, there
exists a constant $C_T>0$ such that for $0 \le \tau \le T$, 
for any $\tau > 0$, the linear operator ${\mathcal{K}}_h(\tau)  = \ul{\mc{T}}_{1,h}(\tau)
 - \ul{\mc{S}}_{0,h}(\tau)$ satisfies
 \begin{equation}
\| {\mathcal{K}}_h(\tau) \underline{W}_{h} \|_{m+1} \le C_T \|  \underline{W}_{h} \|_{m}\ ,\ \ 
\| \partial_j {\mathcal{K}}_h(\tau) \underline{W}_{h} \|_{m} \le \frac{C_T}{a(\tau)^{1/2}}
 \|  \underline{W}_{h} \|_{m}\ ,\ \ {\mathrm{for}} \ j=1,2\ .
\end{equation}
\end{Lemma}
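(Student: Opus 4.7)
The plan is to apply Duhamel's formula and exploit the precise decay of the approximate Oseen velocity $\ul{\mfu}_{app,h} = A\ul{\mfu}_h^0$ to obtain the stated weight gain.  First observe that $\ul{\Lambda_0}$ vanishes on the horizontal sector (its output lies entirely in the $e_3$ component), so the generator of $\ul{\mc{T}}_{1,h}$ is $\mc{L} + (\ul{\Lambda_1})_h$, where
\begin{equation*}
(\ul{\Lambda_1})_h w \;=\; w\cdot\nabla\ul{\mfu}_{app,h} \;-\; \ul{\mfu}_{app,h}\cdot\nabla w.
\end{equation*}
Variation of parameters then gives
\begin{equation*}
\mc{K}_h(\tau)\ul{W}_h \;=\; \int_0^\tau \ul{\mc{S}}_{0,h}(\tau-\sigma)\,(\ul{\Lambda_1})_h\,\ul{\mc{T}}_{1,h}(\sigma)\,\ul{W}_h\,d\sigma.
\end{equation*}

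The key step is a weighted multiplier bound for $(\ul{\Lambda_1})_h$.  From the closed form $\ul{\mfu}_h^0(\xi)=\fr{1-\re^{-|\xi|^2/4}}{2\pi|\xi|^2}(-\xi_2,\xi_1)^T$ one checks that $|\ul{\mfu}_h^0(\xi)|\lesssim (1+|\xi|)^{-1}$ and $|\nabla\ul{\mfu}_h^0(\xi)|\lesssim (1+|\xi|)^{-2}$ uniformly on $\R^2$, so both $b\,\ul{\mfu}_{app,h}$ and $b\,\nabla\ul{\mfu}_{app,h}$ lie in $L^\infty(\R^2)$.  Writing $b^{m+1} = b\cdot b^m$ and applying H\"older's inequality pointwise to each of the two terms of $(\ul{\Lambda_1})_h$ yields
\begin{equation*}
\|(\ul{\Lambda_1})_h w\|_{m+1} \;\le\; C\bigl(\|w\|_m + \|\nabla w\|_m\bigr).
\end{equation*}
This is the only place where the explicit form of the approximate solution enters; the rest is bookkeeping.

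For the first estimate I combine this bound with Proposition \ref{p:sgest2} applied to $\ul{\mc{S}}_{0,h}(\tau-\sigma)$ at weight $m+1$, $q=2$, and $|\alpha|=0$, which preserves the weight up to an exponentially decaying factor $\re^{-(\tau-\sigma)}$, together with the existence-type estimates \eqref{e:existence} on $\ul{\mc{T}}_{1,h}(\sigma)$ that control both $\|\ul{\mc{T}}_{1,h}(\sigma)\ul{W}_h\|_m$ and $a(\sigma)^{1/2}\|\nabla\ul{\mc{T}}_{1,h}(\sigma)\ul{W}_h\|_m$ by $C_T\|\ul{W}_h\|_m$.  The time integral $\int_0^\tau \re^{-(\tau-\sigma)}\bigl(1+a(\sigma)^{-1/2}\bigr)\,d\sigma$ is uniformly bounded on $[0,T]$, which yields the first inequality.

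For the gradient estimate I instead apply the $|\alpha|=1$ version of Proposition \ref{p:sgest2}, which produces the factor $a(\tau-\sigma)^{-1/2}$, together with the easier variant $\|(\ul{\Lambda_1})_h w\|_m \le C(\|w\|_m + \|\nabla w\|_m)$ at target weight $m$ (using only that $\ul{\mfu}_{app,h}$ and $\nabla\ul{\mfu}_{app,h}$ are themselves bounded).  The resulting Beta-type integral $\int_0^\tau a(\tau-\sigma)^{-1/2}(1+a(\sigma)^{-1/2})\,d\sigma$ has only locally integrable singularities at the endpoints and is bounded on $[0,T]$ by a constant $C_T$; trivially $C_T \le C_T/a(\tau)^{1/2}$ since $a(\tau)\le 1$, giving the stated singular form.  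The main technical obstacle is the weighted multiplier estimate for $(\ul{\Lambda_1})_h$: the Oseen velocity decays at exactly the critical rate $|\xi|^{-1}$, so the weight $b$ absorbs this decay with no slack.  Any slower decay of the approximate velocity would force one to settle for a fractional gain $m\mapsto m+\mu$, which still suffices for compactness (via Rellich) but would not deliver the sharp statement of the lemma.
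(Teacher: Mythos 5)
Your proposal is correct and follows essentially the same route as the paper: write $\mc{K}_h$ via Duhamel's formula, gain one power of the weight from the decay $|\ul{\mfu}_h^0(\xi)|\lesssim (1+|\xi|)^{-1}$ (the paper phrases this as $\|\ul{\mfu}_h^0 f\|_{m+1}\le C\|f\|_m$ rather than as a multiplier bound on $(\ul{\Lambda_1})_h$, but it is the same observation), and then combine the $\ul{\mc{S}}_{0,h}$ semigroup estimates with the bounds \eqref{e:existence} on $\ul{\mc{T}}_{1,h}$ and the integrability of the $a(\sigma)^{-1/2}$, $a(\tau-\sigma)^{-1/2}$ singularities. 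The only cosmetic difference is in the gradient estimate, where you note the Beta-type integral is uniformly bounded on $[0,T]$ and then use $a(\tau)\le 1$, while the paper invokes $\int_0^\tau \re^{-(\tau-\sigma)}a(\tau-\sigma)^{-1/2}a(\sigma)^{-1/2}\,d\sigma \le C\,a(\tau)^{-1/2}$ directly; both yield the stated bound.
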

\begin{proof}
From Duhamel's formula we can write
\begin{equation}\label{e:horizontalHamel}
{\mathcal{K}}_h(\tau) \underline{W}_{h,0} = -\int_0^{\tau} \ul{\mc{S}}_{0,h}(\tau-\sigma) (\underline{W}_h(\sigma)
\cdot \nabla) \underline{\mfu}_{h}^0 d\sigma - \int_0^{\tau} \ul{\mc{S}}_{0,h}(\tau-\sigma) (\underline{\mfu}_{h}^0 \cdot
\nabla) \underline{W}_h(\sigma) d\sigma\ .
\end{equation}
The estimates of the lemma now follow by estimating each of the integral terms.  We first have
\begin{eqnarray}
&& \| \int_0^{\tau} \ul{\mc{S}}_{0,h}(\tau-\sigma) (\underline{\mfu}_{h}^0 \cdot
\nabla) \underline{W}_h(\sigma) \|_{m+1} \le C \int_0^{\tau} e^{- (\tau - \sigma)}
\|  (\underline{\mfu}_{h}^0 \cdot
\nabla) \underline{W}_h(\sigma) \|_{m+1} d\sigma\\
&& \qquad \qquad  \le  C \int_0^{\tau} e^{- (\tau - \sigma)}
\| 
\nabla \underline{W}_h(\sigma) \|_{m} d\sigma
 \le C C_T \int_0^{\tau} \frac{e^{- (\tau - \sigma)}}{a(\sigma)^{1/2}} 
d\sigma \| \underline{W}_{h,0}\|_{m} \\ && \qquad \qquad \le C \| \underline{W}_{h,0}\|_{m} \ .
\end{eqnarray}
Here, the first inequality used our estimates on the semigroup $S_0(\tau)$, the second inequality used the fact that
$\| \underline{\mfu}_{h}^0 f\|_{m+1} \le C \| f \|_m$ due to the decay of $\underline{\mfu}_{h}^0(\xi)$ as $|\xi| \to \infty$,  the third inequality used \eqref{e:existence}, and the
last one the fact that the singularity in $a(\tau)^{-1/2}$ is integrable.

Estimating the $\| \cdot \|_{m+1}$-norm of the first integral term in \eqref{e:horizontalHamel} is
similar, but even easier since derivatives of $\underline{\mfu}_{h}^0(\xi)$ decay more rapidly as $|\xi| \to \infty$ than $\underline{\mfu}_{h}^0(\xi)$ itself, and we leave this estimate to the reader. Now we turn to the second estimate in the lemma, namely the estimate of the derivatives of 
${\mathcal{K}}_h$.  Differentiating the expression in \eqref{e:horizontalHamel} we must estimate
\begin{equation}
\partial_j \left({\mathcal{K}}_h(\tau) \underline{W}_{h,0}\right) =
 -\int_0^{\tau} \partial_j \left(\ul{\mc{S}}_{0,h}(\tau-\sigma) (\underline{W}_j(\sigma)
\cdot \nabla) \underline{\mfu}_{h}^0 \right) d\sigma
 - \int_0^{\tau} \partial_j \left(  \ul{\mc{S}}_{0,h}(\tau-\sigma) (\underline{\mfu}_{h}^0 \cdot
\nabla) \underline{W}_h(\sigma)\right) d\sigma\ .
\end{equation}

Once again, because of the explicit formulas for the derivatives of $\underline{\mfu}_{h}^0(\xi)$, the estimates of the first term are easier than the second, so we focus on estimating the second.
Using the estimates on the semi-group $\ul{\mc{S}}_{0,h}$, plus the estimates in \eqref{e:existence},
we have
\begin{equation}
\|  \int_0^{\tau} \p_j\ul{\mc{S}}_{0,h}(\tau-\sigma) (\underline{\mfu}_{h}^0 \cdot
\nabla) \underline{W}_h(\sigma) \|_{m} \le C C_T \left( \int_0^{\tau} \frac{e^{-(\tau - \sigma)} }{
a(\tau-\sigma)^{1/2} a(\sigma)^{1/2} } d\sigma \right)  \| \underline{W}_{h,0}\|_{m} \ .
\end{equation}
Now, using the fact that
\begin{equation}
\int_0^{\tau} \frac{e^{- (\tau - \sigma)} }{
a(\tau-\sigma)^{1/2} a(\sigma)^{1/2} } d\sigma \le \frac{C}{a(\tau)^{1/2}} \ ,
\end{equation}
the estimate, and the lemma follows.
\end{proof}

We now turn to the proof of Proposition \ref{p:eigen}.  We begin with:
\begin{Lemma}\label{l:aLh} $\lambda$ is an eigenvalue of the operator
$$
L_h \ul{w}_{h}:= \mc{L} \ul{w}_h + \underline{w}_{h}\cdot\nabla \underline{\mfu}_{h}^0 - \underline{\mfu}_{h}^0\cdot\nabla \underline{w}_{h},
$$
posed on $Y_{h,1}$, if and only if $\lambda$ is an eigenvalue of the operator
$$
L_3 \underline{u}_3 := (\mc{L} - 1/2)  \underline{u}_3 - A\underline{\mfu}_{h}^0 \cdot\nabla \underline{u}_3 
$$
on $L^2_{2D,0}(m')$, where $\underline{w}_{R,h} = (\p_2 \underline{u}_3,-\p_1\underline{u}_3)^T$ and $0<m'\leq m -1$. 
\end{Lemma}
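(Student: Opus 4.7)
The key observation is that any $\ul{w}_h \in Y_{h,1}$ is divergence-free on $\R^2$, so by the two-dimensional Biot--Savart / stream-function correspondence it can be written as $\ul{w}_h = \nabla_h^\perp \ul{u}_3$ for a scalar $\ul{u}_3$, with $\ul{u}_3$ normalized to have zero mean so as to fix the additive-constant ambiguity. This places $\ul{u}_3 \in L^2_{2D,0}(m')$; the drop in weight from $m$ to $m' \le m-1$ reflects that $\ul{u}_3$ is obtained from $\ul{w}_h$ by a Biot--Savart-type integration, hence is one power of $|\xi|$ less localized, in analogy with Proposition \ref{p:bsest}. The moment condition $Q_1 \ul{w}_h = \ul{w}_h$ is then consistent with $P_0 \ul{u}_3 = 0$, as one checks by integration by parts against the relevant Hermite test functions $\varphi_0$ and $\xi_i \varphi_0$.

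The plan is then to prove the intertwining identity $L_h(\nabla_h^\perp \ul{u}_3) = \nabla_h^\perp L_3 \ul{u}_3$ and read off the eigenvalue correspondence. Two algebraic ingredients suffice. First, since $[\xi\cdot\nabla_\xi,\p_i] = -\p_i$ and $\Delta_\xi$ commutes with partial derivatives, one has $[\mc{L},\p_i] = -\tfrac{1}{2}\p_i$, and hence
\[
\mc{L}(\nabla_h^\perp \ul{u}_3) = \nabla_h^\perp\bigl((\mc{L} - \tfrac{1}{2})\ul{u}_3\bigr),
\]
which accounts for the shift by $-1/2$ in $L_3$ relative to $\mc{L}$. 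Second, a short computation with the two-dimensional $\epsilon_{ij}$ tensor (using $\nabla_h\cdot \mathbf{V}=0$) shows that for any divergence-free field $\mathbf{V}$ on $\R^2$ and scalar $f$, setting $\mathbf{w} := \nabla_h^\perp f$ gives
\[
\mathbf{w}\cdot\nabla \mathbf{V} - \mathbf{V}\cdot\nabla \mathbf{w} = -\nabla_h^\perp(\mathbf{V}\cdot\nabla f).
\]
Applying this with $\mathbf{V} = A\,\ul{\mfu}_h^0$ (divergence-free since it is a two-dimensional Biot--Savart velocity from $\varphi_0$) and combining with the commutator identity yields the intertwining.

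The ``if'' direction is then immediate: an eigenfunction $\ul{u}_3 \in L^2_{2D,0}(m')$ of $L_3$ at eigenvalue $\lambda$ is non-constant, hence $\ul{w}_h := \nabla_h^\perp \ul{u}_3$ is a nonzero element of $Y_{h,1}$ with $L_h \ul{w}_h = \lambda \ul{w}_h$. Conversely, given $L_h \ul{w}_h = \lambda \ul{w}_h$ with $0 \ne \ul{w}_h \in Y_{h,1}$, write $\ul{w}_h = \nabla_h^\perp \ul{u}_3$ with $\ul{u}_3$ zero-mean; the intertwining gives $\nabla_h^\perp(L_3 \ul{u}_3 - \lambda \ul{u}_3) = 0$, so $L_3 \ul{u}_3 - \lambda \ul{u}_3$ is a spatial constant. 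But this difference belongs to $L^2_{2D}(m')$, which contains no nonzero constants, whence $L_3 \ul{u}_3 = \lambda \ul{u}_3$.

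The main technical obstacle is not the algebra but the function-space bookkeeping: one must verify that the stream-function map $\ul{u}_3 \mapsto \nabla_h^\perp \ul{u}_3$ is a bijection between $L^2_{2D,0}(m')$ and $Y_{h,1}$ for some $m' \le m-1$, and that the zero-mean and first-moment conditions match up on both sides. This relies on weighted Biot--Savart estimates analogous to those in Proposition \ref{p:bsest} together with the decay properties of elements of $L^2_{2D,n}(m)$ for $m > 3$, which justify the integration-by-parts steps used above.
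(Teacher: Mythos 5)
Your proposal is correct and follows essentially the same route as the paper: both pass to the stream function via the two-dimensional Biot--Savart map, establish the intertwining $L_h(\nabla_h^\perp \ul{u}_3) = \nabla_h^\perp(L_3\ul{u}_3)$ (with the commutator $[\mc{L},\p_i]=-\tfrac12\p_i$ producing the $-1/2$ shift), and invoke weighted Biot--Savart estimates (the paper carries this out via a generalized H\"older inequality and Prop.\ B.1 of \cite{gallay2002invariant}) to place $\ul{u}_3$ in $L^2_{2D,0}(m')$. Your disposal of the additive-constant ambiguity by noting that $L^2_{2D}(m')$ contains no nonzero constants is a minor variant of the paper's appeal to linearity of the Biot--Savart map, not a genuinely different argument.
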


\begin{Remark} Note that our estimates of the Biot-Savart kernel imply that the velocity field
$\underline{u}_3 \in L^2_{2D,0}(m')$.
\end{Remark}
\begin{proof}
The proof of the lemma follows by noting that if $W$ and $\lambda$ satisfy
$$
L_h W = \lambda W,
$$
then we set $U = \underline{\mfu}_{3,BS}(W)$ where $\overline{\mfu}_{3,BS}$ is the Biot-Savart mapping \eqref{e:BS3} defined above so that $(\p_2 U, -\p_1U)= W$, and $U\in L^2_{2D,0}(m')$. Indeed our choice of $m,m'$, Proposition B.1 of \cite{gallay2002invariant}, and the generalized Holder inequality gives
 \begin{align}
 \| \overline{\mfu}_{3,BS}(\bar w_h)\|_{m'} &\leq \| b^{m' -m''}\|_{L^p} \|b^{m''} \overline{\mfu}_{3,BS}(\bar w_h)\|_{L^{q'}},\notag\\
 &\leq C \| b^m \bar w_h\|_{L^2},\label{e:BS3a}
 \end{align}
 where $1/2 = 1/p + 1/q'$ with $q'\in (2,\infty)$, chosen so that $m' - m''> 2/p$ (and hence $b^{m'-m''}\in L^p$) and $m'' = m- 2/q',$ (so \cite[Prop. B.1]{gallay2002invariant} holds). Now, we compute the derivatives of $h := L_3 U$ to find
$$
(\p_2 h,-\p_1 h)^T = L_h W = \lambda W.
$$
Applying the Biot-Savart law we then find by linearity that $h = \lambda U$ and thus
$$
L_3 U = \lambda U.
$$
\end{proof}
Proposition \ref{p:eigen} now follows from 
\begin{Lemma}  Let $m'$ be as in Lemma \ref{l:aLh} .  Then any eigenvalue, $\lambda$,
 of the operator
$L_3$ whose eigenfunction lies in the space $L^2_{2D,0}(m')$, satisfies
\begin{equation}
\Re(\lambda) \le - 1 \ .
\end{equation}
\end{Lemma}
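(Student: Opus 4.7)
The plan is to symmetrize $L_3$ via the conjugation $\underline{u}_3(\xi) = e^{-|\xi|^2/8} w(\xi)$, which reduces the eigenvalue problem to a two-dimensional quantum harmonic oscillator perturbed by a skew-symmetric transport term, and then to exploit the oscillator's spectral gap together with the mean-zero constraint satisfied by $\underline{u}_3$.

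A direct computation shows that
$$
\mc{L}\underline{u}_3 = e^{-|\xi|^2/8}\bigl(\Delta - \tfrac{|\xi|^2}{16} + \tfrac{1}{2}\bigr)w.
$$
The crucial feature of $\underline{\mfu}_h^0(\xi) = \tfrac{1-e^{-|\xi|^2/4}}{2\pi|\xi|^2}(-\xi_2,\xi_1)^T$ is that it is purely tangential, so $\xi\cdot\underline{\mfu}_h^0\equiv 0$ and therefore $\underline{\mfu}_h^0\cdot\nabla \underline{u}_3 = e^{-|\xi|^2/8}\,\underline{\mfu}_h^0\cdot\nabla w$. Setting $H := \Delta - \tfrac{|\xi|^2}{16}$, the eigenvalue equation $L_3 \underline{u}_3 = \lambda \underline{u}_3$ is equivalent to
$$
Hw - A\,\underline{\mfu}_h^0\cdot\nabla w = \lambda w.
$$

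After this conjugation, $-H$ is (after rescaling $\xi \mapsto \xi/2$) the standard 2D quantum harmonic oscillator, self-adjoint on $L^2(\R^2)$, with simple ground eigenvalue $1/2$ and eigenfunction proportional to $e^{-|\xi|^2/8}$, and all other eigenvalues contained in $[1,\infty)$. Since $\underline{\mfu}_h^0$ is divergence free, the transport term $-A\,\underline{\mfu}_h^0\cdot\nabla$ is skew-symmetric on $L^2(\R^2)$, so pairing with $w$ in $L^2(\R^2)$ and taking real parts eliminates its contribution, yielding
$$
\Re(\lambda)\,\|w\|_{L^2}^2 = \langle Hw,w\rangle_{L^2}.
$$
The assumption $\underline{u}_3\in L^2_{2D,0}(m') = \mathrm{Rg}\{Q_0\}$ means $P_0 \underline{u}_3 = 0$, i.e., $\int_{\R^2}\underline{u}_3\,d\xi = 0$; in the $w$ variable this is $\langle w, e^{-|\xi|^2/8}\rangle_{L^2}=0$. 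Hence $w$ is orthogonal to the ground state of $-H$, and the min-max principle gives $\langle Hw,w\rangle_{L^2}\le -\|w\|_{L^2}^2$, from which $\Re(\lambda)\le -1$ follows at once.

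The main obstacle I anticipate is the a priori justification that $w = e^{|\xi|^2/8}\underline{u}_3$ actually lies in $L^2(\R^2)$, since membership of $\underline{u}_3$ in $L^2_{2D,0}(m')$ only provides polynomial decay. This is handled by a standard bootstrap: any eigenfunction of the elliptic operator $L_3$ is smooth, and iteratively applying the smoothing and decay estimates of Proposition \ref{p:sgest2} promotes it into every weighted space $L^2_{2D}(m)$; Gaussian decay of the eigenfunction, and hence validity of the conjugation, then follows along the lines of the arguments in \cite{gallay2005global,gallay2002invariant}.
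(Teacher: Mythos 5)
Your proposal is correct and is essentially the paper's argument: conjugating by the Gaussian $\varphi_0^{1/2}$ (equivalently, working in the weighted space $X_0=\{f:\varphi_0^{-1/2}f\in L^2\}$) turns $\mc{L}-1/2$ into the self-adjoint harmonic-oscillator operator whose spectrum on the mean-zero subspace lies in $(-\infty,-1]$, the transport term $A\,\underline{\mfu}_h^0\cdot\nabla$ is skew-symmetric there because $\underline{\mfu}_h^0$ is tangential and divergence free, and the real part of any eigenvalue is then controlled by the numerical range of the self-adjoint part. The one point you flag — that the eigenfunction must decay like a Gaussian for the conjugation to be legitimate — is handled in the paper by the polar-coordinate ODE estimate borrowed from \cite{gallay2005global}, which is the same source you invoke, so your treatment matches the paper's level of justification.
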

\begin{proof}
To prove this lemma, we first note that any eigenfunction of $L_3$ decays as a Gaussian as
$|\xi | \to \infty$.  This is proven in a fashion almost identical to the analogous result in \cite{gallay2005global}.  More precisely, if we write the eigenfunction $\psi$ in polar coordinates we find
that $\psi = \Psi(r) e^{i \ell \theta}$ where $\Psi$ satisfies the ODE
\begin{equation}
\Psi''(r) + \left( \frac{r}{2} + \frac{1}{r} \right) \Psi'(r) + (1-\lambda - \frac{\ell^2}{r^2} - \ri \ell
A \eta(r)) \Psi(r) =  0\ ,
\end{equation}
where $\eta(r) = \frac{1}{2 \pi r^2}(1-e^{-r^2/4})$.  This is almost identical to equation (70) of
\cite{gallay2005global} (in fact, significantly simpler than that case, because the nonlocal
term denoted $\Omega(r)$ in that reference is absent) and using the same sort of ODE estimates
applied there one finds that there exists $\gamma > 0$ such that
\begin{equation}
| \Psi(r)| \le C (1+ r^2)^{\gamma} e^{-r^2/4}\ .
\end{equation}

To localize the eigenvalues of $L_3$ we conjugate the two pieces of the operator with the
Gaussian $\varphi_0^{-1/2}$.  Alternatively, we study the operator on the weighted Hilbert
space
\begin{equation}
X_0 = \{ f\in L^2(\R^2)\,:\, \varphi_0^{-1/2} f \in L^2(\R^2), \int_{\R^2} f(\xi ) d\xi = 0 \},
\end{equation}
 From Lemma 4.7 of \cite{gallay2005global}, we know that 
 ${\mathcal{L}} - 1/2$ is a selfadjoint operator on this space with pure
 point spectrum $-1, -3/2, -2, \dots$.  A direct calculation shows that the operator
 $f \to \underline{\mfu}_{h}^0 \cdot\nabla f$ is anti-symmetric in this space.   Explicitly, we have
\begin{align}
(\tl f, \underline{\mfu}_{h}^0\cdot\nabla f)_X&:= \int_{\R^2} \varphi_0^{-1}\tl f\cdot (\underline{\mfu}_{h}^0\cdot\nabla f) d\xi\notag\\
&=-\int_{\R^2} \varphi_0^{-1} f\cdot (\underline{\mfu}_{h}^0\cdot\nabla \tl f) d\xi
\end{align}
where we have used the fact that $\mathrm{div}\,( \varphi_0^{-1}\underline{\mfu}_{h}^0)=0.$ But now we use the fact that if an anti-symmetric operator is added to a self-adjoint operator with pure point spectrum, the
real part of the spectrum of the resulting operator is less than the largest eigenvalue of the self-adjoint part - in this case $-1$.  (See the proof of Proposition 4.1).
in \cite{gallay2005global}.
\end{proof}

\subsubsection{Evolution generated by $\ul{L} + \ul{\Lambda}$}

We now can find and estimate solutions of the full linear equation \eqref{e:aunsc}, which being non-autonomous, generates an evolutionary family of operators $\ul{\mc{S}}(\tau,\sigma),\, \tau\geq \sigma\geq0$ (in the sense of Pazy \cite[Ch. 5]{pazy2012semigroups}) instead of a semi-group.

\begin{Proposition}\label{p:aSu}
The operator $\underline{L} +\ul{\Lambda}$ defines an evolutionary family of operators $\ul{\mc{S}}(\tau,\sigma)$ which satisfy the following temporal estimates for $q\in [1,2]$, $|\alpha|\leq1,$ $m>3$, $\mu\in(0,1/2)$, $f\in Y$, and $b^m f\in L^q(\R^2)$,
\begin{align}
\|e_3^\perp\lp(\p^\alpha\ul{\mc{S}}(\tau,\sigma)f\rp)\|_{m} &\leq \fr{\re^{-(1/2+\mu)(\tau - \sigma)}}{a(\tau - \sigma)^{1/q - 1/2+|\alpha|/2}}\|b^m  f\|_{L^q} ,\notag\\
\|e_3^T\lp(\p^\alpha\ul{\mc{S}}(\tau,\sigma)f\rp)\|_m &\leq \fr{\re^{-(\tau - \sigma)/2}}{a(\tau - \sigma)^{1/q - 1/2+|\alpha|/2}}\|b^me_3^T f\|_L^q.\qquad
\notag
\end{align}
\end{Proposition}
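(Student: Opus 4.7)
The plan is to treat $\ul{\Lambda_2}$ as a time-dependent perturbation of $\mc{L}+\ul{\Lambda_0}+\ul{\Lambda_1}$, whose evolutionary family $\ul{\mc{T}}_1$ is already controlled by Proposition \ref{p:aT1}. Existence of $\ul{\mc{S}}(\tau,\sigma)$ on $Y$ is obtained by a standard short-time Picard iteration for the Duhamel identity
\[
\ul{\mc{S}}(\tau,\sigma) = \ul{\mc{T}}_1(\tau-\sigma) + \int_\sigma^\tau \ul{\mc{T}}_1(\tau-s)\,\ul{\Lambda_2}(s)\,\ul{\mc{S}}(s,\sigma)\,ds,
\]
followed by concatenation, using that the smoothing singularities $a(\tau-s)^{-\alpha}$ appearing in Proposition \ref{p:aT1} are integrable and that the coefficients of $\ul{\Lambda_2}(s)$ decay in absolute time. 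The crucial structural point is an upper-triangularity in the splitting $Y = e_3^\perp Y \oplus e_3 Y$: the operator $\ul{\Lambda_0}$ acts as $e_3\to e_3$ (it is the standard 2D Navier--Stokes linearization), $\ul{\Lambda_1}$ acts as $e_3^\perp\to e_3^\perp$, and $\ul{\Lambda_2}$ is the only cross term, mapping $e_3\to e_3^\perp$ only, with coefficients $\ul{w}_{app,h}(s),\ul{\Phi}_{app}(s)$ carrying an $\re^{-s/2}$ prefactor in absolute time. Nothing sends $e_3^\perp$ into $e_3$.

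Because of this one-way coupling, the $e_3$-component decouples and satisfies the closed evolution $e_3^T\ul{\mc{S}}(\tau,\sigma)f = \ul{\mc{T}}_{1,3}(\tau-\sigma)(e_3^T f)$, so the $e_3^T$-bound of the proposition is immediate from \eqref{T1est2c}. For $e_3^\perp$, applying $e_3^\perp$ to the Duhamel identity and using that $\ul{\Lambda_2}$ reads only the $e_3$ slot gives
\[
e_3^\perp\ul{\mc{S}}(\tau,\sigma)f = \ul{\mc{T}}_1(\tau-\sigma)(e_3^\perp f) + \int_\sigma^\tau \ul{\mc{T}}_1(\tau-s)\,\ul{\Lambda_2}(s)\,\bigl(e_3^T\ul{\mc{S}}(s,\sigma)f\bigr)\,ds.
\]
The first term satisfies the desired estimate with exponent $\re^{-(\tau-\sigma)}$ by \eqref{T1est1c} and \eqref{T1est3c}. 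For the integrand, since the coefficients of $\ul{\Lambda_2}(s)$ are Schwartz functions multiplied by $\re^{-s/2}$, H\"{o}lder's inequality together with the Calder\'{o}n--Zygmund/Biot-Savart bound $\|\nabla\ul{\mfu}_{R,h}\|_{L^q}\leq C\|\ul{w}_{R,3}\|_{L^q}$ (from Proposition \ref{p:bsest}) and the $e_3$-bound proved above yield
\[
\|b^m\,\ul{\Lambda_2}(s)(e_3^T\ul{\mc{S}}(s,\sigma)f)\|_{L^q} \leq C\,\fr{\re^{-s/2-(s-\sigma)/2}}{a(s-\sigma)^{1/q-1/2}}\,\|b^m e_3^T f\|_{L^q}.
\]

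Inserting this into the Duhamel formula and using the $e_3^\perp$ estimate of Proposition \ref{p:aT1} on the outer $\ul{\mc{T}}_1(\tau-s)$, the exponential exponents combine exactly to $-(\tau-s)-s/2-(s-\sigma)/2=-(\tau-\sigma)-\sigma/2$, independent of $s$. Splitting the integral at $s=(\tau+\sigma)/2$ to handle the two smoothing singularities $a(\tau-s)^{-|\alpha|/2}$ and $a(s-\sigma)^{-(1/q-1/2)}$ separately on each sub-interval shows that the integral is bounded by
\[
C(\tau-\sigma)\,\fr{\re^{-(\tau-\sigma)-\sigma/2}}{a(\tau-\sigma)^{1/q-1/2+|\alpha|/2}}.
\]
Since $(\tau-\sigma)\re^{-(\tau-\sigma)}\leq C_\mu\,\re^{-(1/2+\mu)(\tau-\sigma)}$ for any $\mu\in(0,1/2)$, the claimed $e_3^\perp$-bound follows, and the derivative case $|\alpha|=1$ is handled identically by invoking the derivative estimate in Proposition \ref{p:aT1} on the outer $\ul{\mc{T}}_1$. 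The main obstacle I anticipate is the bookkeeping of the two singular factors alongside the two distinct exponential scales (absolute time in $\ul{\Lambda_2}(s)$ versus relative time $\tau-\sigma$ in the target estimate); the loss of rate from the optimal value $1$ down to $1/2+\mu$ on the $e_3^\perp$-block is structural, reflecting the polynomial prefactor $\tau-\sigma$ produced by the exact exponent cancellation above.
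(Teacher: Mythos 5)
Your proposal is correct, and it follows the paper's broad strategy — the paper also constructs $\ul{\mc{S}}(\tau,\sigma)$ via the Duhamel identity around $\ul{\mc{T}}_1$ with $\ul{\Lambda_2}$ as the perturbation, uses Proposition \ref{p:aT1} together with the Biot--Savart/Calder\'on--Zygmund bounds, and exploits both the $\re^{-s/2}$ decay of the coefficients of $\ul{\Lambda_2}$ and the fact that the third component is untouched by $\ul{\Lambda_2}$. Where you genuinely diverge is in how the estimate is closed: the paper bundles all components (and their gradients, with $a$-weights) into a single functional $\Psi(\tau,\sigma)$ with the weaker weight $\re^{(1/2+\mu)(\tau-\sigma)}$ on the $e_3^\perp$ block and closes by Gronwall, whereas you observe that the coupling is strictly one-way ($\ul{\Lambda_0}:e_3\to e_3$, $\ul{\Lambda_1}:e_3^\perp\to e_3^\perp$, $\ul{\Lambda_2}:e_3\to e_3^\perp$ only, since $\ul{\mfu}_{R,h}$ is slaved to $\ul{w}_{R,3}$), so the $e_3$ block is exactly $\ul{\mc{T}}_{1,3}$ and the $e_3^\perp$ block is given by an explicit Duhamel integral with known forcing — no Gronwall iteration needed, and the loss from rate $1$ to $1/2+\mu$ appears transparently as the factor $(\tau-\sigma)\re^{-(\tau-\sigma)}$. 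This is a cleaner bookkeeping of the same mechanism. Two small points to tighten: (i) your displayed bound on $\ul{\Lambda_2}(s)\bigl(e_3^T\ul{\mc{S}}(s,\sigma)f\bigr)$ is taken in the weighted $L^q$ norm, but to get the singularity structure $a(\tau-s)^{-|\alpha|/2}\,a(s-\sigma)^{-(1/q-1/2)}$ you describe (and which you need — keeping $L^q$ data on the outer $\ul{\mc{T}}_1$ would give $a(\tau-s)^{-(1/q-1/2+|\alpha|/2)}$, non-integrable when $q=1$, $|\alpha|=1$) you should bound the inner term in $\|\cdot\|_m$ and apply the outer estimate with $q=2$; the same Schwartz-coefficient/H\"older argument gives this directly; (ii) the bound $\|\nabla\ul{\mfu}_{R,h}\|_{L^q}\le C\|\ul{w}_{R,3}\|_{L^q}$ is not part of Proposition \ref{p:bsest} but is the estimate from \cite[Lem.~2.1]{gallay2002invariant} that the paper invokes elsewhere; the $L^{2p/(2-p)}$ estimate \eqref{e:bsbt} covers the terms where $\ul{\mfu}_{R,h}$ appears undifferentiated. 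Neither point affects the validity of the argument.
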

\begin{proof}
First, the existence and uniqueness of $\ul{\mc{S}}(\tau,\sigma)$ can be obtained using a fixed point argument with the mapping
$$
\ul{W}(\tau)\mapsto \ul{\mc{S}}_0(\tau - \sigma) f+ \int_\sigma^\tau \ul{\mc{S}}_0(\tau - s) \ul{\Lambda} \,\ul{W}(s)ds,
$$
where we have set $\ul{W}(\sigma) = f$. To obtain the temporal estimates we use a Gronwall type argument which uses the fact that the coefficients of $\ul{\Lambda}_2$ decay with rate $\re^{-\tau/2}.$ Due to the uniqueness of the above fixed point, $\ul{\mc{S}}(\tau,\sigma)$ can also be represented by the following fixed point formula
\begin{align}
\ul{W}(\tau) := \ul{\mc{S}}(\tau, \sigma) f &= \ul{\mc{T}}_1(\tau-\sigma) f + \int_\sigma^\tau \ul{\mc{T}}_1(\tau - s) \ul{\Lambda}_2\, \ul{W}(s)ds. 
\end{align}

We then wish to show that
\begin{align}
\Psi(\tau,\sigma) &= \re^{(\tau - \sigma)/2}\Bigg[\re^{\mu(\tau - \sigma)}\lp(\|e_3^\perp \ul{W}(\tau)\|_{m}  + a(\tau - \sigma)^{1/2}\|\nabla(e_3^\perp \ul{W}(\tau))\|_{m}\rp) \notag\\
&+  \| \ul{W}_3(\tau)\|_m + a(\tau - \sigma)^{1/2}\| \nabla ( \ul{W}_3(\tau))\|_m \Bigg]\notag\\
&= \re^{(\tau - \sigma)/2}\Bigg[\re^{\mu(\tau - \sigma)}\lp(\| \ul{W}_h(\tau)\|_{m}  + \| \ul{W}_p(\tau)\|_{m}+ a(\tau - \sigma)^{1/2}(\| \nabla \ul{W}_h(\tau)\|_{m}  + \|\nabla \ul{W}_p(\tau)\|_{m} ) \rp) \notag\\
&+  \|  \ul{W}_3(\tau)\|_m + a(\tau - \sigma)^{1/2}\| \nabla (\ul{W}_3(\tau))\|_m \Bigg]
\end{align}
is bounded for all $\tau\geq \sigma$. Here we weaken the temporal decay rate on the $\ul{W}_h$ and $\ul{W}_p$ components in order to be able to close the Gronwall argument for $\Psi.$ We use Proposition \ref{p:aT1} to obtain
\begin{align}
\| \underline{W}_h(\tau)\|_m &\leq C\re^{-(1/2+\mu)(\tau - \sigma)} \|e_h^T f \|_m \notag\\
&+ C\int_\sigma^\tau \re^{-(1/2+\mu)(\tau-s)}\Big[ (\| \ul{w}_{app,h}(s)\|_{m} + \|b^m \ul{\Phi}_{app,h}(s)\|_{m})\|\nabla\underline{\mfu}_{R,h}(s)\|_{L^2} \notag\\
&\quad+ \| \underline{\mfu}_{R,h}\|_{L^6}( \|\nabla \ul{w}_{app,h}(s)\|_m + \|\nabla \ul{\Phi_{app}}(s)\|_m)  \Big]ds\notag\\
&\leq C\re^{-(1/2+\mu)(\tau - \sigma)} \|e_h^T f  \|_m +C_{B}\int_\sigma^\tau \re^{-(\tau-s)} \re^{-s/2}\|\underline{W}_{3}(s)\|_{L^{3/2}} ds\notag\\
&\leq C\re^{-(1/2+\mu)(\tau - \sigma)} \|e_h^T f\|_m + C_B\int_\sigma^\tau \re^{-(\tau - s)}\re^{-s/2} \| \ul{W}_3(s)\|_m ds\notag\\
&\leq C\re^{-(1/2+\mu)(\tau - \sigma)} \|e_h^T f \|_m + C_B\int_\sigma^\tau \re^{-\tau }\re^{\sigma/2} \Psi(s,\sigma) ds.
\end{align}
Note in the second line we used the estimate $\|\nabla \underline{u}_{R,h}\|_{L^q} \leq C \| \underline{W}_{3}\|_{L^q}$ which holds for all $q\in [1,\infty]$ (see \cite{roussier2003stabilite}) as well as the Biot-Savart estimate \eqref{e:bsbt} to obtain $\| \underline{u}_{h}\|_{L^{6}}\leq C\|\underline{W}_{3}\|_{L^{3/2}}$, while in the third line we used the fact that $L^2_{2D}(m)\hookrightarrow L^q(\R^2)$ for all $q\in [1,2]$ to conclude $ \|\underline{W}_{3}\|_{L^{3/2}}\leq C\| \underline{W}_{3}\|_m$.  Also, $C_B>0$ is a constant dependent on the amplitudes $B_{i}$ for $i = 1,2$.  We can then conclude
\begin{align}
\re^{(1/2+\mu)(\tau - \sigma)} \|\ul{W}_h(\tau)\|_m \leq C \|e_h^T f\|_m + C_B\int_\sigma^\tau \re^{(\mu-1/2)s}\Psi(s,\sigma) ds.
\end{align}
Using the fact that the third component $\bar W_3$ is unaffected by $\ul{\Lambda}_2$, we similarly obtain
\begin{align}
\re^{(\tau-\sigma)/2}\| \ul{W}_3(\tau)\|_m &\leq C \|e_3^T f\|_m,\notag\\
\re^{(1/2+\mu)(\tau - \sigma)} \| \ul{W}_p(\tau)\|_{m} &\leq C \|e_p^T f\|_{m} +C_B\int_\sigma^\tau \re^{(\mu-1/2)s} \Psi(s,\sigma)ds,\notag
\end{align}
as well as the gradient estimates
\begin{align}
a(\tau - \sigma)^{1/2}\re^{(1/2+\mu)(\tau - \sigma)} \|\nabla(\ul{W}_h(\tau))\|_m &\leq C \| e_h^T f\|_m + C_B \int_\sigma^\tau \fr{\re^{(\mu - 1/2)s}a(\tau - \sigma)^{1/2}}{a(\tau - s)^{1/2}}\Psi(s,\sigma)ds,\notag\\
a(\tau - \sigma)^{1/2}\re^{(\tau - \sigma)/2} \|\nabla(\ul{W}_3(\tau))\|_m &\leq C \|e_3^T f\|_m,\notag\\
a(\tau - \sigma)^{1/2}\re^{(\tau - \sigma)/2} \|\nabla(\ul{W}_p(\tau))\|_{m} &\leq C \|e_4^T f\|_{m} 
+ C_B \int_\sigma^\tau \fr{\re^{(\mu - 1/2)s}a(\tau - \sigma)^{1/2}}{a(\tau - s)^{1/2}}\Psi(s,\sigma)ds.\notag\
\end{align}
Combining these together we obtain the inequality
\begin{align}
\Psi(\tau,\sigma) \leq C\|f\|_{m} + C_{A,B}\int_\sigma^\tau \fr{\re^{(\mu - 1/2)s}a(\tau - \sigma)^{1/2}}{a(\tau - s)^{1/2}}\Psi(s,\sigma)ds.
\end{align}
Then, applying Gronwall's inequality, we obtain that 
\begin{align}
\Psi(\tau,\sigma)&\leq C_{A,B}\|f\|_{m} \mathrm{exp}\lp( \int_\sigma^\tau \fr{\re^{(\mu - 1/2)s}a(\tau - \sigma)^{1/2}}{a(\tau - s)^{1/2}}ds \rp),\notag\\
&\leq C_{A,B}\re^{1/(1/2 - \mu)} \|f\|_{m},
\end{align}
which since $\mu<1/2$ implies that $\Psi(\tau,\sigma)$ is bounded for all $\tau\geq \sigma$. We thus obtain the temporal estimates for $q=2$ on each component of $W(\tau)$ by unraveling the definition of $\Psi$ and taking note of the invariance properties of $\underline{L} + \ul{\Lambda}$. A similar argument then gives the desired $L^q$ estimates.
\end{proof}

\subsubsection{Evolution generated by $\bar L + \bar \Lambda$}
We now translate the estimates of Proposition \ref{p:aSu} back into the stationary frame in order to conclude estimates on the linear flow  for $\bar L(\tau) + \bar \Lambda$.  

\begin{Proposition}\label{p:asgest}
Let $m>3$, $\alpha\in \N^2$ with $|\alpha|\leq1$, and $\bar W_0\in Y,$ then the operator $\bar L(\tau) + \bar \Lambda$ generates a evolutionary family of operators, $\bar S(\tau,\sigma)$ for $\tau\geq\sigma\geq0$ which satisfies the following decay estimates
\begin{align}
\|e_3^\perp\lp(\p^\alpha\bar{S}(\tau,\sigma)W_0\rp)\|_{m} &\leq \fr{\re^{-(1/2+\mu)(\tau - \sigma)}}{a(\tau - \sigma)^{1/q - 1/2+|\alpha|/2}}\|b^m W_0\|_{L^q},\notag\\
\|e_3^T\lp(\p^\alpha\bar{S}(\tau,\sigma)W_0\rp)\|_m &\leq \fr{\re^{-(\tau - \sigma)/2}}{a(\tau - \sigma)^{1/q - 1/2+|\alpha|/2}}\|b^m e_3^T W_0\|_{L^q},\notag
\end{align}
\end{Proposition}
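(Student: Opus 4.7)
The plan is to transfer the estimates of Proposition \ref{p:aSu} from the rotating frame back to the stationary frame by conjugation with the rotation already introduced in \eqref{e:tJ}. Working in scaling variables with $t = e^\tau - 1$, I would define the block-diagonal operator $\mc{R}(\tau) : Y \to Y$ which is the identity on the $e_3$-component and acts as $e^{\Gamma (e^\tau - 1) J_2}$ on the $(h,p)$-components. Since $\mc{R}(\tau)$ is a pointwise rotation on $\R^5$, it commutes with every spatial derivative, preserves incompressibility, acts as an isometry on $L^q(\R^2)^5$ for every $q \in [1,\infty]$ and on each weighted space $L^2_{2D}(m)^5$, leaves the subspaces $Y_{h,n}, Y_{3,n}, Y_{p,n}$ invariant, and depends strongly continuously on $\tau$.

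Next I would verify the intertwining identity $\bar{\mc{S}}(\tau,\sigma) = \mc{R}(\tau)^{-1}\,\ul{\mc{S}}(\tau,\sigma)\,\mc{R}(\sigma)$ for $\tau \ge \sigma \ge 0$ at the level of generators. A direct differentiation gives $\p_\tau \mc{R}(\tau) = \Gamma e^\tau \mc{R}(\tau) J$, with $J$ the embedding of $J_2$ coupling the $h$- and $p$-blocks and vanishing on the $e_3$-component. Substituting into the ODE satisfied by $\ul W(\tau) := \mc{R}(\tau) \bar W(\tau)$ one finds
\begin{equation*}
\ul L + \ul \Lambda \;=\; (\p_\tau \mc{R}(\tau))\mc{R}(\tau)^{-1} \;+\; \mc{R}(\tau)\bigl(\bar L(\tau) + \bar\Lambda\bigr)\mc{R}(\tau)^{-1}.
\end{equation*}
The first summand will exactly cancel the explicit $\Gamma e^\tau$ coupling in $\bar L(\tau)$ from \eqref{e:linv1}, while the conjugation by $\mc{R}$ will send the time-oscillating approximate solution $\bar W_{app}$ hidden inside $\bar\Lambda$ to the stationary $\ul W_{app}$ used to define $\ul\Lambda$; this follows from \eqref{e:approxsol}, which was in fact constructed so that $\bar W_{app} = \mc{R}(\tau)^{-1}\ul W_{app}$.

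Once the intertwining is established, Proposition \ref{p:asgest} will follow at once. Existence, uniqueness, and strong continuity of the evolutionary family $\bar{\mc{S}}(\tau,\sigma)$ are inherited from Proposition \ref{p:aSu}, and the quantitative bounds follow by applying $\mc{R}(\tau)^{-1}$ on the left and $\mc{R}(\sigma)$ on the right to the estimates of that proposition, evaluated on $\mc{R}(\sigma)\bar W_0$. Since $\mc{R}$ is an isometry on $L^q$ and on $L^2_{2D}(m)$, commutes with $\p^\alpha$, and preserves both the $e_3$-subspace and its orthogonal complement, every norm and every splitting appearing in Proposition \ref{p:aSu} carries over without loss. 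The main obstacle will be the bookkeeping: one has to check term by term that conjugation by $\mc{R}$ converts each contribution to $\bar\Lambda$ listed in \eqref{e:pertlin1} into the corresponding contribution to $\ul\Lambda$. Once that purely algebraic compatibility is verified, no new analytic estimates need to be proved.
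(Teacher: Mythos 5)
Your proposal is correct and follows essentially the same route as the paper: the paper's proof likewise transfers Proposition \ref{p:aSu} back to the stationary frame by applying the explicit rotation $\re^{\pm(\re^\tau-1)\Gamma J_2}$ to the $(h,p)$-components (identity on the third component), using that this pointwise rotation commutes with $\p^\alpha$, preserves the $L^q$ and $L^2_{2D}(m)$ norms, and that $|c(\tau)|,|s(\tau)|\le 1$, so all bounds carry over with only a change of constant. The one small imprecision is your claim that the rotation leaves $Y_{h,n}$ and $Y_{p,n}$ separately invariant --- it actually mixes the $h$- and $p$-blocks --- but since these two spaces coincide and the stated estimates only split off the $e_3$-component, this does not affect the argument.
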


\begin{proof}
We set $\bar W(\xi,\sigma) = W_0(\xi)$ as the initial condition of the linear evolution $\bar W(\xi,\tau) = \overline{\mc{S}}(\tau,\sigma) \bar W(\xi,\sigma)$ which solves $\bar W_\tau = (\bar L(\tau) + \bar\Lambda)\bar W$.  In order to use Proposition \ref{p:aSu}, we translate to the corresponding evolution in the rotating frame.  We first set $\ul{w}_3(\xi,\sigma) = W_{0,3}(\xi)$, Since it is untouched by the rotating frame transformation. For the other components, by translating back to unscaled variables, we set
\begin{align}
(\ul{w}_h,\ul{\Phi})(\xi,\sigma) &= (1+s)( \ul{\omega}_h,\ul{\Theta})(x,s)\notag\\
&=(1+s)\re^{-s\Gamma J_2}( \bar{\omega}_h,\bar{\Theta})(x,s)\notag\\
&= \re^{-(\re^\sigma - 1)\Gamma J_2}( \bar{w}_h,\bar{\Phi})(\xi,\sigma).\notag\\
&=\re^{-(\re^\sigma - 1)\Gamma J_2}( W_{0,h},W_{0,p})(\xi,\sigma),\notag
\end{align}
with $J_2$ as in \eqref{e:tJ}. We then let $\underline{W}(\tau):= \ul{\mc{S}}(\tau,\sigma) \ul{W}(\sigma)$, be the solution of \eqref{e:aunsc} with initial data $\underline{W}(\sigma)$ as defined above. Note $\overline{w}_3(\tau):=\underline{w}_3(\tau)$ as the third vorticity component is not affected by the rotating frame.    Next, defining $R(\tau)  = \re^{(\re^\tau - 1)\Gamma J_2}$, we have
\begin{align}
\overline{w}_h(\tau)&= \lp[R(\tau)\left(\begin{array}{c}\underline{w}_h(\tau) \\ \underline{\Phi}(\tau)\end{array}\right)\rp]_{1,2}\notag\\
&= c(\tau) \ul{w}_h(\tau) - s(\tau)\ul{\Phi}(\tau)\notag\\
\overline{\Phi}(\tau)&= \lp[R(\tau)\left(\begin{array}{c}\underline{w}_h(\tau) \\ \underline{\Phi}(\tau)\end{array}\right)\rp]_{3,4}\notag\\
&= s(\tau)\ul{w}_h(\tau) + c(\tau)\ul{\Phi}(\tau)\notag
\end{align}
where the subscripts at the end of the first and third lines denote the component of the vector and we recall that $c(\tau) := \cos(\Gamma(\re^{\tau}-1)), \, s(\tau) := \sin(\Gamma(\re^{\tau}-1))$. Next we use the estimates in the rotating frame to estimate different components of $\overline{W}$.  Using the estimates of Proposition \ref{p:aSu} we find 
\begin{align}
\|\bar w_h(\tau)\|_m &\leq  C\| \underline{w}_h(\tau)\|_m + |s(\tau)|\cdot \|\Phi(\tau)\|_m,\notag\\
&\leq C\re^{-(1/2+\mu)(\tau - \sigma)} \| \ul{W}(\sigma)\|_{m}\notag\\
&\leq C\re^{-(1/2+\mu)(\tau - \sigma)} \| W_0\|_{m}\\
\|\bar \Phi(\tau)\|_{m} &\leq C(\|\underline{w}_h(\tau)\|_{m} + \|\underline{\Phi}(\tau)\|_{m})\notag\\
&\leq C \re^{-(\mu+1/2)(\tau - \sigma)} \| W_0\|_{m} ,
\end{align}
since the rotational transformation does not affect the norm of $W_0$. The estimates for the gradients, as well as $L^q$ data, follow in a similar way.  
\end{proof}

\subsection{Baroclinic evolution}\label{ss:Abc}

\subsubsection{Unperturbed evolution of $\tl L(\tau)$}
First we estimate the unscaled velocity $\tl v = (\tl u, \tl \theta)^T$ under the linear evolution, $\tl v_t = M\tl v:=\Delta \tl v + \mb{P}J_{\Omega,\Gamma}\mb{P}\tl v$, in algebraically weighted spaces.  One readily finds that this equation generates a $C^0$-semigroup which we denote by $\re^{ M t}$.  The approach used in \cite[Prop. 4.4.2]{roussier2003stabilite}
 can be used to readily find
\begin{Proposition}\label{p:bcvel}
Let $m\geq0$, $\alpha\in \mb{N}^3$, $q\in [1,2],$ and $f\in (1-Q) L^2(m)^4$. If also $b^m f\in L^q(\mb{D})$ then 
\begin{align}
\|\p^\alpha \re^{ M (t-s)} f\|_{m} \leq \fr{C \re^{-4\pi^2(t - s)}}{(t-s)^{1/q - 1/2 + |\alpha_h|/2}a(t - s)^{(1/q - 1/2)/2 + \alpha_3|/2}}\| b^m f \|_{L^q(\mb{D})}.
\end{align}
\end{Proposition}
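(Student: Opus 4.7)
The plan is to first establish the estimate in the unweighted case $m=0$ via Fourier analysis, and then promote it to algebraically weighted spaces by a commutator argument, closely following the scheme of \cite[\S 4.4]{roussier2003stabilite}. Throughout, the restriction to the baroclinic subspace $(1-Q)L^2(\mb{D})^4$ is essential: it gives $n\neq 0$ in the vertical Fourier expansion, and hence $|k|^2 \geq \kappa_1^2$ where $\kappa_n = 2\pi n$ (periodic) or $\kappa_n = n\pi$ (stress-free).

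First I would treat the unweighted case. Because $M = \mathrm{diag}(\Delta) + \mb{P}J_{\Omega,\Gamma}\mb{P}$ commutes with horizontal translations and preserves the vertical Fourier decomposition, $\widehat{\re^{Mt}f}_n(k_h)$ can be computed by diagonalizing the symbol along the four (orthonormal) eigenvectors $a_g, a_0, a_\pm$ from \eqref{e:evecs-p}. Since the skew part only contributes phase factors $\re^{\pm it\Gamma p_\eta(k)}$, we have the pointwise Fourier bound
\begin{equation*}
|\widehat{\re^{Mt} f}_n(k_h)| \;\leq\; C\,\re^{-t|k|^2}\,|\hat f_n(k_h)|.
\end{equation*}
The claimed estimate then splits into a horizontal and vertical factor. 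Horizontally, the standard 2D heat kernel yields $\|\p_h^{\alpha_h}\re^{t\Delta_h}g\|_{L^2(\R^2)} \leq C t^{-(1/q-1/2)-|\alpha_h|/2}\|g\|_{L^q(\R^2)}$. Vertically, on the mean-zero subspace of $L^2([0,1])$, a direct computation using the Fourier series shows that $\|\p_3^{\alpha_3}\re^{t\p_3^2}g\|_{L^2[0,1]}$ is bounded by $C\re^{-\kappa_1^2 t/2}a(t)^{-(1/q-1/2)/2-|\alpha_3|/2}\|g\|_{L^q[0,1]}$: the factor $a(t) = 1-\re^{-t}$ behaves like $t$ for small $t$ (reproducing the 1D heat kernel blowup) and tends to $1$ for large $t$ (reflecting the discrete gap in the vertical spectrum). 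Combining these two one-dimensional bounds via Minkowski and extracting the Poincaré gain $\re^{-4\pi^2 t}$ from $|k|^2 \geq \kappa_1^2 + |k_h|^2$ yields the $m=0$ estimate.

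To promote the estimate to $m>0$, I would conjugate by the weight. Setting $h(x,t) = b^m(\xi)\re^{Mt}f(x)$, one checks that $h$ solves $\p_t h = Mh + L_m h$ where, since the Helmholtz projector, the rotation/buoyancy matrix, and $\p_3$ all commute with multiplication by $b^m(\xi)$, the perturbation reduces to $L_m = -[b^m,\Delta_h]b^{-m} = -(2\nabla_h b^m \cdot \nabla_h + \Delta_h b^m)b^{-m}$. The coefficients of $L_m$ decay like $b^{-1}$ and $b^{-2}$, so $L_m$ is a lower-order perturbation of the horizontal Laplacian. Writing $h$ via Duhamel's formula around the unperturbed evolution and iterating, one obtains the weighted estimate from Step 1 with the same temporal structure (up to a larger constant). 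The derivative estimates in $x_h$ are obtained by running the same argument after moving one factor of $\p_h$ onto the initial data and using the short-time blowup $a(t-s)^{-1/2}$ of the kernel.

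The main obstacle will be the vertical component of the $L^q \to L^2$ estimate: the mismatch between the short-time ($a(t)\sim t$, heat-kernel-like) and large-time (discrete spectral gap) behaviors requires splitting the argument into $0 < t-s \leq 1$ and $t-s > 1$ and carefully keeping track of the Gaussian decay in Fourier to interpolate between the two regimes. Once the one-dimensional bound with the sharp $a(t-s)$ factor is in hand, combining with the horizontal estimate and the weighted-commutator bootstrap is routine.
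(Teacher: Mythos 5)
The paper does not actually write out a proof of this proposition --- it invokes the approach of \cite[Prop.~4.4.2]{roussier2003stabilite} --- so I judge your argument on its own merits. Your first step is sound: since $\mathrm{diag}(\Delta)$ is a scalar multiplier and $\widehat{\mb{P}J_{\Omega,\Gamma}\mb{P}}$ is skew-Hermitian, the symbol of $\re^{Mt}$ is $\re^{-t|k|^2}$ times a unitary matrix, so the unweighted $L^q\to L^2$ bound reduces by Plancherel to the scalar heat semigroup on $\mb{D}$, whose horizontal and vertical factors give exactly the stated $(t-s)^{-(1/q-1/2)-|\alpha_h|/2}$ and $\re^{-4\pi^2(t-s)}a(t-s)^{-(1/q-1/2)/2-\alpha_3/2}$ behavior (here the restriction to $(1-Q)$, i.e.\ $n\neq 0$, is used correctly).

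The gap is in the promotion to $m>0$. Your claim that ``the Helmholtz projector \dots commute[s] with multiplication by $b^m(\xi)$'' is false: $\mb{P}$ is a nonlocal Fourier multiplier in $k_h$, so $[b^m,\mb{P}J_{\Omega,\Gamma}\mb{P}]\neq 0$, and your conjugated generator is missing the term $R_m:=[b^m,\mb{P}J_{\Omega,\Gamma}\mb{P}]\,b^{-m}$ in addition to the heat commutator $L_m$. This is not a cosmetic omission: $R_m$ is a bounded but nonlocal operator whose norm is proportional to $|\Omega|+|\Gamma|$, and if you feed it into the Duhamel iteration you propose, a Gronwall-type closure only yields decay $\re^{-(4\pi^2-C(|\Omega|+|\Gamma|))(t-s)}$, which can destroy the exponential rate entirely rather than ``routinely'' reproducing $\re^{-4\pi^2(t-s)}$. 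To repair the weighted step you need to use structure rather than treat the skew part as a generic perturbation: for instance, use the exact factorization $\re^{Mt}=\re^{t\Delta}\,U(t)$, where $U(t)$ is the unitary group generated by the multiplier $\Gamma\widehat{\mb{P}J_\eta\mb{P}}$, and prove weighted bounds for $U(t)$ directly on the baroclinic sector (where $|k|\geq 2\pi$ keeps the symbol and its $k_h$-derivatives bounded), in the spirit of the weighted multiplier argument of Lemma \ref{l:bclem}; note that $k_h$-derivatives of the oscillatory symbol grow linearly in $t$, which is exactly why this step is delicate and why, when the estimate is actually applied (Proposition \ref{p:sgest3}), a small loss $\delta$ in the exponential rate is conceded. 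As written, your commutator step both drops a necessary term and underestimates the difficulty of recovering the stated decay rate once that term is present.
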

To transfer these results to the vorticity formulation, we need to show that the unscaled Biot-Savart law, which maps $\tl \omega$ to $\p_j\tl u$ for $j = 1,2,3$ is a bounded operator on the $b^m$-weighted $L^q$ spaces. We view this operator as a matrix Fourier multiplier and aim to apply the theory of weighted $L^q$ multipliers developed over the past several decades; see \cite{garcia1985weighted} for a review.  In particular, a weight function $\rho\geq0$ with $\rho\in L^1_{loc}(\R^n)$ is said to be Muckenhaupt class $A_q(\R^n)$ for some $q\in(1,\infty)$ if for all bounded cubes $Q$ with faces parallel to coordinate axes, 
\begin{align}
\sup_Q \lp(\fr{1}{|Q|} \int_Q \rho(x) dx\rp)\lp( \fr{1}{|Q|} \int_Q \rho(x)^{-1/(q-1)} dx  \rp)^{q-1}<\infty
\end{align}
where $|Q|$ denotes the Lebesque measure of $Q$. For our weight $b^m$ considered in $\R^2$ we first have
\begin{Lemma}\label{l:mhr2}
The weight $b^m:= (1+|x_h|^2)^{m/2}$ is of class $A_q(\R^2)$ for all $q>1$ and $0\leq m\leq 2(q-1)$.
\end{Lemma}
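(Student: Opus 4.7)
The plan is to verify the Muckenhoupt $A_q$ condition by a direct case analysis on axis-parallel cubes, exploiting the radial symmetry of $b^m$ and its dichotomous behavior: bounded near the origin and comparable to the power weight $|x_h|^m$ at infinity.

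First I would fix an axis-aligned cube $Q \subset \R^2$ with center $x_0$ and side length $\ell>0$, and split into two regimes. In the \emph{far} regime $|x_0| \geq C_0 \ell$ (for some fixed geometric constant $C_0$, e.g.\ $C_0 = 4\sqrt{2}$), one has $|x_h| \sim |x_0|$ uniformly on $Q$, so both $b^m(x_h)$ and $b^{-m/(q-1)}(x_h)$ are comparable to their values at $x_0$, with constants depending only on $m$ and $q$. The averaged product in the $A_q$ condition then reduces to $b^m(x_0)\cdot b^{-m}(x_0) = 1$ up to a constant.

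In the \emph{near} regime $|x_0| < C_0 \ell$, the cube is contained in a ball $B(0,R)$ with $R \sim \ell$. One estimates the two averaged integrals by moving to polar coordinates,
\begin{equation*}
\fr{1}{|Q|}\int_Q b^m\,dx_h \;\le\; \fr{C}{R^2}\int_0^{CR}(1+r^2)^{m/2}\,r\,dr \;\le\; C'(1+R^m),
\end{equation*}
and analogously for the second integral, with $m$ replaced by $-m/(q-1)$; this latter integral is bounded by $C(1 + R^{-m/(q-1)})$ as soon as $m/(q-1) < 2$. The $A_q$ product then behaves like $R^m \cdot R^{-m} = 1$ up to constants, uniformly in $R$, which establishes membership.

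The main obstacle is the endpoint $m = 2(q-1)$: here $\int_{B(0,R)} b^{-2}\,dx_h$ picks up a logarithm in $R$, giving an unbounded $A_q$ ratio of order $(\log R)^{q-1}$. Strict inequality $0 \leq m < 2(q-1)$ is therefore the correct range for the power-weight part of the argument, and I would flag this since the stated range in the lemma includes the borderline case. As an alternative and essentially equivalent approach, one may cite the classical characterization $|x|^\alpha \in A_q(\R^n)$ iff $-n < \alpha < n(q-1)$ (see e.g.\ Stein or Grafakos) and deduce the lemma from the pointwise equivalence $b^m(x_h) \sim \max(1,|x_h|^m)$ together with the elementary stability of the $A_q$ class under multiplication by weights bounded above and below.
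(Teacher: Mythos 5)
Your argument is correct in substance but takes a genuinely different route from the paper: the paper offers no verification at all and simply cites \cite[Lem.~2.3~(v)]{farwig1997weighted}, whereas you check the Muckenhaupt condition directly via the standard far/near cube dichotomy (weight essentially constant on cubes with $|x_0|\geq C_0\ell$, polar-coordinate computation on cubes near the origin), with the classical characterization $|x|^\alpha\in A_q(\R^n)$ iff $-n<\alpha<n(q-1)$ as a fallback. Your approach buys a self-contained proof and makes visible exactly where the restriction on $m$ comes from; the citation buys brevity. Your flag about the endpoint is correct and worth keeping: at $m=2(q-1)$ the dual average over a ball of radius $R$ produces a $\log R$, the $A_q$ ratio grows like $(\log R)^{q-1}$, and $b^m$ is genuinely not in $A_q(\R^2)$. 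The lemma cited in the paper likewise gives only the strict range, which is what Lemma \ref{l:mhb} states ($0\leq m<2(q-1)$) and what the application in Lemma \ref{l:bclem} uses, so the ``$\leq$'' in the statement should be read as ``$<$'' but nothing downstream is affected.

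Two small repairs to your write-up. In the near regime the displayed bound $C\bigl(1+R^{-m/(q-1)}\bigr)$ for the second average is too weak when $R$ is large: combined with $C'(1+R^m)$ it only yields a product of order $R^m$. The same polar computation in fact gives the sharper bound $\lesssim \min\bigl(1,R^{-m/(q-1)}\bigr)$ (namely $\sim R^{-m/(q-1)}$ for $R\geq 1$ and $\sim 1$ for $R\leq 1$), and then $\max(1,R^m)\cdot\min\bigl(1,R^{-m/(q-1)}\bigr)^{q-1}\leq C$, which is clearly what you intend by ``behaves like $R^m\cdot R^{-m}$.'' In the alternative route, the reduction $b^m\sim\max(1,|x_h|^m)$ plus stability under bounded multiplicative perturbations is not quite enough, since $\max(1,|x_h|^m)/|x_h|^m$ is unbounded near the origin; you also need the elementary observation that the maximum of two $A_q$ weights is again $A_q$ (use $\max(w_1,w_2)\leq w_1+w_2$ and $\max(w_1,w_2)^{-1/(q-1)}\leq w_i^{-1/(q-1)}$), applied to $w_1\equiv 1$ and $w_2=|x_h|^m$.
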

\begin{proof}
See \cite[Lem. 2.3 (v)]{farwig1997weighted}.
\end{proof}
Following \cite{sauer2015weighted} we can then define a Muckenhaupt class of weights, $A_q(\mb{D})$, for our spatial domain $\mb{D} = \R^2 \times \mb{T}$ in the same way as above; see \cite[pg. 338]{sauer2015weighted}. From this we then can conclude the following lemma:
\begin{Lemma}\label{l:mhb}
For all $q>1$ and $0\leq m<2(q-1)$, the weight $b^m:=(1+|x_h|^2)^{m/2}$ is of class $A_q(\mb{D})$.
\end{Lemma}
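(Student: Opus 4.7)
The plan is to reduce the $A_q(\mb{D})$ condition directly to the $A_q(\R^2)$ condition already established in Lemma \ref{l:mhr2}, exploiting the crucial fact that the weight $b^m(x) = (1+|x_h|^2)^{m/2}$ has no dependence on the vertical variable $x_3 \in \mathbb{T}$.

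The main step is a simple factorization argument. Invoking the definition of $A_q(\mb{D})$ from \cite{sauer2015weighted}, one needs a uniform upper bound, over axis-parallel cubes $Q \subset \mb{D}$, of the Muckenhoupt quotient. Any such cube splits as $Q = Q_h \times Q_3$ with $Q_h \subset \R^2$ a horizontal square and $Q_3 \subset \mathbb{T}$ a vertical interval, so $|Q| = |Q_h|\,|Q_3|$. Since $b^m$ and $b^{-m/(q-1)}$ are both constant in $x_3$, Fubini's theorem gives
\begin{equation*}
\frac{1}{|Q|}\int_Q b^m\,dx = \frac{1}{|Q_h|}\int_{Q_h} b^m\,dx_h,
\end{equation*}
and an identical identity for $b^{-m/(q-1)}$. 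Consequently the Muckenhoupt quotient over $Q \subset \mb{D}$ agrees exactly with the Muckenhoupt quotient over $Q_h \subset \R^2$, and Lemma \ref{l:mhr2} supplies a bound uniform in $Q_h$, and hence in $Q$.

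The only mild care required is the treatment of cubes whose side length exceeds the torus thickness $1$. I would dispense with these by restricting attention, via a standard subdivision argument, to cubes of side length at most $1$, so that $Q_3$ does not wrap around $\mathbb{T}$ and the factorization above applies verbatim. No real obstacle is anticipated: the result is essentially a tensorial statement, namely that an $x_3$-independent weight inherits its Muckenhoupt class from its horizontal profile. The slight tightening from $m \le 2(q-1)$ in Lemma \ref{l:mhr2} to $m < 2(q-1)$ in the present lemma presumably reflects the safety margin needed in the reduction to unit-sized cubes, or a compatibility condition imposed in the definition of $A_q(\mb{D})$ in \cite{sauer2015weighted}.
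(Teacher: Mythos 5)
Your proposal is correct and is essentially the paper's own argument: the paper likewise writes $Q = I_1\times I_2\times I_3$, uses the $x_3$-independence of $b^m$ to factor the vertical integration out of the Muckenhaupt quotient, and then invokes Lemma \ref{l:mhr2}. The extra remarks about cubes wrapping around the torus and the strict inequality $m<2(q-1)$ are minor refinements the paper does not dwell on, but they do not change the substance of the argument.
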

\begin{proof}
Let $Q = I_1\times I_2\times I_3$, for some bounded intervals $I_1,I_2\subset \R, I_3\subset \mb{T}$. Then we use the fact that $b^m$ does not depend on $x_3$ and Lemma \ref{l:mhr2} to obtain
\begin{align}
\sup_Q \lp(\fr{1}{|Q|} \int_Q b^m dx\rp)&\lp( \fr{1}{|Q|} \int_Q b^{-m/(q-1)} dx  \rp)^{q-1} = \notag\\
\sup_Q& \lp(\fr{1}{|I_1\times I_2|} \int_Q b^m dx_h\rp)\lp( \fr{1}{|I_1\times I_2|} \int_Q b^{-m/(q-1)} dx_h  \rp)^{q-1} \leq C 
\end{align}
for some positive constant $C>0$.
\end{proof}
For this class of weights, the results of \cite{sauer2015weighted} can be used to show that $\p_j\tl u$, for $j = 1,2,3$, are bounded operators in the $b^m$ weighted $L^q$ space. Namely we obtain the following:
\begin{Lemma}\label{l:bclem}
Let $\tl \omega\in (1-Q)L^2(m)$ satisfy $b^m \tl w\in L^q(\mb{D})$ for some $m\geq0$ and $q\in [3/2,\infty)$. Also let $\tilde{u}$ be determined by the Biot-Savart law given in \eqref{e:pbbs} above.  Then there exists a $C>0$ such that for for $j = 1,2,3$,
\begin{align}
\|b^m\p_j\tilde{u}\|_{L^q(\mb{D})}\leq C\|b^m\tilde \omega\|_{L^q(\mb{D})}.
\end{align}
\end{Lemma}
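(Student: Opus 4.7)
The plan is to recognize $\tilde{\omega}\mapsto\partial_j\tilde u$ as a matrix-valued Fourier multiplier operator of Calder\'on--Zygmund type on $\mathbb{D}=\mathbb{R}^2\times\mathbb{T}$ and then invoke the weighted $L^q$ multiplier theory of Sauer \cite{sauer2015weighted} together with the $A_q$-membership of $b^m$ furnished by Lemma \ref{l:mhb}.

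First I would rewrite the Biot--Savart law \eqref{e:pbbs} in Fourier variables $k=(k_1,k_2,2\pi n)$ and apply $\partial_j$ to get
$$
\widehat{\partial_j\tilde u}_n(k_h)=\frac{ik_j}{|k_h|^2+4\pi^2n^2}\,A_n(k_h)\,\widehat{\tilde\omega}_n(k_h),
$$
where every entry of $A_n$ is either $\pm ik_l$ ($l\in\{1,2\}$) or $\pm 2\pi in$. Each component of the resulting matrix symbol is thus of the form $\pm k_jk_\ell/|k|^2$ with $k_3:=2\pi n$, i.e.\ the symbol of a composition of two Riesz transforms. Crucially, because $\tilde\omega\in(1-Q)L^2(m)$, only non-zero vertical modes $n\ne 0$ occur, so $|k|^2\ge 4\pi^2$ on the support of $\widehat{\tilde\omega}$ and the symbol is smooth away from the horizontal-frequency axis; in particular it is bounded, homogeneous of degree zero, and satisfies the Mih\-lin--H\"ormander derivative bounds uniformly in $(k_h,n)$.

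Next I would verify the hypotheses of the weighted multiplier theorem of \cite{sauer2015weighted} (formulated precisely for functions on $\mathbb{R}^m\times\mathbb{T}^d$): the derivative bounds above, plus the fact that, by Lemma \ref{l:mhb}, $b^m(x)=(1+|x_h|^2)^{m/2}$ is an $A_q(\mathbb{D})$ weight for the admissible pairs $(m,q)$. Applying that theorem componentwise yields
$$
\|b^m\partial_j\tilde u\|_{L^q(\mathbb{D})}\le C\|b^m\tilde\omega\|_{L^q(\mathbb{D})},\qquad j=1,2,3,
$$
with $C$ depending only on $q$, $m$ and the symbol bounds.

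The main technical point will be carefully justifying the application of a Calder\'on--Zygmund / Mih\-lin-type weighted multiplier theorem on the mixed domain $\mathbb{R}^2\times\mathbb{T}$: one must either appeal directly to a $\mathbb{R}^2\times\mathbb{T}$ version as in \cite{sauer2015weighted}, or lift to $\mathbb{R}^3$ via a transference/partial Fourier series decomposition and verify that the weight $b^m(x_h)$, which depends only on the unbounded horizontal variables, remains admissible. A secondary subtlety is keeping track of the compatibility between the $A_q$-range $0\le m<2(q-1)$ from Lemma \ref{l:mhb} and the range $q\in[3/2,\infty)$ claimed in the statement; on the overlap these constraints are satisfied, and away from it the estimate can be extended by interpolation/duality against the known unweighted Biot--Savart bounds.
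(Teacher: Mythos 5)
There is a genuine gap, and it sits exactly where you flag your ``secondary subtlety.'' The lemma is claimed (and later used, e.g.\ in Proposition \ref{p:sgest3} with $m>3$) for \emph{all} $m\geq 0$, but Lemma \ref{l:mhb} only puts $b^m$ in $A_q(\mb{D})$ for $0\leq m<2(q-1)$; for $q=3/2$ that means $m<1$. So the direct appeal to the weighted multiplier theorem of \cite{sauer2015weighted} simply does not apply in the regime where the lemma is actually needed. Your proposed repair — ``interpolation/duality against the known unweighted Biot--Savart bounds'' — does not close this: duality pairs $L^q(b^m)$ with a space carrying a \emph{negative}-exponent weight, and (Stein--Weiss type) interpolation between the unweighted estimate and a weighted one with exponent $\ell<2(q-1)$ only produces intermediate exponents, never exponents beyond the Muckenhaupt range. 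So as written the argument proves the estimate only for $m<2(q-1)$, not for general $m\geq 0$.

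The paper's proof supplies the missing idea: split the weight as $b^m=b^{m-m_1}\,b^{m_1}$ with $m_1=\lfloor m\rfloor$, and use the equivalence $b^{m_1}|f|\sim\sum_{|\beta|\leq m_1}|x_h^\beta f|$ to trade the integer part of the weight for $k_h$-derivatives in Fourier space, $x_h^\beta(\p_j\tl u)=\mc{F}^{-1}[\p_{k_h}^\beta(B_j\widehat{\tl\omega})]$. The derivatives falling on the symbol $B_j$ are again bounded Mikhlin-type multipliers, and the ones falling on $\widehat{\tl\omega}$ return terms $x_h^{\beta'}\tl\omega$; only the fractional residual weight $m-m_1<1\leq 2(q-1)$ (this is precisely where the hypothesis $q\geq 3/2$ enters) is handled by the $A_q$/weighted multiplier theory, and reassembling gives the bound by $\|b^m\tl\omega\|_{L^q(\mb{D})}$. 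A second, more minor point: your observation that the symbol is only needed on the modes $n\neq0$ is correct, but to transfer the Mikhlin bounds from $\R^3$ to $\R^2\times\mb{T}$ one should, as in the paper, replace $B_j$ by $(1-\chi(k_3))B_j(k)$ with a cutoff $\chi$ supported near $k_3=0$, so that the symbol being restricted is smooth on all of $\R^3$ and agrees with $B_j$ on $\R^2\times\{\Z\setminus\{0\}\}$; merely noting that $|k|\geq 2\pi$ on the support of $\widehat{\tl\omega}$ does not by itself license the restriction/transference step.
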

\begin{proof}
First we find observe that $\p_j \tilde{u}$, when defined on $\R^3$, is obtained from $\tilde{\omega}$ via a Fourier matrix multiplier $ B_j(k)$ with components of the form $\fr{k_ik_j}{|k|^2},\,\, k\in \R^3$. This implies that $ B_j(k)$ is bounded for $k\in \R^3$, is smooth away from the origin, and satisfies the Mikhlin condition $|k|^\beta |\p_{k}^\beta B_j(k)| \leq C$ for some $C>0$. Thus $ B_j(k)$ defines a bounded operator on $L^q(\R^3)$. 

Then, following \cite[Prop. 6]{sauer2015weighted}, we take a smooth bump function $\chi(k_3)$, with $\chi(0)=1$ and $\mathrm{supp}\{\chi\} \subset (-1/2,1/2)$, and define a smoothed operator $\tl B_j(k):= (1-\chi(k_3)) B_j(k).$ Observe that $\tl B_j(k) \equiv B_j(k)$ for $k\in \R^2\times \{\mb{Z}\diagdown\{0\}\}$. Then, since $\mb{D}$ is a locally compact abelian group (so that the Fourier transform can be defined on it), Lemma \ref{l:mhb} and the results of \cite[Prop. 4, Rmk. 5]{sauer2015weighted} (see also \cite{andersen2009restriction}) can be used to show that $B_j(k)$, defined on $k\in \R^2\times \{\mb{Z}\diagdown\{0\}\}$, defines a bounded operator on the weighted space $L^q(\ell):=\{f\in L^q(\mb{D})\,|\, \|b^\ell f \|_{L^q(\mb{D})}<\infty\}$ for all $0<\ell<2(q-1)$. We note that the derivatives $\p_{k_h}^\beta B_j(k)$ are also bounded multipliers, smooth away from the origin so that they also define bounded operators on $L^q(\ell)$. 

Next, letting $\beta\in \mb{N}^2$ denote a multi-index and $m_1\in \mb{N}$, it can readily be found that there exists constants $C,C'>0$ such that
\begin{align}\label{e:eqxh}
C'\sum_{|\beta|\leq m_1} |x_h^\beta f(x)|\leq (1+|x_h|^2)^{m_1/2}|f(x)| \leq C\sum_{|\beta|\leq m_1} |x_h^\beta f(x)|
\end{align}
Denoting $\mc{F}$ as the Fourier transform on $\mb{D}$, $m_1 =\lfloor m\rfloor $ the greatest integer below $m$ we then have
\begin{align}
\|b^m \p_j \tilde{u}\|_{L^q(\mb{D})} &=\|b^{m_1} \p_j \tilde{u}\|_{L^q(m-m_1)} \notag\\
&\leq C \sum_{|\beta|\leq m_1} \| x_h^\beta \p_j\tilde{u}\|_{L^q(m -m_1)}\notag\\
&\leq C \sum_{|\beta|\leq m_1} \| \mc{F}^{-1}\lp[ \p_{k_h}^\beta ( B_j(k) \widehat{\tilde{\omega}}(k)  )  \rp]\|_{L^q(m-m_1)}\notag\\
&\leq C \sum_{|\beta|\leq m_1} \|\mc{F}^{-1} \lp[ \p_{k_h}^\beta \widehat{\tilde{\omega}}(k)\rp]\|_{L^q(m-m_1)}\notag\\
&= C \sum_{|\beta|\leq m_1} \| x_{h}^\beta \tl\omega \|_{L^q(m-m_1)}\notag\\
&\leq C \|b^{m_1}\tl \omega\|_{L^q(m-m_1)} = C \|b^m\tl \omega\|_{L^q(\mb{D})}.\notag
\end{align}
Here we have used equivalencies \eqref{e:eqxh} in the second and last lines and the $L^q(m-m_1)$ boundedness of the multipliers $\p_{k_h}^\beta B_j$, for all $m-m_1<1<2(q-1)$, in the fourth line. 
\end{proof}

Next let $\tl W(\tau)$ be the solution of the linear equation  \eqref{e:linv2} in vorticity formulation. It can readily be found that this formulation also defines an evolutionary family of operators $\tl S_0(\tau,\sigma)$ on $L^2(m)^3\times H^1(m).$ Also let $\tilde{U}:= (\tilde{\mfu},\tilde\phi)^T$ be its corresponding velocity profile.  We can then obtain
\begin{Proposition}\label{p:sgest3}
Let $m\geq0$, $\alpha\in \mb{N}^3$, $q\in [1,2],$ and $f\in (1-Q) L^2(m)^3\times H^1(m)$. If also $b^m f\in L^q(\mb{D})$ then, for any $\delta\in (0,4\pi^2)$,
\begin{align}
\|\p^\alpha\tilde{S}_0(\tau,\sigma)f \|_{*,m} \leq \fr{C \re^{-(4\pi^2 - \delta)(\re^\tau - \re^\sigma)}}{a(\tau - \sigma)^{1/q - 1/2 + |\alpha_h|/2} a(\re^\tau - \re^\sigma)^{(1/q - 1/2)/2 + |\alpha_3|/2}} \lp( \|b^me_4^\perp f\|_{L^q(\mb{D})} + \|b^m \nabla e_4^T f\|_{L^q(\mb{D})}\rp)\end{align}
\end{Proposition}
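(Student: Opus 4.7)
The plan is to derive Proposition \ref{p:sgest3} from the unscaled velocity-temperature estimate in Proposition \ref{p:bcvel}, via the self-similar change of variables and the weighted Biot-Savart bound of Lemma \ref{l:bclem}.

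First, I would make precise the correspondence between the scaled evolutionary family $\tilde S_0(\tau,\sigma)$ and the unscaled semigroup $e^{M(t-s)}$. Given scaled data $f=(\tilde w_0,\tilde\phi_0)$ at time $\sigma$, define the unscaled vorticity-temperature data at time $s = e^\sigma-1$ via
\begin{equation*}
\tilde\omega_0(x_h,x_3) = (1+s)^{-1}\tilde w_0(x_h/\sqrt{1+s},x_3),\qquad \tilde\theta_0(x_h,x_3) = (1+s)^{-1/2}\tilde\phi_0(x_h/\sqrt{1+s},x_3),
\end{equation*}
and reconstruct $\tilde u_0$ from $\tilde\omega_0$ via the Biot-Savart law \eqref{e:pbbs}. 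A direct computation shows that $e^{M(t-s)}(\tilde u_0,\tilde\theta_0)$ corresponds, after taking the curl of the velocity part and undoing the rescaling, to $\tilde S_0(\tau,\sigma)f$ at time $t = e^\tau-1$.

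Second, I would apply Proposition \ref{p:bcvel} to $\tilde v(t) = e^{M(t-s)}(\tilde u_0,\tilde\theta_0)$. For the vorticity output, use the fact that $M$ is a Fourier multiplier and hence commutes with $\nabla$, so that the vorticity-temperature pair evolves under a semigroup with the same heat-kernel decay rate $e^{-4\pi^2(t-s)}$ and the same time-singular factors as in Proposition \ref{p:bcvel}. To convert the resulting $\|b^m \tilde u_0\|_{L^q}$ into a bound on the vorticity initial data, combine Lemma \ref{l:bclem} ($\|b^m\nabla\tilde u_0\|_{L^q}\leq C\|b^m\tilde\omega_0\|_{L^q}$) with a vertical Poincar\'e inequality (available since $\tilde u_0$ has zero vertical mean). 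For the temperature's $H^1(m)$-component, $\|\nabla\tilde\phi(t)\|_{L^2(m)}$, use commutativity $\nabla e^{M(t-s)} = e^{M(t-s)}\nabla$ to transfer the derivative onto the initial data, reducing to Proposition \ref{p:bcvel} applied to $\nabla\tilde\theta_0$; this produces exactly the $\|b^m\nabla e_4^T f\|_{L^q}$ summand on the right-hand side of the target estimate. The $\partial^\alpha$ output derivatives with $|\alpha|\leq 1$ are handled by taking the corresponding multi-index in Proposition \ref{p:bcvel}.

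Third, I would transform back to scaled variables. Using $1+t = e^\tau$, $1+s = e^\sigma$ and the algebraic identity $e^\tau - e^\sigma = e^\tau a(\tau-\sigma)$, the horizontal time factor transforms as $(t-s)^{-(1/q-1/2+|\alpha_h|/2)} = e^{-\tau(1/q-1/2+|\alpha_h|/2)}a(\tau-\sigma)^{-(1/q-1/2+|\alpha_h|/2)}$, matching the form of the target, while the vertical factor $a(t-s) = a(e^\tau-e^\sigma)$ is already as required. The weighted $L^q$ and $L^2$ norms pick up explicit powers of $1+t$ and $1+s$ upon un-dilation by $\sqrt{1+t}$, and after collecting all exponents one obtains the target bound multiplied by a polynomial factor in $e^\sigma$ and $e^\tau$, which must be absorbed into the $\delta$-slack of the exponential prefactor $e^{-(4\pi^2-\delta)(e^\tau-e^\sigma)}$.

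The main obstacle lies in this last absorption step. When $\tau-\sigma$ is bounded below, the double-exponential growth of $e^{\delta(e^\tau-e^\sigma)}$ trivially dominates any polynomial in $e^\sigma,e^\tau$. The delicate regime is $\sigma$ large with $\tau-\sigma$ very small, where $e^\tau-e^\sigma \sim e^\sigma(\tau-\sigma)$ is small and the exponential factor is ineffective; here one must exploit the singular time factors $a(\tau-\sigma)^{-\beta}$ and $a(e^\tau-e^\sigma)^{-\gamma}$, together with the hypothesis $m>1$, to absorb the remaining polynomial growth uniformly on $\{\tau\ge\sigma\ge 0\}$. A clean case-split based on the size of $e^\sigma(\tau-\sigma)$ is the natural way to package this bookkeeping.
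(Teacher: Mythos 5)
Your overall strategy is the same as the paper's: undo the self-similar change of variables, estimate the unscaled velocity--temperature semigroup $\re^{M(t-s)}$ via Proposition \ref{p:bcvel}, commute derivatives with the constant-coefficient multiplier so that the vorticity output is controlled, use the weighted Biot--Savart bound of Lemma \ref{l:bclem} to convert the velocity-gradient data norm into a vorticity data norm, and then rescale. Two points in your execution need repair, though. First, the derivative bookkeeping: to obtain the stated singular powers you must bound the output vorticity pointwise by $|\nabla \tilde u(t)|$ and move that derivative onto the data, i.e.\ use $\partial_j\re^{M(t-s)}=\re^{M(t-s)}\partial_j$, apply Proposition \ref{p:bcvel} with data $\nabla\tilde v_0$, and then use Lemma \ref{l:bclem} to bound $\|b^m\nabla\tilde u_0\|_{L^q}$ by $\|b^m\tilde\omega_0\|_{L^q}$ (this is exactly the paper's computation, and it is also why no Poincar\'e inequality is needed). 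Your alternative of paying the derivative on the output side (the $|\alpha|=1$ case of Proposition \ref{p:bcvel}) and then using a vertical Poincar\'e inequality plus Biot--Savart on the undifferentiated $\|b^m\tilde u_0\|_{L^q}$ costs an extra singular factor of order $a(\tau-\sigma)^{-1/2}$ (or $a(\re^\tau-\re^\sigma)^{-1/2}$), and so proves a strictly weaker estimate than the one claimed.

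Second, the ``main obstacle'' you identify does not exist, and your proposed cure would not work if it did. Collecting the Jacobians, the scaling factors of $\tilde\omega$, $\nabla_{x_h}\tilde u$, $\nabla_{x_h}\tilde\theta$ (each is $(1+t)^{-1}$ times its scaled counterpart), and the identity $t-s=\re^\tau a(\tau-\sigma)$, the powers of $1+t$ and $1+s$ combine into a power of $\re^{\tau-\sigma}$ alone (in the paper's computation the leftover is $\re^{(1-1/q)(\tau-\sigma)}$); since $\tau-\sigma\le\re^{\tau-\sigma}-1\le\re^\tau-\re^\sigma$ for $\sigma\ge0$, this is absorbed by $\re^{\delta(\re^\tau-\re^\sigma)}$ for any $\delta>0$, with no case split, no use of $m$ (the hypothesis here is $m\ge0$, not $m>1$), and no help from the singular factors. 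Conversely, if a genuinely uncancelled positive power of $\re^\sigma$ did remain --- for instance from crudely comparing the dilated weight $(1+|x_h|^2/(1+s))^{m/2}$ with $b^m$, which costs $(1+s)^{m/2}$ --- it could not be absorbed in the regime $\sigma$ large, $\tau-\sigma$ small: the factors $a(\tau-\sigma)^{-\beta}$ and $a(\re^\tau-\re^\sigma)^{-\gamma}$ are part of the target bound, not extra room, and cannot compensate growth in $\sigma$ at fixed $\tau-\sigma$. The correct move is to carry the time-dilated weights through the semigroup estimate (as the paper implicitly does), not to trade them for $b^m$ and then attempt an absorption.
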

\begin{proof}
Denote $\tl W(\tau) = \tilde{S}_0(\tau,\sigma)f$ and set $\tl W(\sigma) = f$. Recall the norm has the form $\| \tl W(\tau)\|^2_{*,m} = \|\tl w\|_m^2 + \|\tl \phi \|_{H^1(m)}^2.$  Furthermore, it readily seen that 
\begin{align}
\|\tl W(\tau)\|_{*,m} &\leq C\lp( \|\nabla\tilde{\mfu}\|_m + \|\tilde\phi\|_{H^1(m)}\rp)\notag\\
&\leq C\lp( \|\nabla_h\tilde{\mfu}\|_m + \|\p_3\tilde{\mfu}\|_m + \|\nabla_h\tilde\phi\|_{L^2(m)} + \|\p_3\tilde\phi\|_{L^2(m)} + \|\tilde\phi\|_{L^2(m)}  \rp)
\end{align}
We shall then estimate each of these pieces by relating them to their unscaled components which we can estimate using Proposition \ref{p:bcvel}.  We find, setting $\tl U  = (\tilde{\mfu}_1,\tilde{\mfu}_2,\tilde{\mfu}_3,\tl\phi)^T$ and using the fact that $\nabla$ and $M$ commute, the estimate
\begin{align}
\|\nabla_{\xi} \tl U(\tau)\|_m^2  &= \int_{\mb{D}} \lp| (1+|\fr{x_h}{\sqrt{1+t}}|^2)^{m/2}(1+t)^{1/2}\nabla_{x_h} \tl U(\fr{x_h}{\sqrt{1+t}},x_3,\log(1+t))\rp|^2 \fr{dx}{1+t}\notag\\
&= \int_{\mb{D}}\lp| (1+|\fr{x_h}{\sqrt{1+t}}|^2)^{m/2}(1+t)\nabla_{x_h} \tl v(x,t)\rp|^2 \fr{dx}{1+t}\notag\\
&= (1+t)\int_{\mb{D}}\lp| (1+|\fr{x_h}{\sqrt{1+t}}|^2)^{m/2} \re^{(t-s)M} \nabla_{x_h} \tl v(x,s)\rp|^2 dx\notag\\
&\leq \fr{C \re^{-8\pi^2(t - s)} (1+t)}{(t-s)^{2(1/q - 1/2) }a(t - s)^{(1/q - 1/2) }}
\lp(\int_{\mb{D}} |(1+|\fr{x}{\sqrt{1+s}}|^2)^{m/2} \nabla_{x_h} \tl v(x,s)|^q dx  \rp)^{2/q}\notag\\
&\leq \fr{C \re^{-8\pi^2(t - s)} (1+t)}{(t-s)^{2(1/q - 1/2 )}a(t - s)^{(1/q - 1/2) }}
\Bigg[\lp(\int_{\mb{D}} |(1+|\fr{x}{\sqrt{1+s}}|^2)^{m/2} \tl \omega(x,s)|^q  dx \rp)^{2/q}\notag\\
&\quad+ \lp(\int_{\mb{D}} |(1+|\fr{x}{\sqrt{1+s}}|^2)^{m/2} \nabla_{x_h} \tl \theta(x,s)|^q  dx \rp)^{2/q}\Bigg]
\notag
\end{align}
where $\nabla_{\xi} = (\p_{\xi_1},\p_{\xi_2})^T, \nabla_{x_h} = (\p_{x_1},\p_{x_2})^T$, and the last inequality was obtained using Lemma \ref{l:bclem}.
Changing back to scaled variables we then obtain for some small $\delta>0,$
\begin{align}
\|\nabla_h \tl U(\tau)\|_m^2&\leq 
C\fr{\re^{-8\pi^2(\re^\tau-\re^\sigma)}\re^{\tau - \sigma}\re^{2(1/2 - 1/q)(\tau - \sigma)}}{a(\tau - \sigma)^{2(1/q - 1/2)}a(\re^\tau - \re^\sigma)^{1/q - 1/2}}\lp( \|b^m\tilde{w}\|_{L^q(\mb{D})}^2 + \|b^m \nabla_{x_h} \tilde{\phi}\|_{L^q(\mb{D})}^2  \rp)\notag\\
&\leq C\fr{\re^{-(8\pi^2 - \delta)(\re^\tau - \re^\sigma)}}{a(\tau - \sigma)^{2(1/q - 1/2)}a(\re^\tau - \re^\sigma)^{1/q - 1/2}}\lp( \| b^m e_4^\perp f\|_{L^q(\mb{D})}^2 + \|b^m \nabla e_4^Tf\|_{L^q(\mb{D})}^2\rp).
\end{align}
The estimates for $\|\p_3 \tl U(\tau)\|_m$, $\|\tl \phi(\tau)\|_m$ and $|\alpha|>0$ follow in the same way.
\end{proof}

\subsubsection{Perturbed evolution of $\tl L(\tau) + \tl \Lambda$}
We now wish to prove Proposition \ref{p:btev}.  To begin we construct the evolutionary family of operators, which we shall denote as $\tl S(\tau,\sigma)$, generated by $\tl L(\tau) + \tl \Lambda$.  This can be done by studying the integral formulation, 
\begin{align}
\tl W(\tau) &= \tl S_0(\tau,\sigma) \tl W_0 + \int_\sigma^\tau  \tl S_0(\tau,s) (1-Q)\lp[  \overline{\mfu}_{app,h}\cdot\nabla_h\tl W + \tilde{\mfu}_h\cdot\nabla_h\bar W_{app} - \tl w_h\cdot\nabla_h\left(\begin{array}{c}\overline{\mfu}_{app}\\0\end{array}\right)   - \bar w_{app,h}\cdot\nabla_h \left(\begin{array}{c}\tilde{\mfu}\\0\end{array}\right) \rp]d\sigma\notag\\
&\qquad+\int_\sigma^\tau  \re^{\sigma/2}\tl S_0(\tau,s) (1-Q)\lp[  \overline{\mfu}_{app,3} \cdot\p_3\tl W  - \bar w_{app,3}\cdot\,\p_3 \left(\begin{array}{c}\tilde{\mfu}\\0\end{array}\right) \rp]d\sigma.
\end{align}
Existence and uniqueness can then be obtained via a fixed-point argument similar to ones in previous sections. To obtain the temporal estimates we use another Gronwall type argument. That is we show that the following quantity is bounded uniformly for all $\tau\geq \sigma\geq0$.  For ease of notation we set $c = 4\pi^2 -\delta,\, \tl c = c - \delta'$ with $\delta'>0$ to be chosen so that $\tl c>0$,
\begin{align}
\tl \Psi(\tau,\sigma)&= \re^{\tl c(\re^{\tau}- \re^{\sigma})} \Bigg( \| \tl W(\tau)\|_{*,m} + a(\tau - \sigma)^{1/2} \| \nabla_h \tl W(\tau)\|_{*,m} + a(\re^\tau - \re^\sigma)^{1/2} \|\p_3 \tl W(\tau)\|_{*,m} \Bigg).    
\end{align}  

Using estimates similar to those used in the proof of Theorem \ref{t:alg-as}, as well as the fact that $\|\nabla \bar\phi\|_m\leq C \|\overline{\Phi}\|_m$, we then estimate with $q = 3/2$,
\begin{align}
\re^{\tl c(\re^\tau-\re^\sigma)}\| \tl W(\tau)\|_{*,m}&\leq C\re^{-\delta(\re^\tau - \re^\sigma)}\|\tl W_0\|_m \notag\\
&+ C_{A,B}\int_\sigma^\tau \fr{\re^{(\tl c - c)(\re^\tau-\re^s)}}{a(\tau - s)^{1/q-1/2}a(\re^{\tau} - \re^s)^{(1/q - 1/2)/2}} \Bigg( \|\nabla_h \tl W\|_{*,m} + \| \tl W\|_{*,m} + e^{-s/2}\|\tl W\|_{*,m}   
\Bigg)ds\notag\\
&+ C_{A,B}\int_\sigma^\tau \fr{\re^{(\tl c - c)(\re^\tau-\re^s)}}{a(\tau - s)^{1/q-1/2}a(\re^{\tau} - \re^s)^{(1/q - 1/2)/2}}\re^{s/2}\Bigg( \re^{-s/2} \|\p_3 \tl W\|_{*,m} + \|\tl W\|_{*,m}    \Bigg)\notag\\
&\leq C\|\tl W_0\|_{*,m} + C_{A,B}\int_\sigma^\tau \re^{-\delta'(\re^\tau-\re^s)}g(\tau,s,\sigma) \tl\Psi(s,\sigma)ds,
\end{align}
where $C_{A,B}$ is a constant dependent on the approximate solution amplitudes $A, B_1,B_2$ and
$$
g(\tau,s,\sigma) = \fr{1+\re^{s/2}+ a(s-\sigma)^{-1/2} + a(\re^s-\re^\sigma)^{-1/2}}{a(\tau - s)^{1/q-1/2}a(\re^{\tau} - \re^s)^{(1/q - 1/2)/2}}.
$$

In a similar manner we find
\begin{align}
a(\tau - \sigma)^{1/2} \re^{\tl c(\re^{\tau}-\re^\sigma)} \|\nabla_h \tl W(\tau)\|_{*,m} &\leq C\re^{-\delta(\re^\tau - \re^\sigma)}\|\tl W_0\|_{*,m} +
C_{A,B} \int_\sigma^\tau \re^{(\tl c - c)(\re^\tau-\re^s)}\fr{a(\tau-\sigma)^{1/2}}{a(\tau-s)^{1/2}}g(\tau,s,\sigma)\tl\Psi(s,\sigma)ds\notag\\
a(\re^\tau - \re^\sigma)^{1/2} \re^{\tl c(\re^{\tau}-\re^\sigma)} \|\p_3 \tl W(\tau)\|_{*,m} &\leq C\re^{-\delta(\re^\tau - \re^\sigma)}\|\tl W_0\|_{*,m} + 
 C_{A,B}\int_\sigma^\tau \re^{(\tl c - c)(\re^\tau-\re^s)}\fr{a(\re^\tau-\re^\sigma)^{1/2}}{a(\re^{\tau}-\re^s)^{1/2}}g(\tau,s,\sigma)\tl\Psi(\tau,s,\sigma)ds.
\end{align}
Combing these all together we obtain 
\begin{align}
\tl\Psi(\tau,\sigma) &\leq C \re^{-\delta(\re^\tau - \re^\sigma)}\|\tl W_0\|_{*,m} + C_{A,B} \int_\sigma^\tau \re^{-\delta'(\re^\tau-\re^s)} \tl g(\tau,s,\sigma)\tl\Psi(\tau,\sigma)  ds,
\end{align}
with 
$$
\tl g(\tau,s,\sigma) = g(\tau,s,\sigma)\lp(1+\fr{a(\tau-\sigma)^{1/2}}{a(\tau-s)^{1/2}}+ \fr{a(\re^\tau-\re^\sigma)^{1/2}}{a(\re^{\tau}-\re^s)^{1/2}}    \rp).
$$
Then applying Gronwall's inequality we obtain
\begin{align}
\tl\Psi(\tau,\sigma)&\leq C_{A,B}\|\tl W_0\|_{*,m} \mathrm{exp}\lp[\int_\sigma^\tau \re^{-\delta'(\re^\tau-\re^s)}\tl g(\tau,s,\sigma) ds    \rp].\notag\\
&\leq  \tl C_{A,B}\|\tl W_0\|_{*,m}  \re^{\tl C}.
\end{align}
where $\tl C>0$ is a constant dependent on $\delta'$ and $q$. The integral inside the exponential can be bounded, uniformly in $\tau\geq\sigma$, using the argument of \cite[Prop. 4.5.2]{roussier2003stabilite} and   using the fact that the singularities at $s = \tau$ have the form $a(\tau - s)^{1/q}$ or $a(\re^\tau - \re^s)^{1/q}$ and those at $s = \sigma$ take the form $a(s - \sigma)^{1/2}$ or $a(\re^s-\re^\sigma)^{1/2}$.

\bibliography{Stable-Strat}
\bibliographystyle{siam}

\end{document}